\documentclass[11pt]{amsart} 


\title[Existence of Typical Scales]
 {Existence of Typical Scales for Manifolds with Lower Ricci Curvature Bound}

\author[D.~Jansen]{Dorothea Jansen}

\email{d.jansen@uni-muenster.de}


\date{\today}

\keywords{Lower Ricci curvature bound, Gromov-Hausdorff convergence}

\subjclass[2010]{53C21, 53C23} 

\thanks{This work was supported by
  the Gottfried Wilhelm Leibniz-Preis of Prof.~Dr.~Burkhard Wilking 
  and the SFB 878: Groups, Geometry \& Actions, at the University of M\"unster.}


\usepackage[english]{babel}

\usepackage[utf8]{inputenc}

\usepackage[T1]{fontenc}

\usepackage{xcolor}
\definecolor{pantone3282}{RGB}{0,142,150} 
\definecolor{pantone369}{RGB}{122,181,22} 

\usepackage{nameref}
\usepackage{aliascnt}

\usepackage{xspace}

\usepackage{comment}

\usepackage[hypertexnames=true]{hyperref}
\hypersetup{
 citebordercolor = pantone369!70,
 linkbordercolor = pantone3282!70, 
 urlbordercolor  = pantone3282!70, 
 pdfauthor 	 = Dorothea Jansen
}

\usepackage{tikz}
\usetikzlibrary{arrows,decorations.pathmorphing,backgrounds,positioning,fit}


\newtheorem{thm}{Theorem}[section]
  
\newaliascnt{prop}{thm}
  \newtheorem{prop}[prop]{Proposition}
  
  \aliascntresetthe{prop}
\newaliascnt{lemma}{thm}
  \newtheorem{lemma}[lemma]{Lemma}
  
  \aliascntresetthe{lemma}
\newaliascnt{cor}{thm}

  \aliascntresetthe{cor}
\newcommand{\precptnessThm}{Gromov's Pre-compactness Theorem\xspace}	
\newaliascnt{mainthm}{thm}
  \newtheorem{mainthm}[mainthm]{Theorem}
  
  \aliascntresetthe{mainthm}
\newtheorem*{mainthm*}{Main Theorem}
\newaliascnt{mainprop}{thm}
  \newtheorem{mainprop}[mainprop]{Proposition}
  
  \aliascntresetthe{mainprop}
\newtheorem*{mainprop*}{\autoref{prop:main}}
\newtheorem*{cnthm*}{Theorem\;\ref{lem_CN:Xgen}}
\newaliascnt{prodlemma}{thm}
  \newtheorem{prodlemma}[prodlemma]{Theorem}
  
  \aliascntresetthe{prodlemma}
\newcommand{\refProdLemma}{the Product Lemma\xspace}
\newaliascnt{segmineq}{thm}
  \newtheorem{segmineq}[segmineq]{Theorem}
  \aliascntresetthe{segmineq}
\newcommand{\refSegmIneq}{the Segment Inequality\xspace}	
\newaliascnt{bgthm}{thm}
  \newtheorem{bgthm}[bgthm]{Theorem}
  
  \aliascntresetthe{bgthm}
\newcommand{\refBGThm}{the Bishop-Gromov Theorem\xspace}
\newaliascnt{rescthm}{thm}
  \newtheorem{rescthm}[rescthm]{Theorem}
  
  \aliascntresetthe{rescthm}

\theoremstyle{definition}
\newaliascnt{defn}{thm}
  \newtheorem{defn}[defn]{Definition}
  
  \aliascntresetthe{defn} 
\newaliascnt{deflem}{thm}

  \aliascntresetthe{deflem} 
\newaliascnt{exm}{thm}

  \aliascntresetthe{exm} 

\newtheorem*{defn*}{Definition}
\newtheorem*{not*}{Notation}


\DeclareMathOperator{\nn}{\mathbb{N}}

\DeclareMathOperator{\rr}{\mathbb{R}}

\DeclareMathOperator{\eps}{\varepsilon}
\renewcommand{\phi}{\varphi}

\DeclareMathOperator{\diam}{diam}
\DeclareMathOperator{\rad}{rad}
\DeclareMathOperator{\vol}{vol}
\DeclareMathOperator{\dt}{\textit{dt}}
\DeclareMathOperator{\dtau}{\textit{d$\tau$}}
\DeclareMathOperator{\dV}{\textit{dV}}
\DeclareMathOperator{\Ric}{Ric}
\DeclareMathOperator{\Hess}{Hess}
\DeclareMathOperator{\Xgen}{\mathit{X}_{gen}}
\DeclareMathOperator{\Ygen}{\mathit{Y}_{gen}}
\DeclareMathOperator{\CBG}{\textit{C}_\textit{{BG}}}

\newcommand{\B}{\bar{B}} 

\newcommand{\dGH}{d_{\textit{GH}}}

\newcommand{\dgh}[5]{\dGH(B_{#1}^{#2}(#3),B_{#1}^{#4}(#5))} 
 
\newcommand{\dghpt}[5]{\dGH((#2,#3),(#4,#5)) \leq #1}

\def\limitspaces{\mathcal{X}^n}

\newcommand{\norm}[1]{\|#1\|}
\newcommand{\avgint}{{-}\hspace*{-1.05em}\int} 
\newcommand{\avgintsmall}{{-}\hspace*{-0.9em}\int} 
\DeclareMathOperator{\Mx}{Mx} 
\DeclareMathOperator{\pr}{pr} 
\DeclareMathOperator{\pt}{pt} 
\DeclareMathOperator{\prrrk}{\pr_{\rr^k}} 
\DeclareMathOperator{\id}{id} 
\DeclareMathOperator{\im}{im} 
\DeclareMathOperator{\C11}{{\color{black}\textit{C}_{{1-1}}(\textit{n})}} 
\def\limomega{\lim\nolimits_\omega}

\newcommand{\hateps}{\hat{\eps}}
\newcommand{\hatdelta}{\hat{\delta}}
\newcommand{\tildeeps}{\hatdelta}
\newcommand{\mydelta}{{\hateps}}
\newcommand{\myeps}{\delta}

\def\aand{\xspace\textrm{and}\xspace}
\def\as{\textrm{ as }}

\newcommand{\GH}{Gro\-mov-Haus\-dorff\xspace}


\newcommand{\myquote}[1]{\textquoteleft #1\textquoteright}

\newcommand{\ndim}{$n$-di\-men\-sion\-al\xspace}




\begin{document}

 
\begin{abstract}
 For collapsing sequences of Riemannian manifolds 
 which satisfy a uniform lower Ricci curvature bound
 it is shown that there is a sequence of scales
 such that for a set of good base points of large measure
 the pointed rescaled manifolds subconverge to a product of a Euclidean and a compact space. 
 All Euclidean factors have the same dimension,
 all possible compact factors satisfy the same diameter bounds
 and their dimension does not depend on the choice of the base point
 (along a fixed subsequence).
\end{abstract}


\maketitle

\setcounter{tocdepth}{1}
\tableofcontents


If a sequence of Riemannian manifolds satisfies a uniform lower sectional curvature bound,
this bound carries over to the possibly non-smooth Alexandrov limit.
If the limit is actually a smooth manifold, by Yamaguchi's Fibration Theorem, \cite{yamaguchi}, 
the manifolds fibre over the limit in the following way:
If $M_i$ is a sequence of \ndim manifolds 
with a uniform lower sectional curvature bound and a uniform upper diameter bound
converging to a compact manifold $N$ of lower dimension, then for sufficiently large $i\in\nn$ 
there are fibrations $M_i \to N$ which are close to Riemannian submersions. 
Moreover, up to a finite covering, each fibre is the total space of a fibration over a torus.

For proving the latter, a crucial argument is the following: 
Consider the pre-image of some ball under the fibration $M_i \to N$.
After rescaling the metric up, 
this pre-image converges to a product of a Euclidean and a compact space.
In fact, it is possible to replace the rescaling factors by larger ones such 
that the limit again has the form of such a product, 
but the Euclidean factor has higher dimension.
This can be iterated until, finally, the compact factor vanishes.
Such scaling factors are called \emph{typical scales}. 
Similar techniques have been used
by e.g.~Shioya and Yamaguchi in \cite{shioya-yamaguchi}
and Kapovitch, Petrunin and Tuschmann in \cite{kapovitch-petrunin-tuschmann}.

If a sequence of manifolds only satisfies a uniform lower Ricci curvature bound,
Yamaguchi's Fibration Theorem might fail.
This was proven by Anderson in \cite{anderson1992}.
However, in recent years Cheeger and Colding obtained 
deep structure results for limits of such sequences, 
\cite{cheeger-colding-I,cheeger-colding-II,cheeger-colding-III},
by using \emph{measured \GH convergence}:
After renormalizing the measure of the manifolds and passing to a subsequence,
the manifolds converge to a metric measure space 
such that the (renormalised) measures converge to the limit measure. 
The uniform lower Ricci curvature bound carries over to the limit
in the sense that the limit measure still satisfies the Bishop-Gromov Theorem.

Another difficulty occurring with only a lower Ricci curvature bound is the following:
Unlike for lower section curvature bounds, 
tangent cones of the limit space need not be metric cones. 
In fact, in the case of a collapsing sequence, 
the tangent cones at some point may depend on the choice of the rescaling sequence, 
cf.~\cite{cheeger-colding-I}.
However, Colding and Naber \cite{colding-naber} proved 
that any limit of a sequence of manifolds with uniform lower Ricci curvature bound
contains a connected subset of full measure 
such that for each point in this subset the tangent cone is unique 
and a Euclidean space of a fixed dimension $k \in \nn$.
This $k$ is called the \emph{dimension} of the limit space. 
Notice that this dimension is at most the Hausdorff-dimension of the space.
In particular, $k < n$.

If a collapsing sequence of manifolds $M_i$ satisfies 
the lower Ricci curvature bound $-\eps_i$, where $\eps_i \to 0$,
and if this sequence converges to a Euclidean space,
then the Rescaling Theorem of Kapovitch and Wilking in \cite{kapovitch-wilking}
already provides the existence of one sequence of typical scales. 
For this sequence, the blow-ups of the manifolds split 
into products of this Euclidean space and a compact factor.
The main theorem of this paper generalises this statement 
by allowing arbitrary limit spaces
and a uniform lower Ricci curvature bound.
\begin{mainthm*}
 Let $(M_i,p_i)_{i \in \nn}$ be a collapsing sequence 
 of pointed complete connected \ndim Riemannian manifolds
 which satisfy the uniform lower Ricci curvature bound $\Ric_{M_i} \geq -(n-1)$ 
 and converge to a limit $(X,p)$ of dimension $k < n$ 
 in the measured \GH sense. 
 Then for every $\eps \in (0,1)$ there exist a subset of good points
 $G_1(p_i) \subseteq B_1(p_i)$ 
 satisfying \[\vol(G_1(p_i)) \geq (1 -\eps) \cdot \vol(B_1(p_i)),\]
 a sequence $\lambda_i \to \infty$ and
 a constant $D > 0$
 such that the following holds: 
 
 For any choice of base points $q_i \in G_1(p_i)$ 
 and every sublimit $(Y,q)$ of $(\lambda_i M_i, q_i)_{i \in \nn}$ 
 there is a compact metric space $K$ of dimension $l \leq n-k$ 
 and diameter $\frac{1}{D} \leq \diam(K) \leq D$ 
 such that $Y$ splits isometrically as a product
 \[Y \cong \rr^k \times K.\] 
 Moreover, for any base points $q_i, q_i' \in G_1(p_i)$ 
 such that, after passing to a subsequence, both
 $(\lambda_i M_i, q_i) \to (\rr^k \times K, \cdot)$ 
 and $(\lambda_i M_i, q_i') \to (\rr^k \times K', \cdot)$ 
 as before,
 $\dim(K) = \dim(K')$.
\end{mainthm*}

Observe that $\dim(K) < n-k$ might occur in the situation of the theorem:
Consider the sequence of flat tori 
$M_i := S^1 \times S^1(\frac{1}{i}) \times S^1(\frac{1}{i^2})$
where $S^1(r)$ denotes a circle of radius $r>0$. 
In this example, it is easy to imagine the last 
two factors \myquote{collapsing to a point} in the limit, 
although the very last factor collapses faster than central one. 
Hence, $M_i$ converges to $S^1$. 
For $\lambda_i = i$,
the rescaled manifolds $\lambda_i M_i$ converge to $\rr \times S^1$.
Using the notation of the main theorem,
one has $n=3$, $k=1$ and $l = 1 < 2 = n-k$.

Furthermore, note that the theorem does not prove 
that all possible compact spaces need to have the same dimension,
but that the dimension only depends on the regarded subsequence 
and not on the choice of the base points:
Let $M_i = S^1(\frac{1}{i^2})$ for $i \in 2 \nn$, 
$M_i = S^1(\frac{1}{i})$ for $i \in 2\nn + 1$
and $\lambda_i = i$. 
Then $M_i$ collapses to a point, 
but $(\lambda_i M_i)_{i \in 2\nn} \to \rr^0 \times \{\pt\}$ 
and $(\lambda_i M_i)_{i \in 2\nn+1} \to \rr^0 \times S^1$. 

\par\medskip

Now turn to the proof of the main theorem.
Let $(M_i,p_i)_{i \in \nn}$ be a collapsing sequence as in the main theorem 
and let $(X,p)$ denote its limit.
First will be proven that
for points $q_i \in M_i$ and a small radius $r>0$
the conclusion of the main theorem holds correspondingly on $B_r(q_i)$ 
for a subset of good points $G_r(q_i)$ 
and a sequence of scales $\lambda_i \to \infty$,
i.e.~a \myquote{local} version of the main theorem will be established.

Recall that the set of good points $G_r(q_i) \in B_r(q_i)$ 
and the scales $\lambda_i \to \infty$ have to satisfy the following:
Any point $x_i \in G_r(q_i)$ needs to have the property 
that each sublimit of $(\lambda_i M_i, x_i)$ is the product 
of $\rr^k$ with a compact metric space
where the compact spaces (essentially) all have the same dimension.
This is achieved in two steps.

First, let $G_r^1(q_i) \subseteq B_r(q_i)$ denote the set of all points $x_i$ 
such that all sublimits of $(\mu_i M_i, x_i)$ split off an $\rr^k$-factor
where $\mu_i \to \infty$ is an arbitrary sequence.
In particular (once $\lambda_i \to \infty$ is constructed), 
for any $x_i \in G_r^1(q_i)$, 
any limit of the sequence $(\lambda_i M_i, x_i)$ splits off an $\rr^k$-factor.
That these sets $G_r^1(q_i)$ have large volume inside $B_r(q_i)$
is obtained by generalising results of Cheeger, Colding \cite{cheeger-colding-II} 
and Kapovitch, Wilking \cite{kapovitch-wilking}
involving modified distance functions coming from splitting theorems. 
For more details, see \autoref{sec:first_set}.

Next, define scales $\lambda_i \to \infty$ 
and another subset $G_r^2(q_i) \subseteq B_r(q_i)$ of large volume 
as the set of points $x_i$ 
such that $(\lambda_i M_i, x_i)$ has small distance to a product of $\rr^k$ 
and a compact metric space of diameter $1$.
The existence of such $\lambda_i \to \infty$ and $G_r^2(q_i)$ is obtained 
by using the Rescaling Theorem of Kapovitch and Wilking in \cite{kapovitch-wilking}.
For further explanations, see \autoref{sec:second_set}.

These scales $\lambda_i \to \infty$ 
and the intersection $G_r(q_i) = G_r^1(q_i) \cap G_r^2(q_i)$ 
give the splitting result in the local version of the main theorem.

In order to finish the local version, prove that two such limits have the same dimension.
First, the following special case is investigated: 
Suppose that two points $x_i, y_i \in G_r(q_i)$ are connected 
by an integral curve of a vector field whose flow is measure preserving and bi-Lipschitz 
(on a set of large enough volume).
In this situation, via \GH convergence one obtains a bi-Lipschitz map 
between subsets of positive volume inside of the limit spaces 
of $(\lambda_i M_i, x_i)$ and $(\lambda_i M_i, y_i)$, respectively.
In particular, these limits need to have the same dimension.

For the general situation,
recall that the flow of any divergence-free vector field is measure preserving.
Moreover, 
using (slight generalisations of) results in \cite{kapovitch-wilking},
it is bi-Lipschitz 
if its derivative 
is small---in some $L_\alpha$-norm, $\alpha > 1$, 
and taking some average value (in volume sense).
In fact every two points of the set $G_r^2(q_i)$ are connected 
by the curve of such a vector field 
(or are at least sufficiently close to its start and end point).
For more details, see \autoref{sec:C=>dim}.

\par\medskip
For verifying the main theorem, i.e.~defining $G_1(p_i)$ and $\lambda_i \to \infty$,
fix $r>0$ and finitely many sequences $(q_i)_{i \in \nn}$
such that the union of the subsets of good points $G_r(q_i) \subseteq B_r(q_i)$ 
has sufficiently large volume in $B_1(p_i)$.
Define $G_1(p_i)$ as the union of these $G_r(q_i)$. 
Now difficulties arise since each sequence $(q_i)_{i \in \nn}$ provides 
its own sequence of scales $\lambda_i \to \infty$
and these sequences might be pairwise different, 
but the main theorem requires only a single sequence of scales. 
This problem can be solved if, 
given two such sequences $(q_i^1)_{i \in \nn}$ and $(q_i^2)_{i \in \nn}$ 
and their corresponding scales $\lambda_i^1 \to \infty$ and $\lambda_i^2 \to \infty$, 
the local version of the main theorem still holds 
for $(q_i^2)_{i \in \nn}$ and $\lambda_i^1 \to \infty$ 
(instead of $\lambda_i^2 \to \infty$).
Indeed, this is achieved by a clever choice of the finitely many $(q_i)_{i \in \nn}$ 
utilizing the H\"older continuity result of Colding and Naber \cite{colding-naber}.
For more intuitive details on this approach, see the introduction to \autoref{sec:step2}.

\par\bigskip
This paper is structured as follows: 
First, notation is fixed in \autoref{sec:background}.
The subsequent sections deal with the proof of the main theorem:
First, \autoref{sec:step1} proves the above mentioned local version of the main theorem:
If points $q_i$ and some small $r>0$ satisfy 
that the rescaled manifolds $(\frac{1}{r} M_i, q_i)$ are sufficiently close to $\rr^k$,
then the statement of the main theorem holds on the ball $B_r(q_i)$ analogously.
As explained before, using finitely many of such sequences $q_i$ 
and taking the union of the obtained subsets $G_r(q_i) \subseteq B_r(q_i)$ 
will be used in \autoref{sec:step2} to prove the main theorem.


\par\medskip\noindent\textit{Acknowledgements.} 
The results of this paper are part of the author’s PhD thesis\footnote{Available at
\href{https://miami.uni-muenster.de/Record/a23e84b7-fb36-41b8-8d4d-5af83a15816c}
{https://miami.uni-muenster.de}.}. 
The author would like to thank her advisor Burkhard Wilking for 
his guidance and expertise.


\section{Notation}\label{sec:background}
For the sake of clarifying notation, recall the following theorem.
\begin{bgthm}[Bishop-Gromov Theorem, {\cite[Chapter 9, Lemma 1.6]{petersen}}]
 \label{lem_background:BG}
 Let $M$ be a complete \ndim Riemannian manifold 
 with lower Ricci curvature bound $\Ric_M \geq (n-1) \cdot \kappa$ for some $\kappa \in \rr$ 
 and let $p \in M$.
 Then the map 
 \[ r \mapsto \frac{\vol_M(B_r(p))}{V^n_\kappa(r)}\]
 is monotonically decreasing with limit $1$ as $r \to 0$, 
 where $V^n_\kappa(r)$ is the volume of an $r$-ball 
 in the \ndim space form of sectional curvature $\kappa$.
 
 In particular, for $R \geq r > 0$, 
 \[\vol_M (B_R(p)) \leq \frac{V^n_\kappa(R)}{V^n_\kappa(r)} \cdot \vol_M(B_r(p)).\]
 This factor is independent of $M$ and denoted by
 \[\CBG(n,\kappa,r,R) := \frac{V^n_\kappa(R)}{V^n_\kappa(r)}.\] 
\end{bgthm}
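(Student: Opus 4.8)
The plan is to run the classical comparison argument in geodesic polar coordinates. Fix $p\in M$ and identify a neighbourhood of $p$ with an open subset of $\rr_{>0}\times S^{n-1}$, $S^{n-1}\subseteq T_pM$, via $\exp_p$, so that the Riemannian volume element becomes $\mathcal{A}(t,\theta)\,dt\,d\theta$ with $\mathcal{A}(\,\cdot\,,\theta)>0$ on $(0,c(\theta))$, where $c(\theta)$ is the distance to the cut locus; extend $\mathcal{A}(t,\theta):=0$ for $t\ge c(\theta)$. Since $\exp_p$ is a diffeomorphism from $\{(t,\theta):0<t<c(\theta)\}$ onto $M\setminus\operatorname{Cut}(p)$ and $\operatorname{Cut}(p)$ is a null set, one gets $\vol_M(B_r(p))=\int_{S^{n-1}}\int_0^r\mathcal{A}(t,\theta)\,dt\,d\theta$, while for the space form $V^n_\kappa(r)=\omega_{n-1}\int_0^r\mathcal{A}_\kappa(t)\,dt$, with $\omega_{n-1}=\vol(S^{n-1})$ and $\mathcal{A}_\kappa$ the model area density (for instance $\mathcal{A}_\kappa(t)=\kappa^{-(n-1)/2}\sin^{n-1}(\sqrt{\kappa}\,t)$ when $\kappa>0$). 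For $\kappa>0$ I would from the start restrict to $r\le\pi/\sqrt{\kappa}$, beyond which, by Bonnet--Myers, balls exhaust $M$ and both functions are constant.

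First I would prove the pointwise density comparison: for each $\theta$ the ratio $t\mapsto\mathcal{A}(t,\theta)/\mathcal{A}_\kappa(t)$ is non-increasing on $(0,\infty)$. On $(0,c(\theta))$ the logarithmic derivative $m(t,\theta):=\partial_t\log\mathcal{A}(t,\theta)$ is the mean curvature of the geodesic sphere through the corresponding point, and the matrix Riccati equation for the shape operator $S$ of these spheres together with $\operatorname{tr}(S^2)\ge(\operatorname{tr}S)^2/(n-1)$ yields the scalar Riccati inequality $m'+m^2/(n-1)\le-\Ric_M(\partial_t,\partial_t)\le-(n-1)\kappa$, with equality (and $m=m_\kappa:=\partial_t\log\mathcal{A}_\kappa$) in the model. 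A standard ODE comparison, using the matching singular behaviour $m(t,\theta),m_\kappa(t)\sim(n-1)/t$ as $t\to0$, gives $m(t,\theta)\le m_\kappa(t)$ on $(0,c(\theta))$, hence $\partial_t\log(\mathcal{A}(t,\theta)/\mathcal{A}_\kappa(t))\le0$; past $t=c(\theta)$ the ratio is $0$, so it stays non-increasing across the cut value because $\mathcal{A}(\,\cdot\,,\theta)$ drops to $0$ there.

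Next I would integrate this up. Writing $\mathcal{A}(t,\theta)=\mathcal{A}_\kappa(t)\,h_\theta(t)$ with $h_\theta$ non-increasing, one has $\int_{S^{n-1}}\mathcal{A}(t,\theta)\,d\theta=\mathcal{A}_\kappa(t)\int_{S^{n-1}}h_\theta(t)\,d\theta$, so the angularly integrated density ratio $\big(\int_{S^{n-1}}\mathcal{A}(t,\theta)\,d\theta\big)\big/\big(\omega_{n-1}\mathcal{A}_\kappa(t)\big)$ is still non-increasing in $t$. Invoking the elementary averaging lemma --- if $f,g\colon(0,\infty)\to[0,\infty)$ with $g>0$ and $f/g$ non-increasing, then $R\mapsto\big(\int_0^Rf\big)\big/\big(\int_0^Rg\big)$ is non-increasing --- applied to $f(t)=\int_{S^{n-1}}\mathcal{A}(t,\theta)\,d\theta$ and $g(t)=\omega_{n-1}\mathcal{A}_\kappa(t)$, I conclude that $r\mapsto\vol_M(B_r(p))/V^n_\kappa(r)$ is monotonically non-increasing. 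Its limit as $r\to0$ is $1$ because $\mathcal{A}(t,\theta)=t^{n-1}+O(t^{n+1})$ uniformly in $\theta$, so both $\vol_M(B_r(p))$ and $V^n_\kappa(r)$ equal $\tfrac{\omega_{n-1}}{n}r^n(1+o(1))$. The displayed inequality for $R\ge r>0$ is then monotonicity evaluated at the two radii, and $V^n_\kappa(R)/V^n_\kappa(r)$ visibly depends only on $n,\kappa,r,R$, so one may set $\CBG(n,\kappa,r,R):=V^n_\kappa(R)/V^n_\kappa(r)$.

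The only genuinely delicate point is the cut-locus bookkeeping: one must justify the polar integration formula for $\vol_M(B_r(p))$ (that $\{0<t<c(\theta)\}$ maps diffeomorphically onto $M\setminus\operatorname{Cut}(p)$, that $c$ is continuous so this domain is open, hence measurable, and that $\operatorname{Cut}(p)$ has measure zero) and check that the zero-extension of $\mathcal{A}$ does not spoil monotonicity of the density ratio. Everything else is the Riccati comparison plus the averaging lemma, exactly as in \cite[Chapter 9]{petersen}, which I follow.
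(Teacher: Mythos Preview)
Your argument is correct and is the standard Riccati-comparison proof of Bishop--Gromov. The paper does not give its own proof of this statement at all: it is recorded in the notation section as a background result with a citation to \cite[Chapter~9, Lemma~1.6]{petersen}, which is precisely the reference whose argument you are reproducing, so there is nothing further to compare.
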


Throughout this paper, the Bishop-Gromov Theorem will always be applied 
using the notion $\CBG(n,\kappa,r,R)$.
Regarding $\CBG$ as a function of radii or curvature bound, it has the following properties:
Fix any $n \in \nn$, $c \geq 1$, $y > 0$, $-1 \leq \kappa \leq 0$ and $R \geq r > 0$.
Then
$\CBG(n,-1,r,cr) \leq \CBG(n,-1,R,cR)$,
$\CBG(n,\kappa,r,R) \leq \CBG(n,1,r,R)$ and 
$\lim_{x \to \infty} \CBG(n,-1,x,x+y) = e^{(n-1)y}$.

\par\bigskip

In the following, all metric spaces are assumed to be proper, i.e.~closed balls are compact.
Recall that a sequence $(X_i,p_i)$ of pointed metric spaces converges to $(X,p)$ 
(in the pointed \GH sense)
if \[\dgh{r}{X}{p}{Y}{q} \to 0\] for all $r > 0$
where $\dgh{r}{X}{p}{Y}{q}$ denotes the pointed \GH distance 
of the (compact) balls $(\B_r(p),p)$ and $(\B_r(q),q)$. 
In this case, $(X,p)$ is called \emph{(pointed \GH) limit} of $(X_i,p_i)$.
If $(X,p)$ occurs as limit of a convergent subsequence, it is called \emph{sublimit},
and the sequence $(X_i,p_i)$ is said to \emph{subconverge}.
Note that limits are unique up to isometry.

If $\dgh{1/\eps}{X}{p}{Y}{q} \leq \eps$ for some $\eps > 0$,
this is denoted by \[\dghpt{\eps}{X}{p}{Y}{q}.\]

\par\medskip
Recall the following correspondence of \GH convergence 
and \GH approximations
where maps $f : X \to Y$ and $g : Y \to X$ 
between compact metric spaces 
are called \emph{($\eps$-)\GH approximations} 
or \emph{$\eps$-ap\-prox\-i\-ma\-tions between the spaces $(X,p)$ and $(Y,q)$} if
$f(p) = q$, $g(q) = p$
and the following holds for all $x,x_1,x_2 \in X$ and $y,y_1,y_2 \in Y$:
\begin{align*}
 |d_X(x_1,x_2) - d_Y(f(x_1),f(x_2))| < \eps, &\quad\quad d_X(g \circ f(x), x) < \eps, \\
 |d_Y(y_1,y_2) - d_X(g(y_1),g(y_2))| < \eps, &\quad\quad d_Y(f \circ g(y), y) < \eps.  
\end{align*}

\begin{prop}\label{lem_background:GH_convergence_properties_I}
 Let $(X,d_X,p)$ and $(X_i,d_{X_i},p_i)$, $i \in \nn$, be pointed metric spaces.
 If the $X_i$ and $X$ are length spaces, the following are equivalent:
 \begin{enumerate}
  \item 
   $(X_i,p_i) \to (X,p)$. 
  \item\label{lem_background:GH_convergence_properties_I--convergence_iff_dgh_small--appr} 
   There is a sequence $\eps_i \to 0$ and $\eps_i$-ap\-prox\-i\-ma\-tions $(f_i,g_i)$ 
   between the balls $(\B_{1/\eps_i}^{X_i}(p_i),p_i)$ and $(\B_{1/\eps_i}^{X}(p),p)$.
  \item\label{lem_background:GH_convergence_properties_I--convergence_iff_dgh_small--dgh} 
   For any function $g: \rr^{> 0} \to \rr^{> 0}$ with $\lim_{x \to 0} g(x) = 0$
   there exists $\eps_i \to 0$ with $\dgh{1/\eps_i}{X_i}{p_i}{X}{p} \leq g(\eps_i)$.
 \end{enumerate}
\end{prop}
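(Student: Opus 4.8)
The plan is to prove the equivalences (a)$\Leftrightarrow$(b) and (a)$\Leftrightarrow$(c) separately, with the length-space hypothesis entering only in the backward directions, through a single restriction lemma. Two standard facts are used repeatedly. The first is the usual dictionary, for \emph{compact} pointed metric spaces, between smallness of the (pointed) \GH distance and existence of \GH approximations: there is an absolute constant $C_0 \ge 1$ such that $\dGH((A,a),(B,b)) < \eta$ yields a $C_0\eta$-approximation between $(A,a)$ and $(B,b)$, and conversely an $\eta$-approximation forces $\dGH((A,a),(B,b)) \le C_0\eta$ (see \cite[Ch.~11]{petersen}). The second is a \emph{restriction lemma}: if $X_i$ and $X$ are length spaces and $0 < R < R'$, then, provided $\eta$ is small relative to $R'-R$, every $\eta$-approximation between $(\B^{X_i}_{R'}(p_i),p_i)$ and $(\B^{X}_{R'}(p),p)$ restricts to a $C_1\eta$-approximation between $(\B^{X_i}_{R}(p_i),p_i)$ and $(\B^{X}_{R}(p),p)$, for an absolute constant $C_1$. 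This lemma is the only place the length-space hypothesis is used: restricting a given map $f_i$ to $\B_R(p_i)$ keeps the basepoint fixed and distorts distances by less than $\eta$, but it may carry a few points slightly outside $\B_R(p)$, which is repaired by a radial push toward $p$ along an almost-minimising path (so $X$ must be a length space); and density of the corrected image in $\B_R(p)$ follows by taking, for $y \in \B_R(p)$, a near-preimage $x \in \B_{R'}(p_i)$, noting $d(x,p_i) < R + 2\eta$, and sliding $x$ along an almost-minimising path from $p_i$ to $x$ until it reaches $\B_R(p_i)$, at distance $< 3\eta$ from $x$ (so $X_i$ must be a length space); the reverse map $g_i$ is handled symmetrically. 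For a general metric space the restricted approximation can genuinely fail to have dense image in the smaller ball, so this hypothesis is essential.

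Given these tools, (a)$\Rightarrow$(b) is a diagonal argument that does not use the length-space property. Since $\dgh{m}{X_i}{p_i}{X}{p} \to 0$ as $i \to \infty$ for each $m \in \nn$, choose $I_1 < I_2 < \dots$ with $\dgh{m}{X_i}{p_i}{X}{p} < \tfrac{1}{C_0 m}$ for all $i \ge I_m$, and set $\eps_i := \tfrac1m$ for $I_m \le i < I_{m+1}$; then $\eps_i \to 0$, and the dictionary converts $\dgh{1/\eps_i}{X_i}{p_i}{X}{p} < \tfrac{1}{C_0 m}$ into an $\eps_i$-approximation between $(\B^{X_i}_{1/\eps_i}(p_i),p_i)$ and $(\B^{X}_{1/\eps_i}(p),p)$. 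Likewise, for (a)$\Rightarrow$(c), let $g \colon \rr^{>0} \to \rr^{>0}$ with $\lim_{x \to 0} g(x) = 0$ be given; since $g(2^{-m}) > 0$ and $\dgh{2^m}{X_i}{p_i}{X}{p} \to 0$, choose $J_1 < J_2 < \dots$ with $\dgh{2^m}{X_i}{p_i}{X}{p} < g(2^{-m})$ for $i \ge J_m$, and set $\eps_i := 2^{-m}$ for $J_m \le i < J_{m+1}$; then $\eps_i \to 0$ and $\dgh{1/\eps_i}{X_i}{p_i}{X}{p} < g(\eps_i)$ --- the behaviour of $g$ off the points $2^{-m}$ is immaterial. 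The two backward directions are the same argument with different input: from (b) directly, or from (c) applied to $g(x) := x$, one obtains $\eps_i \to 0$ together with an approximation of error at most a fixed multiple of $\eps_i$ between the $1/\eps_i$-balls; then for any fixed $R > 0$ and all large $i$ the restriction lemma provides an approximation of error $O(\eps_i)$ between the $R$-balls, whence $\dgh{R}{X_i}{p_i}{X}{p} \to 0$ (again by the dictionary), i.e.\ $(X_i,p_i) \to (X,p)$.

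The main obstacle is the restriction lemma: it carries all the geometric content and is the sole use of the length-space assumption, and the error propagation must be tracked with some care while stray points are moved back into the smaller balls. Everything else is the routine bookkeeping of the two diagonal sequences above, checking each time that the chosen $\eps_i$ tend to $0$ while the radii $1/\eps_i$ stay small enough for the relevant convergence estimate to have already taken effect.
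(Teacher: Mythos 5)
Your argument is correct. Note, though, that the paper does not actually prove this proposition: it is stated as background, and the only ingredient it records is the inequality $\dgh{r}{X}{p}{Y}{q} \leq 16 \cdot \dgh{R}{X}{p}{Y}{q}$ for pointed length spaces and $R \geq r > 0$, stated immediately after the proposition. That inequality is precisely your restriction lemma, packaged in distance form rather than as a statement about restricting and correcting approximation maps; your sketch of its proof (pushing stray image points back into the smaller ball along almost-minimising paths, and sliding near-preimages of points of $\B_r(p)$ into $\B_r(p_i)$, which is where the length-space hypothesis enters) is the standard argument for it, and your bookkeeping of the near-inverse conditions for the corrected pair $(f_i,g_i)$ goes through with absolute constants. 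The remaining content of your proof --- the compact-ball dictionary between $\dGH$ and $\eps$-approximations, and the two diagonal choices of $\eps_i$ (radii $m$ with thresholds $\tfrac{1}{C_0 m}$ for (a)$\Rightarrow$(b), radii $2^m$ with thresholds $g(2^{-m})$ for (a)$\Rightarrow$(c), and $g(x):=x$ for the converse) --- is exactly the routine reduction one would expect, and it is sound; in particular you correctly isolate that the length-space hypothesis is needed only in the backward directions, to pass from control on large balls to control on all smaller balls.
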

For convergent spaces $(X_i,p_i) \to (X,p)$,
maps as in \ref{lem_background:GH_convergence_properties_I--convergence_iff_dgh_small--appr} 
are always implicitly fixed.
In this situation, recall that a sequence of points $q_i \in \B^{X_i}_{1/\eps_i}(p_i)$ 
is said to \emph{converge} to a point $q \in X$, denoted by $q_i \to q$, 
if $f_i(q_i)$ converges to $q$ (in $X$).
In particular, $p_i^x := g_i(x) \to x$. 
In this situation, $(X_i,p_i^x) \to (X,x)$ as well.
\par\medskip

Further, note the following fact:
If $(X,p)$ and $(Y,q)$ are pointed length spaces and $R \geq r > 0$, 
then $\dgh{r}{X}{p}{Y}{q} \leq 16 \cdot \dgh{R}{X}{p}{Y}{q}$.
In particular,
convergence $\dgh{R}{X_i}{p_i}{X}{p} \to 0$ for some $R>0$ implies convergence
$\dgh{r}{X_i}{p_i}{X}{p} \to 0$ for all $r\leq R$.

\par\bigskip
For the sake of simpler notation,
instead of passing to a subsequence
occasionally ultralimits will be used.
Recall that a \emph{non-principal ultrafilter on $\nn$}
is a finitely additive probability measure $\omega$ on $\nn$ such that 
all subsets $S \subseteq \nn$ are $\omega$-measurable 
with value $\omega(S) \in \{0,1\}$ and $\omega(S) = 0$ if $S$ is finite,
and that for any bounded sequence $(a_i)_{i \in \nn}$ of real numbers
there exists a unique real number $\limomega a_i$ such that
\[\omega( \{i \in \nn \mid |a_i - \limomega a_i| < \eps \}) = 1\]
for every $\eps > 0$. 

Given a non-principal ultrafilter $\omega$ and 
pointed metric spaces $(X_i,d_{X_i},p_i)$, 
the metric space $\limomega (X_i,p_i) := (X_\omega, d_\omega)$
is called \emph{ultralimit} of $(X_i,p_i)$ 
where
\[X_\omega := \{ [(x_i)_{i \in \nn}] 
\mid x_i \in X_i~\aand~\sup\nolimits_{i \in \nn} d_{X_i}(x_i,p_i) < \infty\}\]
for 
$(x_i)_{i \in \nn} \sim (y_i)_{i \in \nn}$ if and only if $\limomega d_{X_i}(x_i,y_i) = 0$
and \[d_\omega([(x_i)_{i \in \nn}],[(y_i)_{i \in \nn}]) := \limomega d_{X_i}(x_i,y_i).\]

\begin{lemma}\label{lem_background:ultralimits}
 Let $(X_i,d_{X_i},p_i)$ and $(Y_i,d_{Y_i},q_i)$, $i \in \nn$, be pointed length spaces.
 \begin{enumerate}
  \item 
  Let $\omega$ be a non-principal ultrafilter on $\nn$.
  Then $\lim_\omega(X_i,p_i)$ is a sublimit in the pointed \GH sense.
  Concretely, there exists a subsequence $(i_j)_{j \in \nn}$ such that both
  \[(X_{i_j},p_{i_j}) \to \limomega (X_i,p_i) 
  \quad\aand\quad
  (Y_{i_j},q_{i_j}) \to \limomega (Y_i,q_i)\]
  as $j \to \infty$ in the pointed \GH sense.
 \item 
  The sublimit of a sequence of pointed length spaces in the pointed \GH sense
  is the ultralimit with respect to a non-principal ultrafilter.
  To be more precise: 
  If $(X,d_X,p)$ and $(Y,d_Y,q)$ are pointed length spaces 
  and $(i_j)_{j \in \nn}$ is a subsequence such that both
  \[(X_{i_j},p_{i_j}) \to (X,p)
  \quad\aand\quad
  (Y_{i_j},q_{i_j}) \to (Y,q)\]
  as $j \to \infty$ in the pointed \GH sense,
  then there exists a non-principal ultrafilter $\omega$ on $\nn$ 
  such that there are isometries
  \[\limomega (X_i,p_i) \cong (X,p) \quad\aand\quad \limomega (Y_i,q_i) \cong (Y,q).\]
 \end{enumerate}
\end{lemma}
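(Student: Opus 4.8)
For (a) the plan is to extract a diagonal subsequence, the key input being that at each fixed scale the ball of $\limomega(X_i,p_i)$ is $\omega$-almost always \GH-close to the corresponding ball of $X_i$; for (b) the plan is to pick a non-principal ultrafilter adapted to the given subsequence and then use that ultralimits of convergent sequences recover their \GH limit.

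\textbf{Part (a).} Put $X_\omega:=\limomega(X_i,p_i)$ and $Y_\omega:=\limomega(Y_i,q_i)$. First I would record the structural facts I need: $X_\omega$ and $Y_\omega$ are complete length spaces (the usual approximate-midpoint argument, inherited from the $X_i,Y_i$), and their closed balls are compact --- this is the only place the ambient setup enters, since in every situation in which the lemma is applied the balls $\B_R^{X_i}(p_i)$, $\B_R^{Y_i}(q_i)$ are uniformly totally bounded by \refBGThm, and uniform total boundedness passes to the ultralimit. The crux is then the estimate
\[\omega\bigl(\{\, i\in\nn \mid \dgh{R}{X_i}{p_i}{X_\omega}{p_\omega}<\eps \,\}\bigr)=1,\]
valid for all $R,\eps>0$, together with its analogue for the $Y_i$. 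To prove it I would set $\delta:=\eps/100$, fix a finite $\delta$-net $z^1=[(z^1_i)],\dots,z^m=[(z^m_i)]$ of the compact ball $\B_R^{X_\omega}(p_\omega)$ with $z^1=p_\omega$, and check that the set of indices $i$ for which (i) the pairwise $X_i$-distances of $z^1_i,\dots,z^m_i$ are within $\delta$ of the corresponding distances in $X_\omega$, and (ii) $\{z^l_i\}_l$ is a $2\delta$-net of $\B_R^{X_i}(p_i)$, lies in $\omega$: property (i) defines a finite intersection of $\omega$-sets, and for (ii) I would argue by contradiction, using that $\omega$ is $\{0,1\}$-valued --- if it failed, picking on its complement a point $x_i\in\B_R^{X_i}(p_i)$ at distance $\ge 2\delta$ from every $z^l_i$ would produce a class $[(x_i)]\in\B_R^{X_\omega}(p_\omega)$ at distance $>\delta$ from the entire net. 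On that $\omega$-full index set the net and the sequences $(z^l_i)$ manufacture an $O(\delta)$-\GH-approximation between $(\B_R^{X_i}(p_i),p_i)$ and $(\B_R^{X_\omega}(p_\omega),p_\omega)$, giving the estimate. Finally, with $A_j:=\{i\mid \dgh{j}{X_i}{p_i}{X_\omega}{p_\omega}<1/j\}$ and $B_j$ its analogue for the $Y_i$, each $A_j\cap B_j$ is $\omega$-full, hence infinite, so I can choose $i_1<i_2<\cdots$ with $i_j\in A_j\cap B_j$; for fixed $R$ and $j\ge R$ the comparison $\dgh{R}{X}{p}{Y}{q}\le 16\,\dgh{R'}{X}{p}{Y}{q}$ ($R'\ge R$) recalled above shows $(X_{i_j},p_{i_j})\to X_\omega$ and $(Y_{i_j},q_{i_j})\to Y_\omega$.

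\textbf{Part (b).} Given the subsequence $(i_j)$ with $(X_{i_j},p_{i_j})\to(X,p)$ and $(Y_{i_j},q_{i_j})\to(Y,q)$, I would take $\omega$ to be any ultrafilter on $\nn$ extending the filter generated by the cofinite sets together with the infinite set $\{i_j\mid j\in\nn\}$; it is non-principal because it refines the Fréchet filter. Since $\{i_j\}\in\omega$, the map $[(x_i)_{i\in\nn}]\mapsto[(x_{i_j})_{j\in\nn}]$ is an isometry $\limomega(X_i,p_i)\cong\lim_{\omega'}(X_{i_j},p_{i_j})$, where $\omega'$ is the non-principal ultrafilter on $\nn$ induced by $j\mapsto i_j$, and the same for the $Y_i$; so it remains to verify the general fact that, for any non-principal ultrafilter, the ultralimit of a pointed-\GH-convergent sequence of length spaces is isometric to its limit. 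For that I would take \GH-approximations $(f_j,g_j)$ between $\B_{1/\eps_j}(p_{i_j})$ and $\B_{1/\eps_j}(p)$ with $\eps_j\to 0$ (available by \autoref{lem_background:GH_convergence_properties_I}, noting $X$ is proper as a pointed \GH limit) and check that $[(x_j)]\mapsto\lim_{\omega'}f_j(x_j)$ is well defined (bounded sequences in the proper space $X$ have $\omega'$-limits), distance preserving (the errors $\eps_j$ disappear in the $\omega'$-limit) and onto ($z$ is hit by $[(g_j(z))]$), hence an isometry onto $X$; applied to $X$ and $Y$ with the single ultrafilter $\omega$ this completes the proof.

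\textbf{Expected main obstacle.} I expect essentially all of the difficulty to sit in the $\omega$-almost-everywhere approximation estimate of part (a): that is where one converts the purely set-theoretic selection performed by the ultrafilter into honest metric \GH-closeness, and where compactness of the ultralimit balls (equivalently, uniform total boundedness of the $\B_R(p_i)$, supplied here by \refBGThm) is genuinely used. The remaining ingredients --- the approximate-midpoint argument, the diagonal extraction, the construction and non-principality of $\omega$ in part (b), and the check that an ultralimit of a convergent sequence equals its limit --- are routine.
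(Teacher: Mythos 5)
The paper states this lemma in its background section without proof, so there is no in-paper argument to compare against; judged on its own, your proposal is correct and follows the standard route: an $\omega$-almost-everywhere net/approximation estimate plus a diagonal extraction for (a), and a pushforward ultrafilter together with the identification of the ultralimit of a GH-convergent sequence with its limit for (b). The one substantive issue is the one you flag yourself: for arbitrary pointed (proper) length spaces part (a) is false as literally stated --- for $X_i$ a wedge of $i$ unit segments the ultralimit exists but no subsequence converges in the pointed \GH sense --- so some uniform total boundedness of the balls $\B_R^{X_i}(p_i)$ has to be imported, and supplying it from \refBGThm, i.e.\ from the only setting in which the lemma is used in the paper, is the right reading of the statement. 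Granting that, your argument is sound: condition (i) is a finite intersection of $\omega$-full sets, the $\{0,1\}$-valuedness contradiction correctly forces the $2\delta$-net property $\omega$-a.e., the resulting $O(\delta)$-approximations give $\omega(\{i \mid \dgh{R}{X_i}{p_i}{X_\omega}{p_\omega}<\eps\})=1$, and the choice $i_j\in A_j\cap B_j$ together with the $16\cdot\dGH$ comparison for smaller radii yields the simultaneous subconvergence. In (b), the ultrafilter generated by the Fr\'echet filter and $\{i_j\}$, the induced $\omega'$, and the map $[(x_j)]\mapsto\lim_{\omega'}f_j(x_j)$ (well defined by properness of $X$, distance preserving since the errors $\eps_j$ vanish $\omega'$-a.e., surjective via $g_j$, base-point preserving) are all correct; the points left implicit --- extending subsequence representatives by $p_i$ to see that the reindexing map is onto, and that a non-principal ultrafilter contains every cofinite set so the tail conditions hold $\omega'$-a.e. --- are routine.
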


\par\bigskip

Let $(M_i, p_i)_{i \in \nn}$ be a sequence 
of pointed complete connected \ndim Riemannian manifolds
with lower Ricci curvature bound $\Ric_{M_i} \geq -(n-1)$.
If $\vol_{M_i}(B_1(p_i)) \to 0$ as $i \to \infty$, 
this sequence is said to be \emph{collapsing}.
In this situation, \emph{renormalised limit measures} are used, 
cf.~\cite[section 1]{cheeger-colding-I}: 
Let $(M_i, p_i)_{i \in \nn}$ be a collapsing sequence as above. 
Then $(M_i, p_i)$ subconverges to a metric space $(X,p)$ 
such that a \myquote{renormalisation} of the measures $\vol_{M_i}$ 
converges to a limit measure $\vol_X$.
In fact, the following is true.

\begin{thm}[{\cite[Theorem 1.6, Theorem 1.10]{cheeger-colding-I}}]
\label{thm-background:precompactness-measured}
 Let $(M_i,p_i)_{i \in \nn}$ be a sequence 
 of pointed complete connected \ndim Riemannian manifolds 
 satisfying the uniform lower Ricci curvature bound $\Ric_{M_i} \geq -(n-1)$. 
 Then $(M_i,p_i)$ subconverges to a metric space $(X,p)$ in the pointed \GH sense 
 and there exists a Radon measure $\vol_X$ on $X$ 
 such that for all $x \in X$, $x_i \to x$ and $r > 0$,
 \[
  \frac{\vol_{M_i}(B_r^{M_i}(x_i))}{\vol_{M_i}(B_1^{M_i}(p_i))} 
  \to \vol_X(B_r^X(x)) 
  \as i \to \infty.
 \]
 Moreover, for any $R \geq r > 0$ and $x \in X$,
 \[ \frac{\vol_X(B_R^X(x))}{\vol_X(B_r^X(x))} \leq \CBG(n,-1,r,R).\]
 This $\vol_X$ is called \emph{(renormalised) limit measure}.
\end{thm}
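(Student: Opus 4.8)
The plan is to extract the \GH subconvergence from \precptnessThm and to construct $\vol_X$ as a weak-$*$ subsequential limit of the renormalised Riemannian measures; the genuinely delicate point will be upgrading this weak-$*$ convergence to the claimed convergence of \emph{all} ball volumes.

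First I would fix $R > 0$ and use \autoref{lem_background:BG} with $\kappa = -1$ to get a uniform packing estimate: if $x_1, \dots, x_N \in B_R^{M_i}(p_i)$ are $\eps$-separated, the balls $B_{\eps/2}^{M_i}(x_j)$ are pairwise disjoint and contained in $B_{2R}^{M_i}(x_1)$, so $N \le \CBG(n,-1,\eps/2,2R)$ independently of $i$. Hence the closed balls $\B_R^{M_i}(p_i)$ are uniformly totally bounded, and \precptnessThm together with a diagonal argument over $R \to \infty$ produces a subsequence along which $(M_i,p_i)$ converges in the pointed \GH sense to a proper length space $(X,p)$; I fix \GH approximations $(f_i,g_i)$ as in \autoref{lem_background:GH_convergence_properties_I}.

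Next I would set $\nu_i := \vol_{M_i}/\vol_{M_i}(B_1^{M_i}(p_i))$ and push it forward by $f_i$ to Radon measures on large balls of $X$. For fixed $x \in X$, $x_i \to x$ and $r > 0$, \autoref{lem_background:BG} bounds $\nu_i(B_r^{M_i}(x_i)) \le \nu_i(B_{r+d(x_i,p_i)}^{M_i}(p_i)) \le \CBG(n,-1,1,r+d(x_i,p_i))$ uniformly in $i$, so the pushforwards have uniformly bounded mass on every bounded subset of $X$; by weak-$*$ compactness of Radon measures on the proper space $X$, a further subsequence gives a weak-$*$ limit $\vol_X$. Because the $f_i$ are $\eps_i$-approximations with $\eps_i \to 0$, the sets $f_i(B_r^{M_i}(x_i))$ are squeezed between $B_{r-\delta_i}^X(x)$ and $B_{r+\delta_i}^X(x)$ for suitable $\delta_i \to 0$, and the portmanteau theorem then yields $\vol_X(\mathrm{int}\,B_r^X(x)) \le \liminf_i \nu_i(B_r^{M_i}(x_i)) \le \limsup_i \nu_i(B_r^{M_i}(x_i)) \le \vol_X(\B_r^X(x))$.

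The hard part is to close this gap for every radius, equivalently to show that $\vol_X$ charges no metric sphere. Here I would invoke the one-sided modulus of continuity forced by \autoref{lem_background:BG}: the increment $\nu_i(B_r^{M_i}(x_i)) - \nu_i(B_{r-\delta}^{M_i}(x_i))$ is at most $(\CBG(n,-1,r-\delta,r)-1) \cdot \CBG(n,-1,1,r+d(x_i,p_i))$, which tends to $0$ as $\delta \to 0$ uniformly in $i$. Feeding this into the previous inequalities applied at radius $r - \delta$ squeezes $\liminf$ and $\limsup$ together, so $\nu_i(B_r^{M_i}(x_i)) \to \vol_X(B_r^X(x))$ for all $r > 0$; the same computation shows that $\vol_X$ is independent of the choices of subsequence and of the approximations $(f_i,g_i)$. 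Finally, passing to the limit in $\nu_i(B_R^{M_i}(x_i)) \le \CBG(n,-1,r,R)\,\nu_i(B_r^{M_i}(x_i))$ gives $\vol_X(B_R^X(x)) \le \CBG(n,-1,r,R)\,\vol_X(B_r^X(x))$, and it remains only to note that $\vol_X$ is Radon, being a weak-$*$ limit of Radon measures on a proper space.
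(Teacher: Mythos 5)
This statement is quoted background: the paper gives no proof of it at all, citing \cite[Theorems 1.6 and 1.10]{cheeger-colding-I}, so there is no internal argument to compare against. Your sketch is essentially the standard construction of the renormalised limit measure (Fukaya-style): uniform total boundedness from \refBGThm plus \precptnessThm and a diagonal argument for the pointed \GH sublimit, then a weak-$*$ limit of the renormalised measures, and finally the uniform one-sided modulus of continuity $\nu_i(B_r(x_i))-\nu_i(B_{r-\delta}(x_i))\leq(\CBG(n,-1,r-\delta,r)-1)\cdot\CBG(n,-1,1,r+d(x_i,p_i))$ to kill the open-ball/closed-ball discrepancy in the portmanteau sandwich. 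That last estimate is the right key point and the limsup/liminf squeeze is correct; passing to the limit in the Bishop--Gromov ratio is then immediate. Cheeger and Colding themselves organise this differently (volume convergence on a countable dense family of centres and radii, Arzel\`a--Ascoli-type equicontinuity of $r\mapsto\nu_i(B_r)$, and a covering argument to produce the measure, rather than abstract weak-$*$ compactness), but the two routes are interchangeable here.

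Two caveats you should patch. First, a \GH approximation $f_i$ as fixed in \autoref{lem_background:GH_convergence_properties_I} need not be Borel measurable, so the pushforward $(f_i)_*\nu_i$ is not automatically defined; either choose measurable approximations (e.g.\ built from finite nets, mapping each point to a nearest net point with a measurable tie-break) or avoid pushforwards altogether by extracting limits of the functions $r\mapsto\nu_i(B_r(x_i))$ over a countable dense set of centres and radii and reconstructing the measure from these ball volumes (the doubling property from \refBGThm makes the values on balls determine the measure). Second, drop the remark that ``the same computation shows $\vol_X$ is independent of the choices of subsequence'': this is false in general, and the paper explicitly notes right after the theorem that the limit measure depends on the subsequence and base points. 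Fortunately the statement only asserts existence of $\vol_X$ along the chosen subsequence, so neither point affects the validity of your main argument once the measurability issue is addressed.
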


Observe that the limit measure of a sequence $(M_i,p_i)$ 
depends on the choice of the base points and the considered subsequence, 
cf.~again \cite[section 1]{cheeger-colding-I}.
Moreover, observe the following: 
\precptnessThm ensures subconvergence 
(for pointed Riemannian manifolds of the same dimension 
with a lower Ricci curvature bound),
but the above theorem guarantees more, 
namely subconvergence (for the same class) 
including convergence of the (renormalised) measures.
Throughout this paper, 
only measured \GH convergence will be used,
i.e.~whenever a sequence $(M_i,p_i)_{i \in \nn}$ converges to a limit space $(X,p)$,
this limit is equipped with a measure $\vol_X$ as in the above theorem.
\par\medskip

Let the complete pointed metric space $(X,p)$ be the pointed \GH limit of 
a sequence of pointed connected \ndim Riemannian manifolds $(M_i, p_i)$ 
satisfying the uniform lower Ricci curvature bound $\Ric_{M_i} \geq -(n-1)$.
As introduced in \cite[p.~408]{cheeger-colding-I}, 
a \emph{tangent cone} at $x \in X$ 
is a \GH limit of $(\lambda_i X, x)$ 
where $\lambda_i \to \infty$ as $i \to \infty$. 
In general, this limit depends on the choice of $x \in X$ 
and the sequence $\lambda_i \to \infty$.
If the limit is independent of the choice of $\lambda_i \to \infty$, 
it is denoted by $C_x X$.
If $C_x X = \rr^k$, this point $x$ is called \emph{$k$-regular}
and the set of all $k$-regular points is denoted by $\mathcal{R}_k$.
Furthermore, $\mathcal{R} = \bigcup_k \mathcal{R}_k$ denotes the set of all regular points.

Moreover, Cheeger and Colding proved that there are points 
such that non-unique tangent cones of different dimensions occur, 
cf.~\cite[Example 8.80]{cheeger-colding-I}.
In particular, there are points that are not regular.
However, they proved that for any renormalised limit measure 
the complement $X \setminus \mathcal{R}$ has measure $0$, 
i.e.~almost all points are regular, 
cf.~\cite[Theorem 2.1]{cheeger-colding-I}.
Even more, it was conjectured that there is some $k$ 
such that $\mathcal{R} \setminus \mathcal{R}_k$ has measure $0$ as well, 
i.e.~almost all points are $k$-regular.
This conjecture was proven by Colding and Naber in \cite{colding-naber}.

\begin{thm}[{\cite[Theorem 1.18 and p.~1185]{colding-naber}}]\label{lem_CN:Xgen}
 Let $(M_i,p_i)_{i \in \nn}$ be a sequence 
 of pointed complete connected \ndim Riemannian manifolds
 which satisfy the uniform lower Ricci curvature bound $\Ric_{M_i} \geq -(n-1)$ 
 and converge to a limit $(X,p)$. 
 Then there is $k=k(X) \in \nn$ such that $\mathcal{R}_k$ has full measure and is connected. 
\end{thm}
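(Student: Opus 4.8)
The plan is to obtain this as the main geometric consequence of the \emph{sharp H\"older continuity of tangent cones along geodesics} of Colding and Naber, the proof of which is the real content. Write $N := X\setminus\mathcal R$; by the Cheeger--Colding results recalled above, $\vol_X(N)=0$, the strata $\mathcal R_m$ with $0\le m\le n$ are disjoint and measurable, and $\mathcal R=\bigsqcup_m\mathcal R_m$, so $\sum_m\vol_X(\mathcal R_m)=\vol_X(X)$ and some stratum has positive measure. Since $X$ is a proper length space it is geodesic, so the statement reduces to: \textbf{(i)} exactly one index $k$ has $\vol_X(\mathcal R_k)>0$, whence $\vol_X(X\setminus\mathcal R_k)=0$; and \textbf{(ii)} this $\mathcal R_k$ is connected. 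I would derive both from one measure-theoretic fact, \emph{weak convexity along a single stratum}: for $\vol_X\times\vol_X$-a.e.\ pair $(x,y)$ there is a minimizing geodesic $\gamma=\gamma_{x,y}$ whose open interior lies in a single stratum $\mathcal R_{m(\gamma)}$, and $m(\gamma)$ is pinned down at the ends by $C_{\gamma(s)}X\to C_xX$ as $s\to0^+$ (and likewise at the other end) whenever $x$ (resp.\ $y$) is regular.

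The heart of the matter --- and the step I expect to be by far the main obstacle --- is the H\"older estimate itself. The target is: there are $\alpha=\alpha(n)\in(0,1)$ and $C=C(n)$ so that for $\vol_X\times\vol_X$-a.e.\ $(x,y)$, writing $\ell=d(x,y)$, some minimizing geodesic $\gamma=\gamma_{x,y}\colon[0,\ell]\to X$ satisfies
\[
 \dGH\big(\B_1^{r^{-1}X}(\gamma(s)),\ \B_1^{r^{-1}X}(\gamma(t))\big)\;\le\;C(n)\,\big(|s-t|/r\big)^{\alpha(n)}
\]
for all interior $s,t$ and all $0<r\le\min\{s,\ell-s,t,\ell-t\}$, together with a two-parameter refinement in which the base point and the scale move together. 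This forces $C_{\gamma(s)}X$ to exist and be unique at every interior $s$, makes $s\mapsto C_{\gamma(s)}X$ H\"older in the pointed \GH metric, and --- since it degenerates only mildly near the endpoints --- yields the endpoint convergence above. I would prove it as Colding and Naber do: smooth a distance-type function coming from the Cheeger--Colding almost-splitting construction by the heat flow, use the Bochner formula together with \refSegmIneq to control an $L^2$-average of the Hessian of the smoothed function along $\gamma$, and integrate these Hessian bounds along $\gamma$ to upgrade them to $C^{0,\alpha}$-control of the rescaled pointed geometry. Keeping every constant dependent on $n$ alone is the delicate point, and is precisely where the uniform bound $\Ric_{M_i}\ge-(n-1)$ enters.

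Granting the estimate, weak convexity along a single stratum follows. Applying \refSegmIneq with $g=\mathbf{1}_N$ and using $\vol_X(N)=0$ shows that for a.e.\ $(x,y)$ the chosen $\gamma=\gamma_{x,y}$ has $\gamma(t)\in\mathcal R$ for a.e.\ $t\in(0,\ell)$, so $C_{\gamma(t)}X=\rr^{m(t)}$ for such $t$. Since the unit balls of $\rr^{m}$ and $\rr^{m'}$ are $\eps_0(n)$-separated in $\dGH$ when $m\ne m'$ (a packing count, using $m,m'\le n$), H\"older continuity of $t\mapsto C_{\gamma(t)}X$ forces $m(t)$ to be constant on the full-measure set of interior $t$ with $\gamma(t)\in\mathcal R$, and then, by continuity of the tangent cone along $\gamma$, for \emph{every} interior $t$; thus $\gamma((0,\ell))\subseteq\mathcal R_{m(\gamma)}$. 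The endpoint convergence then identifies $m(\gamma)$ with the regularity index of $x$, and with that of $y$, whenever these are regular.

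Finally I conclude. For \textbf{(i)}: if $j\ne j'$ both had $\vol_X(\mathcal R_j),\vol_X(\mathcal R_{j'})>0$, then $\vol_X\times\vol_X(\mathcal R_j\times\mathcal R_{j'})>0$, so some pair $(x,y)\in\mathcal R_j\times\mathcal R_{j'}$ is joined by such a geodesic with $m(\gamma)=j$ (read off at $x$) and $m(\gamma)=j'$ (read off at $y$) --- impossible; hence a single $k$ carries all the measure and $\mathcal R_k$ has full measure. For \textbf{(ii)}: suppose $\mathcal R_k=U_1\sqcup U_2$ with $U_1,U_2$ nonempty and relatively open; since $\mathcal R_k$ has full $\vol_X$-measure and $\vol_X$ charges every nonempty open subset of $X$ (a consequence of its Bishop--Gromov property), both $U_\iota$ have positive measure, so a.e.\ $(a,b)\in U_1\times U_2$ is joined by a geodesic $\gamma$ with $\gamma((0,\ell))\subseteq\mathcal R_k=U_1\sqcup U_2$; being connected, the interior lies in one $U_\iota$, say $U_1$, so $b\in\overline{U_1}\cap\mathcal R_k=U_1$ (as $U_1$ is clopen in $\mathcal R_k$), contradicting $b\in U_2$. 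Hence $\mathcal R_k$ is connected.
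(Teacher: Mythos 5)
First, a structural remark: the paper does not prove \autoref{lem_CN:Xgen} at all --- it is quoted verbatim from \cite{colding-naber} (Theorem 1.18 and p.~1185), so there is no proof in the paper to compare yours against. What you have written is, in effect, an outline of the Colding--Naber argument itself: the sharp H\"older continuity of \autoref{thm_CN:hoelder} via parabolic approximation and the Bochner formula, \refSegmIneq applied to the indicator of the singular set to see that a.e.\ connecting geodesic is regular at a.e.\ interior time, the definite $\dGH$-separation of the unit balls of $\rr^m$, $m \leq n$, a chaining argument giving constancy of the dimension along the regular interior (and, using the scale-invariance of the estimate, regularity of \emph{every} interior point), and the clopen-decomposition argument for connectedness. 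Granted the deep estimate, which you rightly flag as the real content and only sketch, those pieces are sound in outline.

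The genuine gap is the endpoint identification in your step (i). The claim that $C_{\gamma(s)}X \to C_xX$ as $s \to 0^+$ whenever $x$ is regular is not justified and does not follow from \autoref{thm_CN:hoelder}: that estimate compares two \emph{interior} points at a common scale $r$ below (a fraction of) their distance to the endpoints, with a constant that degenerates there; it never relates scales $r \gtrsim s$ (where $B_r(\gamma(s))$ is trivially close to $B_r(x)$, but with error comparable to $1$ once $r \sim s$) to scales $r \ll s$, which are the ones determining $C_{\gamma(s)}X$ --- and in a collapsed limit, closeness to $\rr^j$ at one scale says nothing about smaller scales, which is precisely the phenomenon this whole paper is about. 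Your proposed two-parameter refinement with constant $C(n)$ for all $r \leq \min\{s,\ell-s,t,\ell-t\}$ is stronger than what Colding--Naber prove, and even if granted it only controls $\gamma(s)$ against points at parameter distance $O(r) \leq O(s)$, which still does not yield convergence of tangent cones at the endpoint. Without this step you cannot transfer the constant interior dimension to $x \in \mathcal{R}_j$ and $y \in \mathcal{R}_{j'}$, and the uniqueness of the full-measure stratum is not obtained. The standard repair --- the one used in \cite{colding-naber} and mirrored by this paper's own later argument (the set $S_2$ and \cite[Theorem A.4 (3)]{colding-naber} in \autoref{lem:points_s.t._nearly_all_pairs_in_interior_of_good_geod}) --- is to note that a.e.\ pair $(x,y)$ lies in the \emph{interior} of a minimising geodesic; extending past $x$ and $y$ makes them interior points, so the interior constancy, chained through the full-measure set of regular parameters, applies to them directly and no endpoint limit of tangent cones is needed. (Your connectedness argument is then fine as stated: once $\mathcal{R}_k$ has full measure, \refSegmIneq with $g = \mathbf{1}_{X\setminus\mathcal{R}_k}$ plus the interior-constancy upgrade puts the open interior of a.e.\ connecting geodesic into $\mathcal{R}_k$, and the endpoints are handled by the closure/clopen argument.)
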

 
 This $k$ is called the \emph{dimension of $X$},
 a $k$-regular point is called \emph{generic}
 and $\Xgen := \mathcal{R}_k$ denotes the set of all generic points.
Note that $k < n$ if the sequence is collapsing.


\section{Local construction}\label{sec:step1}
For a collapsing sequence of pointed complete connected 
\ndim Riemannian mani\-folds $(M_i,p_i)$ 
satisfying the uniform lower Ricci curvature bound $\Ric_{M_i} \geq -(n-1)$, 
the main proposition of this section 
provides a condition on points $q_i \in M_i$ 
such that on balls around these points with sufficiently small radius 
a \myquote{local version} of the main theorem holds:
In fact, the statement of the main theorem holds on $B_r(q_i)$
if the rescaled manifolds $(\frac{1}{r} M_i, q_i)$ are sufficiently close 
to the Euclidean space.
Applying this result to finitely many sequences 
of such points $q_i$ and radii $r$
will prove the main theorem in \autoref{sec:step2}.

This local result follows from generalising several theorems
of Cheeger and Colding in \cite{cheeger-colding-96,cheeger-colding-II} 
and Kapovitch and Wilking in \cite{kapovitch-wilking}. 
Those results make statements assuming a sequence of manifolds 
to converge to a Euclidean space $\rr^k$. 
The generalisations do not assume such a convergence
but that the manifolds are sufficiently close to $\rr^k$,
and then make similar statements as the mentioned theorems. 

In the situation of a sequence $(M_i,p_i)$ 
converging to a limit $(X,p)$ as in the main proposition, 
there is no reason why the manifolds should already be 
sufficiently close to $\rr^k$.
On the other hand, there is hope that this is true 
after rescaling all manifolds with (the same) factor: 
For a generic point $x \in X$, 
the rescaled limit space $(\lambda X,x)$ 
converges to $\rr^k$ as $\lambda \to \infty$.
In particular, $(\lambda X,x)$ is close to $\rr^k$ 
for sufficiently large $\lambda >0$.
Moreover, given any sequence of points $x_i \in M_i$ converging to $x \in X$,
the equally rescaled manifolds $(\lambda M_i, x_i)$ 
converge to $(\lambda X, x)$.
Hence, the $(\lambda M_i, x_i)$ are close to $\rr^k$ 
for sufficiently large $\lambda$ and $i$.

So, one can expect to be able to use the above explained generalisations 
for the rescaled manifolds.
In fact, those (generalised) theorems make statements 
about balls of radius $1$. 
Applied to the rescaled manifolds $\lambda M_i$, 
this corresponds to balls of radius $\frac{1}{\lambda}$ 
in the unscaled manifolds $M_i$.
Thus, in the following, instead of $\lambda$ 
the notation $\frac{1}{r}$ will be used, 
where $r > 0$ is sufficiently small,
and statements about balls of radius $r$ will be obtained.
This leads to the following local version of the main theorem,
where the choice of notation $\mydelta$ 
and $\tildeeps$---while seemingly artificial---will turn out to be helpful
when proving the main theorem by applying the \myquote{local version}.
\begin{mainprop}\label{prop:main} 
 Let $(M_i)_{i \in \nn}$ be a sequence 
 of complete connected \ndim Riemannian manifolds
 with uniform lower Ricci curvature bound $\Ric_{M_i} \geq -(n-1)$, 
 and let $k < n$. 
 Given $\mydelta \in (0,1)$, 
 there is $\tildeeps =\tildeeps(\mydelta;n,k) > 0$ 
 such that for any $0 < r \leq \tildeeps$ 
 and $q_i \in M_i$ with \[\dghpt{\tildeeps}{r^{-1} M_i}{q_i}{\rr^k}{0}\]
 there are 
 a family of subsets of good points $G_{r}(q_i) \subseteq B_r(q_i)$ 
 with \[\vol(G_r(q_i)) \geq (1 - \mydelta) \cdot \vol( B_r(q_i))\] and
 a sequence $\lambda_i \to \infty$ 
 such that the following holds:
 \begin{enumerate}
 \item For every choice of base points $x_i \in G_r(q_i)$ 
 and every sublimit $(Y,\cdot)$ of $(\lambda_i M_i, x_i)$ 
 there exists a compact metric space $K$ of dimension $l \leq n-k$
 satisfying $\frac{1}{5} \leq \diam(K) \leq 1$ 
 such that $Y$ splits isometrically as a product
 \[Y \cong \rr^k \times K.\]
 \item If $x_i^1, x_i^2 \in G_r(q_i)$ are base points such that, 
 after passing to a subsequence,
 \[(\lambda_i M_i, x_i^j) \to (\rr^k \times K_j, \cdot)\] 
 for $1 \leq j \leq 2$ as before,
 then $\dim(K_1) = \dim(K_2)$.
 \end{enumerate}
\end{mainprop}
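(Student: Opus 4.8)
The plan is to run the two-step programme sketched above: first build the sequence of scales $\lambda_i\to\infty$ and the good set $G_r(q_i)$ as the intersection of two subsets of $B_r(q_i)$ of almost full volume, and then deduce from this construction both the splitting statement~(a) and the equality of dimensions~(b). After shrinking $\tildeeps$ suitably, I would take $G_r(q_i)=G_r^1(q_i)\cap G_r^2(q_i)$. Here $G_r^1(q_i)$ is the set of $x_i\in B_r(q_i)$ for which every sublimit of $(\mu_i M_i,x_i)$, for an arbitrary sequence $\mu_i\to\infty$, splits off an $\rr^k$-factor; that $\vol(G_r^1(q_i))\ge(1-\frac{\mydelta}{2})\vol(B_r(q_i))$ I would obtain by generalising the splitting-function arguments of Cheeger--Colding \cite{cheeger-colding-II} and Kapovitch--Wilking \cite{kapovitch-wilking}: the hypothesis $\dghpt{\tildeeps}{r^{-1}M_i}{q_i}{\rr^k}{0}$ yields, on a subset of $B_r(q_i)$ of controlled volume, $k$ almost-harmonic functions with Hessian small in an integral sense, such almost-splitting data is preserved under rescaling, and the accompanying gradient estimates force every rescaled blow-up at such a point to split off $\rr^k$. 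The set $G_r^2(q_i)$ together with the scales $\lambda_i\to\infty$ I would extract from a version of the Rescaling Theorem of \cite{kapovitch-wilking} that assumes only $\tildeeps$-closeness to $\rr^k$ rather than convergence --- roughly, taking $\lambda_i$ to be the largest scale at which a unit-diameter compact factor has not yet collapsed --- so that $\vol(G_r^2(q_i))\ge(1-\frac{\mydelta}{2})\vol(B_r(q_i))$ and, for every $x_i\in G_r^2(q_i)$, the ball $(\lambda_i M_i,x_i)$ is close, with closeness controlled by $\tildeeps$, to a product $\rr^k\times K_i'$ with $\diam K_i'=1$.

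Part~(a) then follows almost formally. For $x_i\in G_r(q_i)$, any sublimit $Y$ of $(\lambda_i M_i,x_i)$ splits as $\rr^k\times K$ with $K$ containing no metric line (since $x_i\in G_r^1(q_i)$) and is, at every fixed scale, close to $\rr^k\times K_i'$ with $\diam K_i'=1$ (since $x_i\in G_r^2(q_i)$); as the diameter of the compact factor is continuous under \GH-convergence, this forces $K$ to be compact with $\frac{1}{5}\le\diam K\le1$. The bound $\dim K\le n-k$ holds because $Y$ is a measured \GH-limit of $n$-manifolds with $\Ric\ge-(n-1)$, whence $\dim_{\mathcal H}Y\le n$, and $\dim K\le\dim_{\mathcal H}K\le\dim_{\mathcal H}Y-k$.

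For part~(b) I would first treat the model case in which $x_i^1$ and $x_i^2$ lie on a common integral curve of a vector field defined on a subset of $B_r(q_i)$ of almost full volume whose flow $\Phi_i$ is measure-preserving and uniformly bi-Lipschitz. Rescaling by $\lambda_i$ and passing to the limit --- using \autoref{thm-background:precompactness-measured} for the convergence of the renormalised measures and Arzel\`a--Ascoli for the uniformly bi-Lipschitz maps $\Phi_i$ --- produces a bi-Lipschitz map, measure-preserving up to a bounded constant, between subsets of positive measure of $\rr^k\times K_1$ and $\rr^k\times K_2$; these subsets meet the full-measure sets of generic, hence $(k+\dim K_j)$-regular, points (\autoref{lem_CN:Xgen}), and a bi-Lipschitz map preserves Hausdorff dimension, so $k+\dim K_1=k+\dim K_2$. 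The general case I would reduce to this one by invoking (a slight generalisation of) the results of \cite{kapovitch-wilking}: any two points of $G_r^2(q_i)$ lie, up to small error, on a common integral curve of a \emph{divergence-free} vector field built from the almost-splitting functions (possibly a bounded chain of such fields), and divergence-freeness makes the flow measure-preserving while a small bound on the average $L_\alpha$-norm ($\alpha>1$) of its derivative makes the flow bi-Lipschitz on a set of definite volume.

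The hard part will be part~(b), and more precisely the claim that arbitrary points of $G_r^2(q_i)$ can be joined, uniformly in $i$, by flows of divergence-free vector fields whose derivative is small in the required average sense: this is exactly where the Kapovitch--Wilking gradient and maximal-function estimates must be upgraded from the setting of sequences \emph{converging} to $\rr^k$ to that of sequences merely $\tildeeps$-close to $\rr^k$, and where the quantitative dependence $\tildeeps=\tildeeps(\mydelta;n,k)$ has to be propagated carefully through every estimate. Once those generalisations are available, the construction of $G_r^1$, the extraction of $\lambda_i$ and $G_r^2$, and the deduction of part~(a) should all be comparatively routine.
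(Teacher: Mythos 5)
Your overall architecture is the same as the paper's: $G_r(q_i)=G_r^1(q_i)\cap G_r^2(q_i)$, with $G_r^1$ coming from generalised Cheeger--Colding splitting functions together with a generalised Product Lemma, with $G_r^2$ and the scales extracted from a closeness version of the Kapovitch--Wilking Rescaling Theorem, and with part (b) proved via measure-preserving, bi-Lipschitz flows of divergence-free vector fields and generic points. However, two concrete steps are missing. First, part (a) is not ``almost formal'': for a sublimit $Y\cong\rr^k\times Y'$ the factor $Y'$ is a priori an arbitrary, possibly unbounded metric space, and there is no ready-made ``continuity of the diameter of the compact factor'' to invoke; what is actually needed is a quantitative statement that \GH-closeness of $\rr^k\times Y'$ to $\rr^k\times K_i$ with $\diam(K_i)=1$ at a single fixed scale forces $Y'$ to be compact with diameter comparable to $1$. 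This is exactly \autoref{lemma:rrkxX:dGH-small->diamX_similar}, whose proof (via sphere maps and Borsuk--Ulam) is a substantial piece of the argument; it also explains why one only gets $\diam(Y')\in[c,5c]$ and must rescale the $\lambda_i$ by a fixed constant to normalise to $[\frac{1}{5},1]$. Second, and more seriously, you never argue that the extracted scales satisfy $\lambda_i\to\infty$. Since the hypothesis is only $\tildeeps$-closeness of each $(r^{-1}M_i,q_i)$ to $\rr^k$ rather than convergence, the Rescaling Theorem can only be applied manifold-by-manifold (through a contradiction argument), producing one factor per manifold with no a priori divergence along the sequence; ``the largest scale at which a unit-diameter factor has not yet collapsed'' could stay bounded unless this is excluded. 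The paper excludes it with \autoref{lem:spaces_of_different_dimension_are_not_close}, which in turn rests on \autoref{lem:limit_of_limits_decreases_dimension} (dimension does not increase under limits of spaces in $\limitspaces$) and \autoref{lem:rescaling_converging_to_tangent_cone}; your proposal has no substitute for this genuinely nontrivial ingredient.

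On the other hand, the step you single out as hardest --- joining arbitrary points of $G_r^2(q_i)$ by flows of divergence-free vector fields with small averaged derivative, under mere $\tildeeps$-closeness --- is cheaper than you anticipate: the paper does not re-propagate the Kapovitch--Wilking estimates quantitatively, but packages the needed connecting data as the $\mathcal{C}$-property, taken directly from (the proof of) the Rescaling Theorem, and performs the upgrade from ``converging to $\rr^k$'' to ``$\tildeeps$-close to $\rr^k$'' by soft compactness/contradiction arguments (\autoref{cor_rescaling_theorem}, and analogously \autoref{prop-M} for the splitting functions). Also, in part (b), after obtaining the bi-Lipschitz map between positive-measure subsets one still has to blow up at a pair of generic points (the map is only defined on a subset, and the relevant dimension is that of the tangent cones), as is done in \autoref{prop:bilipschitz_implies_dimension}; invoking ``bi-Lipschitz maps preserve Hausdorff dimension'' on the subsets alone does not quite suffice.
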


The idea of the proof is to construct 
two families of sets $G_r^1(q_i)$ and $G_r^2(q_i)$, 
where $r$ is sufficiently small, 
with the following properties:
For any choice of points $x_i \in G_r^1(q_i)$ 
and for any rescaling sequence $\lambda_i \to \infty$, 
every (sub)limit of the sequence $(\lambda_i M_i, x_i)$ splits off 
an $\rr^k$-factor.
The second family of sets $G_r^2(q_i)$ is constructed 
together with a rescaling sequence $\lambda_i \to \infty$ 
such that for all large enough $i$ and any point $x_i \in G_r^2(q_i)$
each single rescaled manifold $(\lambda_i M_i, x_i)$ is close 
to the product of $\rr^k$ and a compact space,
where this compact space depends on the choice of the regarded $i$ 
and the base point $x_i$.
After fixing this sequence $\lambda_i \to \infty$,
the intersection of those two sets gives the result.

This section is structured as follows: 
First, $G_r^1(q_i)$ and $G_r^2(q_i)$ are constructed in
\autoref{sec:first_set} and \autoref{sec:second_set}, respectively.
Next, \autoref{sec:C=>dim} establishes a basis for proving that blow-ups have the same dimension.
Finally, \autoref{prop:main} is proven in \autoref{sec:proof_main_prop}.

\subsection{Construction of \texorpdfstring{$G_r^1(q_i)$}{first set}}
\label{sec:first_set}
This subsection deals with finding families of subsets 
$G_r^1(q_i) \subseteq B_r^1(q_i)$ 
such that all blow-ups of $M_i$ with base points from $G_r^1(q_i)$ 
split off an $\rr^k$-factor. 
Recall that a blow-up is the limit of the sequence 
of rescaled manifolds $\mu_i M_i$ for a sequence of scales $\mu_i \to \infty$.
Thus, the natural question is 
under which condition such a splitting can be guaranteed. 

By modifying certain distance functions, 
Cheeger and Colding obtained harmonic functions 
which were used to prove the following splitting theorem.
\begin{thm}
[{\cite[Theorem 6.64]{cheeger-colding-96}}]\label{CC_splitting_thm}
 Let $(M_i,p_i)_{i \in \nn}$ be a sequence 
 of pointed \ndim Riemannian manifolds
 and let $R_i \to \infty$ and $\eps_i \to 0$ be 
 sequences of positive real numbers
 such that ${B_{R_i}^{M_i}(p_i)}$ 
 has Ricci curvature at least $- (n-1) \cdot \eps_i$.
 Assume $(B_{R_i}^{M_i}(p_i),p_i)$ to converge 
 to some pointed metric space $(Y,y)$ in the pointed \GH sense.
 
 If $Y$ contains a line,
 then $Y$ splits isometrically as $Y \cong \rr \times Y'$.
\end{thm}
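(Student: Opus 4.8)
The plan is to follow the Cheeger--Colding strategy: produce quantitative ``almost splitting'' estimates on the $M_i$ coming from an almost line, and then push them through the \GH limit to an exact splitting of $Y$. First I would fix the line $\gamma\colon\rr\to Y$ (translating so that $\gamma(0)=y$) and, for each large $T>0$, choose $q_i^{\pm}\in B_{R_i}^{M_i}(p_i)$ with $q_i^{\pm}\to\gamma(\pm T)$ (possible for large $i$ since $R_i\to\infty$). Then $d(p_i,q_i^{\pm})\to T$ and $d(q_i^+,q_i^-)\to 2T$, so $q_i^-\to p_i\to q_i^+$ is an almost line of length $\approx 2T$ in $M_i$. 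I set $\mathbf{b}_i^{\pm}(x):=d(x,q_i^{\pm})-d(p_i,q_i^{\pm})$ and let $e_i:=\mathbf{b}_i^++\mathbf{b}_i^-$ be the excess; by the triangle inequality $e_i\ge -o(1)$, and the heart of the argument is to force $e_i$ to be small on $B_1(p_i)$.

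Next, I would exploit the comparison machinery available under $\Ric\ge-(n-1)\eps_i$. The Abresch--Gromoll excess estimate gives $e_i\le\Psi$ on $B_1(p_i)$ with $\Psi=\Psi(\eps_i,1/T\mid n)\to0$; the Laplacian comparison gives $\Delta\,\mathbf{b}_i^{\pm}\le(n-1)/d(\cdot,q_i^{\pm})+\Psi\le\Psi'$ in the barrier sense on $B_2(p_i)$, small because $d(\cdot,q_i^{\pm})\approx T$ is large there, so $\mathbf{b}_i^{\pm}$ is almost subharmonic and, since $\mathbf{b}_i^+\approx-\mathbf{b}_i^-$, almost harmonic. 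I replace $\mathbf{b}_i^{\pm}$ by the harmonic function $h_i^{\pm}$ on $B_2(p_i)$ with the same boundary data; elliptic estimates and the maximum principle bound $\|h_i^{\pm}-\mathbf{b}_i^{\pm}\|_{L^\infty(B_2(p_i))}$ and $\avgint_{B_1(p_i)}|\nabla h_i^{\pm}-\nabla\mathbf{b}_i^{\pm}|^2$ by the excess, whence $\avgint_{B_1(p_i)}\bigl||\nabla h_i^{\pm}|-1\bigr|\to0$ and $h_i^++h_i^-\to\mathrm{const}$. Plugging $h_i:=h_i^+$ into Bochner's formula $\tfrac12\Delta|\nabla h_i|^2=|\Hess h_i|^2+\Ric(\nabla h_i,\nabla h_i)$, integrating against a cutoff supported in $B_2(p_i)$ and absorbing both the $\Ric$ term and the energy defect $\avgint\bigl||\nabla h_i|^2-1\bigr|$, I obtain the key estimate $\avgint_{B_1(p_i)}|\Hess h_i|^2\to0$; by the Segment Inequality this becomes: for a definite fraction of pairs $x,x'\in B_1(p_i)$, the function $t\mapsto h_i(\gamma_{x,x'}(t))$ is almost affine with $|\nabla h_i|\approx1$ along the minimizing geodesic $\gamma_{x,x'}$.

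Finally, I would pass to the limit. By Cheeger--Colding's theory of convergence of energy-bounded harmonic functions under \GH convergence, the $h_i$ subconverge, uniformly on compact sets and up to additive constants, to a $1$-Lipschitz $b\colon Y\to\rr$ which in the limit has $|\nabla b|\equiv1$ and vanishing Hessian --- i.e.\ $b$ is affine and realizes distances along its gradient lines; since $b|_\gamma=\pm t$, $b$ is onto $\rr$. Using the limiting excess and Segment Inequality estimates I would then prove
\[
d_Y(\xi,\eta)^2=|b(\xi)-b(\eta)|^2+d_{Y'}\bigl(\pi(\xi),\pi(\eta)\bigr)^2\qquad\text{for all }\xi,\eta\in Y,
\]
where $Y':=b^{-1}(0)$ and $\pi\colon Y\to Y'$ is the nearest-point projection; equivalently, the flow of $\nabla b$ is an isometric $\rr$-action on $Y$ with quotient $Y'$. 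This is exactly the isometric splitting $Y\cong\rr\times Y'$.

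The hard part will be that all the control obtained on $\Hess h_i$ and on $\bigl||\nabla h_i|-1\bigr|$ is only an $L^1$/$L^2$ average over balls, never pointwise, while $M_i$ approaches $Y$ merely in the \GH sense. So the real work is, on the one hand, to run the Bochner computation rigorously with only integral bounds (the choice of cutoff, the absorption of the $\Ric$ and defect errors) and, more seriously, to upgrade ``$b$ has vanishing Hessian in an averaged limiting sense'' to the \emph{exact, global} metric product structure of the possibly singular space $Y$; the latter is precisely where the equality-case rigidity behind the excess and Segment Inequalities, together with the fine analysis of limits of harmonic functions under \GH convergence, is indispensable.
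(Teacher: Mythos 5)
This theorem is not proved in the paper at all---it is quoted directly from Cheeger--Colding \cite{cheeger-colding-96} (their Theorem 6.64)---and your outline reproduces precisely the strategy of that cited proof: almost line from the limit, Abresch--Gromoll excess estimate, Laplacian comparison and harmonic replacement of the Busemann-type functions, Bochner formula with cutoff to get the integral Hessian bound, the Segment Inequality to obtain the almost-Pythagorean estimates, and passage to the \GH limit to turn them into an exact splitting. So your proposal is correct in approach and coincides with the argument the paper relies on via the citation.
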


Assuming that the limit space already is the Euclidean space $\rr^n$ 
(of the same dimension as the manifolds of the convergent sequence),
Colding proved convergence 
of the volume of balls of radius $1$ in the manifolds 
to the volume of the $1$-ball in $\rr^n$
by using $n$ of such function (for every manifold), 
cf.~\cite[Lemma 2.1]{colding-97}.
Using both the observations there and the proof 
of the above splitting theorem,
Cheeger and Colding obtained the following statement
which is stated here as noted in \cite[Theorem 1.3]{kapovitch-wilking}.

\begin{thm}
[{\cite[section 1]{cheeger-colding-II}}]
\label{CC_existence_harmonic_functions}
 Let $(M_i, p_i) \to (\rr^k,0)$ be a sequence 
 of pointed \ndim Riemannian manifolds
 which satisfy the uniform lower Ricci curvature bound 
 $\Ric_{M_i} \geq -\frac{1}{i}$. 
 Then there are harmonic functions $b^i_1, \dots, b^i_k: B_2(p_i) \to \rr$ 
 and a constant $C(n) \geq 0$ such that 
 \begin{enumerate}
 \item 
  $|\nabla b^i_j| \leq C(n)$ for all $i$ and $j$ and
 \item 
  $\avgintsmall_{B_1(p_i)} 
  \sum_{j,l=1}^k |\langle \nabla b^i_j,\nabla b^i_l\rangle - \delta_{jl}| 
  + \sum_{j=1}^k \norm{\Hess b^i_j}^2 \dV_{M_i} 
  \to 0$ 
  as $i \to \infty$.
 \end{enumerate}
 Moreover, the maps $\Phi^i = (b^i_1,\dots,b^i_k): B_2(p_i) \to \rr^k$ 
 provide $\eps_i$-\GH approximations 
 between $B_1(p_i)$ and $B_1(0)$ with $\eps_i \to 0$.
\end{thm}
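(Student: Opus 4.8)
The plan is to realise the $b^i_j$ as harmonic replacements of Busemann-type functions built along $k$ independent families of ``almost lines'', i.e.\ to run the construction underlying \autoref{CC_splitting_thm} simultaneously in the $k$ coordinate directions of $\rr^k$, and then to read off (a), (b) and the approximation property from the Bochner formula together with the standard comparison estimates. Concretely, fix a large $R>0$ (to be sent to $\infty$ slowly with $i$ at the very end). Using the \GH approximations underlying $(M_i,p_i)\to(\rr^k,0)$, for each $j\in\{1,\dots,k\}$ pick $p^{\pm}_{i,j}\in M_i$ with $p^{\pm}_{i,j}\to\pm Re_j$, let $h^{i,\pm}_j$ be the harmonic function on $B_2(p_i)$ with boundary values $d(p^{\pm}_{i,j},\cdot)$ on $\partial B_2(p_i)$, and set
\[
b^i_j:=\tfrac12\bigl(h^{i,-}_j-h^{i,+}_j\bigr),
\]
normalised by an additive constant so that $b^i_j(p_i)=0$. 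On $\rr^k$ this recipe returns, up to an $O(1/R)$ error on $B_1(0)$, exactly the coordinate function $x_j$, so the task is to show that on $M_i$ the corresponding error stays $o(1)$.

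Since $d(p^{\pm}_{i,j},\cdot)$ is $1$-Lipschitz, the maximum principle and the interior gradient estimate for harmonic functions under the lower Ricci bound $-\tfrac1i$ (Cheng--Yau) give $|\nabla b^i_j|\le C(n)$ on $B_1(p_i)$, which is (a). For the finer statement that $|\nabla b^i_j|$ is close to $1$ in the mean one follows the proof of \autoref{CC_splitting_thm}: the Laplacian comparison bounds $\Delta\,d(p^{\pm}_{i,j},\cdot)$ in the barrier sense by $O(1/R)$ on $B_2(p_i)$, so by comparison $h^{i,\pm}_j$ together with its gradient tracks $d(p^{\pm}_{i,j},\cdot)$ in $L^1(B_1(p_i))$ up to $o(1)$, while the Abresch--Gromoll excess estimate gives $h^{i,+}_j+h^{i,-}_j-d(p^{+}_{i,j},p^{-}_{i,j})\to0$ in the same sense; combining these with the segment inequality yields
\[
\avgintsmall_{B_1(p_i)}\bigl|\,|\nabla b^i_j|^2-1\,\bigr|\ \dV_{M_i}\ \longrightarrow\ 0 .
\]

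For the off-diagonal part of (b) one runs the identical construction along the direction $\tfrac{1}{\sqrt2}(e_j+e_l)$ to obtain a harmonic $b^i_{jl}$; uniqueness for the Dirichlet problem and closeness of the boundary data give $\sup_{B_{3/2}(p_i)}\bigl|b^i_{jl}-\tfrac{1}{\sqrt2}(b^i_j+b^i_l)\bigr|=o(1)$, and the gradient estimate applied to the harmonic difference promotes this to the gradients on $B_1(p_i)$. Since $|\nabla b^i_{jl}|^2$, $|\nabla b^i_j|^2$ and $|\nabla b^i_l|^2$ are each $L^1$-close to $1$, expanding $|\nabla b^i_{jl}|^2=\tfrac12|\nabla b^i_j|^2+\langle\nabla b^i_j,\nabla b^i_l\rangle+\tfrac12|\nabla b^i_l|^2+o(1)$ forces $\avgintsmall_{B_1(p_i)}|\langle\nabla b^i_j,\nabla b^i_l\rangle-\delta_{jl}|\,\dV_{M_i}\to0$. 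For the Hessian term one applies the Bochner formula to the harmonic function $b^i_j$,
\[
\tfrac12\Delta|\nabla b^i_j|^2=\norm{\Hess b^i_j}^2+\Ric(\nabla b^i_j,\nabla b^i_j)\ \ge\ \norm{\Hess b^i_j}^2-\tfrac1i|\nabla b^i_j|^2 ,
\]
integrates against a Cheeger--Colding cutoff $\phi$ with $\phi\equiv1$ on $B_1(p_i)$, $\operatorname{supp}\phi\subset B_{3/2}(p_i)$ and $|\Delta\phi|\le C(n)$, and uses $\int\Delta\phi=0$ to get
\[
\int\phi\,\norm{\Hess b^i_j}^2\ \le\ \tfrac12\int(\Delta\phi)\bigl(|\nabla b^i_j|^2-1\bigr)+\tfrac1i\int\phi\,|\nabla b^i_j|^2 .
\]
Both terms on the right are $o\bigl(\vol(B_1(p_i))\bigr)$ by the $L^1$-estimate above and the bound $|\nabla b^i_j|\le C(n)$; dividing by $\vol(B_1(p_i))$ and using \autoref{lem_background:BG} to bound $\vol(B_{3/2})/\vol(B_1)$ from above by $C(n)$ gives the remaining half of (b).

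Finally, each $b^i_j$ is $C(n)$-Lipschitz and, by the comparison estimates above, is $o(1)$-close in $L^\infty(B_1(p_i))$ to $h^{i,-}_j-R$, a function which the \GH approximation $g_i\colon B_1(p_i)\to B_1(0)$ matches (for $R$ large) with the $j$-th coordinate function on $B_1(0)$ up to $o(1)$; hence $\Phi^i=(b^i_1,\dots,b^i_k)$ is uniformly $o(1)$-close to $g_i$ and $\Phi^i(p_i)\to0$. Since $g_i$ is an $\eps_i$-approximation, it follows that $\Phi^i$ distorts distances on $B_1(p_i)$ by $o(1)$ and is $o(1)$-dense in $B_1(0)$, i.e.\ $\Phi^i$ is an $\eps_i'$-\GH approximation with $\eps_i'\to0$; a diagonal argument choosing $R=R_i\to\infty$ slowly lets all the $o(1)$ errors decay simultaneously. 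I expect the main obstacle to be the package of \emph{effective} almost-splitting estimates behind the middle step: showing that $|\nabla b^i_j|$ and the relevant linear combinations are $L^1$-close to their Euclidean values with errors genuinely controlled by how close $(M_i,p_i)$ is to $\rr^k$ and by $\tfrac1i$. This is precisely where the Laplacian comparison, the Abresch--Gromoll inequality, the good cutoff functions and the segment inequality must be assembled as in the proof of \autoref{CC_splitting_thm}; once $|\nabla b^i_j|^2\to1$ in $L^1$ is in hand, the Bochner argument for the Hessian and the parallelogram identity for the orthogonality are routine.
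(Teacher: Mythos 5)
The paper gives no proof of this statement: it is quoted (in the formulation of \cite[Theorem 1.3]{kapovitch-wilking}) from \cite{cheeger-colding-II}, so the only thing to compare against is the original Cheeger--Colding argument, and your sketch reconstructs exactly that argument --- harmonic replacements of distance functions to far-away points in the $k$ coordinate (and diagonal) directions, Laplacian comparison, the Abresch--Gromoll excess estimate, the segment inequality, and the Bochner formula integrated against a cutoff with $|\Delta\phi|\le C(n)$ --- so the outline is correct and consistent with the cited source. The only cosmetic adjustment is to solve the Dirichlet problems on a slightly larger ball (say $B_4(p_i)$) and restrict, so that the Cheng--Yau interior estimate yields $|\nabla b^i_j|\le C(n)$ on all of $B_2(p_i)$ as stated, rather than only on a compactly contained subball.
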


Conversely, Kapovitch and Wilking proved 
the following in \cite{kapovitch-wilking}:
If there exist $k$ functions with analogous properties 
on balls with radii $r_i \to \infty$,
then the sublimit splits off an $\rr^k$-factor.

\begin{prodlemma}
[Product Lemma, {\cite[Lemma 2.1]{kapovitch-wilking}}]\label{product-lemma}
 Let $(M_i,p_i)_{i \in\nn}$ be a pointed sequence 
 of \ndim manifolds with $\Ric_{M_i} > - \eps_i$ 
 for a sequence $\eps_i \to 0$ and let $r_i \to \infty$ 
 such that $\B_{r_i}(p_i)$ is compact for all $i \in \nn$. 
 Assume for every $i \in \nn$ and $1 \leq j \leq k$ 
 there are harmonic functions $b^i_j : B_{r_i}(p_i) \to \rr$ 
 which are $L$-Lipschitz and fulfil
 \[
  \avgint_{B_R(p_i)} 
  \sum_{j,l=1}^k |\langle \nabla b^i_j, \nabla b^i_l \rangle - \delta_{jl}| 
  + \sum_{j=1}^k \norm{\Hess b^i_j}^2 \dV_{M_i} 
  \to 0 
  \text{ for all } R>0. 
 \]
 Then $(B_{r_i}(p_i),p_i)$ subconverges in the pointed \GH sense 
 to a metric product $(\rr^k \times X, p_{\infty})$ for some metric space $X$. 
 Moreover, $(b^i_1, \dots, b^i_k)$ converges 
 to the projection onto the Euclidean factor.
\end{prodlemma}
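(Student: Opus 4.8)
The plan is to run the Cheeger--Colding line-splitting argument, producing the $k$ lines not from a geometric hypothesis on the limit but from the integral control on the $b^i_j$, and to feed the resulting structure into \autoref{CC_splitting_thm}. First I would set up compactness: after subtracting constants (which changes neither harmonicity nor $\Hess b^i_j$) one may assume $b^i_j(p_i)=0$; since $\Ric_{M_i}>-\eps_i$ with $\eps_i\to0$ and $r_i\to\infty$, \precptnessThm yields a subsequence along which $(B_{r_i}(p_i),p_i)\togh(Y,p_\infty)$ for a complete pointed length space $Y$, which we equip with a renormalised limit measure as in \autoref{thm-background:precompactness-measured}. The $b^i_j$ are uniformly $L$-Lipschitz and vanish at $p_i$, hence locally uniformly bounded, so an Arzel\`a--Ascoli argument along the \GH approximations produces (after a further subsequence) $L$-Lipschitz limit functions $b_j\colon Y\to\rr$ with $b_j(p_\infty)=0$ and $b^i_j\to b_j$ uniformly on bounded sets.

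The heart of the argument is to show the $b_j$ force an isometric splitting. Fix $R>0$. Applying \refSegmIneq to the nonnegative functions $\norm{\Hess b^i_j}$, $\big||\nabla b^i_j|^2-1\big|$ and $|\langle\nabla b^i_j,\nabla b^i_l\rangle|$ ($j\neq l$), whose $B_R(p_i)$-averages tend to $0$ by hypothesis, one finds that for $\vol_{M_i}\times\vol_{M_i}$-almost every pair $x,y\in B_{R/2}(p_i)$ and a minimizing geodesic $\gamma$ between them, the integrals $\int_\gamma\norm{\Hess b^i_j}$, $\int_\gamma\big||\nabla b^i_j|^2-1\big|$ and $\int_\gamma|\langle\nabla b^i_j,\nabla b^i_l\rangle|$ all tend to $0$. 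Since $(b^i_j\circ\gamma)'=\langle\nabla b^i_j,\gamma'\rangle$ and $(b^i_j\circ\gamma)''=\Hess b^i_j(\gamma',\gamma')$, this makes each $b^i_j\circ\gamma$ $C^1$-close to an affine function and, with the gradient conditions, makes $\Phi^i:=(b^i_1,\dots,b^i_k)$ almost distance-preserving in the directions spanned by the $\nabla b^i_j$. Passing to the \GH limit (the a.e.\ statements persist because the renormalised measures converge) I would obtain: each $b_j$ is affine along a.e.\ minimizing geodesic of $Y$, the map $\Phi:=(b_1,\dots,b_k)\colon Y\to\rr^k$ is onto and $1$-Lipschitz, $d_Y(x,y)^2\ge|\Phi(x)-\Phi(y)|^2$ with equality along the coordinate directions, and hence $Y$ contains lines in $k$ linearly independent directions.

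Finally I would convert this into an honest product. Since $\norm{\Hess b^i_j}$ is small on average, the gradient flow of $\nabla b^i_j$ distorts distance and measure on $B_R(p_i)$ by $1+o(1)$, and since $\norm{\Hess b^i_j}$ controls $[\nabla b^i_j,\nabla b^i_l]$ while $\langle\nabla b^i_j,\nabla b^i_l\rangle\approx\delta_{jl}$, the $k$ flows almost commute and are almost free; in the limit they converge to a genuine isometric $\rr^k$-action on $Y$ with totally geodesic flat orbits, free because $\Phi$ separates them. Equivalently one applies \autoref{CC_splitting_thm} $k$ times, peeling off one $\rr$-factor at each step: after $Y=\rr\times Y'$ the remaining $b_l$ have gradients orthogonal to the $\rr$-factor, hence are constant along it and descend to $Y'$, where they again satisfy the hypotheses, so the induction closes. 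This gives $Y\cong\rr^k\times X$ with $\Phi$ the projection onto the Euclidean factor; since $b^i_j\to b_j$, the maps $(b^i_1,\dots,b^i_k)$ converge to this projection, as claimed.

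I expect the decisive difficulty to be precisely this transfer: the Hessian bounds live on the \emph{smooth} manifolds $M_i$, whereas the limit $Y$ of a collapsing sequence need not be a manifold, so there is no differentiation available on $Y$. Every quantitative step --- the $C^1$-approximation by affine functions, the almost-isometry and almost-commutation of the gradient flows, the uniformity of the error terms in $R$ --- has to be carried out on the $M_i$ via the gradient and segment estimates of Cheeger--Colding, and only the resulting metric and measure statements pushed to the limit; the bookkeeping ensuring that the limiting $\rr^k$-action is genuinely isometric, free and with flat orbits, so that \autoref{CC_splitting_thm} applies cleanly, is where the real work lies.
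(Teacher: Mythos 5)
First, a caveat: the paper does not prove this statement at all --- it is imported verbatim from \cite[Lemma 2.1]{kapovitch-wilking} --- so there is no in-paper argument to compare yours with; what follows judges your sketch on its own terms. Your overall strategy (Arzel\`a--Ascoli limits $b_j$ of the $b^i_j$, \refSegmIneq to upgrade the integral hypotheses to control along minimizing geodesics between typical pairs of points, then a splitting argument) is the right family of ideas, but the two decisive steps are asserted rather than proved.

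The first gap is the jump from ``$b_j$ is affine along a.e.\ geodesic, $d_Y\ge|\Phi(x)-\Phi(y)|$, with equality along the coordinate directions'' to ``$\Phi$ is onto and $Y$ contains lines in $k$ independent directions''. The Segment Inequality only controls $b^i_j$ along geodesics that join \emph{typical pairs of points}; it neither produces geodesics aligned with $\nabla b^i_j$ (i.e.\ pairs $x_i,y_i$ with $|b^i_j(x_i)-b^i_j(y_i)|\approx d(x_i,y_i)\to\infty$) nor the surjectivity of $\Phi$, and ``equality along the coordinate directions'' presupposes exactly the calibrated geodesics one is trying to construct. Manufacturing them requires running the gradient flow of $b^i_j$ (which is measure preserving by harmonicity) with quantitative distortion estimates of the type in \autoref{lemma:generalization_kw_3.7}, choosing good starting points via maximal-function bounds, and then a diagonal argument in $i$ and the scale $T$ to ensure the resulting long, nearly calibrated geodesics pass at \emph{bounded} distance from $p_i$ --- the small-excess bound one gets for fixed accuracy allows the geodesic to drift off at distance of order $\sqrt{\delta}\,T$, so it would be invisible in the pointed limit without this extra care. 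None of this is in the sketch, and it is where the real work of \cite[Lemma 2.1]{kapovitch-wilking} lies.

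The second gap is the induction. After one application of \autoref{CC_splitting_thm} you have $Y\cong\rr\times Y'$, but $Y'$ is not exhibited as a limit of manifolds with almost nonnegative Ricci curvature, and the ``descended'' $b_l$ live on a possibly non-smooth space, so they cannot ``again satisfy the hypotheses'' of that theorem: as stated the induction does not typecheck, and iterating the splitting theorem needs a separate argument (split $Y$ itself along a second calibrated line and prove compatibility of the two splittings, or work with all $k$ limit functions simultaneously). The alternative ending is also not sufficient as stated: the flows are only almost isometric on large-measure subsets, so producing a limiting \emph{isometric} $\rr^k$-action on all of $Y$ is itself a nontrivial step, and even granted such an action with flat totally geodesic orbits, a free isometric $\rr^k$-action does not by itself force a metric product --- you are back to needing splitting-type rigidity. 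So the plan is in the right spirit but stops short precisely at the points that make the Product Lemma a theorem rather than a formal limit computation.
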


The above theorems will be generalised to the following statements:
If all manifolds are sufficiently close to $\rr^k$,
then there exist harmonic functions similar to those of
\autoref{CC_existence_harmonic_functions}
such that the average integral does not converge to zero but only is bounded,
cf.~\autoref{prop-M}.
Consequently, an adaptation of \refProdLemma will be established:
Under the following weaker hypothesis, the same conclusion holds, 
cf.~\autoref{gen_product_lemma}: 
Only the average integral about the norms of the Hessian vanishes 
when passing to the limit
whereas the average integrals about the scalar products of the gradients 
are bounded by a small constant.

First,
maps similar to those in \autoref{CC_existence_harmonic_functions} 
will be constructed.
A crucial step of the proof will be the rescaling of maps. 
\begin{lemma}\label{prop-M} 
 Given $n \in \nn$, there exists $L = L(n) \geq 0$ 
 such that the following holds:
 For arbitrary $\mydelta > 0$, $R>0$, $k \leq n$ and $g: \rr^+ \to \rr^+$ 
 with $\lim_{x \to 0} g(x) = 0$ 
 there exists $\tildeeps = \tildeeps(\mydelta,g,R;n,k) \in (0,1)$ 
 such that the following is true for every $\myeps \leq \tildeeps$:
 For every pointed complete connected \ndim 
 Riemannian manifold $(M,p)$
 with $\Ric_M \geq -(n-1) \cdot \myeps^2$ and 
 \[\dghpt{g(\myeps)}{M}{p}{\rr^k}{0}\]
 there exist harmonic functions $f_1, \dots, f_k : B_R^M(p) \to \rr$ 
 such that $|\nabla f_j| \leq L$ and
 \[
  \avgint_{B_R^M(p)} 
  \sum_{j,l=1}^k |\langle \nabla f_j, \nabla f_l\rangle - \delta_{jl}| 
  + \sum_{j=1}^k \norm{\Hess(f_j)}^2 \dV_M 
  < \mydelta.
 \]
\end{lemma}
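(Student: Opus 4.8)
The plan is to deduce \autoref{prop-M} from \autoref{CC_existence_harmonic_functions} by a standard contradiction-and-rescaling argument, exploiting the fact that the conclusion of \autoref{CC_existence_harmonic_functions} is scale-invariant in the appropriate sense. Fix $L = L(n) := C(n)$, the constant from \autoref{CC_existence_harmonic_functions}. Suppose the statement fails for some $\mydelta > 0$, $R > 0$, $k \le n$ and $g$; then there is a sequence $\myeps_i \to 0$ and pointed complete connected $n$-dimensional Riemannian manifolds $(M_i, p_i)$ with $\Ric_{M_i} \ge -(n-1) \myeps_i^2$ and $\dghpt{g(\myeps_i)}{M_i}{p_i}{\rr^k}{0}$, but such that no harmonic functions $f_1, \dots, f_k$ on $B_R^{M_i}(p_i)$ with $|\nabla f_j| \le L$ satisfy the Hessian/gradient average-integral bound by $\mydelta$.

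The first step is to rescale so as to land in the hypotheses of \autoref{CC_existence_harmonic_functions}. Put $\tilde M_i := R^{-1} M_i$; then $B_1^{\tilde M_i}(p_i) = B_R^{M_i}(p_i)$ as sets, the Ricci bound improves to $\Ric_{\tilde M_i} \ge -(n-1) R^2 \myeps_i^2 \to 0$, and since $g(\myeps_i) \to 0$ the pointed \GH condition combined with \autoref{lem_background:GH_convergence_properties_I} (and the scaling behaviour of $\dGH$ on a ball of fixed radius $R$) gives $(\tilde M_i, p_i) \to (\rr^k, 0)$ in the pointed \GH sense. After passing to a subsequence we may assume $R^2 \myeps_i^2 \le \frac1i$, so \autoref{CC_existence_harmonic_functions} applies on $\tilde M_i$: there are harmonic $b_1^i, \dots, b_k^i : B_2^{\tilde M_i}(p_i) \to \rr$ with $|\nabla b_j^i| \le C(n) = L$ and
\[
 \avgint_{B_1^{\tilde M_i}(p_i)} \sum_{j,l=1}^k |\langle \nabla b_j^i, \nabla b_l^i\rangle - \delta_{jl}| + \sum_{j=1}^k \norm{\Hess b_j^i}^2 \, \dV_{\tilde M_i} \to 0.
\]

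The second step is to rescale these functions back and check that the quantity in question transforms correctly. Define $f_j^i : B_R^{M_i}(p_i) \to \rr$ by $f_j^i := R\, b_j^i$ (viewing $b_j^i$ on the same underlying ball). Harmonicity is preserved under constant rescaling of the metric and of the function. Under $g \mapsto R^{-2} g$ one has $\nabla^{\tilde M_i} b_j^i = R^2 \nabla^{M_i} b_j^i$ on corresponding points, but lengths are measured with $\tilde g$, so $|\nabla^{M_i} f_j^i|_{g} = R \cdot |\nabla^{M_i} b_j^i|_g = |\nabla^{\tilde M_i} b_j^i|_{\tilde g} \le L$; similarly $\langle \nabla f_j^i, \nabla f_l^i\rangle_g = \langle \nabla b_j^i, \nabla b_l^i\rangle_{\tilde g}$ and $\norm{\Hess^{M_i} f_j^i}_g = \norm{\Hess^{\tilde M_i} b_j^i}_{\tilde g}$, the point being that both the Hessian and the metric used to take its norm scale by the same power of $R$. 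Finally the average integral $\avgint_{B_R^{M_i}(p_i)} (\cdots)\,\dV_{M_i}$ over the unrescaled ball equals $\avgint_{B_1^{\tilde M_i}(p_i)} (\cdots)\,\dV_{\tilde M_i}$ because the integrand is pointwise equal and the averaging normalises away the volume factor. Hence the left-hand side for $(M_i, f_j^i)$ equals the left-hand side for $(\tilde M_i, b_j^i)$, which tends to $0$, so for $i$ large it is $< \mydelta$ — contradicting the choice of $(M_i, p_i)$. This proves the lemma.

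The only real subtlety — and the step I would check most carefully — is the bookkeeping of the scaling exponents in the second step: one must verify that $|\nabla f_j|$, the pairing $\langle \nabla f_j, \nabla f_l\rangle$, and $\norm{\Hess f_j}^2$ are each \emph{scale-invariant} once the function is rescaled by $R$ together with the metric by $R^{-2}$, and that the average integral (not the plain integral) is likewise scale-invariant. Everything else is a routine compactness/contradiction packaging of \autoref{CC_existence_harmonic_functions}, with \autoref{lem_background:GH_convergence_properties_I} invoked only to turn the quantitative closeness $\dghpt{g(\myeps_i)}{\tilde M_i}{p_i}{\rr^k}{0}$ into honest pointed \GH convergence so that the cited theorem applies.
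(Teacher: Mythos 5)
Your proposal follows essentially the same route as the paper: negate the statement, extract a contradicting sequence with $\myeps_i \to 0$, rescale to $R^{-1}M_i$ (after a subsequence so that the Ricci hypothesis of \autoref{CC_existence_harmonic_functions} is met), apply that theorem, and rescale the resulting harmonic functions by $R$ to obtain functions on $B_R^{M_i}(p_i)$ whose average-integral defect tends to $0$, contradicting the choice of the sequence. The one inaccuracy sits exactly at the step you flagged: the Hessian term is \emph{not} scale-invariant under this simultaneous rescaling. With $\tilde g = R^{-2}g$ and $f = R\,b$, the gradient quantities are indeed invariant, as you computed, but the Hessian of $f$ (a $(0,2)$-tensor, with the Levi-Civita connection unchanged by constant rescaling) scales by $R$, while passing from the $\tilde g$-norm to the $g$-norm of a $(0,2)$-tensor contributes a factor $R^{-2}$; hence $\norm{\Hess^{g}(f)}_g = R^{-1}\,\norm{\Hess^{\tilde g}(b)}_{\tilde g}$, so the squared term picks up a factor $\frac{1}{R^2}$ rather than being equal. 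This is harmless for the argument: the paper's proof carries precisely this factor and bounds the rescaled average integral by $\big(1+\frac{1}{R^2}\big)$ times the quantity from \autoref{CC_existence_harmonic_functions}, which still tends to $0$, so the contradiction goes through. Your proof needs only this one-line correction; everything else (the invariance of the averaged integral because the constant Jacobian $R^{-n}$ cancels, the rescaled Ricci bound, the use of \autoref{lem_background:GH_convergence_properties_I} to pass from quantitative closeness to honest convergence) matches the paper's argument.
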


\begin{proof}
 Let $L:=C(n)$ be the constant of \autoref{CC_existence_harmonic_functions}.
 The proof is done by contradiction:
 Assume the statement is false 
 and let $\mydelta$, $R$, $k$ and $g$ be contradicting. 
 For every $i \in \nn$, 
 let $\tildeeps_i := (i\cdot(n-1))^{-1/2} \in (0,1)$ and 
 choose the contradicting $\myeps_i \leq \tildeeps_i$ and $(M_i,p_i)$ with
 $\Ric_{M_i} \geq - (n-1) \cdot \myeps_i^2 \geq - \frac{1}{i}$ 
 and
 $\dghpt{g(\myeps_i)}{M_i}{p_i}{\rr^k}{0}$ 
 such that for all harmonic Lipschitz maps 
 $f^i_1, \dots, f^i_k : B_1^{M_i}(p_i) \to \rr$ with $|\nabla f^i_j| \leq L$,
 \[
  \avgint_{B_1^{M_i}(p_i)} 
  \sum_{j,l=1}^k |\langle \nabla f^i_j, \nabla f^i_l\rangle - \delta_{jl}| 
  + \sum_{j=1}^k \norm{\Hess(f^i_j)}^2 \dV_{M_i} 
  \geq \mydelta.
 \]
    
 Since $g(\myeps_i) \to 0$ as $i \to \infty$,
 $(M_i,p_i) \to (\rr^k,0)$
 and so $(\frac{1}{R} \, M_i,p_i) \to (\rr^k,0)$ as well. 
 By \autoref{CC_existence_harmonic_functions}, 
 there are harmonic functions 
 $\tilde{f}^i_j: B_1^{R^{-1} M_i}(p_i) \to \rr$, $1 \leq j \leq k$, 
 with $|\nabla \tilde{f}^i_j| \leq L$ and 
 \[
  \avgint_{B_1^{R^{-1} M_i}(p_i)} 
  \sum_{j,l=1}^k |\langle \nabla \tilde{f}^i_j, 
    \nabla \tilde{f}^i_l\rangle - \delta_{jl}| 
  + \sum_{j=1}^k \norm{\Hess(\tilde{f}^i_j)}^2 \dV_{R^{-1} M_i} 
  \to 0 \as {i \to \infty}.
 \]
 In particular, the rescaled functions $f^i_j := R \cdot \tilde{f}^i_j : B_R^{M_i} \to \rr$
 satisfy $|\nabla f^i_j| \leq L$ and
 \begin{align*}
  &\avgint_{B_R^{M_i}(p_i)} 
  \sum_{j,l=1}^k |\langle \nabla {f}^i_j, \nabla {f}^i_l\rangle - \delta_{jl}| 
  + \sum_{j=1}^k \norm{\Hess({f}^i_j)}^2 \dV_{M_i} 
  \\
  &= \avgint_{B_1^{R^{-1} M_i}(p_i)} 
  \sum_{j,l=1}^k |\langle \nabla \tilde{f}^i_j, 
    \nabla \tilde{f}^i_l\rangle - \delta_{jl}| 
  + \frac{1}{R^2} 
  \cdot \sum_{j=1}^k \norm{\Hess(\tilde{f}^i_j)}^2 \dV_{M_i} 
  \displaybreak[0]\\
  &\leq \Big(1 + \frac{1}{R^2}\Big) 
  \\&\quad
  \cdot \Big(\, \avgint_{B_1^{R^{-1} M_i}(p_i)} 
  \sum_{j,l=1}^k |\langle \nabla \tilde{f}^i_j, 
  \nabla \tilde{f}^i_l\rangle - \delta_{jl}| 
   + \sum_{j=1}^k \norm{\Hess(\tilde{f}^i_j)}^2 \dV_{M_i} \Big) 
   \\
  &\to 0 \as {i \to \infty}.
 \end{align*}
 This is a contradiction.
\end{proof}

In order to generalise \refProdLemma,
the following result of Cheeger and Colding is used.
Again, the theorem is stated using the notation of 
\cite[Theorem 1.5]{kapovitch-wilking}.
\begin{segmineq}
[Segment Inequality, {\cite[Theorem 2.11]{cheeger-colding-96}}]
\label{segment-inequality}
 Given any dimension $n \in \nn$ and radius $r_0 > 0$, 
 there exists $\tau = \tau(n,r_0)$ such that the following holds:
 Let $M$ be an \ndim Riemannian manifold 
 which satisfies the lower Ricci curvature bound $\Ric_{M} \geq -(n-1)$ 
 and $g: M \to \rr^+$ be a non-negative function.
 Then for $r \leq r_0$,
 \[
  \avgint_{B_r(p) \times B_r(p)} \int_0^{d(z_1,z_2)} 
  g(\gamma_{z_1,z_2}(t)) \dt \dV(z_1,z_2) 
  \leq \tau \cdot r \cdot \avgint_{B_{2r}(p)} g(q) \dV(q),
 \]
 where $\gamma_{z_1,z_2}$ denotes a minimising geodesic from $z_1$ to $z_2$.
\end{segmineq}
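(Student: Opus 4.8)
The plan is to reduce the two–dimensional average over $B_r(p)\times B_r(p)$ to one–dimensional integrals of $g$ along radial geodesics and then to invoke the Bishop volume comparison.

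First I would symmetrise and split each minimising geodesic at its midpoint. Reversing the geodesic from $z_1$ to $z_2$ interchanges the parameter subintervals $[0,d/2]$ and $[d/2,d]$, so, relabelling $z_1\leftrightarrow z_2$,
\[
 \avgint_{B_r(p)\times B_r(p)}\!\!\int_0^{d(z_1,z_2)}\! g\big(\gamma_{z_1,z_2}(t)\big)\,\dt\,\dV(z_1,z_2)\;=\;2\,\avgint_{B_r(p)\times B_r(p)}\!\!\int_0^{d(z_1,z_2)/2}\! g\big(\gamma_{z_1,z_2}(t)\big)\,\dt\,\dV(z_1,z_2).
\]
For $t\le d(z_1,z_2)/2$ and $z_1,z_2\in B_r(p)$ one has $d\big(p,\gamma_{z_1,z_2}(t)\big)\le d(p,z_1)+t\le r+\tfrac12 d(z_1,z_2)\le 2r$, so only the restriction of $g$ to $B_{2r}(p)$ matters; this is exactly where the ball $B_{2r}(p)$ on the right–hand side of the claim enters. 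It thus suffices to bound the last integral.

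Next I would fix $z_1$ and pass to geodesic polar coordinates at $z_1$: writing a point at distance $\rho$ from $z_1$ in direction $\theta\in S^{n-1}$ as $\exp_{z_1}(\rho\theta)$, the Riemannian volume is $\mathcal{A}(\rho,\theta)\,d\rho\,d\theta$, with $\mathcal{A}\equiv0$ beyond the cut locus. For $z_2=\exp_{z_1}(\rho\theta)$ with $\rho=d(z_1,z_2)$ and $0\le t\le\rho/2$ one has $\gamma_{z_1,z_2}(t)=\exp_{z_1}(t\theta)$, so the inner integral depends on $\theta$ only. The quantitative core of the argument is the Bishop comparison: under $\Ric\ge-(n-1)$ the function $\rho\mapsto\mathcal{A}(\rho,\theta)\big/(\sinh\rho)^{n-1}$ is non–increasing, whence, comparing at the scales $\rho$ and $\rho/2$,
\[
 \mathcal{A}(\rho,\theta)\;\le\;\Big(\frac{\sinh\rho}{\sinh(\rho/2)}\Big)^{n-1}\mathcal{A}\big(\tfrac\rho2,\theta\big)\;=\;\big(2\cosh\tfrac\rho2\big)^{n-1}\mathcal{A}\big(\tfrac\rho2,\theta\big)\;\le\;(2\cosh r_0)^{n-1}\,\mathcal{A}\big(\tfrac\rho2,\theta\big)
\]
for all $\rho\le2r_0$. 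The point of comparing with $\mathcal{A}(\rho/2,\theta)$ instead of with the area density of the model space of curvature $-1$ is that the (possibly small) volumes of balls in $M$ are retained.

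Using this comparison — together with the substitution $\rho=2u$ and the interchanges of integration it makes possible — and then integrating over $z_1\in B_r(p)$ as well, the whole double integral is bounded by $C(n,r_0)\,r$ times the average of $g$ over $B_{2r}(p)$. The interchanges of integration express the inner integral over $z_1$ as the volume of an intersection of two balls, and since every point entering the computation lies within distance $4r$ of $p$, \refBGThm lets one compare all the ball volumes that occur — around $p$, around $z_1$, around the intermediate points — up to the single factor $\CBG(n,-1,r,4r)$. This yields
\[
 \avgint_{B_r(p)\times B_r(p)}\!\!\int_0^{d(z_1,z_2)/2}\! g\big(\gamma_{z_1,z_2}(t)\big)\,\dt\,\dV(z_1,z_2)\;\le\;\tfrac12\,\tau(n,r_0)\cdot r\cdot\avgint_{B_{2r}(p)}\! g\,\dV ,
\]
which, combined with the first display, is the assertion.

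The delicate step is the one just indicated: the passage from $\int_0^{d/2}g\big(\gamma_{z_1,z_2}(t)\big)\,\dt$ to a volume integral. Minimising geodesics issuing from $z_1$ are strongly concentrated near $z_1$, so the change of variables $z_2\leftrightarrow\gamma_{z_1,z_2}(t)$ carries a Jacobian weight that blows up as $t\to0$; one has to organise the Bishop comparison at the right scale so that this weight stays integrable and, crucially, so that no volume–collapse factor is thrown away, and it is only after integrating over $z_1$ as well that the singular contribution gets absorbed by the Bishop–Gromov volume ratios. That the final constant depends on $n$ and $r_0$ alone — and not on the (possibly collapsing) manifold $M$ — is precisely what gives the statement its force; everything else is bookkeeping with the polar–coordinate volume formula and \refBGThm.
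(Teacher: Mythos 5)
This statement is quoted in the paper from Cheeger--Colding (\cite[Theorem 2.11]{cheeger-colding-96}) without proof, so the comparison is with the standard argument there. Your sketch follows that argument in outline (split the geodesic at its midpoint, symmetrise, pass to polar coordinates, use the pointwise Bishop comparison $\mathcal{A}(\rho,\theta)/\sinh^{n-1}\rho$ non-increasing), but the two halves are paired with the wrong base point, and this creates a genuine gap at exactly the step you yourself flag as delicate. After your symmetrisation you must bound $\int_0^{d/2} g(\gamma_{z_1,z_2}(t))\,\dt$, i.e.\ the \emph{first} half of the geodesic, and you then fix $z_1$ and write $z_2=\exp_{z_1}(\rho\theta)$, $\gamma_{z_1,z_2}(t)=\exp_{z_1}(t\theta)$. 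To convert the line integral into a volume integral you need $\mathcal{A}(\rho,\theta)\le C(n,r_0)\,\mathcal{A}(t,\theta)$ for \emph{all} $t\in(0,\rho/2]$, not just at $t=\rho/2$; the ratio $\mathcal{A}(\rho,\theta)/\mathcal{A}(t,\theta)$ blows up like $t^{-(n-1)}$ as $t\to 0$, so the comparison you state (only between $\rho$ and $\rho/2$) does not suffice, and the subsequent ``substitution $\rho=2u$ and interchanges of integration'' still leave an unbounded weight at small $t$. This is not a technicality that bookkeeping with $\CBG$ repairs: the fixed-$z_1$ version of the inequality you are implicitly using is false. Take $g=\chi_{B_\eps(z_1)}$ with $\eps\ll r$ in a non-collapsed manifold: then $\avgintsmall_{B_r(p)}\int_0^{d/2}g(\gamma_{z_1,z_2}(t))\,\dt\,\dV(z_2)$ is of order $\eps$, while $r\cdot\avgintsmall_{B_{2r}(p)}g$ is of order $r(\eps/r)^n$, which is much smaller. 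So any proof that keeps the first half based at $z_1$ must use the integration over $z_1$ in an essential, quantitative way, and your sketch only asserts that ``the singular contribution gets absorbed'' without providing the mechanism; the remark about the inner $z_1$-integral being a volume of an intersection of two balls is not developed and does not obviously supply it.

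The fix is the classical pairing: for fixed $z_1$ bound the \emph{second} half, $t\in[d/2,d]$, in polar coordinates at $z_1$. There $t\ge\rho/2$, so the Bishop comparison gives $\mathcal{A}(\rho,\theta)\le\bigl(\sinh\rho/\sinh t\bigr)^{n-1}\mathcal{A}(t,\theta)\le(2\cosh(\rho/2))^{n-1}\mathcal{A}(t,\theta)$, uniformly bounded for $\rho\le 2r_0$; Fubini in $(\rho,t)$ then produces the factor $r$ and the integral of $g$ over $B_{2r}(p)$ (second-half points lie within $r$ of $z_2$, hence within $2r$ of $p$), and a single $\CBG(n,-1,r,2r)\le\CBG(n,-1,r_0,2r_0)$ converts the averages. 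The first half is then handled by the symmetry $z_1\leftrightarrow z_2$ (equivalently, polar coordinates at $z_2$), not by polar coordinates at $z_1$. With that exchange of roles your outline becomes the standard proof; as written, the central estimate does not close.
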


The following lemma is a generalisation of \refProdLemma
where the average integral of scalar products of the gradients 
does not have to vanish, but only needs to be bounded.
\begin{lemma}\label{gen_product_lemma}
 Let $(M_i)_{i \in \nn}$ be a sequence 
 of connected \ndim Riemannian manifolds
 with $\Ric_{M_i} \geq -(n-1) \cdot \eps_i$ 
 where $\eps_i \to 0$.
 Let $r_i \to \infty$ and $q_i \in M_i$ be points 
 such that the balls $\B_{r_i}(q_i)$ are compact. 
 Furthermore, let $k \leq n$ 
 and assume that for every $1 \leq j \leq k$ 
 there is a harmonic $L$-Lipschitz map 
 $b^i_j: B_{r_i}(q_i) \to \rr$ satisfying
 $\avgintsmall_{B_r(q_i)} \sum_{j=1}^k \norm{\Hess(b^i_j)}^2 \dV \to 0$ 
 and 
 $\avgintsmall_{B_r(q_i)} \sum_{j,l=1}^k |\langle \nabla b^i_j, 
  \nabla b^i_l\rangle - \delta_{jl}| \dV 
  \leq 10^{-n^2}$ 
 for all $r \leq r_i$.
 
 Then every sublimit of $(B_{r_i}(q_i),q_i)$ is isometric 
 to a product $(\rr^k \times X, q_{\infty})$ for some metric space $X$ 
 and some point $q_{\infty} \in \rr^k \times X$.
\end{lemma}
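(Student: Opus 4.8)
The plan is to reduce the statement to \refProdLemma: first replace the maps $b^i_j$ by suitable linear combinations $\tilde b^i_j$ whose gradients are asymptotically orthonormal in the averaged $L^1$-sense demanded there, then apply the Product Lemma, and finally upgrade its subconvergence conclusion to a statement about \emph{every} sublimit using uniqueness of pointed \GH limits.

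For the first step I would form, for each $i$, the symmetric $k\times k$ matrix $A^i$ with entries $A^i_{jl}:=\avgintsmall_{B_1(q_i)}\langle\nabla b^i_j,\nabla b^i_l\rangle\dV$. It is positive semidefinite, being the Gram matrix of the $\nabla b^i_j$ in $L^2(B_1(q_i))$, and by hypothesis every entry satisfies $|A^i_{jl}-\delta_{jl}|\le10^{-n^2}$; hence its spectrum lies in $[\tfrac12,\tfrac32]$, so $(A^i)^{-1/2}$ is well defined, symmetric and bounded in operator norm by $2$, uniformly in $i$. Put $\tilde b^i_j:=\sum_{l=1}^k\bigl((A^i)^{-1/2}\bigr)_{jl}\,b^i_l\colon B_{r_i}(q_i)\to\rr$. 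Each $\tilde b^i_j$ is then harmonic and $C(n)L$-Lipschitz, and from $\Hess(\tilde b^i_j)=\sum_l((A^i)^{-1/2})_{jl}\Hess(b^i_l)$ one gets $\avgintsmall_{B_r(q_i)}\sum_j\norm{\Hess(\tilde b^i_j)}^2\dV\le C(n)\avgintsmall_{B_r(q_i)}\sum_l\norm{\Hess(b^i_l)}^2\dV\to0$ for every fixed $r>0$ (legitimate since $r\le r_i$ for large $i$).

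The crux is the estimate
\[
 \avgint_{B_R(q_i)}\sum_{j,l=1}^k\bigl|\langle\nabla\tilde b^i_j,\nabla\tilde b^i_l\rangle-\delta_{jl}\bigr|\dV\;\longrightarrow\;0\qquad\text{for each fixed }R>0,
\]
which I would obtain from \autoref{segment-inequality}. For harmonic functions $b,b'$ with common Lipschitz constant $L'$ on $B_{2R}(q_i)$, and $z_1,z_2\in B_R(q_i)$, differentiating $\langle\nabla b,\nabla b'\rangle$ along the minimising geodesic $\gamma_{z_1,z_2}$ and using Cauchy--Schwarz yields $\bigl|\langle\nabla b,\nabla b'\rangle(z_1)-\langle\nabla b,\nabla b'\rangle(z_2)\bigr|\le L'\int_0^{d(z_1,z_2)}(\norm{\Hess b}+\norm{\Hess b'})(\gamma_{z_1,z_2}(t))\dt$; averaging over $(z_1,z_2)\in B_R(q_i)\times B_R(q_i)$ and applying the Segment Inequality (valid since $\Ric_{M_i}\ge-(n-1)$ for large $i$) bounds the right-hand side by $L'\,\tau(n,R)\,R\avgintsmall_{B_{2R}(q_i)}(\norm{\Hess b}+\norm{\Hess b'})\dV$, which tends to $0$ for $b=\tilde b^i_j$, $b'=\tilde b^i_l$ by the Hessian bound above together with Cauchy--Schwarz (to pass from the $L^2$- to the $L^1$-average). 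By Jensen this double average dominates $\avgintsmall_{B_R(q_i)}\bigl|\langle\nabla\tilde b^i_j,\nabla\tilde b^i_l\rangle-\overline{\langle\nabla\tilde b^i_j,\nabla\tilde b^i_l\rangle}^{B_R}\bigr|\dV$, and the mean occurring here equals $\bigl((A^i)^{-1/2}\,\widehat A^i(R)\,(A^i)^{-1/2}\bigr)_{jl}$ with $\widehat A^i(R)_{pq}:=\avgintsmall_{B_R(q_i)}\langle\nabla b^i_p,\nabla b^i_q\rangle\dV$. Running the same oscillation bound for the $b^i_p$ and comparing $B_1(q_i)\subseteq B_R(q_i)$ — here using \refBGThm to bound $\vol(B_R(q_i))/\vol(B_1(q_i))$ by $\CBG(n,-1,1,R)$ — shows $\widehat A^i(R)-A^i\to0$ as $i\to\infty$, hence $(A^i)^{-1/2}\widehat A^i(R)(A^i)^{-1/2}\to\mathrm{Id}$, and the displayed limit follows by the triangle inequality.

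Thus the $\tilde b^i_j$ satisfy every hypothesis of \refProdLemma on $B_{r_i}(q_i)$, so $(B_{r_i}(q_i),q_i)$ subconverges to a metric product $(\rr^k\times X,q_\infty)$. Finally, if $(Y,q_\infty)$ is \emph{any} sublimit, say along a subsequence $(i_m)$, then all hypotheses of the present lemma persist along $(i_m)$, so the construction above applies to $(i_m)$ and produces a further subsequence converging to some product $\rr^k\times X$; by uniqueness of pointed \GH limits, $Y\cong\rr^k\times X$, as claimed. I expect the main obstacle to be the oscillation estimate of the third paragraph: one must upgrade the a priori scale-dependent near-identity Gram matrices $\widehat A^i(R)$ to a single limiting matrix and straighten the frame $b^i_\bullet$ accordingly, so as to land within the scope of \refProdLemma; the genuinely analytic content — convergence of harmonic functions under measured \GH convergence, and the splitting itself — is entirely carried by that lemma.
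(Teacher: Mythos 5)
Your proposal is correct and takes essentially the same route as the paper's proof: control the oscillation of $\langle\nabla b^i_j,\nabla b^i_l\rangle$ by the Segment Inequality (plus Bishop--Gromov to pass between radii), straighten the frame by the inverse square root of the resulting near-identity Gram-type matrix, invoke the Product Lemma, and use uniqueness of pointed \GH limits to treat an arbitrary sublimit. The only difference is cosmetic: the paper selects good base points $\hat q_i$, uses the pointwise values $c^i_{jl}=\langle\nabla b^i_j,\nabla b^i_l\rangle(\hat q_i)$ and a single limiting matrix $C^{-1/2}$ after passing to a subsequence, whereas you work with ball-averaged Gram matrices and the $i$-dependent normalisation $(A^i)^{-1/2}$; both implementations are valid.
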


\begin{proof}
 Let $(Y, y)$ be an arbitrary sublimit of $(B_{r_i}(q_i),q_i)$. 
 Without loss of generality, 
 assume convergence $(B_{r_i}(q_i),q_i) \to (Y,y)$. 
 The concept of the proof is the following: 
 For well chosen $\hat{q}_i \in B_{1/2}(q_i)$,
 $c^i_{jl} := \langle \nabla b^i_j, \nabla b^i_l\rangle (\hat{q}_i)$
 and after passing to a subsequence, 
 $\avgintsmall_{B_1(q_i)} 
 |\langle \nabla b^i_j, \nabla b^i_l\rangle - c^i_{jl}| 
 \dV \to 0$. 
 In a second step, 
 the corresponding statement for balls of arbitrary radius will be shown.
 Finally, after passing to a subsequence 
 such that every $(c^i_{jl})_{i \in \nn}$ converges to some limit $c_{jl}$
 and defining $h_{jl}$ via the matrix identity 
 $\big((h_{jl})_{1 \leq j,l \leq k}\big)^2 
 = \big((c_{jl})_{1 \leq j,l \leq k}\big)^{-1}$,
 the linear combinations $d^i_j := \sum_{l=1}^k h_{jl} b^i_l$ 
 satisfy the hypothesis of \refProdLemma, 
 and thus, prove the claim.
 
 \par\smallskip\noindent\textit{First step:} 
 Fix $1 \leq j,l \leq k$ 
 and 
 suppose there exists $\eps >0$ such that for every $N \in \nn$ 
 there is $i \geq N$ with
 \[
  \avgint_{B_1(q_i)} \int_0^1 (\norm{\Hess(b^i_j)} 
  + \norm{\Hess(b^i_l)})(\gamma_{\hat{q}_i x_i}(t)) \dt \dV(x_i) 
  \geq \eps
 \]
 for all $\hat{q}_i \in B_{1/2}(q_i)$
 where $\gamma_{\hat{q}_i x_i}$ denotes a minimising geodesic from $\hat{q}_i$ to $x_i$.
 For such an $i$, 
 \begin{align*}
  &\avgint_{B_1(q_i) \times B_1(q_i)} \int_0^1 (\norm{\Hess(b^i_j)} 
  + \norm{\Hess(b^i_l)})(\gamma_{\hat{q}_i x_i}(t)) \dt \dV(\hat{q}_i,x_i)\\
  &\geq \frac{\avgintsmall_{B_{1/2}(q_i) \times B_1(q_i)} \int_0^1 
  (\norm{\Hess(b^i_j)} + \norm{\Hess(b^i_l)}) (\gamma_{\hat{q}_i x_i}(t)) 
  \dt \dV(\hat{q}_i,x_i)}{\CBG(n,-1,\frac{1}{2},1)}
  \displaybreak[0]\\
  &\geq \frac{\eps}{\CBG(n,-1,\frac{1}{2},1)}
  >0.
 \end{align*}
 On the other hand,
 \refSegmIneq
 provides $\tau = \tau(n,1)$ such that
 \begin{align*}
  &\avgint_{B_1(q_i) \times B_1(q_i)} \int_0^1 (\norm{\Hess(b^i_j)} 
  + \norm{\Hess(b^i_l)}) (\gamma_{\hat{q}_i x_i}(t)) \dt \dV(\hat{q}_i,x_i)\\
  &\leq \tau \cdot \avgint_{B_2(q_i)} (\norm{\Hess(b^i_j)} + \norm{\Hess(b^i_l)}) \dV 
  \displaybreak[0]\\
  &\leq 2\tau \cdot 
    \Big(\, \avgint_{B_2(q_i)} \sum_{j=1}^k \norm{\Hess(b^i_j)}^2 \dV \Big)^{1/2}\\
  &\to 0 \as i \to \infty.  
  \end{align*}
 This is a contradiction.
 Therefore, after passing to a subsequence,
 there exist $\hat{q}_i \in B_{1/2}(q_i)$ with
 $\avgintsmall_{B_1(q_i)} \int_0^1 
  (\norm{\Hess(b^i_j)} + \norm{\Hess(b^i_l)}) (\gamma_{\hat{q}_iq}(t)) 
  \dt \dV(q) \to 0.$
 Define 
 $c^i_{jl} := \langle \nabla b^i_j, \nabla b^i_l\rangle (\hat{q}_i)$.
 Then 
 \begin{align*}
  &{\avgint_{B_1(q_i)} 
  |\langle \nabla b^i_j, \nabla b^i_l \rangle(x_i) - c^i _{jl} | \dV(x_i)} \\
    &\leq \avgint_{B_1(q_i)} \int_0^1 \Big| \frac{d}{dt}_{|t=\tau} 
  \langle \nabla b^i_j (\gamma_{\hat{q}_iq} (t)), 
  \nabla b^i_l (\gamma_{\hat{q}_iq}(t))\rangle 
  \Big| \dtau \dV(q)
  \\&= \avgint_{B_1(q_i)} \int_0^1 \Big| 
  \langle \Hess(b^i_j) \cdot \dot{\gamma}_{\hat{q}_i x_i} (\tau), \nabla b^i_l \rangle 
  (\gamma_{\hat{q}_i x_i}(\tau))\\
  & \quad\quad\quad\quad\quad\quad 
  + \langle \nabla b^i_j , \Hess(b^i_l) \cdot (\dot{\gamma}_{\hat{q}_i x_i}(\tau)) \rangle 
  (\gamma_{\hat{q}_i x_i} (\tau)) 
  \Big| \dtau \dV(x_i)
  \displaybreak[0]\\
  &\leq \avgint_{B_1(q_i)} \frac{3L}{2} \cdot \int_0^1 (\norm{\Hess(b^i_j)} 
  + \norm{\Hess(b^i_l)}) \circ \gamma_{\hat{q}_i x_i} (\tau) \dtau \dV(x_i)\\
  &\to 0 \as {i \to \infty}.
 \end{align*}

 \par\smallskip\noindent\textit{Second step:}
 Fix an arbitrary $R > 0$.
 Analogously to the first step,
 one can prove the existence of $\bar{q}_i \in B_1(q_i)$ 
 such that
 \begin{align*}
  &|\langle \nabla b^i_j, \nabla b^i_l\rangle(\bar{q}_i) - c^i_{jl}| 
  \to 0 \quad\aand \\ 
  &\avgint_{B_R(q_i)} \int_0^1  (\norm{\Hess(b^i_j)} 
  + \norm{\Hess(b^i_l)}) (\gamma_{\bar{q}_i x_i}(t))\dt \dV(x_i) 
  \to 0 
 \end{align*}
 as $i \to \infty$.
 As in the first step,
 \begin{align*}
  &\avgint_{B_R(q_i)}|\langle \nabla b^i_j, \nabla b^i_l\rangle (q) 
  - c^i_{jl}| \dV(q)\\
  &\leq \avgint_{B_R(q_i)} \int_0^1 \Big|\frac{d}{dt}_{|t=\tau} 
  \langle \nabla b^i_j, \nabla b^i_l\rangle \circ \gamma_{\bar{q_i}q}(t) \Big| 
    + \Big|\langle \nabla b^i_j, \nabla b^i_l\rangle(\bar{q_i}) 
    - c^i_{jl} \Big| \dtau \dV(q) 
  \\&\to 0 \as i \to \infty.
 \end{align*}

 \par\smallskip\noindent\textit{Third step:}
 As the $b^i_j$ are $L$-Lipschitz, 
 $c^i_{jl} 
 := \langle \nabla b^i_j, \nabla b^i_l \rangle (\hat{q}_i) 
 \in [-L^2,L^2]$ 
 is a bounded sequence, and thus, has a convergent subsequence.
 Pass to a subsequence 
 such that all sequences $(c^i_{jl})_{i \in \nn}$ converge
 and denote the limits by 
 $c_{jl} := \lim_{i \to \infty} c^i_{jl} \in [-L^2,L^2]$.
 Then
 \begin{align*}
  |c_{jl} - \delta_{jl}|
  &\leq \lim_{i \to \infty}\avgint_{B_R(q_i)} 
  |c^i_{jl} - \langle \nabla b^i_j, \nabla b^i_l \rangle| \dV 
  + \avgint_{B_R(q_i)} |\langle \nabla b^i_j, \nabla b^i_l \rangle 
  - \delta_{jl}| \dV 
  \\&\leq \lim_{i \to \infty}
  \avgint_{B_R(q_i)} \sum_{j,l=1}^k 
  |\langle \nabla b^i_j, \nabla b^i_l \rangle - \delta_{jl}| \dV 
  \leq 10^{-n^2}.
 \end{align*}
 Hence, the matrix $C := (c_{jl})_{1 \leq j,l \leq k}$ 
 is invertible, symmetric and positive definite. 
 In particular, its inverse $C^{-1}$ is diagonalisable 
 with positive eigenvalues. 
 Let $C_D^{-1}$ denote the diagonal matrix and 
 $S$ the invertible matrix with $C^{-1} = S \cdot C_D^{-1} \cdot S^{-1}$
 and define $C_D^{-1/2}$ as the diagonal matrix 
 whose entries are the square roots of the diagonal entries of $C_D^{-1}$. 
 Then the matrix $H := (h_{jl})_{jl} := S \cdot C_D^{-1/2} \cdot S^{-1}$ 
 satisfies $H^2 = C^{-1}$.
 Now define $d^i_j := \sum_{l=1}^k h_{jl} b^i_l$. 

 Obviously, these are Lipschitz and harmonic. 
 Furthermore, 
 it is straightforward to see
 \begin{align*}
  {\avgint_{B_R(q_i)} 
  \sum_{j_1,j_2 = 1}^k 
  \big|\langle \nabla d^i_{j_1}, \nabla d^i_{j_2} \rangle - \delta_{j_1j_2} \big| 
  + \sum_{j=1}^k \norm{\Hess(d^i_j)}^2 
  \dV}
  &\to 0 \as i \to \infty.
 \end{align*}
 By \refProdLemma, 
 after passing to a subsequence, 
 $(B_{r_i}(q_i),q_i)$ converges to $(\rr^k \times X, (0,q_{\infty}))$ 
 for some metric space $X$ and $q_{\infty} \in X$.
 Since $(B_{r_i}(q_i),q_i)$ converges to $(Y,y)$, 
 this proves that $Y$ is isometric to $\rr^k \times X$.
\end{proof}

Applying the previous two lemmata proves 
that for sufficiently small balls 
there is a subset of good base points of arbitrary good volume
such that all sublimits of sequences with respect to those base points 
split off an $\rr^k$-factor.
In order to verify this, the following statement, 
which in its first form was proven by Stein in \cite[p.~13]{stein},
is needed for estimating the volume of a set 
where the so called $\rho$-maximum function is bounded from above.
Again, the notation of \cite[Lemma~1.4b)]{kapovitch-wilking} is used.

\begin{thm}[Weak type 1-1 inequality]\label{weak_1_1_inequality}
 Let $M$ be an \ndim Riemannian manifold 
 with lower Ricci curvature bound $\Ric_M \geq -(n-1)$.
 For a non-negative function $f: M \to \rr$ and $\rho > 0$,
 define the \emph{$\rho$-maximum function of $f$} as 
 \[\Mx_{\rho}f(p) := \sup_{r \leq \rho} \avgint_{B_r(p)} f.\]
 Especially, put $\Mx f(p) = \Mx_2f(p)$. 
 Then there is $\C11 > 0$ 
 such that for any non-negative function $f \in L^1(M)$ and $c > 0$, 
 \[
  \vol(\{x \in M \mid \Mx_{\rho}f(x) > c\}) 
  \leq \frac{\C11}{c} \int_M f \dV_M.
 \]
\end{thm}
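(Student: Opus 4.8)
The plan is to run the classical Hardy--Littlewood maximal function argument, with the Euclidean Vitali covering replaced by the abstract $5r$-covering lemma and Lebesgue volume comparison replaced by \refBGThm. Fix $f \in L^1(M)$ with $f \ge 0$ and $c > 0$, and set $A := \{x \in M \mid \Mx_\rho f(x) > c\}$; the goal is $\vol(A) \le \frac{\C11}{c}\int_M f\,\dV_M$. For each $x \in A$ choose, by definition of $\Mx_\rho$, a radius $r_x \in (0,\rho]$ with
\[
 \int_{B_{r_x}(x)} f\,\dV_M > c\cdot\vol(B_{r_x}(x)).
\]

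First I would apply the standard $5r$-covering lemma to the family $\{B_{r_x}(x)\}_{x \in A}$, whose radii are bounded by $\rho$: since $M$ is separable this yields a countable pairwise disjoint subfamily $\{B_{r_j}(x_j)\}_{j\in\nn}$ with $A \subseteq \bigcup_j B_{5r_j}(x_j)$. (If one prefers to avoid the covering lemma in this generality, exhaust $A$ by compact sets and use finite subcovers; this costs nothing.) Next, \refBGThm together with the monotonicity $\CBG(n,-1,r_j,5r_j) \le \CBG(n,-1,\rho,5\rho)$ recorded earlier gives $\vol(B_{5r_j}(x_j)) \le \CBG(n,-1,\rho,5\rho)\cdot\vol(B_{r_j}(x_j))$ for every $j$. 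Summing, and using disjointness of the $B_{r_j}(x_j)$ together with the choice of $r_j$,
\[
 \vol(A) \le \sum_j \vol(B_{5r_j}(x_j))
 \le \CBG(n,-1,\rho,5\rho)\sum_j \vol(B_{r_j}(x_j))
 \le \frac{\CBG(n,-1,\rho,5\rho)}{c}\int_M f\,\dV_M,
\]
which is the asserted inequality with $\C11 := \CBG(n,-1,\rho,5\rho)$.

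The only genuinely delicate point is the dependence of the constant: the bound above involves $\rho$, and to absorb it into a constant depending on $n$ alone it suffices to restrict $\rho$ to a fixed bounded range --- in particular $\rho \le 2$, which is the relevant case here (recall $\Mx f = \Mx_2 f$) --- where $\CBG(n,-1,\rho,5\rho) \le \CBG(n,-1,2,10)$ is a dimensional constant; otherwise one simply records the constant as $\C11 = \C11(n,\rho)$. A secondary, purely measure-theoretic point --- measurability of $A$ and legitimacy of the countable covering --- is standard for the lower semicontinuous maximal function on a separable manifold and is in any case sidestepped by the compact-exhaustion remark. Everything else is routine bookkeeping.
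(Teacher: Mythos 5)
Your proof is correct; the paper itself does not prove this statement but quotes it (Stein, and \cite[Lemma 1.4b)]{kapovitch-wilking}), and the argument behind the cited result is exactly the one you give: Vitali $5r$-covering of the superlevel set by balls on which the average exceeds $c$, volume doubling via \refBGThm for the $5$-fold enlargements, and summation over the disjoint subfamily. Your caveat about the constant is the right one to flag: the covering argument only yields $\C11 = \C11(n,\rho)$ in general, and it is the restriction to the bounded range $\rho \leq 2$ used throughout the paper (so that $\CBG(n,-1,\rho,5\rho) \leq \CBG(n,-1,2,10)$ by the monotonicity recorded in \autoref{lem_background:BG}) that makes $\C11$ a purely dimensional constant, as the notation $\C11$ is intended to convey.
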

In the following, $\C11$ will always denote the constant of the weak type 1-1 inequality.
\par\medskip

Now the first set needed for \autoref{prop:main} can be constructed.
\begin{lemma}\label{lem:set_where_all_blow_ups_split__locally}
 Let $(M_i)_{i \in \nn}$ be a sequence 
 of complete connected \ndim Riemannian manifolds
 which satisfies the uniform lower 
 Ricci curvature bound $\Ric_{M_i} \geq -(n-1)$, 
 and let $k < n$. 
 For every $\mydelta \in (0,1)$ 
 there exists $\tildeeps_1 = \tildeeps_1(\mydelta;n,k) > 0$ 
 such that for all $0 < r \leq \tildeeps_1$ and $q_i \in M_i$ with
 \[\dghpt{\tildeeps_1}{r^{-1} M_i}{q_i}{\rr^k}{0}\]
 there is a family of subsets of good points 
 $G_{r}^1(q_i) \subseteq B_{r}(q_i)$ satisfying
 \[ \vol_{M_i}(G_{r}^1(q_i)) \geq (1-\mydelta) \cdot \vol_{M_i}(B_{r}(q_i)) \]
 such that for every choice of $x_i \in G_r^1(q_i)$,
 every $\lambda_i \to \infty$ 
 and every sublimit $(Y,\cdot)$ of $(\lambda_i M_i, q_i)$
 there exists $Y'$ such that $Y \cong \rr^k \times Y'$ isometrically.
\end{lemma}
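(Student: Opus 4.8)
The plan is to combine \autoref{prop-M} (to produce harmonic functions on balls of radius roughly $r^{-1}$ in the rescaled manifolds), the weak type 1-1 inequality (to locate a large-volume set of base points where the relevant average integrals stay controlled at all scales), and the generalised Product Lemma \autoref{gen_product_lemma} (to conclude the splitting for every blow-up). The key bookkeeping issue is that \autoref{gen_product_lemma} needs the Hessian term to go to zero while the gradient term stays below $10^{-n^2}$, whereas \autoref{prop-M} only gives smallness of the \emph{sum}; so I will need to apply \autoref{prop-M} with its $\mydelta$-parameter chosen $\ll 10^{-n^2}$ and then separately upgrade the Hessian term using a maximum-function argument.

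First I would fix $\mydelta \in (0,1)$. Choose an auxiliary constant $\eta := \tfrac{1}{2} 10^{-n^2}$, and apply \autoref{prop-M} with parameters $(\eta, g, R)$ for a suitable fixed $R$ (say $R = 2$, matching the $B_2$-balls in \autoref{CC_existence_harmonic_functions}) and $g(x) = x$, obtaining $\tildeeps' = \tildeeps'(\eta;n,2,k)$ and the universal Lipschitz constant $L = L(n)$. Set $\tildeeps_1$ to be the minimum of $\tildeeps'$ and an additional small constant to be determined by the volume estimate below; in particular $\tildeeps_1$ must be small enough that $r \le \tildeeps_1$ forces $\myeps^2 := r^2 \le$ (curvature threshold) and that the Bishop–Gromov comparison constants $\CBG(n,-1,\cdot,\cdot)$ appearing below are under control. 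Now suppose $0 < r \le \tildeeps_1$ and $q_i \in M_i$ with $\dghpt{\tildeeps_1}{r^{-1}M_i}{q_i}{\rr^k}{0}$. Rescale: the manifolds $N_i := r^{-1} M_i$ satisfy $\Ric_{N_i} \ge -(n-1) r^2$ and are $\tildeeps_1$-close to $(\rr^k,0)$ at $q_i$, so \autoref{prop-M} gives harmonic $f_1^i,\dots,f_k^i : B_2^{N_i}(q_i) \to \rr$ with $|\nabla f_j^i| \le L$ and average integral over $B_2^{N_i}(q_i)$ of the full expression $< \eta$.

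Next I would run the weak type 1-1 argument on $N_i$ to select good points. Let $h^i := \sum_{j,l} |\langle \nabla f_j^i, \nabla f_l^i\rangle - \delta_{jl}| + \sum_j \|\Hess f_j^i\|^2$ on $B_2^{N_i}(q_i)$ (extended by $0$). Because $\avgintsmall_{B_2^{N_i}(q_i)} h^i < \eta$ and Bishop–Gromov bounds $\vol(B_2^{N_i}(q_i))/\vol(B_1^{N_i}(q_i)) \le \CBG(n,1,1,2)$, we get $\int_{B_1^{N_i}(q_i)} h^i \le \eta \cdot \CBG(n,1,1,2) \cdot \vol(B_1^{N_i}(q_i))$. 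Apply \autoref{weak_1_1_inequality} with $\rho = 1$ and threshold $c$ chosen as a fixed multiple of $10^{-n^2}$ (small compared to nothing but bounded below by an absolute constant): the set $E_i := \{x \in B_1^{N_i}(q_i) : \Mx_1 h^i(x) > c\}$ has $\vol(E_i) \le \frac{\C11}{c} \eta \CBG(n,1,1,2) \vol(B_1^{N_i}(q_i))$, which is $\le \mydelta \cdot \vol(B_1^{N_i}(q_i))$ provided $\eta$ — hence $\tildeeps_1$ — was chosen small enough relative to $\mydelta$; here one also uses that $\vol(B_1^{N_i}(q_i))$ is comparable (via $\CBG$) to $\vol(B_{1/2}^{N_i}(q_i))$ so that "large volume in $B_1$" transfers to "large volume in $B_{r}(q_i)$" after unscaling. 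Define $G_r^1(q_i) \subseteq B_r(q_i)$ as the unscaling of $B_{1}^{N_i}(q_i) \setminus E_i$ (intersected with $B_r(q_i)$), which then has $\vol_{M_i}(G_r^1(q_i)) \ge (1-\mydelta)\vol_{M_i}(B_r(q_i))$.

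Finally I would verify the splitting. Fix $x_i \in G_r^1(q_i)$ and any $\lambda_i \to \infty$; I may assume $\lambda_i r \to \infty$ (for $i$ large, since $\lambda_i \to \infty$) and pass to a subsequence realising a given sublimit $(Y,\cdot)$. Consider the further-rescaled manifolds $\lambda_i M_i = (\lambda_i r) N_i$ based at (the image of) $x_i$. The functions $\lambda_i r \cdot f_j^i$, restricted to balls of radius $\lambda_i r \to \infty$ in $\lambda_i M_i$, are still harmonic and $L$-Lipschitz, and for each fixed $R > 0$ the average integral $\avgintsmall_{B_R} \big(\sum_{j,l}|\langle\nabla(\lambda_i r f_j^i),\nabla(\lambda_i r f_l^i)\rangle - \delta_{jl}| + \sum_j \|\Hess(\lambda_i r f_j^i)\|^2\big)$ equals $\avgintsmall_{B_{R/(\lambda_i r)}^{N_i}(\cdot)}\big(\sum_{j,l}|\ldots - \delta_{jl}| + (\lambda_i r)^{-2}\sum_j\|\Hess f_j^i\|^2\big)$, a ball of radius $R/(\lambda_i r) \le 1$ around a point in $B_1^{N_i}(q_i)\setminus E_i$; since $\Mx_1 h^i(x_i) \le c$ at that point, this average is $\le c$, and moreover the Hessian part alone tends to $0$ because of the extra $(\lambda_i r)^{-2}$ factor. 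Hence the hypotheses of \autoref{gen_product_lemma} are met (with $10^{-n^2}$ in place of $c$, having chosen $c \le 10^{-n^2}$), so $(B_{\lambda_i r}(\cdot),\cdot)$, and therefore $Y$, is isometric to $\rr^k \times Y'$.

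\textbf{Main obstacle.} The delicate point is the interplay of scales and constants: \autoref{gen_product_lemma} demands the gradient-defect average to be below the \emph{fixed} threshold $10^{-n^2}$ \emph{at every radius} along the sequence, while \autoref{prop-M} only controls a single scale. The maximum-function / weak type 1-1 step is exactly what bridges this — it converts "small total integral at scale $1$" into "pointwise-small averages at all scales $\le 1$ off a set of small volume" — but it requires tracking that the Bishop–Gromov constants $\CBG(n,1,1,2)$ and $\CBG(n,-1,\tfrac12,1)$ (which degrade slightly because the curvature bound on $N_i$ is $-(n-1)r^2$, not $0$) stay bounded independently of $i$ and $r \le \tildeeps_1$, and that the threshold $c$ can simultaneously be taken $\le 10^{-n^2}$ \emph{and} large enough that $\C11/c$ times the small total mass is still $\le \mydelta$. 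Getting $\tildeeps_1$ to depend only on $\mydelta, n, k$ while absorbing all these comparison constants is the real content; the rest is the routine rescaling bookkeeping indicated above.
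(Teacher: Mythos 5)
Your proposal is correct and takes essentially the same route as the paper's proof: apply \autoref{prop-M} on $r^{-1}M_i$ with the smallness parameter chosen of size $\mydelta/\bigl(\C11\cdot 10^{n^2}\cdot\CBG(n,-1,1,2)\bigr)$, define the good set via the $1$-maximum function of the gradient-defect plus Hessian term with threshold $10^{-n^2}$ using the weak type 1-1 inequality, and then rescale the harmonic functions (noting the extra $(\lambda_i r)^{-2}$ on the Hessian term) to invoke \autoref{gen_product_lemma}. Two cosmetic points only: the uniform volume-ratio bound should be $\CBG(n,-1,1,2)$ rather than $\CBG(n,1,1,2)$, and no comparison between $B_{1/2}$ and $B_1$ is needed, since $B_1^{r^{-1}M_i}(q_i)$ unscales exactly to $B_r(q_i)$ and volume ratios are scale-invariant.
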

 
\begin{proof}
 Define $C(n) := \C11 \cdot\ 10^{n^2} \cdot \CBG(n,-1,1,2)$, 
 and let $\tildeeps_1 = \tildeeps_1(\mydelta;n,k)$ 
 be the constant $\tildeeps(\frac{\mydelta}{C(n)},\id,2;n,k)$ 
 and $L(n)$ be as in \autoref{prop-M}.
 Further, let $0 < r \leq \tildeeps_1$ and $q_i \in M_i$ satisfy
 \[\dghpt{\tildeeps_1}{r^{-1} M_i}{q_i}{\rr^k}{0}.\]
 Then there are harmonic and $L$-Lipschitz functions 
 $f^i_j : B_{2}^{r^{-1} M_i}(q_i) \to \rr$, $1 \leq j \leq k$, satisfying
 \[
  \avgint_{B^{r^{-1} M_i}_2(q_i)} \psi_\nabla(f^i) 
  + \psi_H(f^i) \dV_{r^{-1} M_i} 
  \leq \frac{\mydelta}{C(n)}
 \]
 where
 $\psi_\nabla(f^i) 
 := \sum_{j,l=1}^k |\langle \nabla f^i_j, \nabla f^i_l \rangle - \delta_{jl}|$ 
 and 
 $\psi_H(f^i) 
 := \sum_{j=1}^k \norm{\Hess(f^i_j)}^2$.
 Define
 \[
  G_i 
  := \{x_i \in B_{1}^{r^{-1} M_i}(q_i) 
  \mid \Mx_{1}^{r^{-1} M_i} (\psi_\nabla(f^i) + \psi_H(f^i)) (x_i) 
  < 10^{-n^2}\}
 \]
 where the $1$-maximum function is taken 
 with respect to $d_{r^ {-1} M_i} = \frac{1}{r} d_{M_i}$.
 Using \autoref{weak_1_1_inequality},
 the volume of this set can be estimated by
 \begin{align*}
  &\vol_{r^{-1} M_i}(B_{1}^{r^{-1} M_i}(q_i) \setminus G_i) \\
  &\leq \frac{C(n)}{\CBG(n,-1,1,2)} \cdot \frac{\mydelta}{C(n)} 
  \cdot \vol_{r^{-1} M_i}({B_{2}^{r^{-1} M_i}(q_i)}) \\
  &\leq \mydelta \cdot \vol_{r^{-1} M_i}({B_{1}^{r^{-1} M_i}(q_i)}).
 \end{align*}
 Hence, regarding $G_{r}^1(q_i) := G_i$ as a subset of $M_i$,
 \begin{align*}
  \frac{\vol_{M_i}(G_{r}^1(q_i))}{\vol_{M_i}(B_{r}^{M_i}(q_i))}
  = \frac{\vol_{r^{-1} M_i}(G_i)}{\vol_{r^{-1} M_i}(B_{1}^{r^{-1} M_i}(q_i))}
  \geq 1-\mydelta.
 \end{align*}
 Now let $x_i \in G_r^1(q_i)$ and $\lambda_i \to \infty$ be arbitrary.
 Define $r_i := \lambda_i \cdot r \to \infty$ and let $0 < \rho \leq r_i$.
 Since 
 $ B_{r_i}^{\lambda_i M_i}(x_i) 
 = B_1^{r^{-1} M_i}(x_i) 
 \subseteq B_2^{r^{-1} M_i}(q_i)$,
 the rescaled maps 
 \[\tilde{f}^i_j := r_i \cdot f^i_j : B_{r_i}^{\lambda_i M_i}(x_i) \to \rr\]
 are well defined, harmonic and $L$-Lipschitz. 
 It is straightforward to see
 \begin{align*}
  &\avgint_{B_{\rho}^{\lambda_i M_i}(x_i)} \psi_\nabla(\tilde{f}^i) \dV_{\lambda_i M_i}
  \leq 10^{-n^2}
  \quad\aand\\
  &\avgint_{B_{\rho}^{\lambda_i M_i}(x_i)} \psi_H(\tilde{f}^i) \dV_{\lambda_i M_i} 
  \leq \frac{10^{-n^2}}{r_i^2} \to 0 \as i \to \infty.    
 \end{align*}
 By \autoref{gen_product_lemma}, any sublimit of $(\lambda_i M_i, x_i)$ 
 has the form $(\rr^k \times Y', \cdot)$ for some metric space $Y'$.
\end{proof}

\subsection{Construction of 
\texorpdfstring{$G_r^2(q_i)$}{second set} 
and \texorpdfstring{$\lambda_i$}{scales}}
\label{sec:second_set}
The aim of this subsection is to find 
a rescaling sequence $\lambda_i \to \infty$ 
and a family of subsets $G_r^2(q_i) \subseteq B_r(q_i)$
with the following two properties: 
On the one hand, every single rescaled manifold $\lambda_i M_i$ 
(with a base point from $G_r^2(q_i)$) 
is close to a product of $\rr^k$ and a compact metric space.
On the other hand, the sublimits of sequences $(\lambda_i M_i,x_i)$ 
with base points $x_i \in G_r^2(q_i)$ have the same dimension 
(depending not on the base points but only on the choice of the subsequence).

Before motivating the procedure, 
recall the definition of time-dependent vector fields, cf.~\cite{Lee}: 
A time-dependent vector field on a Riemannian manifold $M$ 
is a continuous map $X: I \times M \to TM$, where $I \subseteq \rr$ is an interval,
such that $X^t$ is a vector field for all $t \in I$,
i.e.~$X^t$ satisfies $X^t_p := X^t(p) := X(t,p) \in T_pM$ for all $(t,p) \in I \times M$.
Such a time-dependent vector field $X: I \times M \to TM$ 
is called \emph{piecewise constant in time} 
if there exist disjoint sub-intervals $I = I_1 \amalg \ldots \amalg I_n$ 
such that $X^s = X^t$ for all $1 \leq i \leq n$ and $s,t \in I_i$. 
 
For arbitrary $s \in I$ and $I-s := \{\tau-s \mid \tau \in I\}$, 
a curve $c : I-s \to M$ is called \emph{$s$-integral curve of $X$} if
$c'(t) = X^{s+t}_{c(t)}$ 
for all $t \in I-s$.
A $0$-integral curve is also called \emph{integral curve of $X$}.

Furthermore,
there exist an open set 
$\Omega \subseteq \bigcup_{s\in I} \{s\} \times (I-s) \times M$ 
and a map $\Phi : \Omega \to M$
satisfying the following: 
for any $(s,p) \in I \times M$, the set 
$\Omega^{(s,p)} := \{t \in I-s \mid (s,t,p) \in \Omega\}$ 
is an open interval which contains $0$,
and for any fixed $(s,p) \in I \times M$ 
the map $c: \Omega^{(s,p)} \to M$ defined by $c(t) := \Phi(s,t,p)$ 
is the unique maximal $s$-integral curve of $X$ with starting point $p$.
Using the notation $\phi^s_t := \Phi(s,t,\cdot)$, 
this is equivalent to $\phi$ being a maximal solution of
$\frac{d}{dt}_{|t=t_0} \phi^s_t(p) = X^{s+t_0}_{\phi^s_{t_0}(p)}$ 
and
$\phi^s_0 = \id$.
Such a $\Phi$ is called \emph{flow of $X$}.
Moreover, the following is true:
If $p \in M$ and $s,t,u$ are times 
with $(s,t,p) \in \Omega$ and $(s+t, u, \phi^s_t(p)) \in \Omega$,
then $(s,t+u,p) \in \Omega$ and 
$\phi^{s+t}_u \circ \phi^s_t (p) = \phi^s_{t+u} (p)$.
In particular, if defined, $\phi^{s+t}_{-t}$ is the inverse of $\phi^s_t$.

A time-dependent vector field $X$ has \emph{compact support} 
if there exists a compact set $K \subseteq M$ 
such that for all $t \in I$ 
the vector fields $X^t$ have support $K$.
In this case, the flow $\Phi$ exists for all times.

\par\medskip
In order to prove that two blow-ups have the same dimension, 
the following will be established and used:
Let $X_i : [0,1] \times M_i \to TM_i$ be time-dependent vector fields with 
$\int_0^1 (\Mx_{2r}(\norm{\nabla.X_i^t}^{3/2}) (c_i(t)) )^{2/3} \dt 
< \mydelta$ 
for all $i \in \nn$
where $\mydelta > 0$ and the $c_i$ are integral curves. 
Moreover, let the $X_i$ be divergence free, 
i.e.~the flows are measure preserving.
Then any blow-ups coming from the sequences 
with base points $c_i(0)$ and $c_i(1)$, respectively,
have the same dimension,
i.e.~if $\lambda_i \to \infty$ with
\[ 
 (\lambda_i M_i, c_i(0)) \to (Y_0,y_0) 
 \quad\aand\quad 
 (\lambda_i M_i, c_i(1)) \to (Y_1,y_1),
\]
then $\dim(Y_0) = \dim(Y_1)$.
This will be proven in \autoref{sec:C=>dim}.

Since \GH convergence is preserved by shifting base points a little bit,
the same statement is true 
if the base points $c_i(0)$ and $c_i(1)$, respectively,
are replaced by points $x_i$ and $y_i$, respectively,
where $\lambda_i \cdot d(c_i(0),x_i) < C$ 
and $\lambda_i \cdot d(c_i(1),y_i) < C$ 
for some $C > 0$ (independent of $i$).
This motivates the following definition.

\begin{defn}\label{dfn:close_to_zooming_in}
 Let $M$ be a complete connected \ndim Riemannian manifold 
 and $r, C, \mydelta>0$.
 A subset $B_r(q)' \subseteq B_r(q)$ has the 
 \emph{$\mathcal{C}(M,r,C,\mydelta)$-property} if the following holds:
 
 For all pairs of points $x,y \in B_r(q)'$
 there exists a time-dependent vector field $X : [0,1] \times M \to TM$ 
 which is piecewise constant in time and has compact support 
 and an integral curve $c:[0,1] \to M$ satisfying
 the following conditions:
 \begin{enumerate}
  \item 
   The vector field $X^t$ is divergence free on $B_{10r}(c(t))$ 
   for all $0 \leq t \leq 1$,
  \item 
   $d(x,c(0)) < C$, $d(y,c(1)) < C$ and
  \item 
   $\int_0^1 (\Mx_{2r}(\norm{\nabla.X^t}^{3/2}) (c(t)) )^{2/3} \dt 
   < \mydelta$.
 \end{enumerate}
\end{defn}
Note the following:
If a subset $B_r(q)' \subseteq B_r(q)$ has the $\mathcal{C}(M,r,C,\mydelta)$-property
and is regarded as a subset $B_r(q)' \subseteq B_{\lambda r}^{\lambda M}(q)$, where $\lambda > 0$,
then it has the $\mathcal{C}(\lambda M,\lambda r, \lambda C,\mydelta)$-property.
For this reason, the notation of the $\mathcal{C}$-property contains the manifold $M$.
\par\medskip

In order to construct the subset $G_r^2(q_i)$, 
the following statement is used:
There is a rescaling factor such that, 
if a manifold is sufficiently close to $\rr^k$, 
the rescaled manifold is close to a product. 
This statement will be proven by contradiction 
using the following theorem of Kapovitch and Wilking 
where the first part is the first part of the original theorem 
and the second is taken from its proof.

\begin{rescthm}
[Rescaling Theorem {\cite[Theorem 5.1]{kapovitch-wilking}}]
\label{rescaling-theorem}
 Let $(M_i,p_i)_{i \in \nn}$ be a sequence 
 of pointed \ndim Riemannian manifolds
 and $r_i \to \infty$ and $\mu_i \to 0$ 
 be sequences of positive real numbers 
 such that $B_{r_i}^{M_i}(p_i)$ has Ricci curvature larger than $- \mu_i$ 
 and $\B_{r_i}^{M_i}(p_i)$ is compact.
 Suppose that $(M_i,p_i)$ converges to $(\rr^k,0)$ for some $k < n$. 
 After passing to a subsequence, 
 there is a compact metric space $K$ with $\diam(K) = 10^{-n^2}$, 
 a family of subsets $G_1(p_i) \subseteq B_1(p_i)$ 
 with $\frac{\vol(G_1(p_i))}{\vol(B_1(p_i))} \to 1$,
 a sequence $\lambda_i \to \infty$ 
 and a sequence $\mydelta_i \to 0$ 
 such that the following holds:
 \begin{enumerate}
  \item 
   For all $q_i \in G_1(p_i)$, 
   the isometry type of the limit of any convergent subsequence 
   of $(\lambda_i M_i, q_i)$ 
   is given by the metric product $\rr^k \times K$.
  \item 
   The set $G_1(p_i)$ has the $\mathcal{C}(M_i,1,\frac{9^n}{\lambda_i},\mydelta_i)$-property. 
 \end{enumerate}
\end{rescthm}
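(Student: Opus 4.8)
The plan is to adapt the argument of Kapovitch and Wilking in \cite{kapovitch-wilking}. Since $(M_i,p_i)\to(\rr^k,0)$ with $k<n$, the sequence collapses. First I would, proceeding exactly as in the proof of \autoref{lem:set_where_all_blow_ups_split__locally}, use \autoref{CC_existence_harmonic_functions} to produce harmonic $L$-Lipschitz maps $\Phi^i=(b^i_1,\dots,b^i_k)\colon B_2(p_i)\to\rr^k$ which are $\eps_i$-\GH approximations onto $B_1(0)$ and satisfy $\avgintsmall_{B_2(p_i)}\psi(f^i)\to0$, where
\[
 \psi(f^i):=\sum_{j,l=1}^k|\langle\nabla b^i_j,\nabla b^i_l\rangle-\delta_{jl}|+\sum_{j=1}^k\norm{\Hess b^i_j}^2 ,
\]
and then apply the weak type 1-1 inequality (\autoref{weak_1_1_inequality}) to define $G_1(p_i)\subseteq B_1(p_i)$ as the set of points at which $\Mx\psi(f^i)<\delta_i$, for a sequence $\delta_i\to0$ chosen so that $\delta_i^{-1}\avgintsmall_{B_2(p_i)}\psi(f^i)\to0$; then $\vol(G_1(p_i))/\vol(B_1(p_i))\to1$. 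The purpose of this set is that near every point of $G_1(p_i)$ the functions $b^i_j$ are controlled \emph{at all scales up to a definite size}: their gradients are almost orthonormal and almost parallel, and the fibres of $\Phi^i$ are almost totally geodesic.

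Next I would construct the rescaling sequence. The idea is that $\lambda_i$ is the reciprocal of the \emph{first} scale at which blowing $M_i$ up near a good point ceases to look $k$-dimensional: roughly, $\lambda_i^{-1}$ is the largest $s\le1$ such that $(s^{-1}M_i,q)$ is still $10^{-n^2}$-close to $\rr^k$ up to resolution $10^{-n^2}$ for every $q\in G_1(p_i)$. Since $(M_i,p_i)\to(\rr^k,0)$, this forces $\lambda_i\to\infty$. Now fix $q_i\in G_1(p_i)$ and let $(Y,y)$ be any sublimit of $(\lambda_i M_i,q_i)$. The rescaled functions $\lambda_i b^i_j$ on $B_R^{\lambda_i M_i}(q_i)=B_{R/\lambda_i}^{M_i}(q_i)$ are harmonic and $L$-Lipschitz, their Hessian-squared averages tend to $0$ (gaining a further factor $\lambda_i^{-2}$ from the rescaling), and the scale-invariant term $\sum_{j,l}|\langle\nabla b^i_j,\nabla b^i_l\rangle-\delta_{jl}|$ is bounded by $\delta_i$ at every scale on $G_1(p_i)$; hence \autoref{gen_product_lemma} applies and $Y\cong\rr^k\times K_{q_i}$ for some metric space $K_{q_i}$. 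The threshold property defining $\lambda_i$ then forces $K_{q_i}$ to be compact with $\diam(K_{q_i})=10^{-n^2}$: the diameter is at most $10^{-n^2}$ because at scale $\lambda_i$ the space is $10^{-n^2}$-close to $\rr^k$ to that resolution, and it cannot be smaller because a slightly coarser scale already fails this closeness. A priori $K_{q_i}$ still depends on $q_i$ and on the chosen subsequence.

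It remains to bring in the flow, which yields part~(2) and the point-independence asserted in part~(1). Given $x,y\in G_1(p_i)$, steer the $\Phi^i$-coordinate of $x$ towards $\Phi^i(y)$ along a time-dependent vector field of the form $X^t=\sum_j a_j(t)\,\nabla b^i_j$, piecewise constant in time and cut off away from the relevant region so that it has compact support, and let $c\colon[0,1]\to M_i$ be the integral curve starting near $x$. Each $\nabla b^i_j$ is divergence free because $b^i_j$ is harmonic, so $X^t$ is divergence free near $c(t)$, which is condition~(i) of \autoref{dfn:close_to_zooming_in}; once the $\Phi^i$-coordinates agree, $c(1)$ lies essentially in the same $\Phi^i$-fibre as $y$, which has diameter of order $\lambda_i^{-1}$ and in particular less than $9^n/\lambda_i$, which is~(ii); and $\nabla.X^t=\sum_j a_j\Hess b^i_j$ has norm bounded by a dimensional constant times $\psi(f^i)^{1/2}$, so combining $\Mx\psi(f^i)<\delta_i$ on $G_1(p_i)$ with \refSegmIneq (\autoref{segment-inequality}) to absorb the part of $c$ lying outside the good set yields~(iii) with constant $\delta_i$. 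This is precisely the $\mathcal{C}(M_i,1,9^n/\lambda_i,\delta_i)$-property. For part~(1), the flow $\phi^0_1$ is measure preserving (as $X$ is divergence free) and, because $\delta_i\to0$, bi-Lipschitz with constant tending to $1$; passing to a \GH limit it becomes a measure-preserving $1$-bi-Lipschitz, hence isometric, map between the blow-up at $x$ and the blow-up at $c(1)$, and the blow-ups at $c(1)$ and at $y$ coincide since after rescaling by $\lambda_i$ their base points are at bounded distance and the limit is a metric product. Therefore $\rr^k\times K_x\cong\rr^k\times K_y$, and by uniqueness of the maximal Euclidean factor of such a product, $K_x\cong K_y$; passing to a single subsequence makes all of these pairwise isometric compact spaces converge to one $K$ with $\diam(K)=10^{-n^2}$, which gives part~(1).

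The step I expect to be the main obstacle is the construction of the single scale $\lambda_i$ in the second paragraph. One must show simultaneously that the effective scale of collapse is comparable for all base points in $G_1(p_i)$, so that a single sequence $\lambda_i$ works for the whole set, and that the threshold choice produces a compact factor of diameter \emph{exactly} $10^{-n^2}$ rather than merely at most $10^{-n^2}$. This relies on the all-scales control of the Cheeger--Colding functions on $G_1(p_i)$, so that the collapsed local geometry varies slowly over the set, on a continuity and compactness analysis of the family of rescaled balls $(\lambda M_i,q)$, and on coordinating with the rate at which $\delta_i\to0$. The remaining delicate points --- controlling the maximal function along integral curves that may exit $G_1(p_i)$, and justifying both the bounded base-point shift after rescaling and the cancellation of the $\rr^k$-factor --- are comparatively routine but still require care.
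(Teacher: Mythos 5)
This theorem is not proved in the paper at all: part (1) is quoted verbatim from \cite[Theorem 5.1]{kapovitch-wilking} and part (2) is extracted from the proof given there, so there is no internal argument of the paper to compare your proposal against. Judged on its own terms, your sketch correctly identifies several genuine ingredients of the Kapovitch--Wilking argument --- the splitting maps of \autoref{CC_existence_harmonic_functions}, a good set cut out by the weak type 1-1 inequality (\autoref{weak_1_1_inequality}) on which the gradients and Hessians are controlled at all scales, divergence-free fields of the form $\sum_j a_j(t)\,\nabla b^i_j$ whose flows produce the $\mathcal{C}$-property of \autoref{dfn:close_to_zooming_in}, and \autoref{gen_product_lemma} to split off the $\rr^k$-factor of any blow-up --- but as written it is not a proof.

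The genuine gap sits exactly where you flag it, and flagging it does not fill it: the construction of a \emph{single} sequence $\lambda_i\to\infty$ such that, for every $q_i\in G_1(p_i)$, the blow-up is $\rr^k\times K$ with $K$ compact of the normalized diameter $10^{-n^2}$, and with the \emph{same} $K$ for all base points along one subsequence. Your threshold definition of $\lambda_i$ presupposes what must be proved: that for each good point there is a definite transition scale at which the rescaled manifold stops looking $k$-dimensional yet the compact factor stays bounded away from a point and from being unbounded, and that these transition scales are uniformly comparable over all of $G_1(p_i)$ so that one $\lambda_i$ serves the whole set. A closeness-to-$\rr^k$ criterion at a fixed resolution does not by itself exclude that the limit at your threshold is $\rr^k$ itself (degenerate $K$) or has an unbounded second factor, nor does it give the diameter \emph{exactly} $10^{-n^2}$; in Kapovitch--Wilking this is the heart of the proof of Theorem 5.1 and occupies a substantial contradiction argument coordinating the Product Lemma, the choice of scales, and the flow construction, with the flow then used to propagate one isometry type to all good points. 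Relatedly, your final step ``measure-preserving and $1$-bi-Lipschitz in the limit, hence isometric'' is too quick: the bi-Lipschitz control from the $\mathcal{C}$-property holds only on a large-measure subset of a small ball, which is why the analogous argument in this paper (\autoref{prop:bilipschitz_implies_dimension}) only extracts equality of dimensions rather than isometry of the compact factors; upgrading to a global isometry of $\rr^k\times K_x$ with $\rr^k\times K_y$ requires additional work that your sketch does not supply.
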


\begin{lemma}\label{cor_rescaling_theorem}
 For every $\mydelta \in (0,1)$, $R>0$, $\eta > 0$ and $k < n$
 there exists a bound $\tildeeps = \tildeeps(\mydelta,R,\eta;n,k) > 0$ 
 such that for all pointed \ndim Riemannian manifolds $(M,p)$ 
 with $\Ric_M \geq -(n-1) \cdot \tildeeps^2$
 satisfying that $\B_{1/\tildeeps}(p)$ is compact 
 and \[\dghpt{\tildeeps}{M}{p}{\rr^k}{0}\]
 there is a factor $\lambda > 0$ 
 such that the following holds: 
 \begin{enumerate}
 \item\label{cor_rescaling_theorem--a} 
  There are 
  a subset of good points $G_1(p) \subseteq B_1(p)$ satisfying 
  \[\vol_M(G_1(p)) \geq (1- \mydelta) \cdot \vol_M(B_1(p))\] and 
  a compact metric space $K$ of diameter $1$
  such that for all $q \in G_1(p)$ 
  there is a point $\tilde{q} \in \rr^k \times K$ with 
  \[\dgh{R}{\lambda M}{q}{\rr^k \times K}{\tilde{q}} \leq \eta.\]
 \item\label{cor_rescaling_theorem--b} 
  The set $G_1(p)$ has the 
  $\mathcal{C}(M,1,\frac{9^n\cdot 10^{n^2}}{\lambda},\mydelta)$-property.
 \end{enumerate}  
\end{lemma}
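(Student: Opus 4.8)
The plan is to argue by contradiction, deducing the statement from \refRescThm by the standard principle that a manifold which is merely \emph{close} to $\rr^k$ behaves, after rescaling, like one that actually \emph{converges} to $\rr^k$. So suppose the claim fails for some fixed $\mydelta \in (0,1)$, $R>0$, $\eta>0$ and $k<n$. Then for each $i \in \nn$ I can choose a pointed \ndim Riemannian manifold $(M_i,p_i)$ with $\Ric_{M_i} \geq -(n-1)\cdot i^{-2}$, with $\B_{i}(p_i)$ compact, and with $\dghpt{1/i}{M_i}{p_i}{\rr^k}{0}$, such that for \emph{every} $\lambda>0$ at least one of \ref{cor_rescaling_theorem--a}, \ref{cor_rescaling_theorem--b} fails for $(M_i,p_i)$ and this $\lambda$. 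From $\dgh{i}{M_i}{p_i}{\rr^k}{0} \leq 1/i$ and the comparison $\dgh{r}{X}{p}{Y}{q} \leq 16\,\dgh{R'}{X}{p}{Y}{q}$ for $R' \geq r$ one gets $(M_i,p_i) \to (\rr^k,0)$ in the pointed \GH sense (\autoref{lem_background:GH_convergence_properties_I}); together with $r_i := i \to \infty$, compactness of $\B_{r_i}(p_i)$, and the Ricci lower bounds tending to $0$, this places me in the setting of \refRescThm.

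Next I would apply \refRescThm to $(M_i,p_i)$: after passing to a subsequence it produces a compact metric space $\hat K$ with $\diam(\hat K) = 10^{-n^2}$, subsets $\hat G_i \subseteq B_1(p_i)$ with $\vol(\hat G_i)/\vol(B_1(p_i)) \to 1$, scales $\hat\lambda_i \to \infty$, and $\hat\delta_i \to 0$ such that every sublimit of $(\hat\lambda_i M_i,q_i)$, for any choice of $q_i \in \hat G_i$, is isometric to $\rr^k \times \hat K$, and each $\hat G_i$ has the $\mathcal C(M_i,1,9^n/\hat\lambda_i,\hat\delta_i)$-property. Now rescale by $10^{n^2}$: set $\lambda_i := 10^{n^2}\hat\lambda_i \to \infty$, let $K$ be $\hat K$ with its metric multiplied by $10^{n^2}$ (so $\diam(K)=1$), and put $G_1(p_i) := \hat G_i$. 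Since $(\lambda_i M_i,q_i) = 10^{n^2}\cdot(\hat\lambda_i M_i,q_i)$ and rescaling commutes with products and fixes $\rr^k$, every sublimit of $(\lambda_i M_i,q_i)$ with $q_i \in G_1(p_i)$ is isometric to $\rr^k \times K$. Moreover $9^n/\hat\lambda_i = 9^n\,10^{n^2}/\lambda_i$, so $G_1(p_i)$ has the $\mathcal C(M_i,1,9^n\,10^{n^2}/\lambda_i,\hat\delta_i)$-property; since condition (c) of \autoref{dfn:close_to_zooming_in} only weakens as its last parameter grows, for all $i$ large enough (so that $\hat\delta_i < \mydelta$) the set $G_1(p_i)$ has the $\mathcal C(M_i,1,9^n\,10^{n^2}/\lambda_i,\mydelta)$-property and satisfies $\vol(G_1(p_i)) \geq (1-\mydelta)\,\vol(B_1(p_i))$. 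Thus conclusion \ref{cor_rescaling_theorem--b} and the volume part of \ref{cor_rescaling_theorem--a} hold for $(M_i,p_i)$ with $\lambda = \lambda_i$.

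What remains is the \emph{uniform} \GH bound in \ref{cor_rescaling_theorem--a}: that for all large $i$ every $q \in G_1(p_i)$ admits $\tilde q \in \rr^k\times K$ with $\dgh{R}{\lambda_i M_i}{q}{\rr^k\times K}{\tilde q} \leq \eta$. Here I would argue once more by contradiction: if it failed, there would be a subsequence and points $q_i \in G_1(p_i)$ that violate the bound for \emph{every} $\tilde q$. Extending $(q_i)$ arbitrarily to a full sequence with values in $G_1(p_i)$ (nonempty for large $i$ by the volume bound), and noting that $\Ric_{\lambda_i M_i} \geq -(n-1)(i\lambda_i)^{-2}$ and that $\B_{2R}^{\lambda_i M_i}(q_i) = \B^{M_i}_{2R/\lambda_i}(q_i) \subseteq \B_i(p_i)$ is compact once $2R/\lambda_i + 1 < i$, \precptnessThm lets me extract a convergent sub-subsequence of $(\lambda_i M_i,q_i)$ whose limit, by the previous paragraph, is isometric to $\rr^k\times K$ with some base point $\tilde q_\infty$; but then $\dgh{R}{\lambda_i M_i}{q_i}{\rr^k\times K}{\tilde q_\infty} \to 0$ along that sub-subsequence, contradicting that it stays above $\eta$. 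Hence \ref{cor_rescaling_theorem--a} also holds for $\lambda = \lambda_i$ when $i$ is large, so for such $i$ \emph{both} conclusions hold for the single value $\lambda = \lambda_i$, contradicting the choice of $(M_i,p_i)$; this proves the lemma. I expect this last step to be the main obstacle, since \refRescThm only controls the isometry types of sublimits of $(\lambda_i M_i,q_i)$ and not a convergence rate uniform in the base point $q \in G_1(p_i)$; uniformity is recovered by the nested compactness argument, which crucially uses that the theorem's conclusion holds for \emph{every} admissible sequence $q_i \in G_1(p_i)$.
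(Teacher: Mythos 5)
Your proposal is correct and follows essentially the same route as the paper, whose proof is exactly the contradiction argument you describe: apply \refRescThm to a putative sequence of counterexamples and rescale both the scales $\lambda_i$ and the compact factor $K$ by $10^{n^2}$. Your additional nested compactness argument, upgrading the subconvergence statement of \refRescThm to the uniform \GH bound over all base points in $G_1(p_i)$, is precisely the detail the paper leaves implicit in calling the argument straightforward, and you carry it out correctly.
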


\begin{proof}
 This is a straightforward contradiction argument 
 rescaling both the sequence $\lambda_i \to \infty$ and the compact space $K$
 occurring in \autoref{rescaling-theorem} by a factor $10^{n^2}$.
\end{proof}

Now rescaling the sequence $M_i$ 
such that each element is close enough to $\rr^k$ 
and applying the previous result,
one obtains factors $\lambda_i$ 
which basically are the sought-after rescaling sequence.
However, the lemma does provide $\lambda_i$ for every $i$, 
but does not give any information about whether or not 
$\lambda_i \to \infty \as i \to \infty$. 
In order to prove $\lambda_i \to \infty$, 
the fact is needed that spaces of different dimensions are not close. 
This in turn follows from the fact 
that sequences of limit spaces do not increase dimension.
For this, the following lemma is needed which states that, 
given a converging sequence of proper length spaces, 
there exists a rescaling sequence
such that the rescaled sequence converges to a tangent cone.

\begin{lemma}\label{lem:rescaling_converging_to_tangent_cone}
 Let $(X_i,p_i) \to (X,p)$ be a converging sequence of proper length spaces. 
 Then there exists $\mu_i \to \infty$ 
 such that for all $\lambda_i \to \infty$ with $\lambda_i \leq \mu_i$,
 $(\lambda_i X_i,p_i)$ subconverges to a tangent cone of $(X,p)$.
\end{lemma}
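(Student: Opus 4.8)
The plan is to extract a single sequence $\mu_i \to \infty$ that is "slow enough" relative to the convergence $(X_i,p_i) \to (X,p)$ so that rescaling by any $\lambda_i \le \mu_i$ decouples the two limiting processes: at scale $\lambda_i$ the spaces $X_i$ are still indistinguishable from $X$ (because $\lambda_i$ grows slowly compared to the rate at which $\dgh{R}{X_i}{p_i}{X}{p} \to 0$), while $\lambda_i \to \infty$ forces us into the tangent-cone regime of $X$. First I would fix, for each $m \in \nn$, the error $\eta_m := \sup_{i} \dgh{m}{X_i}{p_i}{X}{p}$ — no, more carefully: since $\dgh{m}{X_i}{p_i}{X}{p} \to 0$ as $i \to \infty$ for each fixed $m$, a diagonal argument produces an increasing sequence $i_1 < i_2 < \cdots$ such that $\dgh{m}{X_i}{p_i}{X}{p} \le 1/m$ for all $i \ge i_m$. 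Then define $\mu_i := m$ for $i_m \le i < i_{m+1}$ (and $\mu_i := 1$ for $i < i_1$); this $\mu_i \to \infty$ and has the property that $\dgh{\mu_i}{X_i}{p_i}{X}{p} \to 0$.

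Next I would record the scaling behaviour of the pointed \GH distance: for $\lambda \ge 1$ and any $\rho > 0$ one has $\dgh{\rho}{\lambda X_i}{p_i}{\lambda X}{p} = \lambda \cdot \dgh{\rho/\lambda}{X_i}{p_i}{X}{p}$, and by the comparison $\dgh{r}{\cdot}{\cdot}{\cdot}{\cdot} \le 16\,\dgh{R}{\cdot}{\cdot}{\cdot}{\cdot}$ for $r \le R$ stated in the excerpt, this is at most $16\lambda \cdot \dgh{\mu_i}{X_i}{p_i}{X}{p}$ whenever $\rho/\lambda \le \mu_i$, i.e.\ whenever $\rho \le \lambda \mu_i$. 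Now let $\lambda_i \to \infty$ with $\lambda_i \le \mu_i$ be arbitrary. Fix $\rho > 0$. For all large $i$ we have $\rho \le \lambda_i \le \lambda_i \mu_i$, hence
\[
 \dgh{\rho}{\lambda_i X_i}{p_i}{\lambda_i X}{p} \le 16\,\lambda_i \cdot \dgh{\mu_i}{X_i}{p_i}{X}{p} \le 16\,\mu_i \cdot \dgh{\mu_i}{X_i}{p_i}{X}{p} \to 0,
\]
by the defining property of $\mu_i$. Thus, for every $\rho$, the difference between $(\lambda_i X_i, p_i)$ and $(\lambda_i X, p)$ in the $\rho$-ball pointed \GH distance tends to $0$; equivalently, any pointed \GH sublimit of $(\lambda_i X_i, p_i)$ coincides with a pointed \GH sublimit of $(\lambda_i X, p)$.

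Finally I would invoke \precptnessThm (Gromov pre-compactness, valid here since all spaces are proper length spaces with the relevant uniform local bounds) to pass to a subsequence along which $(\lambda_i X, p)$ converges; since $\lambda_i \to \infty$, its limit is by definition a tangent cone of $(X,p)$. Along the same subsequence $(\lambda_i X_i, p_i)$ converges to the same space by the estimate above, which is exactly the assertion. The one point requiring a little care — and the likely main obstacle — is the bookkeeping that makes $\mu_i \cdot \dgh{\mu_i}{X_i}{p_i}{X}{p} \to 0$ rather than merely $\dgh{\mu_i}{X_i}{p_i}{X}{p} \to 0$: this is why one must not take $\mu_i$ to grow as fast as possible, but rather choose the thresholds $i_m$ generously enough that $\dgh{m}{X_i}{p_i}{X}{p} \le 1/m^2$ (say) for $i \ge i_m$, so that $\mu_i \cdot \dgh{\mu_i}{X_i}{p_i}{X}{p} \le 1/\mu_i \to 0$. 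With that strengthening the diagonal argument goes through unchanged and the rest is the routine scaling computation above.
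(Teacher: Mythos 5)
Your proposal is correct and takes essentially the same route as the paper: there one picks $\eps_i \to 0$ with $\dgh{1/\eps_i}{X_i}{p_i}{X}{p} \leq \eps_i$ and sets $\mu_i := \eps_i^{-1/2}$, which is exactly your requirement that $\mu_i \cdot \dgh{\mu_i}{X_i}{p_i}{X}{p} \to 0$, and then uses the same scaling estimate to compare the rescaled $X_i$ with the rescaled $X$, whose sublimits are tangent cones by definition. The only cosmetic differences are that you obtain the slow growth of $\mu_i$ by a diagonal argument instead of the explicit square-root choice, and that you compare $(\lambda_i X_i,p_i)$ with $(\lambda_i X,p)$ directly for every $\lambda_i \leq \mu_i$, whereas the paper first treats $\mu_i$ itself and then passes to general $\lambda_i$ via the convergent ratio $\lambda_i/\mu_i$; both arguments rely on the same (implicit) subconvergence of the rescalings of $X$, which in the paper's applications is guaranteed because $X$ lies in $\limitspaces$.
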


\begin{proof}
 For $\eps_i \to 0$ such that $\dghpt{\eps_i}{X_i}{p_i}{X}{p}$,
 let $\mu_i := \eps_i^{-1/2}$. 
 For fixed $r > 0$, let $i$ be large enough such that $r \leq \eps_i^{-3/2}$.
 Then $\frac{r}{\mu_i} \leq \frac{1}{\eps_i}$ and 
 \begin{align*}
  \dgh{r/\mu_i}{X_i}{p_i}{X}{p}
  \leq 16 \cdot \eps_i
  \to 0 \as i \to \infty.
 \end{align*}
 After passing to a subsequence, 
 $(\mu_i X,p)$ converges to a tangent cone $(Y,q)$.
 Then 
 \begin{align*}
  &\dgh{r}{\mu_i X_i}{p_i}{Y}{q}
  \\&\leq \mu_i \cdot \dgh{r/\mu_i}{X_i}{p_i}{X}{p} + \dgh{r}{\mu_i X}{p}{Y}{q}
  \\&\leq 16 \cdot \eps_i \cdot \mu_i + \dgh{r}{\mu_i X}{p}{Y}{q}
  \\&\to 0,
 \end{align*} 
 and this proves that $(\mu_i X_i, p_i)$ subconverges to $(Y,q)$.

 Now let $\lambda_i \to \infty$ with $\lambda_i \leq \mu_i$. 
 After passing to a further subsequence, 
 $\frac{\lambda_i}{\mu_i} \to \alpha$
 for some $\alpha \leq 1$, 
 $(\lambda_i X,p) \to (\alpha Y, q)$ 
 and $(\lambda_i M_i, p_i) \to (\alpha Y,q)$.
 In particular, $(\lambda_i X_i, p_i)$ subconverges 
 to a tangent cone of $(X,p)$.
\end{proof}

Let $\limitspaces$ denote the class of all pointed metric spaces 
that can occur as \GH limit of a sequence 
of pointed complete connected \ndim Riemannian manifolds $M_i$ 
with $\Ric_{M_i} \geq -(n-1)$.

\begin{lemma}\label{lem:limit_of_limits_decreases_dimension}
 Let $(X_i,p_i) \to (X,p)$ be converging spaces in $\limitspaces$
 such that all $X_i$ have the same dimension $\dim(X_i) = k$. 
 Then $\dim(X) \leq k$.
\end{lemma}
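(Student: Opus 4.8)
The plan is to reduce to tangent cones and use the fact that $k$-regular points are dense in the $X_i$, together with the convergence stability of tangent cones along a convergent sequence. Concretely, I would argue as follows. Suppose for contradiction that $\dim(X) = m > k$. Then the set $\mathcal R_m$ of $m$-regular points of $X$ has full measure, so in particular it is nonempty; pick $x \in \mathcal R_m$, so the (unique) tangent cone $C_x X$ equals $\rr^m$. Pick $x_i \in X_i$ with $x_i \to x$. The idea is that, after suitably rescaling, the $(X_i, x_i)$ must exhibit an $\rr^m$-like blow-up, which contradicts $\dim(X_i) = k < m$ via \autoref{lem_CN:Xgen} and \autoref{lem:limit_of_limits_decreases_dimension} applied one dimension lower, or more directly via a volume/Bishop-Gromov comparison.

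The key technical step is the following diagonal argument. Since $(X_i, x_i) \to (X, x)$ and $(\mu X, x) \to (\rr^m, 0)$ as $\mu \to \infty$ (uniqueness of the tangent cone at a generic point), I would choose, using \autoref{lem:rescaling_converging_to_tangent_cone}, a sequence $\mu_i \to \infty$ such that for any $\lambda_i \to \infty$ with $\lambda_i \leq \mu_i$ the rescaled sequence $(\lambda_i X_i, x_i)$ subconverges to a tangent cone of $X$ at $x$, which is forced to be $\rr^m$. So I can pick $\lambda_i \to \infty$, $\lambda_i \leq \mu_i$, with $(\lambda_i X_i, x_i) \to (\rr^m, 0)$. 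On the other hand, each $X_i$ has dimension $k$, so $\Xgen(X_i) = \mathcal R_k(X_i)$ has full measure and is in particular dense; choose $x_i' \in \mathcal R_k(X_i)$ with $d_{X_i}(x_i, x_i') \leq 1/\lambda_i$, so that $(\lambda_i X_i, x_i') \to (\rr^m, 0)$ as well (shifting the base point by a bounded amount in the rescaled metric does not change the limit). Now for each fixed $i$, rescale again: since $x_i'$ is $k$-regular in $X_i$, there is $\nu_i$ large with $\dghpt{1/i}{\nu_i X_i}{x_i'}{\rr^k}{0}$. A nested diagonal choice of scales $\sigma_i$ combining $\lambda_i$ and $\nu_i$ (taking $\sigma_i$ between $\lambda_i$ and $\lambda_i \nu_i$, chosen so that $\sigma_i/\lambda_i \to \infty$ slowly enough) produces a sequence with $(\sigma_i X_i, x_i')$ converging both to a tangent cone of the limit $\rr^m$ — namely $\rr^m$ — and, because $\sigma_i / \lambda_i \to \infty$ is dominated by $\nu_i$, to a tangent cone of $(X_i, x_i')$ scaled down, which is $\rr^k$. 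This contradiction (a sequence cannot converge to two non-isometric spaces) yields $m \leq k$.

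The main obstacle I anticipate is making the diagonal choice of scales precise: one must ensure simultaneously that (i) $\sigma_i \to \infty$, (ii) $\sigma_i \leq \mu_i$ (or a comparable bound) so that \autoref{lem:rescaling_converging_to_tangent_cone} still forces the limit of $(\sigma_i X_i, x_i')$ to be a tangent cone of $X$, hence $\rr^m$ (using that $(\sigma_i X, x) \to \rr^m$ too, since $\rr^m$ is the unique tangent cone and the scales still diverge), and (iii) $\sigma_i$ is small enough relative to the per-$i$ scales $\nu_i$ witnessing $k$-regularity of $x_i'$ so that $(\sigma_i X_i, x_i')$ is forced to look $k$-dimensional. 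Quantitatively, (iii) should be handled not by literal convergence to $\rr^k$ — that would need $\sigma_i \ll \nu_i$, which is compatible with $\sigma_i \to \infty$ — but rather by noting that $(\sigma_i X_i, x_i')$, being a rescaling of $X_i$ at scale tending to a value where $X_i$ already resolves its $k$-dimensional tangent cone, cannot Gromov-Hausdorff converge to $\rr^m$ with $m > k$; here one may invoke a dimension-monotonicity fact (e.g.\ that $\rr^m$ with $m > k$ is not a limit of spaces each of which is, after an intermediate rescaling, close to $\rr^k$) or simply compare renormalized volume ratios via the Bishop-Gromov estimate in \autoref{thm-background:precompactness-measured}, since the volume-growth exponent of $\rr^m$ strictly exceeds that of $\rr^k$. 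Once the scale bookkeeping is arranged, the contradiction is immediate and the lemma follows.
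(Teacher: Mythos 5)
Your reduction has a genuine gap at the decisive step. After arranging $\lambda_i\ll\sigma_i\ll\nu_i$, the fact that $x_i'$ is $k$-regular in $X_i$ gives you \emph{no} control over $(\sigma_i X_i,x_i')$: $k$-regularity only says that $(\lambda X_i,x_i')$ is close to $\rr^k$ for all $\lambda$ \emph{above} some threshold (your $\nu_i$), and below that threshold the space may look like anything. So the only content of your claimed contradiction ``$(\sigma_i X_i,x_i')$ cannot converge to $\rr^m$'' is: a sequence of spaces in $\limitspaces$, each of Colding--Naber dimension $k$, cannot converge to $\rr^m$ with $m>k$ --- which is exactly the lemma you are proving (applied to $\sigma_i X_i\to\rr^m$; dimension is scale-invariant), and is essentially \autoref{lem:spaces_of_different_dimension_are_not_close}, which the paper \emph{deduces from} this lemma. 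Your fallback via volume growth does not rescue this: the dimension of $X_i$ is defined through a.e.\ tangent cones, not through the growth exponent of any fixed measure; you have not fixed measures on the $X_i$, and even after choosing renormalized limit measures as in \autoref{thm-background:precompactness-measured}, the exponent-$k$ behaviour at $k$-regular points of $X_i$ is an infinitesimal statement with thresholds depending on the point and on $i$, so it cannot be passed to the fixed unit scale at which the limit $\rr^m$ exhibits exponent $m$. This lack of uniformity across scales is precisely the problem, and Bishop--Gromov alone does not supply it.

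For comparison, the paper's proof gets the needed scale-uniform control by using that each $X_i\in\limitspaces$: it approximates $X_i$ by manifolds $M_{ij}$, blows up at a generic point of $X$ so that the $M_{ij}$ are close to $\rr^l$ ($l=\dim X$), produces $l$ harmonic functions via \autoref{prop-M}, and uses the weak type 1-1 inequality to find points $q_{ij}$ where the gradient almost-orthonormality holds on \emph{all} balls below a fixed radius simultaneously (the Hessian term even improves under rescaling). Then \autoref{gen_product_lemma} forces an $\rr^l$-factor in \emph{every} blow-up at $q_{ij}$, in particular in the one converging (via \autoref{lem:rescaling_converging_to_tangent_cone}) to the tangent cone $\rr^k$ of $X_i$ at a generic limit point $q_i$, giving $\rr^k\cong\rr^l\times Z$ and hence $l\le k$. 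That maximal-function mechanism is the missing ingredient in your intermediate-scale argument. A secondary, fixable point: since $\sigma_i/\lambda_i\to\infty$, the shift $d(x_i,x_i')\le 1/\lambda_i$ is not negligible at scale $\sigma_i$, so to identify the limit of $(\sigma_i X_i,x_i')$ as a tangent cone of $X$ you must apply \autoref{lem:rescaling_converging_to_tangent_cone} to the sequence $(X_i,x_i')\to(X,x)$ itself; but repairing this bookkeeping does not close the main gap above.
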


\begin{proof}
 In order to estimate $l := \dim(X)$, 
 take a generic point $x \in \Xgen$ 
 and construct a tangent cone $\rr^l$.
 The idea of this construction is 
 to consider sequences of manifolds $M_{ij}$ converging to $X_i$.
 For large $i$, these are sufficiently close to $X$, 
 and applying \autoref{prop-M} and \autoref{gen_product_lemma} 
 will give the claim.
 So, let $(M_{ij},p_{ij}) \to (X_i,p_i)$ as $j \to \infty$. 
 Without loss of generality, 
 assume $(X,p) = (\rr^l,0)$ and $\Ric_{M_{ij}} \geq -(n-1) \cdot \delta_i$ 
 for some monotonically decreasing sequence $\delta_i \to 0$.
 
 Choose $\eps_i \to 0$ 
 such that
 $\dgh{1/\eps_i}{X_i}{p_i}{\rr^l}{0} \leq \frac{\eps_i}{2}$.
 Without loss of generality, 
 $\dgh{1/\eps_i}{M_{ij}}{p_{ij}}{X_i}{p_i} \leq \frac{\eps_i}{2}$ 
 for all $j \in \nn$.
 Hence, 
 \[\dgh{1/\eps_i}{M_{ij}}{p_{ij}}{\rr^l}{0} \leq \eps_i.\]
 
 Define $g : \rr^+ \to \rr^+$ by 
 \[g(x) := \begin{cases}
            \eps_i 	& \text{if } \delta_i \leq x < \delta_{i-1},\\
            1		& \text{if } x \geq \delta_1,
           \end{cases}
 \]
 $c=c(n) := 2 \cdot \C11 \cdot \CBG(n,-1,\frac{1}{2},1)$
 and choose $\tildeeps = \tildeeps(\frac{10^{-n^2}}{c},g,1;n,l)$ 
 as in \autoref{prop-M}. 
 Let $i \in \nn$ be sufficiently large
 such that $\delta_i \leq \tildeeps$.
 Then $\Ric_{M_{ij}} \geq -(n-1) \cdot \delta_i$ and, 
 since $g(\delta_i) = \eps_i$,
 \[\dgh{1/g(\delta_i)}{M_{ij}}{p_{ij}}{\rr^l}{0} \leq g(\delta_i).\]
 So, there are $L=L(n)$ and harmonic $L$-Lipschitz maps 
 $f^{ij}_h:B_1^{M_{ij}}(p_{ij}) \to \rr$, $1 \leq h \leq l$,
 such that 
 \[
  \avgint_{B_1^{M_{ij}}(p_{ij})} \sum_{h_1,h_2=1}^l 
  |\langle \nabla f^{ij}_{h_1}, \nabla f^{ij}_{h_2} \rangle - \delta_{h_1h_2}| 
  + \sum_{h=1}^l \norm{\Hess(f^{ij}_h)}^2 \dV_{M_{ij}} < \frac{10^{-n^2}}{c}.
 \]
 In order to simplify notation, let 
 \[
  F^{ij} := \sum_{h_1,h_2=1}^l 
  |\langle \nabla f^{ij}_{h_1}, \nabla f^{ij}_{h_2} \rangle - \delta_{h_1h_2}| 
  + \sum_{h=1}^l \norm{\Hess(f^{ij}_h)}^2.
 \]
 With the usual argumentation,
 the set 
 \[
  G_{ij} := \{p\in \B_{1/2}^{M_{ij}}(p_{ij}) 
  \mid \Mx_{1/2}(F^{ij})(p) \leq 10^{-n^2} \}
 \] 
 is compact with 
 $\vol(G_{ij}) \geq \frac{1}{2} \cdot \vol(B_{1/2}^{M_{ij}}(p_{ij}))$
 and the sequence $(G_{ij})_{j \in \nn}$ subconverges 
 to a set $G_i \subseteq \B^{X_i}_{1/2}(p_i)$ with positive volume. 
 In particular, the intersection with $(X_i)_\textrm{gen}$ is nonempty.
 Without loss of generality, 
 assume that $(G_{ij})_{j \in \nn}$ itself already converges to $G_i$,
 and choose $q_{ij} \in G_{ij}$ converging to a $q_i \in (X_i)_\textrm{gen}$.
 
 Since $(M_{ij},q_{ij}) \to (X_i,q_i)$, 
 there exists $\mu^i_j \to \infty$
 as in \autoref{lem:rescaling_converging_to_tangent_cone}
 such that 
 $(\mu^i_j M_{ij},q_{ij}) \to (\rr^k,0)$ as $j \to \infty$.
 On the other hand, the rescaled maps
 \[
  \tilde{f}^{ij} := \mu^i_j f^{ij} : 
  B_{\mu^i_j}^{\mu^i_j M_{ij}}(q_{ij}) \to \rr
 \] 
 are harmonic and $L$-Lipschitz.
 Furthermore, for arbitrary $r>0$ and $j$ large enough 
 such that $2r < \mu^i_j$,
 \begin{align*}
  &\avgint_{B_{r}^{\mu^i_j M_{ij}}(q_{ij})} \sum_{h_1,h_2=1}^l 
  |\langle \nabla \tilde{f}^{ij}_{h_1}, \nabla \tilde{f}^{ij}_{h_2} \rangle 
  - \delta_{h_1h_2}| \dV_{\mu^i_j M_{ij}} 
  \leq 10^{-n^2}
  \quad\aand\\
  &\avgint_{B_{r}^{\mu^i_j M_{ij}}(q_{ij})} \sum_{h=1}^l 
  \norm{\Hess(\tilde{f}^{ij}_h)}^2 \dV_{\mu^i_j M_{ij}} 
  \to 0  \as j \to \infty.
 \end{align*}
 By \autoref{gen_product_lemma}, there exists a metric space $Z$ 
 and a point $z \in \rr^l \times Z$ such that 
 \[(\mu^i_j M_{ij},q_{ij}) \to (\rr^l \times Z, z) \as j \to \infty.\]
 In particular, $\rr^k \cong \rr^l \times Z$, and thus, $k \geq l$.
\end{proof}

\begin{lemma}\label{lem:spaces_of_different_dimension_are_not_close}
 For all $k < n$ there is $\eps_0 = \eps_0(n,k) \in (0,\frac{1}{100})$ 
 such that the following is true:
 If $(X,p), (\rr^k \times K,q) \in \limitspaces$ 
 for a compact metric space $K$ 
 with $\diam(K) = 1$ and $\dim(X) = k$,
 then 
 \[\dgh{1/\eps_0}{X}{p}{\rr^k \times K}{q} > \eps_0.\]
\end{lemma}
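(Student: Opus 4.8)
The plan is to argue by contradiction. If the claim failed for some $k < n$, then for each $\eps_0 = \tfrac{1}{i}$ with $i \geq 101$ there would exist spaces $(X_i,p_i),(\rr^k\times K_i,q_i) \in \limitspaces$ with $K_i$ a compact metric space, $\diam(K_i)=1$, $\dim(X_i)=k$, and $\dghpt{1/i}{X_i}{p_i}{\rr^k\times K_i}{q_i}$. Every space in $\limitspaces$ obeys the Bishop--Gromov inequality of \autoref{thm-background:precompactness-measured} with constants depending only on $n$, so by \precptnessThm I may pass to a subsequence along which $(X_i,p_i) \to (X_\infty,p_\infty)$, $K_i \to K$, and $(\rr^k\times K_i,q_i) \to (\rr^k\times K,q_\infty)$; here $K$ is a compact length space with $\diam(K)=1$, so $K$ has more than one point, and $\rr^k\times K$ carries the product metric. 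Since the pointed \GH distances of $X_i$ and $\rr^k\times K_i$ on balls of radius $i$ tend to $0$, uniqueness of limits gives $(X_\infty,p_\infty) \cong (\rr^k\times K,q_\infty)$. A standard diagonal argument shows that a pointed \GH limit of spaces from $\limitspaces$ again lies in $\limitspaces$, so in particular $X_\infty \in \limitspaces$.

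Feeding $(X_i,p_i) \to (X_\infty,p_\infty)$ into \autoref{lem:limit_of_limits_decreases_dimension} and using $\dim(X_i)=k$ for all $i$ gives $m := \dim(\rr^k\times K) \leq k$. I claim this contradicts $K$ being a non-degenerate length space. Pick a generic point $w=(a,y)$ of $\rr^k\times K$, which exists by \autoref{lem_CN:Xgen}; then $(\lambda_i(\rr^k\times K),w) \to \rr^m$ for \emph{every} sequence $\lambda_i \to \infty$. On the other hand $(\lambda_i(\rr^k\times K),w) = (\rr^k,0)\times(\lambda_i K,y)$, and since $\rr^k\times K$ is metrically doubling (being in $\limitspaces$) its factor $K$ is doubling too, so $(\lambda_i K,y)$ subconverges, say to a pointed space $C$; passing to the limit in the product yields $\rr^k\times C \cong \rr^m$. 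As $\rr^k$ then embeds isometrically in $\rr^m$ while $m \leq k$, this forces $m=k$ and $C=\{\pt\}$. Hence every sublimit of $(\lambda_i K,y)$ is a point, i.e.\ $\diam(\B^{\lambda_i K}_R(y)) \to 0$ for every $R>0$. But $K$ is a length space containing some $z\neq y$, so for each large $i$ a minimising geodesic from $y$ to $z$ contains a point at distance exactly $R$ from $y$ in $\lambda_i K$, a point which lies in $\B^{\lambda_i K}_R(y)$; thus $\diam(\B^{\lambda_i K}_R(y)) \geq R$ for all large $i$, a contradiction. This completes the argument.

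The heart of the proof is the second step: showing $\dim(\rr^k\times K) > k$, i.e.\ that a non-trivial compact factor genuinely raises the dimension provided by \autoref{lem_CN:Xgen}. The delicate point is that this dimension may in general be strictly smaller than the Hausdorff dimension, so merely noting $\dim_H(\rr^k\times K) \geq k+1$ does not suffice; one really has to pass to a generic point and combine the splitting of tangent cones of products with the length-space structure of $K$, as above. A minor technical issue is the precompactness of $\{(K_i,\cdot)\}$: this follows either from the fact that the renormalised limit measure on an isometrically splitting space is a product measure (Cheeger--Colding), or, more directly, from the observation that $\rr^k\times K_i$ is metrically doubling with a constant depending only on $n$, hence so is each $K_i$. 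The remaining ingredients---the contradiction setup, \precptnessThm, and uniqueness of \GH limits---are routine.
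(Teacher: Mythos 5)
Your proof is correct and follows essentially the same route as the paper: a contradiction argument, extraction of limits $(X_i,p_i)\to(X_\infty,p_\infty)\cong \rr^k\times K$ within $\limitspaces$ (via uniform Bishop--Gromov/doubling and a diagonal argument), and an appeal to \autoref{lem:limit_of_limits_decreases_dimension} to conclude $\dim(\rr^k\times K)\le k$. The only difference is that the paper simply asserts $\dim(\rr^k\times K)>k$ at that point, whereas you supply the (correct) justification by blowing up at a generic point and showing that a non-degenerate compact length-space factor forces the tangent cone to split off more than $\rr^k$.
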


\begin{proof}
 Assume the statement is false and let $k < n$ be contradicting.
 For every $i \in \nn$ with $i > 100$
 choose sequences $(M_{ij},p_{ij}) \to (X_i, p_i)\in \limitspaces$ 
 and $(N_{ij},q_{ij}) \to (\rr^k \times K_i, q_i)\in \limitspaces$ as $j \to \infty$ 
 with $\diam(K_i)=1$, $\dim(X_i) = k$ and
 $\dghpt{\frac{1}{i}}{X_i}{p_i}{\rr^k \times K_i}{q_i}$.
 
 For every $i \in \nn$ there is $J(i) \in \nn$ such that 
 $\dghpt{\frac{1}{i}}{M_{ij}}{p_{ij}}{X_i}{p_i}$ for all $j \geq J(i)$.
 Define inductively $j_1 := J(1)$ and $j_{i+1} := \max\{J(i+1), j_{i}+1\}$. 
 In particular, $j_i \to \infty$ as $i\to \infty$ and
 $\dghpt{\frac{1}{i}}{M_{ij_i}}{p_{ij_i}}{X_i}{p_i}$.
 
 After passing to a subsequence, 
 $(M_{ij_i},p_{ij_i})$ converges to some $(X,p) \in \limitspaces$ as $i \to \infty$.
 For arbitrary $r > 0$, 
 \begin{align*}
  &\dgh{r}{X_{i}}{p_{i}}{X}{p}\\
  &\leq \dgh{r}{X_i}{p_i}{M_{ij_i}}{p_{ij_i}} 
   + \dgh{r}{M_{ij_i}}{p_{ij_i}}{X}{p}
  \to 0 \as i \to \infty.
 \end{align*}
 Hence, $(X_i,p_i) \to (X,p)$.
 With analogous argumentation, 
 after passing to a further subsequence,
 $(\rr^k \times K_i,q_i) \to (\rr^k \times K,q)\in \limitspaces$ 
 for some compact metric space $K$ 
 with $\diam(K) = 1$.
 On the other hand, 
 for $r > 0$ and $i \geq r$,
 \begin{align*}
  &\dgh{r}{\rr^k \times K_i}{q_i}{X}{p} \\
  &\leq \dgh{r}{\rr^k \times K_i}{q_i}{X_i}{p_i} + \dgh{r}{X_i}{p_i}{X}{p}
  \to 0 \as i \to \infty.
 \end{align*}
 Hence, $(\rr^k \times K_i,q_i) \to (X,p)$.
 In particular, $X \cong \rr^k \times K$ and $\dim(X) > k$. 
 This is a contradiction to $\dim(X) \leq k$ by 
 \autoref{lem:limit_of_limits_decreases_dimension}. 
\end{proof}

Using this lemma, the sought-after rescaling sequence 
and family of sets can finally be constructed.
\begin{lemma}\label{nearly_splitting_after_rescaling_locally}
 Let $(M_i,p_i)_{i \in \nn}$ be a sequence
 of complete connected \ndim Riemannian manifolds
 which satisfy the uniform lower Ricci curvature bound 
 $\Ric_{M_i} \geq -(n-1)$, 
 and let $k < n$. 
 For every $\mydelta \in (0,1)$ 
 there exists $\tildeeps_2 = \tildeeps_2(\mydelta;n,k) > 0$ such that 
 for all $0 < r \leq \tildeeps_2$ and $q_i \in M_i$ with
 \[\dghpt{\tildeeps_2}{r^{-1} M_i}{q_i}{\rr^k}{0}\]
 there are
 a family of subsets of good points $G_{r}^2(q_i) \subseteq B_{r}(q_i)$ 
 with \[\vol(G_r^2(q_i)) \geq (1- \mydelta) \cdot \vol(B_r(q_i))\]
 and a sequence $\lambda_i \to \infty$
 which satisfy:
 \begin{enumerate}
 \item 
  For each $x_i \in G_r^2(q_i)$ there is a compact metric space $K_i$ 
  with diameter $1$ and a point $\tilde{x}_i \in \{0\} \times K_i$ such that
  \[
   \dgh{1/\eps_0}{\lambda_i M_i}{x_i}{\rr^k \times K_i}{\tilde{x}_i} 
   \leq \frac{\eps_0}{200}
  \]
  for $\eps_0 = \eps_0(n,k)$ 
  as in \autoref{lem:spaces_of_different_dimension_are_not_close}.
 \item 
  The sets $G_{r}^2(q_i)$ have the 
  $\mathcal{C}(M_i,r,\frac{9^n\cdot 10^{n^2}}{\lambda_i},\mydelta)$-property.
 \end{enumerate}
\end{lemma}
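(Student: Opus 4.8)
The plan is to deduce \autoref{nearly_splitting_after_rescaling_locally} from \autoref{cor_rescaling_theorem} combined with \autoref{lem:spaces_of_different_dimension_are_not_close}, the only new ingredient being a verification that the factors produced by \autoref{cor_rescaling_theorem} actually tend to infinity. First I would rescale the hypothesis: given $\mydelta$, set $\eps_0 = \eps_0(n,k)$ as in \autoref{lem:spaces_of_different_dimension_are_not_close}, and let $\tildeeps'=\tildeeps'(\mydelta,\tfrac{1}{\eps_0},\tfrac{\eps_0}{200};n,k)$ be the constant furnished by \autoref{cor_rescaling_theorem}. Then I would put $\tildeeps_2 := \min\{\tildeeps', \eps_0/2\}$ (also shrinking so that $1/\tildeeps_2 \geq 1/\tildeeps'$, ensuring that the ball $\B_{1/\tildeeps'}(q_i)$ is compact — which it is, since the $M_i$ are complete). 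For $0<r\leq\tildeeps_2$ and $q_i\in M_i$ with $\dghpt{\tildeeps_2}{r^{-1}M_i}{q_i}{\rr^k}{0}$, apply \autoref{cor_rescaling_theorem} to the manifold $(r^{-1}M_i,q_i)$: one obtains a factor $\mu_i>0$, a subset $G_1(q_i)\subseteq B_1^{r^{-1}M_i}(q_i)$ of relative volume $\geq 1-\mydelta$, and a compact $K_i$ of diameter $1$ such that every $x_i\in G_1(q_i)$ has a point $\tilde x_i\in\rr^k\times K_i$ with $\dgh{1/\eps_0}{\mu_i r^{-1}M_i}{x_i}{\rr^k\times K_i}{\tilde x_i}\leq\eps_0/200$; moreover $G_1(q_i)$ has the $\mathcal{C}(r^{-1}M_i,1,\tfrac{9^n 10^{n^2}}{\mu_i},\mydelta)$-property. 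I would then set $\lambda_i := \mu_i/r$ and $G_r^2(q_i):=G_1(q_i)$ viewed inside $B_r(q_i)\subseteq M_i$; the volume bound and the $\mathcal{C}(M_i,r,\tfrac{9^n 10^{n^2}}{\lambda_i},\mydelta)$-property transfer by the rescaling remark following \autoref{dfn:close_to_zooming_in}, and part (a) holds since $\mu_i r^{-1}M_i=\lambda_i M_i$ — except that I must improve $K_i$ to have $\tilde x_i$ in the compact slice $\{0\}\times K_i$ rather than merely in $\rr^k\times K_i$, which follows by sliding $x_i$ along the $\rr^k$-directions (composing with the $\rr^k$-translation that kills the Euclidean component of $\tilde x_i$, using that translations are isometries of the limit and that \GH-closeness is stable under moving the base point within a product).

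The remaining — and main — point is to show $\lambda_i\to\infty$, equivalently $\mu_i\to\infty$ (since $r$ is fixed). The hard part will be ruling out that $\mu_i$ stays bounded along a subsequence. Suppose, after passing to a subsequence, $\mu_i\to\mu_\infty\in[0,\infty)$. By \precptnessThm and \autoref{thm-background:precompactness-measured}, after a further subsequence $(r^{-1}M_i,q_i)$ converges to some limit $(X,p)\in\limitspaces$, which by hypothesis is $\tildeeps_2$-close to $\rr^k$; since $\dim$ is lower semicontinuous under such limits (here one must be slightly careful, but \autoref{lem:limit_of_limits_decreases_dimension} applied to the constant sequence together with closeness gives control), and more to the point, $(\mu_i r^{-1}M_i,q_i)=( \lambda_i M_i,q_i)$ then converges to $(\mu_\infty X,p)$, a rescaling of $X$, which again lies in $\limitspaces$ and has dimension $\dim X=:m$. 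Picking any $x_i\in G_r^2(q_i)$ converging to a generic point $x\in X_{\mathrm{gen}}$ (possible since $G_r^2(q_i)$ has definite relative volume, so its limit has positive measure and meets $X_{\mathrm{gen}}$), part (a) forces $(\lambda_i M_i,x_i)$ to be $\eps_0/200$-close in the $1/\eps_0$-ball to a product $\rr^k\times K_i$ with $\diam K_i=1$, while simultaneously it converges to the rescaled limit $(\mu_\infty X,\cdot)$ at the generic point $x$. Its dimension is still $m$ (rescaling and recentering at a generic point does not change the dimension — indeed $\mu_\infty X$, if $\mu_\infty>0$, lies in $\limitspaces$, and recentering a fixed space does not change the dimension invariant of \autoref{lem_CN:Xgen}); but $X$ is $\tildeeps_2 \le \eps_0/2$-close to $\rr^k$, hence so is $\mu_\infty X$ near $x$ only if $\mu_\infty\le 1$... this needs the cleaner argument below.

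A cleaner route, which I would actually write: keep $(r^{-1}M_i,q_i)\to(X,p)$ with $X$ being $\eps_0/2$-close to $\rr^k$. Since $X\in\limitspaces$ and is that close to $\rr^k$, \autoref{lem:limit_of_limits_decreases_dimension} (comparing with the constant sequence $\equiv\rr^k$, or directly) yields $\dim X\leq k$; as $\dim$ cannot exceed $k$ and a limit close to $\rr^k$ is genuinely $k$-dimensional by the same comparison, in fact $\dim X=k$. Now if $\mu_i\to\mu_\infty<\infty$, then along a subsequence $(\lambda_i M_i,q_i)\to(\mu_\infty X,p)$, which is isometric to a rescaling of $X$, hence also has dimension $k$ (rescaling by a fixed positive factor is an isometry up to scale and does not change $\dim$; if $\mu_\infty=0$ the limit is a point, dimension $0<k$, also giving the contradiction below — wait, $k$ could be $0$; but if $k=0$ then $\rr^k=\{\pt\}$ and one argues the bounded case cannot produce a space of diameter-$1$ compact factor either). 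Recentering at the generic base point $x_i\to x\in X_{\mathrm{gen}}$, we get $(\lambda_i M_i,x_i)\to(\mu_\infty X,x)$, still a member of $\limitspaces$ of dimension $k$. By part (a) this same limit is, in its $1/\eps_0$-ball, $\eps_0/200$-close to $\rr^k\times K_i$ with $\diam K_i=1$; after passing to a subsequence $K_i\to K$ with $\diam K=1$ (Gromov precompactness for the uniformly-bounded $K_i$, whose curvature/doubling is inherited), so the limit is $\eps_0/100<\eps_0$-close to $\rr^k\times K$ with $\diam K=1$ and $\dim(\mu_\infty X)=k$ — directly contradicting \autoref{lem:spaces_of_different_dimension_are_not_close}. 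Hence no bounded subsequence of $(\mu_i)$ exists, so $\lambda_i=\mu_i/r\to\infty$, completing the proof. I expect the fiddly bookkeeping — ensuring $\tilde x_i\in\{0\}\times K_i$, handling the degenerate cases $k=0$ or $\mu_\infty=0$, and checking that the compact factors $K_i$ subconverge — to be the only real work; the structural argument is just "apply \autoref{cor_rescaling_theorem}, then use \autoref{lem:spaces_of_different_dimension_are_not_close} to force the scales to blow up."
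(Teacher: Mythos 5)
Your construction is, step for step, the paper's own proof: the paper applies \autoref{cor_rescaling_theorem} with parameters $(\mydelta,\tfrac{1}{\eps_0},\tfrac{\eps_0}{200})$ to the rescaled manifolds $(r^{-1}M_i,q_i)$, sets $\lambda_i:=\tilde\lambda_i/r$, transfers the volume bound and the $\mathcal{C}$-property exactly via the rescaling remark after \autoref{dfn:close_to_zooming_in}, and rules out boundedness of $(\lambda_i)$ by passing to a sublimit $(\lambda_i M_i,x_i)\to(\alpha X,q)$ and playing conclusion a) off against \autoref{lem:spaces_of_different_dimension_are_not_close}. The bookkeeping you flag (Ricci bound $\Ric_{r^{-1}M_i}\geq-(n-1)r^2$, moving $\tilde x_i$ into $\{0\}\times K_i$ by an $\rr^k$-translation, subconvergence of the $K_i$) is harmless and consistent with the paper.

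The one place where you go beyond the paper is the attempt to verify the hypothesis $\dim(\alpha X)=k$ of \autoref{lem:spaces_of_different_dimension_are_not_close}, and that is where your argument breaks down. \autoref{lem:limit_of_limits_decreases_dimension} requires an actual convergent sequence of spaces in $\limitspaces$ of known dimension; being $\eps_0/2$-close to $\rr^k$ on one fixed ball is not convergence, so \myquote{comparing with the constant sequence $\equiv\rr^k$} yields nothing. Worse, the asserted inequality $\dim X\leq k$ is simply not implied by closeness to $\rr^k$: the space $\rr^k\times S^1(\delta)\in\limitspaces$ is $\pi\delta$-close to $\rr^k$ at every scale yet has dimension $k+1$. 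The lower bound $\dim X\geq k$ could be salvaged by a compactness argument (spaces of dimension $<k$ that are $\tfrac1j$-close to $\rr^k$ on $j$-balls would converge to $\rr^k$, contradicting \autoref{lem:limit_of_limits_decreases_dimension}), but that produces its own threshold $\eps_1(n,k)$ which neither $\eps_0/2$ nor $\tildeeps_2$ is forced to meet, and in any case it gives only one of the two inequalities. In fact $\dim X=k$ cannot be extracted from the lemma's stated hypotheses at all: the constant, non-collapsing sequence $M_i\equiv\rr^k\times(S^1(\delta'))^{n-k}$ with $\delta'$ of order $r\,\tildeeps_2$ satisfies $\dghpt{\tildeeps_2}{r^{-1}M_i}{q_i}{\rr^k}{0}$, while every sublimit of $(M_i,x_i)$ is $n$-dimensional, and for any $\lambda_i\to\infty$ the balls $B_{1/\eps_0}^{\lambda_i M_i}(x_i)$ become isometric to Euclidean $n$-balls, which \autoref{lemma:rrkxX:dGH-small->diamX_similar} \ref{lemma:rrkxX:dGH-small->diamX_similar--diam=1} forbids from being $\tfrac{\eps_0}{200}$-close to $\rr^k\times K_i$ with $\diam(K_i)=1$. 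The paper's own proof passes over this point silently, invoking \autoref{lem:spaces_of_different_dimension_are_not_close} without checking its dimension hypothesis; implicitly it is relying on the collapsing situation of \autoref{thm:main} and \autoref{prop:main--generic_points}, where the sublimit is a rescaling of the $k$-dimensional limit space. So your instinct that this hypothesis must be verified is sound, but the verification you offer is invalid, and closing the gap requires either importing the collapsing/dimension-$k$ context into the statement or an additional argument that your proposal does not contain.
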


\begin{proof}
 For $\mydelta \in (0,1)$, 
 let $\tildeeps_2 = \tildeeps_2(\mydelta;n,k)$ denote 
 the $\tildeeps(\mydelta,\frac{1}{\eps_0},\frac{\eps_0}{200};n,k)$ 
 of \autoref{cor_rescaling_theorem}.
 Fix an arbitrary $0 < r \leq \tildeeps_2$ and points $q_i \in M_i$ satisfying
 $\dghpt{\tildeeps_2}{r^{-1} M_i}{q_i}{\rr^k}{0}$.
 
 For these $(r^{-1}M_i,q_i)$, 
 let $\tilde{\lambda}_i > 0$, $\tilde{G}_1(q_i) \subseteq B^{r^{-1} M_i}_1(q_i)$ and $K_i$
 be as in \autoref{cor_rescaling_theorem}, 
 define $\lambda_i := \frac{\tilde{\lambda_i}}{r}$
 and regard 
 $G_{r}^2(q_i) := \tilde{G}_1(q_i)$ 
 as a subset of $M_i$. 
 Then 
 \[
  \frac{\vol_{M_i}(G_{r}^2(q_i))}{\vol_{M_i}(B^{M_i}_{r}(q_i))} 
  = \frac{\vol_{r^{-1} M_i}(\tilde{G}_1(q_i))}
     {\vol_{r^{-1} M_i}(B^{r^{-1} M_i}_{1}(q_i))} 
  \geq 1-\mydelta,\]
 for every $x_i \in G_r^2(q_i)$ 
 there is $\tilde{x}_i \in \{0\} \times K_i$ with
 \[
  \dgh{1/\eps_0}
  {\lambda_i M_i}{x_i}
  {\rr^k \times K_i}{\tilde{x}_i} 
  \leq \frac{\eps_0}{200}
 \]
 and $G_{r}^2(q_i)$ has the 
 $\mathcal{C}(M_i,r,\frac{9^n \cdot 10^{n^2}}{\lambda_i},\hat{\eps})$-property.
 
 Assume that the sequence $(\lambda_i)_{i \in \nn}$ is bounded.
 After passing to a subsequence, 
 $\lambda_i \to \alpha$ and $(\lambda_i M_i,x_i) \to (\alpha X,q)$ 
 for some $q \in X$.
 Then
 \[\dgh{1/\eps_0}{\alpha X}{q}{\rr^k \times K_i}{\tilde{x}_i}  \leq \eps_0\]
 for all $i$ large enough
 in contradiction to 
 \autoref{lem:spaces_of_different_dimension_are_not_close}.
 Hence, $\lambda_i \to \infty$.
\end{proof}

This concludes the construction of $G_r^1(q_i)$, $G_r^2(q_i)$ and $\lambda_i$.

\subsection{The \texorpdfstring{$\mathcal{C}$}{C}-property 
and the dimension of blow-ups}
\label{sec:C=>dim}
In order to prove that the blow-ups 
with base points $x_i$ and $y_i$, respectively, have the same dimension,
a crucial argument is that the flow of a time-dependent vector field 
as in the definition of the $\mathcal{C}$-property
is bi-Lipschitz on some small set. 
This result and the implication about dimensions 
are proven in this subsection.

For the proof it is important to know 
under which conditions large subsets of two balls intersect.
The following lemmata deal with this question.

\begin{lemma}\label{lemma-criterion_intersection_subsets} 
 Let $(X,d,\vol)$ be a metric measure space and 
 let $A' \subseteq A \subseteq X$ and $B' \subseteq B \subseteq X$ 
 be measurable subsets with 
 \begin{align*}
  \vol(A') &\geq (1 - \mydelta) \cdot \vol(A), \\
  \vol(B') &\geq (1 - \mydelta) \cdot \vol(B), \\
  \vol(A \cap B) &> 2 \mydelta \cdot \max\{\vol(A),\vol(B)\}
 \end{align*}
 for some $\mydelta > 0$.
 Then $\vol(A' \cap B') > 0$, in particular, $A' \cap B' \ne \emptyset$.
\end{lemma}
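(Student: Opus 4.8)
The plan is to bound the measure of the portion of $A \cap B$ that is discarded when one shrinks $A$ to $A'$ and $B$ to $B'$, and then to observe that the three hypotheses force this discarded portion to have strictly smaller measure than $A \cap B$ itself, so that something must survive.

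Concretely, I would start from the set-theoretic inclusion
\[
 (A \cap B) \setminus (A' \cap B') \subseteq (A \setminus A') \cup (B \setminus B'),
\]
which holds because a point of $A \cap B$ that fails to lie in $A' \cap B'$ must fail to lie in $A'$ (hence lie in $A \setminus A'$) or fail to lie in $B'$ (hence lie in $B \setminus B'$). All the sets occurring here are measurable by assumption, so monotonicity and sub-additivity of $\vol$, together with the first two hypotheses, give
\[
 \vol\bigl((A \cap B) \setminus (A' \cap B')\bigr)
 \leq \vol(A \setminus A') + \vol(B \setminus B')
 \leq \mydelta \cdot \vol(A) + \mydelta \cdot \vol(B)
 \leq 2\mydelta \cdot \max\{\vol(A),\vol(B)\}.
\]

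By the third hypothesis the right-hand side is strictly smaller than $\vol(A \cap B)$. Since $A' \subseteq A$ and $B' \subseteq B$ yield $A' \cap B' \subseteq A \cap B$, I may then write
\[
 \vol(A' \cap B') = \vol(A \cap B) - \vol\bigl((A \cap B) \setminus (A' \cap B')\bigr) > 0,
\]
and in particular $A' \cap B' \neq \emptyset$, which is the claim. The argument is entirely elementary; the only two points deserving a word are the displayed inclusion and the implicit use that $A \setminus A'$, $B \setminus B'$ and $A' \cap B'$ are measurable, both of which are immediate from the measurability assumption in the statement. I do not expect any genuine obstacle here — this lemma is purely a bookkeeping step preparing the intersection arguments that follow.
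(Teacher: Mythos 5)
Your proof is correct and follows essentially the same route as the paper: both bound $\vol((A\cap B)\setminus(A'\cap B'))$ by the measures of $A\setminus A'$ and $B\setminus B'$, estimate these via the first two hypotheses by $2\hateps\cdot\max\{\vol(A),\vol(B)\}$, and conclude by subtracting from $\vol(A\cap B)$ using the third hypothesis. The only cosmetic difference is that the paper splits the bound into two halves of $\vol(A\cap B)$ rather than summing directly.
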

\begin{proof}
 Observe 
 $\vol((A \cap B) \setminus A') 
 \leq \vol( A \setminus A') 
 \leq \mydelta \cdot \vol(A) 
 < \frac{1}{2} \cdot \vol(A \cap B)$
 by hypothesis. 
 Analogously, 
 $\vol((A \cap B) \setminus B') 
 < \frac{1}{2} \cdot \vol(A \cap B)$.
 Thus,
 \begin{align*}
  \vol(A' \cap B') 
  &= \vol(A \cap B) - \vol((A \cap B) \setminus (A' \cap B')) 
 \\&\geq \vol(A \cap B) - ( \vol((A \cap B) \setminus A') + \vol((A \cap B) \setminus B') )
  >0.
  \qedhere
 \end{align*}
\end{proof}

\begin{lemma}\label{lemma-criterion_volume_intersection}
Let $(M,g)$ be a complete connected \ndim Riemannian manifold 
with lower Ricci curvature bound $\Ric_M \geq -(n-1)$.
Given $0<\mydelta<\frac{1}{2}$ and $s>0$,
there exist $d_0(n,\mydelta,s) > 0, \delta_0(n,\mydelta,s) > 0$ 
such that $(\delta_0(n), \frac{1}{s d_0} - \frac{1}{2})$ is non-empty 
and for $\delta \in (\delta_0(n), \frac{1}{s d_0} - \frac{1}{2})$,
points $p,q \in M$ with distance $d := d(p,q) < d_0$ 
and $R : = \frac{d}{2} + \delta d < \frac{1}{s}$,
\[
 \vol(B_R(p) \cap B_R(q)) 
 > 2\mydelta \cdot \max \{ \vol(B_R(p)) , \vol(B_R(q)) \}.
\]
Moreover, this $\delta_0$ can be chosen 
to be monotonically increasing in $d_0$.
\end{lemma}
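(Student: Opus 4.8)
The plan is to use the elementary fact that, although the balls $B_R(p)$ and $B_R(q)$ have radius $R=\frac{d}{2}+\delta d$ only slightly larger than half of $d=d(p,q)$, a whole ball of radius $\delta d$ about a midpoint of $p$ and $q$ already sits inside their intersection, whereas each of $B_R(p),B_R(q)$ sits inside the ball of radius $(1+\delta)d$ about that same midpoint. \autoref{lem_background:BG} then bounds the volume of the outer ball by a $\CBG$-multiple of the inner one, and the entire content of the lemma is to arrange that this multiple stays below $\frac{1}{2\mydelta}$, which is possible precisely because $\mydelta<\frac{1}{2}$.

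In detail, I would first assume $p\neq q$, so $d>0$; as $M$ is complete and connected, the Hopf--Rinow theorem makes it a geodesic space, so there is a midpoint $m$ with $d(p,m)=d(q,m)=\frac{d}{2}$. For $x\in B_{\delta d}(m)$ the triangle inequality gives $d(p,x)<\frac{d}{2}+\delta d=R$ and, symmetrically, $d(q,x)<R$, hence
\[
 B_{\delta d}(m)\subseteq B_R(p)\cap B_R(q).
\]
Conversely, for $x\in B_R(p)$ one has $d(m,x)<\frac{d}{2}+R=(1+\delta)d$, so $B_R(p)\subseteq B_{(1+\delta)d}(m)$, and likewise $B_R(q)\subseteq B_{(1+\delta)d}(m)$. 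Applying \autoref{lem_background:BG} with $\kappa=-1$ to the concentric balls $B_{\delta d}(m)\subseteq B_{(1+\delta)d}(m)$ gives, for each $z\in\{p,q\}$,
\[
 \vol(B_R(z))\leq\vol(B_{(1+\delta)d}(m))\leq\CBG(n,-1,\delta d,(1+\delta)d)\cdot\vol(B_{\delta d}(m)).
\]
Since $\vol(B_{\delta d}(m))\leq\vol(B_R(p)\cap B_R(q))$ by the first inclusion, and $\vol(B_R(z))>0$, taking the maximum over $z$ reduces the lemma to producing $d_0,\delta_0$ with $\CBG(n,-1,\delta d,(1+\delta)d)<\frac{1}{2\mydelta}$ under the stated hypotheses (note $\frac{1}{2\mydelta}>1$).

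For that estimate I would write $(1+\delta)d=(\delta d)\cdot(1+\frac{1}{\delta})$, so that $\CBG(n,-1,\delta d,(1+\delta)d)=\CBG(n,-1,\rho,c\rho)$ with $\rho=\delta d$ and ratio $c=1+\frac{1}{\delta}\geq1$. The standing hypothesis $R<\frac{1}{s}$ forces $\rho=\delta d<R<\frac{1}{s}$, so the monotonicity $\CBG(n,-1,r,cr)\leq\CBG(n,-1,r',cr')$ for $r\leq r'$ recorded in \autoref{sec:background} gives
\[
 \CBG(n,-1,\delta d,(1+\delta)d)\leq\CBG\Bigl(n,-1,\tfrac{1}{s},\tfrac{1}{s}(1+\tfrac{1}{\delta})\Bigr).
\]
As $\delta\to\infty$ the right-hand side tends to $\CBG(n,-1,\frac{1}{s},\frac{1}{s})=1$ by continuity (and positivity) of $V^n_{-1}$ at $\frac{1}{s}$, so, since $\mydelta<\frac{1}{2}$, there is $\delta_0=\delta_0(n,\mydelta,s)$ with $\CBG(n,-1,\frac{1}{s},\frac{1}{s}(1+\frac{1}{\delta}))<\frac{1}{2\mydelta}$ for all $\delta>\delta_0$. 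Then I would pick $d_0=d_0(n,\mydelta,s)$ with $0<d_0<\frac{1}{s(\delta_0+1/2)}$, for instance $d_0:=\frac{1}{s(\delta_0+1)}$, so that $\frac{1}{sd_0}-\frac{1}{2}=\delta_0+\frac{1}{2}>\delta_0$ and the interval $(\delta_0,\frac{1}{sd_0}-\frac{1}{2})$ is non-empty; the two displayed chains then give the conclusion for all admissible $d$ and $\delta$. As the argument uses only $\delta d<\frac{1}{s}$, the same $\delta_0$ is valid for every smaller $d_0$, so $\delta_0$ may be taken independent of $d_0$, hence (trivially) monotone non-decreasing in $d_0$, as required.

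No step here is a real obstacle; what needs care is only the pair of triangle-inequality inclusions, correctly identifying the inner and the outer ball in Bishop--Gromov, and the order of quantifiers --- $\delta_0$ must be fixed first (depending only on $n,\mydelta,s$), and only afterwards is $d_0$ shrunk so as to make the range $(\delta_0,\frac{1}{sd_0}-\frac{1}{2})$ non-empty.
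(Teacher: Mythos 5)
Your proof is correct, and its geometric core is exactly the paper's: pick the midpoint $m$ of a minimising geodesic from $p$ to $q$, use the inclusions $B_{\delta d}(m)\subseteq B_R(p)\cap B_R(q)$ and $B_R(p),B_R(q)\subseteq B_{(1+\delta)d}(m)$, and reduce via \refBGThm to the single estimate $\CBG(n,-1,\delta d,(1+\delta)d)<\frac{1}{2\mydelta}$ (both arguments also tacitly assume $p\neq q$, which you make explicit). Where you genuinely diverge is in how the thresholds are produced. The paper defines, for each distance $d$, the quantity $\tilde{\delta}_0(n,\mydelta,d):=\inf\{\delta'>0\mid\forall\,\delta>\delta':\CBG(n,-1,\delta d,\delta d+d)<\frac{1}{2\mydelta}\}$, proves its finiteness via the asymptotic $\lim_{x\to\infty}\CBG(n,-1,x,x+y)=e^{(n-1)y}$ together with the cutoff $d<2\hat{r}$, proves its monotonicity in $d$, and only then shrinks $d_0$ until $\tilde{\delta}_0(n,\mydelta,d)\leq\frac{1}{sd}-\frac{1}{2}$, setting $\delta_0:=\tilde{\delta}_0(n,\mydelta,d_0)$. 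You instead push the comparison once and for all to the top scale: since $\delta d<R<\frac{1}{s}$, the background monotonicity $\CBG(n,-1,r,cr)\leq\CBG(n,-1,r',cr')$ for $r\leq r'$ gives $\CBG(n,-1,\delta d,(1+\delta)d)\leq\CBG\bigl(n,-1,\frac{1}{s},\frac{1}{s}(1+\frac{1}{\delta})\bigr)$, and the right-hand side tends to $1<\frac{1}{2\mydelta}$ as $\delta\to\infty$; this yields a $\delta_0$ depending only on $(n,\mydelta,s)$ and the explicit choice $d_0=\frac{1}{s(\delta_0+1)}$, and it dispenses with the finiteness and monotonicity discussion of $\tilde{\delta}_0$ altogether. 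The trade-off is quantitative: your comparison at scale $\frac{1}{s}$ is coarser than the paper's at scale $\delta d$, and your $\delta_0$ is constant in $d_0$, so the ``monotonically increasing in $d_0$'' clause holds only in the trivial, non-strict sense --- in particular $\delta_0$ does not get smaller when $d_0$ is shrunk, whereas the paper's $d_0$-dependent $\delta_0=\tilde{\delta}_0(n,\mydelta,d_0)$ does, and the proof of \autoref{lemma:flow_Lipschitz_on_subset} later relies on precisely this flexibility (``$d_0$ can be chosen so small that $\delta_0\leq\frac{1}{2}-\frac{1}{\alpha+1}$''). So your argument proves the lemma as stated, but the paper's lengthier construction encodes extra behaviour of $\delta_0$ as a function of $d_0$ that is exploited downstream.
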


\begin{proof}
 Let $p,q \in M$ be arbitrary, 
 $\gamma : [0,d] \to M$ be a shortest geodesic connecting $p$ and $q$ 
 and define $m := \gamma(\frac{d}{2})$ as the midpoint of this geodesic, 
 i.e.~$d(p,m) = d(q,m) = \frac{d}{2}$.

 First, let $r > 0$ be arbitrary.
 Since $B_{r}(m) \subseteq B_{d/2+r}(p) \cap B_{d/2+r}(q)$ 
 and $B_{d/2+r}(p) \subseteq B_{d+r}(m)$,
 \begin{align*}
  \frac{\vol(B_{d/2+r}(p) \cap B_{d/2+r}(q))}{\vol(B_{d/2+r}(p))}
  \geq	\frac{\vol(B_{r}(m))}{\vol(B_{d+r}(m))}
  \geq	 \frac{1}{\CBG(n,-1,r, d + r)}.
 \end{align*}
 
 Now let $C_0 := \frac{1}{2 \mydelta} > 1$ 
 and $\hat{r} := \hat{r}(n,\mydelta,s) 
 := \min\big\{\frac{\ln(C_0)}{2(n-1)}, \frac{1}{s}\big\}$.
 For any fixed $0 < d < 2 \hat{r}$, define
 \[
  \tilde{\delta}_0(n,\mydelta,d) 
  := \inf\{ \delta' > 0 
  \mid \forall \delta > \delta' : f_{n,\delta}(\delta d) < C_0\} 
  \in [0,\infty]
 \]
 where for $\delta > 0$ and $r > 0$, 
 \[f_{n,\delta}(r) 
 := \CBG\big(n,-1,r, \Big(1 + \frac{1}{\delta}\Big) \cdot r\big).\] 
 In fact, this $\tilde{\delta}_0(n,\mydelta,d)$ is finite 
 and monotonically increasing in $d$ as will be proven next:
 Assume $\tilde{\delta}_0(n,\mydelta,d) = \infty$, 
 i.e.~there exists $\delta_m \to \infty$ 
 such that $f_{n,\delta_m}(\delta_m d) \geq C_0$. 
 Then
 \begin{align*}
  f_{n,\delta_m}(\delta_m d) 
  &= \CBG(n,-1,\delta_m d, \delta_m d + d) 
  \to e^{(n-1)d} \as {m \to \infty},
 \end{align*}
 and this implies $e^{(n-1)d} \geq C_0$.
 On the other hand, $e^{(n-1)d} < e^{2(n-1)\hat{r}} \leq C_0$. 
 This is a contradiction.
 Thus, $\tilde{\delta}_0(n,\mydelta,d) < \infty$. 

 Now let $d_1 < d_2$ and $\delta > \tilde{\delta}_0(n,\mydelta,d_2)$.
 Since $f_{n,\delta}$ is monotonically increasing in $r$,
 \begin{align*}
  C_0 > f_{n,\delta}(\delta d_2) \geq f_{n,\delta}(\delta d_1),
 \end{align*}
 i.e.~$\delta \geq \tilde{\delta}_0(n,\mydelta,d_1)$, 
 and this proves the monotonicity of $\tilde{\delta}_0(n,\mydelta,\cdot)$.   

 Hence, $\tilde{\delta}_0(n,\mydelta,d)$ decreases for decreasing $d$ 
 whereas $\frac{1}{sd} - \frac{1}{2}$ increases.
 Therefore, there exists $d_0 = d_0(n,\mydelta,s) \leq 2 \hat{r}$ 
 such that $\tilde{\delta}_0(n,\mydelta,d) \leq \frac{1}{sd} - \frac{1}{2}$ 
 for $d \leq d_0$.
 Let 
 \[
  \delta_0 
  = \delta_0(n,\mydelta,s) 
  := \tilde{\delta}_0(n,\mydelta,d_0(n,\mydelta,s)) 
  = \max\{\tilde{\delta}_0(n,\mydelta,d) \mid 0 < d \leq d_0\}
 \]
 where the monotonicity of $\tilde{\delta}_0$ is used in the last equality.
 Finally, for $d \leq d_0$ and $\delta_0 < \delta < \frac{1}{sd_0}-\frac{1}{2}$, let 
 $R
  := \frac{d}{2} + \delta d 
  = \Big(\frac{1}{2} + \delta\Big) \cdot d 
  < \Big(\frac{1}{2} + \frac{1}{sd_0}-\frac{1}{2} \Big) \cdot d_0 
  = \frac{1}{s}$. 
 Then
 \begin{align*}
  &\frac{\vol(B_{R}(p) \cap B_{R}(q))}{\vol(B_{R}(p))} 
  \geq	\frac{1}{\CBG(n,-1,\delta d, d + \delta d)}
  =	\frac{1}{f_{n,\delta}(\delta d)}
  > \frac{1}{C_0} = 2 \mydelta
  .\qedhere
 \end{align*}
\end{proof}

The next lemma will only be needed in \autoref{sec:step2} 
but is already given here
since its statement and the proof are similar to the previous one.

\begin{lemma}\label{lemma-criterion_volume_intersection2} 
 Let $(M,g)$ be a complete connected \ndim Riemannian manifold 
 with lower Ricci curvature bound $\Ric_M \geq -(n-1)$.
 For all $0<\mydelta<\frac{1}{2}$ and $R > 0$
 there is $d_0 = d_0(n,\mydelta,R) > 0$ such that for all 
 $p,q \in M$ with $d(p,q) < d_0$,
 \[
  \vol(B_R(p) \cap B_R(q)) 
  > 2\mydelta \cdot \max \{ \vol(B_R(p)) , \vol(B_R(q)) \}.
 \]
\end{lemma}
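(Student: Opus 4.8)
The plan is to argue exactly as in the proof of \autoref{lemma-criterion_volume_intersection}, using the midpoint of a minimising geodesic, but the situation is strictly easier because the radius $R$ is now fixed instead of shrinking with $d(p,q)$. Given $p,q \in M$ with $d := d(p,q) < d_0$, where $d_0 \leq 2R$ will be specified below, let $\gamma \colon [0,d] \to M$ be a minimising geodesic from $p$ to $q$ and set $m := \gamma(\frac{d}{2})$, so that $d(p,m) = d(q,m) = \frac{d}{2}$. By the triangle inequality one gets the inclusions $B_{R - d/2}(m) \subseteq B_R(p) \cap B_R(q)$ and $B_R(p), B_R(q) \subseteq B_{R + d/2}(m)$ (here $R - \frac{d}{2} > 0$ since $d < d_0 \leq 2R$).

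Next I would invoke \autoref{lem_background:BG} to bound
\[
 \frac{\vol(B_R(p) \cap B_R(q))}{\vol(B_R(p))}
 \geq \frac{\vol(B_{R-d/2}(m))}{\vol(B_{R+d/2}(m))}
 \geq \frac{1}{\CBG(n,-1,R - \frac{d}{2}, R + \frac{d}{2})},
\]
and likewise with $\vol(B_R(q))$ in place of $\vol(B_R(p))$ in the denominator; combining these yields
\[
 \vol(B_R(p) \cap B_R(q))
 \geq \frac{1}{\CBG(n,-1,R - \frac{d}{2}, R + \frac{d}{2})}
 \cdot \max\{\vol(B_R(p)),\vol(B_R(q))\}.
\]
It then remains to choose $d_0$ so that the factor $\CBG(n,-1,R-\frac{d}{2},R+\frac{d}{2})^{-1}$ exceeds $2\mydelta$ for all $d \in (0,d_0)$. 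Since $\CBG(n,-1,\cdot,\cdot) = V^n_{-1}(\cdot)/V^n_{-1}(\cdot)$ is continuous in its radius arguments on $(0,\infty)^2$ and $\CBG(n,-1,R,R) = 1 < \frac{1}{2\mydelta}$ (using $\mydelta < \frac12$), there is $d_0 = d_0(n,\mydelta,R) \leq 2R$ with $\CBG(n,-1,R-\frac{d}{2},R+\frac{d}{2}) < \frac{1}{2\mydelta}$ for all $0 < d < d_0$, and this $d_0$ has the desired property.

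There is essentially no serious obstacle here: the only things to check are that $d_0 \leq 2R$ keeps the inner ball and the factor $\CBG(n,-1,R-\frac{d}{2},R+\frac{d}{2})$ meaningful, and the continuity of $\CBG$ in its radii, which is immediate from its definition. Unlike in \autoref{lemma-criterion_volume_intersection}, no delicate balancing of $\mydelta$ against $d$ is required, because both the inner radius $R - \frac{d}{2}$ and the outer radius $R + \frac{d}{2}$ converge to the same value $R$ as $d \to 0$, so the comparison factor tends to $1$ regardless of $\mydelta$.
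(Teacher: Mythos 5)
Your proof is correct and follows essentially the same route as the paper: the paper's own argument also bounds $\vol(B_R(p)\cap B_R(q))/\vol(B_R(p))$ from below by $\CBG(n,-1,R-\tfrac{d}{2},R+\tfrac{d}{2})^{-1}$ via the midpoint inclusions (citing the previous lemma's computation) and then chooses $d_0$ using that this comparison factor tends to $1$ as $d \to 0$. Your write-up simply makes the inclusions explicit, which is exactly what the paper's reference to \autoref{lemma-criterion_volume_intersection} abbreviates.
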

 
\begin{proof}
 Similarly to the proof of \autoref{lemma-criterion_volume_intersection},
 for arbitrary points $p,q \in M$ with distance $d := d(p,q) < 2R$, observe 
 \begin{align*}
  \frac{\vol(B_R(p) \cap B_R(q))}{\vol(B_R(p))} 
  \geq \frac{1}{\CBG(n,-1,R-\frac{d}{2}, R + \frac{d}{2})}.
 \end{align*}
 As $\CBG(n,-1,R-\frac{d}{2}, R + \frac{d}{2}) \to 1$ as $d \to 0$,
 there is $d_0 = d_0(n,\mydelta,R) \in (0,2R)$ 
 such that $\CBG(n,-1,R-\frac{d}{2}, R + \frac{d}{2}) < \frac{1}{2\mydelta}$
 for all $d \leq d_0$.
 In particular, for points $p,q \in M$ with $d(p,q) < d_0$,
 \[
  \vol(B_R(p) \cap B_R(q)) 
  > 2\mydelta \cdot \max \{ \vol(B_R(p)) , \vol(B_R(q)) \}.
  \qedhere
 \]
\end{proof}

An important notion for investigating the $\mathcal{C}$-property
is the \emph{distortion} of a function
which describes
how much a function changes the distance of two points.
In particular, it will be important to know 
how much the flow of a vector field changes the distance of two points up to some fixed time.
Recall that the distortion of a map $f: M \to N$ between Riemannian manifolds
is the function 
$\dt^f : M \times M \to [0,\infty)$ 
defined by
\begin{align*}
 \dt^f(p,q) &:= |d_N(f(p),f(q)) - d_M(p,q)|.
 \intertext{If $\Phi$ is the flow of a time-dependent vector field on $M$, 
 $t \in [0,1]$, $p,q \in M$ and $r \geq 0$, denote $\phi_t := \Phi(0,t,\cdot)$ and define}
 \dt(t)(p,q) &:= \max\{ \dt^{\phi_\tau}(p,q) \mid 0 \leq \tau \leq t\}  \quad\aand\\
 \dt_r(t)(p,q) &:= \min\{r, \dt(t)(p,q)\}.
\end{align*}

The subsequent lemma generalises \cite[Lemma 3.7]{kapovitch-wilking} 
and can be proven completely analogously to it.
\begin{lemma}\label{lemma:generalization_kw_3.7}
 For $\tilde{\alpha} \in (1,2)$ 
 there exist $C = C(n,\tilde{\alpha})$ 
 and $\hat{C} = \hat{C}(n,\tilde{\alpha})$ 
 such that the following holds
 for any $0 < R \leq 1$:
 Let $M$ be an \ndim Riemannian manifold 
 with $\Ric_M \geq -(n-1)$ 
 and $X : [0,1] \times M \to TM$ be a time dependent, 
 piecewise constant in time vector field with compact support and flow $\Phi$,
 define $\phi^{s}_t := \Phi(s,t,\cdot)$ 
 and let $c : [0,1] \to M$ be an integral curve of $X$ 
 such that $X^t$ is divergence free on $B_{10R}(c(t))$ 
 for all $t \in [0,1]$.
 
 Let $\tilde{\eps} := \int_0^1 (\Mx_R(\norm{\nabla.X^t}))\circ c(t) \dt$.
 Then for any $r \leq \frac{R}{10}$, 
 \begin{align*}
  \avgint_{B_r(c(s)) \times B_r(c(0))} \dt_r(1)(p,q) \dV(p,q) 
  \leq C r \cdot \tilde{\eps}.
 \end{align*}
 Furthermore, there is a subset $B_r(c(0))' \subseteq B_r(c(0))$ 
 with $c(0) \in B_r(c(0))'$ such that
 \[\vol(B_r(c(0))') \geq (1-C \tilde{\eps}) \cdot \vol(B_r(c(0))).\] 
 Finally, for any $t \in [0,1]$,
 \[\phi^{0}_t(B_r(c(0))') \subseteq B_{\tilde{\alpha} r}(c(t))
 \quad\aand \quad
 \vol(B_r(c(t))) \leq \hat{C} \cdot \vol(B_r(c(0))).\]
\end{lemma}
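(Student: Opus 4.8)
The plan is to follow the proof of \cite[Lemma~3.7]{kapovitch-wilking} essentially verbatim; the only change is that the unit scale there is replaced by the scale $R\leq1$, so that balls of radius comparable to $r\leq\tfrac R{10}$ and the maximal function $\Mx_R$ play the roles formerly played by balls of radius comparable to $\tfrac1{10}$ and $\Mx=\Mx_2$. Because $\Ric_M\geq-(n-1)$ and $R\leq1$, every volume comparison constant that \refBGThm{} produces along the way is bounded by its value at scale~$1$; for instance $\CBG(n,-1,r,\tilde\alpha r)\leq\CBG(n,-1,1,\tilde\alpha)\leq\CBG(n,-1,1,2)$ by the monotonicity of $\CBG$ recorded in \autoref{sec:background}, and likewise the constant $\tau(n,1)$ furnished by \refSegmIneq{} bounds all the Segment-Inequality constants that occur. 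This is why $C$ and $\hat C$ end up depending only on $n$ and $\tilde\alpha$.

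The heart of the matter is a pointwise distortion estimate. Fix $p,q\in M$, and for $t\in[0,1]$ let $\gamma_t$ be a minimising geodesic from $\phi^0_t(p)$ to $\phi^0_t(q)$. Applying the flow to $\gamma_t$ and differentiating the length of the resulting family of curves at time~$t$, the symmetry of mixed covariant derivatives turns $\tfrac{d}{dt}$ of the length into an integral of $\langle\nabla_{\dot\gamma_t}X^t,\dot\gamma_t\rangle$ along $\gamma_t$; since $|\langle\nabla_vX^t,v\rangle|\leq\norm{\nabla.X^t}\,|v|^2$, this gives $\bigl|\tfrac{d}{dt}d(\phi^0_tp,\phi^0_tq)\bigr|\leq\int_{\gamma_t}\norm{\nabla.X^t}$, valid as long as $\gamma_t$ stays inside $B_{10R}(c(t))$, where $X^t$ is divergence free. (On each subinterval on which the piecewise constant field is constant this is the classical computation, and the contributions add.) Integrating in time and truncating at $r$ then yields the clean bound $\dt_r(1)(p,q)\leq\int_0^1\int_{\gamma_t}\norm{\nabla.X^t}\dt$.

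Now the volume statements. Averaging this pointwise bound over the relevant product of balls, changing variables by the measure-preserving maps $\phi^0_t$ (divergence freeness is precisely what makes them volume preserving on the regions involved), and applying \refSegmIneq{} to the non-negative function $\norm{\nabla.X^t}$ on a ball of radius $\tilde\alpha r<\tfrac R2$ about $c(t)$, one bounds the inner double integral by $\tau(n,1)\cdot\tilde\alpha r\cdot\avgint_{B_{2\tilde\alpha r}(c(t))}\norm{\nabla.X^t}$, which is at most $\tau(n,1)\cdot\tilde\alpha r\cdot\Mx_R(\norm{\nabla.X^t})(c(t))$ since $2\tilde\alpha r\leq R$; integrating in $t$ and using the definition of $\tilde\eps$ gives the asserted bound $\leq Cr\tilde\eps$. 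From this estimate a subset $B_r(c(0))'\subseteq B_r(c(0))$ with $c(0)\in B_r(c(0))'$, $\vol(B_r(c(0))')\geq(1-C\tilde\eps)\vol(B_r(c(0)))$, and $\dt_r(1)(c(0),q)<(\tilde\alpha-1)r$ for all $q\in B_r(c(0))'$ is produced by a Chebyshev argument exactly as in \cite{kapovitch-wilking}. Since $(\tilde\alpha-1)r<r$, the truncation is inactive there, so $d(c(t),\phi^0_tq)\leq d(c(0),q)+\dt(1)(c(0),q)<\tilde\alpha r$, which is the inclusion $\phi^0_t(B_r(c(0))')\subseteq B_{\tilde\alpha r}(c(t))$. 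Finally, running the whole argument with the time direction reversed and with base point $c(t)$ gives a subset of $B_r(c(t))$ of relative volume $\geq1-C\tilde\eps$ whose image under the measure-preserving map $\phi^t_{-t}$ lies in $B_{\tilde\alpha r}(c(0))$; hence $(1-C\tilde\eps)\vol(B_r(c(t)))\leq\vol(B_{\tilde\alpha r}(c(0)))\leq\CBG(n,-1,1,2)\vol(B_r(c(0)))$ by \refBGThm{}, which gives $\vol(B_r(c(t)))\leq\hat C\vol(B_r(c(0)))$ (the quantitative assertions being of interest only when $\tilde\eps$ is small).

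The one genuinely delicate point---handled in \cite{kapovitch-wilking} just as one expects---is justifying a priori that throughout $[0,1]$ the flowed configurations stay inside the balls $B_{10R}(c(t))$, which is what makes the infinitesimal distortion estimate and the measure-preservation legitimate there and is needed before the change of variables in the \refSegmIneq{} step is allowed. This is a first-exit-time bootstrap: on the maximal interval on which $\dt(t)(p,q)<r$, the geodesics $\gamma_t$ have length $<3r\leq R$ and lie well inside $B_{10R}(c(t))$, so all of the above is valid there; and if that interval ends before time~$1$ then $\dt_r(1)(p,q)$ is already pinned at $r$, which is still $\leq\int_0^1\int_{\gamma_t}\norm{\nabla.X^t}\dt$ over the controlled portion of the trajectory---so the truncated pointwise bound holds with no a-priori hypothesis. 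Threading this bootstrap through the product-ball change of variables is the only part that requires real care; everything else is the Kapovitch--Wilking argument transcribed with $1$ replaced by $R$.
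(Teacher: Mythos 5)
Your proposal matches the paper's proof in approach: the paper, too, establishes this lemma simply by transcribing the proof of \cite[Lemma 3.7]{kapovitch-wilking} with the unit scale replaced by $R$ (all constants absorbed into Bishop--Gromov and Segment Inequality constants at scale $1$), the only modification it records being that the increment $\frac{r}{10}$ in Kapovitch--Wilking's induction is replaced by $\frac{r}{m}$ with $m = 2\cdot\frac{\tilde{\alpha}+1}{\tilde{\alpha}-1}$. Your variant encodes the same $\tilde{\alpha}$-dependence via the Chebyshev threshold $(\tilde{\alpha}-1)r$ together with the first-exit bootstrap, which is the same bookkeeping in slightly different form, and your closing caveat that the quantitative claims are only of interest for small $\tilde{\eps}$ is consistent with how the cited argument is meant to be read.
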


\begin{proof}
 The proof can be done completely analogously to the one 
 of \cite[Lemma 3.7]{kapovitch-wilking} 
 by replacing $\frac{r}{10}$ in the induction by $\frac{r}{m}$ 
 where $m = 2 \cdot \frac{\tilde{\alpha} + 1}{\tilde{\alpha} - 1} > 0$.
 Again, the constants $C$ and $\hat{C}$ can be made explicit 
 in terms of the constant appearing in \refBGThm.
\end{proof}

The following lemma states 
that the flow of a time dependent vector field 
as in the definition of the $\mathcal{C}$-property 
is Lipschitz on certain small sets.

\begin{lemma}\label{lemma:flow_Lipschitz_on_subset}
 Given $\alpha \in (1,2)$, 
 there exist constants $C_0 = C_0(n,\alpha)$ and $C_0' = C_0'(n,\alpha)$ such that 
 for $0 < \mydelta < \frac{1}{2 C_0}$ and $0 < R \leq 1$ 
 there is a number $\hat{r}_0 = \hat{r}_0(n,\mydelta,\alpha,R) < \frac{R}{20 \alpha}$
 satisfying the following:
 
 Let $M$ be an \ndim Riemannian manifold
 with $\Ric_M \geq -(n-1)$, 
 $X : [0,1] \times M \to TM$ a time dependent, 
 piecewise constant in time vector field with compact support and flow $\Phi$,
 $\phi^{s}_t := \Phi(s,t,\cdot)$, $\phi_t := \phi^0_t$,
 $c : [0,1] \to M$ an integral curve of $X$ 
 such that $X^t$ is divergence free on $B_{10R}(c(t))$ for all $t \in [0,1]$
 and 
 $\int_0^1 (\Mx_{2R}(\norm{\nabla.X^t}^{3/2}) (c(t))) ^{2/3}\dt < \mydelta$.
 
 Let $p := c(0)$ and $0 < r < \hat{r}_0$.
 Then there exists a subset $B_r(p)'' \subseteq B_r(p)$ containing $p$ 
 with $\vol(B_r(p)'')> (1-C_0' \sqrt{\mydelta}) \cdot \vol(B_r(p))$ 
 such that $\phi_t$ is $\alpha$-bi-Lipschitz on $B_r(p)''$ 
 for any $t \in [0,1]$.
\end{lemma}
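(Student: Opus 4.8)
The plan is to reduce the $\alpha$-bi-Lipschitz property to a quantitative bound on the distortion $\dt(1)$ and to obtain $B_r(p)''$ as the set on which such a bound holds uniformly over all scales. Fix an auxiliary $\tilde{\alpha}\in(1,\alpha)$, say $\tilde{\alpha}:=\sqrt\alpha$ (so $\tilde{\alpha}\in(1,2)$), and put $\beta:=1-\tfrac1{\tilde{\alpha}}>0$. If a map $\phi$ satisfies $\dt^{\phi}(x,y)\le\beta\,d(x,y)$ for all $x,y$ in a set $S$, then it is $\alpha$-bi-Lipschitz on $S$, since this forces $d(\phi x,\phi y)\le(1+\beta)d(x,y)\le\tilde{\alpha}\,d(x,y)\le\alpha\,d(x,y)$ and $d(\phi x,\phi y)\ge(1-\beta)d(x,y)=\tfrac1{\tilde{\alpha}}d(x,y)\ge\tfrac1\alpha d(x,y)$; applied with $\phi=\phi_\tau$ for every $\tau\in[0,1]$ it shows that it suffices to produce $B_r(p)''\ni p$ with $\vol(B_r(p)'')\ge(1-C_0'\sqrt{\mydelta})\vol(B_r(p))$ such that $\dt(1)(x,y)\le\beta\,d(x,y)$ for all $x,y\in B_r(p)''$ (at the end one shrinks $\beta$ by a fixed factor to absorb the chaining errors below).

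Next I would pass from the $L^{3/2}$-type hypothesis to the $L^1$-quantity of \autoref{lemma:generalization_kw_3.7}. As $t\mapsto t^{3/2}$ is convex, Jensen's inequality applied to each ball average gives $\avgintsmall_{B_\rho(\cdot)}\norm{\nabla.X^t}\le\bigl(\avgintsmall_{B_\rho(\cdot)}\norm{\nabla.X^t}^{3/2}\bigr)^{2/3}$ for $\rho\le 2R$, hence $\Mx_R(\norm{\nabla.X^t})\le\bigl(\Mx_{2R}(\norm{\nabla.X^t}^{3/2})\bigr)^{2/3}$ pointwise, so that the quantity $\tilde{\eps}:=\int_0^1\bigl(\Mx_R(\norm{\nabla.X^t})\bigr)\circ c(t)\,\dt$ appearing in \autoref{lemma:generalization_kw_3.7} satisfies $\tilde{\eps}\le\int_0^1\bigl(\Mx_{2R}(\norm{\nabla.X^t}^{3/2})(c(t))\bigr)^{2/3}\,\dt<\mydelta$. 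One application of \autoref{lemma:generalization_kw_3.7} with this $\tilde{\alpha}$ therefore yields, simultaneously for every scale $0<\rho\le R/10$ and with $\tilde{\eps}$ replaced by the better bound $\mydelta$: a good subset $B_\rho(c(0))'\ni c(0)$ with $\vol(B_\rho(c(0))')\ge(1-C\mydelta)\vol(B_\rho(c(0)))$ and $\phi_t(B_\rho(c(0))')\subseteq B_{\tilde{\alpha}\rho}(c(t))$ for all $t$, the distortion estimate $\avgintsmall_{B_\rho(c(0))\times B_\rho(c(0))}\dt_\rho(1)\,\dV\le C\rho\mydelta$, and the volume comparison $\vol(B_\rho(c(t)))\le\hat C\vol(B_\rho(c(0)))$, with $C,\hat C$ depending only on $n,\tilde{\alpha}$. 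Since $\norm{\nabla.(-X)}=\norm{\nabla.X}$ and the time-reversed flow is generated by $\tilde X^s:=-X^{\tau-s}$ along the curve $s\mapsto c(\tau-s)$, the same estimates hold for $\phi_\tau^{-1}$ around $c(\tau)$.

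The core of the argument is to combine these scale-$\rho$ estimates into a single bi-Lipschitz bound. Choose $\hat r_0<\tfrac{R}{20\alpha}$ small enough (depending on $n,\mydelta,\alpha,R$) that for $r<\hat r_0$ the ball $B_{2r}(p)$ and all its flow images $\phi_t(B_{2r}(p)')$ stay inside $B_{R/10}(c(t))\subseteq B_{10R}(c(t))$, on which $X^t$ is divergence free and the flow therefore measure preserving, and work over the dyadic scales $\rho_j:=2^{-j}r$, $j\ge0$. Since $c(t)=\phi_t(p)$, the curve $t\mapsto\phi_t(x)$ is an integral curve of $X$ through any $x\in B_{2r}(p)$, and the measure-preserving change of variables $w=\phi_t(x)$ together with \refBGThm lets me average the scale-$\rho_j$ distortion estimates off the curve, over $x$ ranging in $B_{2r}(p)'$, at the cost of uniformly bounded $\CBG$-factors. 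Applying \autoref{weak_1_1_inequality} (and \refBGThm for the volume ratios) at each scale $\rho_j$ and summing the geometric series in $j$, I let $B_r(p)''\subseteq B_{2r}(p)'$ be the set of points at which, at every dyadic scale $\rho_j\le2r$, the relevant maximal distortion — forwards and backwards — is at most $\beta\rho_j$; then $\vol(B_r(p)\setminus B_r(p)'')\le C_0'\sqrt{\mydelta}\cdot\vol(B_r(p))$ once $\mydelta<\tfrac1{2C_0}$, and $p\in B_r(p)''$ automatically because $p=c(0)$ lies in every $B_\rho(c(0))'$. Finally, for $x,y\in B_r(p)''$ with $\varrho:=d(x,y)$, I subdivide a minimal geodesic from $x$ to $y$ into $\approx\varrho/\rho_j$ arcs of length $\rho_j$ (with $\rho_j\approx\varrho$), perturb the break points into the good set, and use the telescoping bound $d(\phi_\tau x,\phi_\tau y)\le\varrho+\sum_i\dt^{\phi_\tau}(z_i,z_{i+1})$ together with its counterpart for $\phi_\tau^{-1}$ (chaining along a minimal geodesic in the target) to conclude $\dt(1)(x,y)\le\beta\,\varrho$; this gives the $\alpha$-bi-Lipschitz property of $\phi_t$ on $B_r(p)''$ for every $t\in[0,1]$, with all constants explicit in $n$, $\alpha$ and the constant of \refBGThm. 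The step I expect to be the main obstacle is precisely this combination of scales: \autoref{lemma:generalization_kw_3.7} provides distortion estimates only around points of the integral curve $c$, whereas the bi-Lipschitz bound must hold for pairs lying anywhere in $B_r(p)$ and at every separation down to $0$, so transporting the averaged estimates off the curve by the measure-preserving flow, keeping the exceptional sets small at all dyadic scales simultaneously, and converting averaged distortion bounds into a uniform bi-Lipschitz constant via the geodesic chaining is where the genuine work lies; the two preliminary reductions — the distortion reformulation and the Jensen estimate — are routine.
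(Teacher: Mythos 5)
Your preliminary reductions are fine (the distortion reformulation, the Jensen estimate verifying $\tilde{\eps}<\mydelta$ for the curve $c$, and the time-reversal remark), but two things go wrong in the core step. First, a repairable one: for the averaging off the curve via the measure-preserving change of variables $w=\phi_t(x)$ you end up having to bound $\avgintsmall_{B_{\tilde{\alpha}\rho}(c(t))}\Mx_R(\norm{\nabla.X^t})$, i.e.\ an \emph{iterated} maximal function evaluated at $c(t)$; the pointwise Jensen bound $\Mx_R(\norm{\nabla.X^t})\leq(\Mx_{2R}(\norm{\nabla.X^t}^{3/2}))^{2/3}$ does not reduce this to the hypothesis at $c(t)$. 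This is exactly why the hypothesis carries the exponent $3/2$: one needs the Kapovitch--Wilking inequality $\Mx_{\rho}[\Mx_{\rho}(f)](x)\leq\tilde{C}(n)(\Mx_{2\rho}(f^{3/2})(x))^{2/3}$, as in the paper's proof.

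The genuine gap is in the final chaining step, which is circular at the endpoints. Your $B_r(p)''$ records only \emph{averaged} distortion bounds at dyadic scales around its points; but to bound $\dt^{\phi_\tau}(x,y)$ for the two \emph{fixed} points you must control $d(\phi_\tau x,\phi_\tau z)$ for $x$ and a nearby perturbed break point $z$, and an averaged bound over $B_\rho(x)\times B_\rho(x)$ says nothing about pairs containing the fixed point $x$, which has measure zero --- that small-scale pointwise control around $x$ is precisely what the lemma asserts. What is needed is the pointwise containment $\phi_t(B_\rho(x)')\subseteq B_{\tilde{\alpha}\rho}(\phi_t(x))$ (and its backward analogue) at every scale $\rho$ around every good point $x$, which your argument only provides along the original curve $c$. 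The paper obtains it by defining $B_r(p)''$ differently: after the measure-preserving averaging it selects, by Chebyshev, the set of $x\in B_r(p)'$ with $\int_0^1\Mx_R(\norm{\nabla.X^t})\circ\phi_t(x)\,\dt<\tilde{C}\sqrt{\mydelta}$, so that each such $x$ lies on its \emph{own} admissible integral curve and \autoref{lemma:generalization_kw_3.7} can be re-applied around $x$ and around $\phi_t(x)$, forwards and backwards, at all scales $\rho\leq R/10$. Then for $a,b\in B_r(p)''$ with $d(a,b)<d_0$ one takes $\rho$ slightly larger than $d(a,b)/2$, uses \autoref{lemma-criterion_volume_intersection} and \autoref{lemma-criterion_intersection_subsets} to find a common point $z\in B_\rho(a)'\cap B_\rho(b)'$, and concludes $d(\phi_t a,\phi_t b)\leq 2\tilde{\alpha}\rho\leq\alpha\,d(a,b)$ directly --- no chaining and no control of pairs containing a fixed point is required beyond that containment. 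This is also where the true smallness condition on $\hat{r}_0$ comes from (the overlap lemma \autoref{lemma-criterion_volume_intersection}, not merely keeping the flow inside $B_{R/10}(c(t))$), so your proposal needs to be reorganized along these lines rather than patched.
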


\begin{proof}
 Define 
 $\tilde{\alpha} := \frac{\alpha + 1}{2} \in (1,\frac{3}{2}) \subseteq (1,2)$
 and fix the following constants:
 \begin{itemize}
 \item 
  Let $C = C(n,\alpha)$ be the $C(n,\tilde{\alpha})$ 
  and $\hat{C} = \hat{C}(n,\alpha)$ be the $\hat{C}(n,\tilde{\alpha})$ 
  appearing in \autoref{lemma:generalization_kw_3.7}.
  \item 
   Let $\tilde{C} = \tilde{C}(n)>0$ be the constant 
   of \cite[formula (6)]{kapovitch-wilking} satisfying 
   \[
    \Mx_{\rho}[\Mx_{\rho}(f)](x) 
    \leq \tilde{C}(n) \cdot \big( \Mx_{2\rho}(f^{3/2})(x)\big)^{2/3}
   \] 
   for any $f \in L^{3/2}(M)$ and $0 < \rho \leq 1$.
 \item 
  Let $C_0 = C_0(n,\alpha) := \tilde{C} \cdot C$.
 \item 
  Let $\hat{C}_0 = \hat{C}_0(n,\alpha) := 2 \hat{C} \cdot \CBG(n,-1,\frac{1}{10},\frac{\alpha}{10})$.
 \item 
  Let $C_0' = C_0'(n,\alpha) := \hat{C}_0 + \sqrt{\frac{C_0}{2}}$.
 \end{itemize}
 Fix $0 < \mydelta < \min\big\{\frac{1}{2 C_0},1\big\}$ and $0 < R \leq 1$.
 First, observe
 \begin{align*}
  \tilde{\eps} 
  :=
  \int_0^1 (\Mx_R(\norm{\nabla.X^t})) (c(t)) \dt 
  &\leq \int_0^1 \Mx_R(\Mx_R(\norm{\nabla.X^t})) (c(t)) \dt \\
  &\leq \tilde{C} 
  \cdot \int_0^1 \Mx_{2R}(\norm{\nabla.X^t}^{3/2})^{2/3} (c(t)) \dt \\
  &< \tilde{C} \mydelta.
 \end{align*}
 In particular, $C \tilde{\eps} < C_0\,\mydelta < \frac{1}{2}$.
 By \autoref{lemma:generalization_kw_3.7},
 for all $r \leq \frac{R}{10}$,
 \[
  \avgint_{B_r(p) \times B_r(p)} \dt_{r}(1) \dV 
  \leq C r \cdot \tilde{\eps} < C_0\,\mydelta \cdot r
 \]
 and there is a subset $B_r(p)' \subseteq B_r(p)$ containing $p$ with 
 \[
  \vol(B_r(p)') 
  \geq (1-C_0\,\mydelta) \cdot \vol(B_r(p)) 
  > \frac{1}{2} \cdot \vol(B_r(p))
 \]
 and $\phi_t(B_r(p)') \subseteq B_{\alpha r}(c(t))$ for all $t \in [0,1]$.
 Furthermore,
 for $r \leq \frac{R}{10}$,
 \begin{align*}
  \frac{\vol(B_{\alpha r}(c(t)))}{\vol(B_r(p)')}
  &\leq	\CBG(n,-1,r,\alpha r) \cdot \frac{\vol(B_r(c(t)))}{\vol(B_r(p)')} \\
  &\leq	\CBG\Big(n,-1,\frac{1}{10},\frac{\alpha}{10}\Big) \cdot \hat{C} 
   \cdot \frac{\vol(B_r(p))}{\vol(B_r(p)')} \\
  &\leq \hat{C}_0.
 \end{align*}
 Moreover,
 \begin{align*}
  &\avgint_{B_r(p)'} \int_0^1 \Mx_R(\norm{\nabla.X^t}) 
  \circ \phi_{t} (x) \dt \dV(x) \\
  &= \int_0^1 \frac{1}{\vol(B_r(p)')} 
  \cdot \int_{\phi_{t}(B_r(p)')} \Mx_R(\norm{\nabla.X^t}) (x) \dV(x) \dt 
  \displaybreak[0]\\
  &\leq \int_0^1 \frac{\vol(B_{\alpha r}(c(t)))}{\vol(B_r(p)')} 
  \cdot \avgint_{B_{\alpha r}(c(t))} \Mx_R(\norm{\nabla.X^t}) (x) \dV(x) \dt 
  \displaybreak[0]\\
  &\leq \hat{C}_0 \cdot \int_0^1 \max_{0 \leq \rho \leq R} 
  \avgint_{B_{\rho}(c(t))} \Mx_R(\norm{\nabla.X^t}) (x) \dV(x) \dt 
  \displaybreak[0]\\
  &\leq	\hat{C}_0 \cdot \tilde{C} \int_0^1 
  (\Mx_{2R}(\norm{\nabla.X^t}^{3/2}))^{2/3}(c(t)) \dt\\
  &< 	\hat{C}_0 \cdot \tilde{C} \mydelta.
 \end{align*}

 Define 
 \[
  B_r(p)'' 
  := \{x \in B_r(p)' 
  \mid \int_0^1 \Mx_R(\norm{\nabla.X^t}) \circ \phi_t (x) \dt 
  < \tilde{C} \cdot \sqrt{\mydelta}\}.
 \]
 Note $p \in B_r(p)''$ due to 
 $\int_0^1 \Mx_R(\norm{\nabla.X^t}) \circ c(t) \dt 
 < \tilde{C} \cdot \mydelta \leq \tilde{C} \cdot \sqrt{\mydelta}$
 and 
 $\phi_t(p) = c(t)$. 
 Furthermore,
 \begin{align*}
  \vol(B_r(p)'')
  &> (1- \hat{C}^2\sqrt{\mydelta}) \cdot (1 - C_0\mydelta)
  \cdot \vol(B_r(p)) \\
  &> (1-C_0' \sqrt{\mydelta}) \cdot \vol(B_r(p))
 \end{align*}
 using 
 $C_0' 
 = \hat{C}_0 + \sqrt{\frac{C_0}{2}} 
 = \hat{C}_0 + C_0 \cdot \sqrt{\frac{1}{2C_0}} 
 > \hat{C}_0 + C_0 \sqrt{\mydelta}$.

 Moreover, points in $ B_r(p)''$ satisfy the following: 
 Fix $t \in [0,1]$,
 $a \in B_r(p)''$ and let $\tilde{a} := \phi_t(a)$.
 In particular, 
 $\int_0^1 \Mx_R(\norm{\nabla.X^t}) \circ \phi_t(a) \dt 
 < \tilde{C} \cdot \sqrt{\mydelta}$
 and, by \autoref{lemma:generalization_kw_3.7},
 for any $\rho \leq \frac{R}{10}$ 
 there are subsets $B_{\rho}(a)' \subseteq B_{\rho}(a)$ 
 and $B_{\rho}(\tilde{a})' \subseteq B_{\rho}(\tilde{a})$ 
 such that
 \begin{align*}
  \vol(B_{\rho}(a)') \geq (1-C_0 \sqrt{\mydelta}) \cdot \vol(B_{\rho}(a))
  &\quad\aand\quad 
  \phi_t(B_{\rho}(a)') \subseteq B_{\tilde{\alpha} {\rho}}(\tilde{a}),
  \\
  \vol(B_{\rho}(\tilde{a})') 
  \geq (1-C_0 \sqrt{\mydelta}) \cdot \vol(B_{\rho}(\tilde{a}))
  &\quad\aand\quad 
  \phi^{t}_{-t}(B_{\rho}(\tilde{a})') \subseteq B_{\tilde{\alpha} {\rho}}(a)
 \end{align*}
 where $C_0 \sqrt{\mydelta} < C_0\cdot \mydelta < \frac{1}{2}$.

 Let $d_0 = d_0(n,\mydelta,\alpha, R)$ 
 and $\delta_0 = \delta_0(n,\mydelta,\alpha, R)$, respectively,
 denote the constants $d_0(n,C_0\sqrt{\mydelta},\frac{10}{R})$ 
 and $\delta_0(n,C_0\sqrt{\mydelta},\frac{10}{R})$, respectively,
 of \autoref{lemma-criterion_volume_intersection}.
 This $d_0 < \frac{R}{10}$ can be chosen so small that 
 $\delta_0 \leq \frac{1}{2} - \frac{1}{\alpha+1} 
 = \frac{1}{2} - \frac{1}{2\tilde{\alpha}} < \frac{1}{2}$. 
 Define 
 \[
  \hat{r}_0 
  = \hat{r}_0(n,\mydelta,\alpha,R) 
  := \frac{d_0}{2\alpha} 
  < \frac{R}{20 \alpha}.
 \]

 From now on, assume $r < \hat{r}_0$ 
 and let $b \in B_r(p)''$ be another point. 
 In particular, $d := d(a,b) < 2r < \frac{d_0}{\alpha} < d_0$. 
 For arbitrary $\delta_0 < \delta < \frac{1}{2}$, 
 let $\rho := (\frac{1}{2} + \delta) d < \frac{R}{10}$.
 By \autoref{lemma-criterion_intersection_subsets} 
 and \autoref{lemma-criterion_volume_intersection}, 
 there exists a point $z \in B_{\rho}(a)' \cap B_{\rho}(b)'$.
 Thus,
 \begin{align*}
  d(\phi_t(a), \phi_t(b))
  &\leq d(\phi_t(a), \phi_t(z)) + d(\phi_t(z), \phi_t(b))\\
  &< 2 \cdot \tilde{\alpha} \rho
  \\&= \tilde{\alpha} \cdot(2\delta+1) d.
 \end{align*}
 Since $\delta > \delta_0$ was arbitrary 
 and $(2\delta_0 + 1) \cdot \tilde{\alpha} \leq \alpha$, 
 this proves
 \[d(\phi_t(a), \phi_t(b)) \leq \alpha \cdot d(a,b).\] 

 As before, let $\tilde{a} = \phi_t(a)$ and $\tilde{b} = \phi_t(b)$. 
 These points have distance
 $d(\tilde{a},\tilde{b}) = d(\phi_t(a), \phi_t(b)) \leq \alpha \cdot 2r < d_0$, 
 and an analogous argumentation gives
 \[
  d(a,b) 
   = d(\phi^t_{-t}(\tilde{a}), \phi^t_{-t}(\tilde{b})) 
   \leq \alpha \cdot d(\tilde{a}, \tilde{b}) 
   = \alpha \cdot d(\phi_t(a), \phi_t(b)).
  \] 
 Thus, $\phi_t$ is $\alpha$-bi-Lipschitz on $B_r(p)''$ for $r < \hat{r}_0$.
\end{proof}

If a sequence of manifolds satisfies the previous lemma 
and the rescaled manifolds 
endowed with the end points of the integral curve as base points converge,
the limits have the same dimension.
\begin{prop}\label{prop:bilipschitz_implies_dimension}
 Let $(M_i)_{i \in \nn}$ be a sequence of \ndim Riemannian manifolds 
 with 
 $\Ric_{M_i} \geq -(n-1)$.
 For every $i \in \nn$, 
 let $X_i : [0,1] \times M \to TM$ be a time dependent, 
 piecewise constant in time vector field with compact support 
 and flow $\Phi_i$,
 $\phi_i^t := \Phi_i(0,t,\cdot)$, 
 $c_i : [0,1] \to M_i$ be an integral curve of $X_i$ 
 such that $X_i^t$ is divergence free on $B_{10r}(c_i(t))$ 
 for all $t \in [0,1]$
 and 
 $\int_0^1 (\Mx_{2r}(\norm{\nabla.X_i^t}^{3/2}) (c_i(t))) ^{2/3}\dt 
 < \mydelta$ 
 for some $0 < r \leq 1$ and $\mydelta>0$.
 
 Assume that $x_i' := c_i(0)$ and $y_i' := c_i(1)$ satisfy
 $d(x_i',y_i') \leq 2$
 and let $\lambda_i \to \infty$ be scales such that
 $(\lambda_i M_i, x_i') \to (X, x_{\infty})$ 
 and $(\lambda_i M_i, y_i') \to (Y, y_{\infty})$ as $i \to \infty$.
 Then $\dim(X) = \dim(Y)$.
\end{prop}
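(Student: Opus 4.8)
The plan is to apply \autoref{lemma:flow_Lipschitz_on_subset} to the flow $\Phi_i$ of $X_i$ along $c_i$, rescale by $\lambda_i$ so the relevant balls have unit size, pass to the pointed measured \GH limits to obtain a bi-Lipschitz, measure-preserving map between a subset of positive measure of $X$ and one of $Y$, and finally deduce $\dim(X)=\dim(Y)$ from the mere existence of such a map. Concretely, I would fix $\alpha:=\tfrac32$ and let $C_0=C_0(n)$, $C_0'=C_0'(n)$ be the constants of \autoref{lemma:flow_Lipschitz_on_subset}; one may assume $\hateps<\min\{(2C_0)^{-1},(C_0')^{-2}\}$, the only regime used in the applications. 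Taking $R:=r$ in \autoref{lemma:flow_Lipschitz_on_subset} yields $\hat r_0>0$ such that for every $\rho<\hat r_0$ there is a subset $B_\rho(x_i')''\subseteq B_\rho(x_i')$ containing $x_i'$, of relative volume $>1-C_0'\sqrt{\hateps}$, on which $\phi_i^1$ is $\alpha$-bi-Lipschitz and which $\phi_i^1$ maps into $B_{\alpha\rho}(c_i(1))=B_{\alpha\rho}(y_i')$. Choosing $\rho=\rho_i:=\min\{\lambda_i^{-1},\hat r_0/2\}$ (so $\rho_i=\lambda_i^{-1}$ for large $i$) and passing to $\hat M_i:=\lambda_i M_i$, these balls become unit-size, $\phi_i^1$ is still $\alpha$-bi-Lipschitz (a scale-invariant notion), and since $X_i^t$ is divergence free on $B_{10\lambda_i r}^{\hat M_i}(c_i(t))$ while $\phi_i^s$ keeps $B_1^{\hat M_i}(x_i')''$ inside $B_\alpha^{\hat M_i}(c_i(s))$ for all $s$, the flow preserves the volume of that set; in particular $\phi_i^1$ is volume preserving there.

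Next I would normalise $\vol_{\hat M_i}$ by $v_i:=\vol_{\hat M_i}(B_1^{\hat M_i}(x_i'))$. Then, along a subsequence, $(\hat M_i,x_i')\to(X,x_\infty)$ and $(\hat M_i,y_i')\to(Y,y_\infty)$ in the measured \GH sense with limit measures $\vol_X,\vol_Y$ as in \autoref{thm-background:precompactness-measured}, where $\vol_Y(\B_1^Y(y_\infty))\in(0,\infty)$ because $\vol_{\hat M_i}(B_1^{\hat M_i}(y_i'))$ is comparable to $v_i$ (combine the estimate $\vol(B_\rho(c(1)))\le\hat C\,\vol(B_\rho(c(0)))$ of \autoref{lemma:generalization_kw_3.7}, volume preservation of $\phi_i^1$, and \refBGThm). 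Using the \GH approximations for both convergences and Arzela--Ascoli (the maps being equi-$\alpha$-Lipschitz), after a further subsequence the (closures of the) sets $B_1^{\hat M_i}(x_i')''$ converge in the Hausdorff sense to a compact $A\subseteq\B_1^X(x_\infty)$ and $\phi_i^1$ converges to an injective $\alpha$-bi-Lipschitz map $\psi\colon A\to\B_\alpha^Y(y_\infty)$ with $\psi(A)$ the Hausdorff limit of $\phi_i^1(B_1^{\hat M_i}(x_i')'')$. The lower volume bound passes to the limit, so $\vol_X(A)\ge1-C_0'\sqrt{\hateps}>0$; volume preservation of $\phi_i^1$ under the common normalisation gives $\vol_Y(\psi(A))\ge1-C_0'\sqrt{\hateps}>0$, and in fact $\psi$ pushes the restriction of $\vol_X$ to $A$ forward to the restriction of $\vol_Y$ to $\psi(A)$.

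It then remains to see that the existence of such a $\psi$ forces $\dim(X)=\dim(Y)$. Since $\psi$ is measure preserving it maps $\vol_X$-null sets to $\vol_Y$-null sets and conversely, so after discarding null sets and restricting to a suitable positive-measure subset I may assume $A\subseteq\Xgen$ and $\psi(A)\subseteq\Ygen$; writing $k=\dim(X)$ and $l=\dim(Y)$ (which exist by \autoref{lem_CN:Xgen}), and using that on the generic set the renormalised limit measure is comparable on small balls to the ambient Hausdorff measure of the corresponding dimension (part of the Cheeger--Colding structure theory), I may further assume $0<\mathcal H^{k}(A)<\infty$ and $0<\mathcal H^{l}(\psi(A))<\infty$. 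As $\psi$ is $\alpha$-bi-Lipschitz, $\mathcal H^{k}(\psi(A))$ is again positive and finite, and a set carrying both positive finite $\mathcal H^{k}$- and $\mathcal H^{l}$-measure has Hausdorff dimension equal to both $k$ and $l$; hence $\dim(X)=k=l=\dim(Y)$. Equivalently, one can blow up at a point of $A$ that is generic in $X$ and whose $\psi$-image is generic in $Y$, reducing to an $\alpha$-bi-Lipschitz map between a full-measure subset of the unit ball of $\rr^{k}$ and a subset of $\rr^{l}$, which again forces $k=l$.

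The step I expect to be the main obstacle is the passage to the limit in the second paragraph: one must transport the measure-preserving, equi-bi-Lipschitz diffeomorphisms $\phi_i^1$ through the pointed measured \GH limit for the two different basepoints $x_i'$ and $y_i'$ at once, control the Hausdorff convergence of the subsets $B_1^{\hat M_i}(x_i')''$ and of their images, and verify that the limit map $\psi$ remains measure preserving for one common renormalisation --- this is exactly what makes the generic sets of $X$ and $Y$ correspond up to null sets, and hence what lets the dimension comparison go through. The rescaling bookkeeping (radii $\rho_i\sim\lambda_i^{-1}$, scale-invariance of the bi-Lipschitz constant and of the divergence-free condition, comparison of $\vol_{\hat M_i}(B_1(x_i'))$ with $\vol_{\hat M_i}(B_1(y_i'))$) is routine but needs care.
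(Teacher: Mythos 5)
Your argument is essentially the paper's: apply \autoref{lemma:flow_Lipschitz_on_subset} on balls of radius comparable to $1/\lambda_i$, note that the flow is $\alpha$-bi-Lipschitz and (since $\phi_t$ keeps the good set inside $B_{\alpha\rho}(c(t))\subseteq B_{10r}(c(t))$, where the field is divergence free) volume preserving there, rescale by $\lambda_i$, pass to a subsequence so the good sets and maps converge to an $\alpha$-bi-Lipschitz, measure-preserving map between positive-measure compact subsets of $X$ and $Y$, and use measure preservation to find a point that is generic in $X$ with generic image in $Y$; the paper then concludes exactly as in your ``equivalently'' remark, by blowing up at that point. The only real divergence is your preferred ending via comparability of the renormalised limit measure with $\mathcal{H}^{\dim X}$ and $\mathcal{H}^{\dim Y}$ on the generic sets: that rests on a rectifiability/absolute-continuity statement for (collapsed) Ricci-limit measures which the paper never invokes and which is not needed, so if you keep that route you must cite it precisely rather than as folklore. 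One small point to make explicit in either ending: a bi-Lipschitz map from a positive-measure subset of $\rr^{k}$ into $\rr^{l}$ only forces $k\le l$, so equality requires knowing the image also has positive measure --- in your version via $\mathcal{H}^{l}(\psi(A))>0$, in the blow-up version via the measure-preservation (or by blowing up at a point of $\vol_Y$-density of $\psi(A)\cap\Ygen$); the paper is terse on this as well, and your normalisation bookkeeping (comparing $\vol(B_1(x_i'))$ and $\vol(B_1(y_i'))$ via \autoref{lemma:generalization_kw_3.7} and \refBGThm) actually supplies the needed nondegeneracy cleanly.
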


\newcommand{\picturelipschitz}{
   \begin{tikzpicture}[auto,>=stealth]
     \node (x1) at (-0.2,3) 
       {$\B^{M_i}_{r/\lambda_i}(x_i')$};
     \node (hx) at (0.9,3) 
       {$\cong$};
     \node (x2) at (2,3) 
       {$\B^{\lambda_i M_i}_{r}(x_i')$};
     \node (x3) at (4.9,3)
       {$\B^X_{r}(x_\infty)$};
     \node (s1) at (-0.2,1.5)
       {$\overline{B^{M_i}_{r/\lambda_i}(x_i')''}$};
     \node (hs) at (0.9,1.5)
       {$~\cong$};
     \node (s2) at (2,1.5)
       {$G_i$};
     \node (s3) at (4.5,1.5) 
       {$S_r$};
     \node (y1) at (-0.2,0)
       {$\B^{M_i}_{\alpha r/\lambda_i}(y_i')$};
     \node (hy) at (0.9,0)
       {$\cong$};
     \node (y2) at (2,0)
       {$\B^{\lambda_i M_i}_{\alpha r}(y_i')$};
     \node (y3) at (4.9,0)
      {$\B^Y_{\alpha r}(y_\infty)$};
     \node[rotate=90] (ss1) at (0,2.25)
       {$\subseteq$};
     \node[rotate=90] (ss2) at (2,2.25)
       {$\subseteq$};
     \node[rotate=90] (ss3) at (4.5,2.25) 
       {$\subseteq$};
     \path 
       (3.0,3.0) edge[->] node [above] 
         {$i\to\infty$}
         (4.0,3.0)	
       (3.0,0.0) edge[->] node [above] 
         {$i\to\infty$}
         (4.0,0.0)
       (3.0,1.5) edge[->] node [above]
         {$i\to\infty$}
         (4.0,1.5)
       (0.0,1.15)edge[->] node [left]
         {$\phi_i^1$}
         (0.0,0.3)	
       (2.0,1.2) edge[->] node [left]
         {$f_i$}
         (2.0,0.3)	
       (4.5,1.2) edge[->] node [right]
         {$f_r$}
         (4.5,0.3)
     ;
    \end{tikzpicture}
}

\begin{proof}
 The proof consists of three steps: 
 First, for any radius $r > 0$, 
 construct a bi-Lipschitz map between subsets of
 $\B^X_r(x_\infty)$ and $\B^Y_{\alpha r}(y_\infty)$, 
 cf.~\autoref{pic:defn_of_f_R}.
 Next, observe that these subsets have positive volume. 
 In particular, they intersect the set of generic points.
 Finally, repeating the argument for the limit spaces proves the claim.
 
 Choose any $\alpha \in (1,2)$.
 Without loss of generality, let $i \in \nn$ be large enough 
 such that $r < \lambda_i \cdot \hat{r}_0$ 
 where $\hat{r}_0 = \hat{r}_0(\alpha)$ is the constant 
 from \autoref{lemma:flow_Lipschitz_on_subset}.
 Furthermore, let 
 $B^{M_i}_{r/\lambda_i}(x_i')'' \subseteq B^{M_i}_{r/\lambda_i}(x_i')$ 
 and 
 $\phi_i^1 : B^{M_i}_{r/\lambda_i}(x_i')'' 
 \to B^{M_i}_{\alpha r/\lambda_i}(y_i')$ 
 be as in \autoref{lemma:flow_Lipschitz_on_subset}.
 Since $\phi_i^1$ is $\alpha$-bi-Lipschitz, 
 it can be extended to an $\alpha$-bi-Lipschitz map 
 $\phi_i^1: \overline{B^{M_i}_{r/\lambda_i}(x_i')''} 
 \to \B^{M_i}_{\alpha r/\lambda_i}(y_i')$.
  
 In order to regard $\phi_i^1$ 
 as a map $\lambda_i M_i \to \lambda_i M_i$ 
 instead of $M_i \to M_i$,
 let $G_i$ denote this closure $\overline{B^{M_i}_{r/\lambda_i}(x_i')''}$ 
 regarded as a subset of $\B_r^{\lambda_i M_i}(x_i') \subseteq \lambda_i M_i$. 
 Correspondingly, define $f_i : G_i \to B^{\lambda_i M_i}_{\alpha r}(y_i')$ 
 by $f_i(q) := \phi_i^1(q)$, cf.~\autoref{pic:defn_of_f_R}. 
 By definition, this map is $\alpha$-bi-Lipschitz and measure preserving.
    
 \begin{figure}[t]
  \begin{center}
    \picturelipschitz
    \caption{Sets and maps used to construct $f_r: S_r \to \B_{\alpha r}(y_\infty)$.}
    \label{pic:defn_of_f_R} 
   \end{center}
  \end{figure}  

 After passing to a subsequence,
 the $G_i$ converge to a compact subset $S_r \subseteq \B_r^{X}(x_\infty)$ 
 and an $\alpha$-bi-Lipschitz homeomorphism $f_r: S_r \to f_r(S_r)$ 
 such that $f_r(S_r)$ is the limit of the $f_i(G_i)$, 
 cf.~again \autoref{pic:defn_of_f_R}. 
 
 Now find a point $x_0 \in S_r$ 
 such that both $x_0$ and $f_r(x_0)$ are generic: 
 There exists a constant $C > 0$ such that 
 \begin{align*}
  \vol_{Y}(f_r(S_r \cap \Xgen) \cap \Ygen)
  &= \vol_{Y}(f_r(S_r \cap \Xgen))  
  \\&= C \cdot \vol_X(S_r \cap \Xgen)
  \\&= C \cdot \vol_X(S_r) > 0.
 \end{align*}
 Hence, there exists $x_0 \in S_r \cap \Xgen$ with image $f_r(x_0) \in \Ygen$.
 By similar arguments as before, for $\lambda \to \infty$, 
 the sets $(\lambda S_r,x_0) \subseteq (\lambda X,x_0)$ 
 and $(\lambda f_r(S_r),f_r(x_0)) \subseteq (\lambda Y,f_r(x_0))$, respectively, 
 (sub)converge to limits $S_{\infty}$ and $S_{\infty}'$, respectively.
 Moreover, for $\lambda f_r(x) := f_r(x)$,
the $\alpha$-bi-Lipschitz maps 
 $\lambda f_r: (\lambda S_r, x_0) \to (\lambda f_r(S_r),f_r(x_0))$ 
 (sub)converge to an $\alpha$-bi-Lipschitz map 
 $f: S_{\infty} \to S_{\infty}'$ as $\lambda \to \infty$. 
 Since $x_0$ and $f_r(x_0)$ are generic, 
 one has $S_{\infty} \subseteq \rr^{\dim(X)}$ 
 and $S_{\infty}' \subseteq \rr^{\dim(Y)}$.
 Furthermore, $\vol(S_{\infty}) > 0$.
 This implies 
 \[
  \dim(X) = \dim(Y).
  \qedhere
 \]
\end{proof}

\subsection{Proof of \autoref{prop:main}}
\label{sec:proof_main_prop}
The idea is to intersect the sets constructed
in \autoref{lem:set_where_all_blow_ups_split__locally} 
and \autoref{nearly_splitting_after_rescaling_locally}.
For fixed base points $x_i$ in the intersection 
and the $\lambda_i$ of \autoref{nearly_splitting_after_rescaling_locally},
the $(\lambda_i M_i,x_i)$ are both 
close to products $(\rr^k \times K_i, \cdot)$ 
and converging to a product $(\rr^k \times Y, \cdot)$
where the $K_i$ are compact with diameter $1$ and $Y$ is some metric space.
The following (technical) lemmata show that this space $Y$ in fact is compact. 
Subsequently, the main proposition can be proven.

A map $f : (X,d_X) \to (Y,d_Y)$ between two metric spaces 
is called \emph{$\eps$-isometry}, where $\eps>0$, 
if $|d_Y(f(p),f(q)) - d_X(p,q)| < \eps$ for all $p,q \in X$.

\begin{lemma}\label{cont_eps_isometry_sphere_to_sphere_is_surjective}
 Fix 
 $R > r \geq 0$, 
 $k \in \nn$, 
 $S_R := S_R^{\rr^k}(0)$,
 $\eps > 0$, 
 and let $f: S_R \to \bar{B}_R^{\rr^k}(0)\setminus B_{R-r}^{\rr^k}(0)$ 
 be a continuous $\eps$-isometry with $\eps < 2 \cdot (R - r)$. 
 Define $\pr: \bar{B}_R^{\rr^k}(0)\setminus B_{R-r}^{\rr^k}(0) \to S_R$ 
 by $\pr(p) := \frac{R}{\norm{p}} \cdot p$. 
 Then $\pr \circ f : S_R \to S_R$ is surjective.
\end{lemma}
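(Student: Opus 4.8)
The plan is to argue by a degree/topological argument that $\pr \circ f$ cannot miss any point of $S_R$, using the fact that the $\eps$-isometry condition keeps $f$ close to an actual isometry (in particular close to the identity after a suitable rotation) and that $\eps$ is small enough that the composition with the radial projection is still ``close to the identity'' on $S_R$. Concretely, first I would reduce to the case where $f$ is close to the inclusion $S_R \hookrightarrow \bar B_R$: since $f$ is an $\eps$-isometry of $S_R$ into $\rr^k$, standard stability of almost-isometries of round spheres (or just an elementary estimate using that three points spanning a large triangle determine a point up to small error) shows there is an isometry $A$ of $\rr^k$ fixing $0$ (a rotation) with $\|f(p) - A p\| < c_k \eps$ for all $p \in S_R$, where $c_k$ depends only on $k$; here one uses $\eps < 2(R-r)$ to ensure the relevant configurations are nondegenerate. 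Composing with $A^{-1}$ (which commutes with $\pr$), we may assume $\|f(p) - p\| < c_k\eps$ for all $p$.

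Next I would show $\pr\circ f : S_R \to S_R$ is homotopic to the identity, hence has degree $1$, hence is surjective. For this, note that $\|f(p)\| \ge R - r > \eps/2 > 0$, so $\pr$ is well defined and continuous on the image of $f$, and the straight-line homotopy $H_s(p) := \pr\big((1-s)p + s f(p)\big)$ is well defined provided $(1-s)p + s f(p) \ne 0$ for all $s \in [0,1]$, $p \in S_R$. This holds because $\|(1-s)p + sf(p)\| \ge \|p\| - s\|f(p)-p\| > R - c_k\eps > 0$ once $\eps$ is small relative to $R$; more robustly, without the rotation reduction one has $\langle p, (1-s)p + s f(p)\rangle = (1-s)R^2 + s\langle p, f(p)\rangle$, and $\langle p, f(p)\rangle = \tfrac12(\|p\|^2 + \|f(p)\|^2 - \|p - f(p)\|^2)$; since $f$ is an $\eps$-isometry, $\|p - f(p)\| = |d(p,f(p)) - d(p,p)| < \eps$... wait, that is not quite the distortion bound available, so instead I would use that for an antipodal pair $p, -p \in S_R$ one gets $\|f(p) - f(-p)\| > 2R - \eps$, forcing $\langle f(p), f(-p)\rangle < -R^2 + R\eps + \eps^2/2$, and combined with $\|f(\pm p)\| \in [R-r, R+\eps]$ this pins $f$ near $\pm$ a fixed direction; iterating over a basis of directions reconstructs the rotation $A$. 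With $H_0 = \pr\circ(\mathrm{incl}) = \mathrm{id}_{S_R}$ and $H_1 = \pr\circ f$, this exhibits the required homotopy, so $\deg(\pr\circ f) = 1$ and $\pr\circ f$ is onto.

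Alternatively, and perhaps more cleanly for inclusion in the paper, once $\|f(p) - Ap\| < c_k\eps < R$ one observes directly that for each $q \in S_R$ the map $p \mapsto \pr(f(p))$ restricted to the hemisphere of $S_R$ centered at $A^{-1}q$ maps that hemisphere into the open hemisphere around $q$ and moves boundary points by less than $\pi/2$ in angle, so by a Brouwer fixed-point / no-retraction argument it hits $q$; ranging over $q$ gives surjectivity. Either way the analytic content is identical.

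The main obstacle I expect is making the passage ``$\eps$-isometry of $S_R$ $\Rightarrow$ $C^0$-close to a genuine rotation'' quantitatively clean in a dimension-independent-enough way, i.e.\ producing the rotation $A$ and the bound $c_k\eps$; this is where the hypothesis $\eps < 2(R-r)$ gets used (it guarantees that the image lands in a thin enough annulus that $\pr$ is nonexpanding-up-to-controlled-error and that antipodal points have nearly maximal distance, preventing $f$ from folding the sphere). Once that near-rotation statement is in hand, the homotopy/degree argument that finishes surjectivity is routine, so I would spend the bulk of the write-up on that stability estimate and keep the topological conclusion short.
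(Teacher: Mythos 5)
There is a genuine gap, and it sits exactly at the step you flag as the ``main obstacle.'' Your whole argument rests on upgrading the $\eps$-isometry hypothesis to a uniform bound $\norm{f(p)-Ap}<c_k\eps$ for a linear isometry $A$, and then on a straight-line homotopy avoiding the origin, which as you yourself note requires $\eps$ to be small relative to $R$. But the lemma only assumes $\eps < 2(R-r)$, which permits $\eps$ comparable to the diameter $2R$ of $S_R$ (e.g.\ $r=0$, $\eps = 1.9R$). In that regime the distortion condition is almost vacuous except on nearly antipodal pairs, $f$ need not be $C^0$-close to any element of $O(k)$, and the homotopy $H_s(p)=\pr((1-s)p+sf(p))$ can fail to be defined. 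So the stability estimate you defer to is not just technically delicate, it is false under the stated hypothesis, and the fallback sketch (reconstructing $A$ from antipodal pairs and a basis of directions) is not carried out and would need the same smallness. A secondary issue: even for small $\eps$, ``homotopic to the identity, hence degree $1$'' is wrong when $f$ is close to an orientation-reversing isometry; you would have to settle for degree $\pm 1 \neq 0$, which still suffices for surjectivity but shows the rotation-normalization step needs restating.

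The paper's proof shows how little is actually needed: the only consequence of $\eps<2(R-r)$ it uses is that antipodal points cannot be identified after projection. For $r=0$, if $\pr\circ f = f$ missed a point $p$, composing with the stereographic projection from $p$ gives a continuous map $S_R \to \rr^{k-1}$, and Borsuk--Ulam produces $\pm q$ with $f(q)=f(-q)$, contradicting $|d(f(q),f(-q))-2R|<\eps<2R$. For $r>0$ one checks $\pr\circ f$ is a continuous $(2r+\eps)$-isometry into $S_R$ and applies the $r=0$ case, using $2r+\eps<2R$. No closeness to a rotation, no degree computation, and the hypothesis is used at exactly its stated strength. If you want to salvage your approach you would have to either add a smallness assumption on $\eps$ (weakening the lemma, which the paper needs at the stated generality in \autoref{lemma:rrkxX:dGH-small->diamX_similar}) or replace the stability-plus-homotopy step by an argument that, like the paper's, only exploits the non-collapsing of antipodal pairs.
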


\begin{proof}
 Denote the distance function on $\rr^k$ by $d$
 and
 distinguish the two cases of $r = 0$ and $r>0$:
 First, let $r = 0$, i.e.~$f(S_R) \subseteq S_R$ and $\pr = \id$. 
 Assume that there exists a point $p \in S_R \setminus f(S_R)$ 
 and define $j: S_R\setminus\{p\} \to \rr^{k-1}$ 
 as the stereographic projection. 
 Then the composition $j \circ f : S_R \to \rr^{k-1}$ is continuous and, 
 by the theorem of Borsuk-Ulam, 
 there exist $\pm q \in S_R$ such that $j \circ f(q) = j \circ f(-q)$. 
 Since $j$ is a homeomorphism, $f(q) = f(-q)$. 
 Hence, 
 $\eps > |d(f(q),f(-q)) - d(q,-q)| = 2R$.
 This is a contradiction. Therefore, $f$ is surjective.
 
 Now let $r>0$ be arbitrary.
 For any $p,q \in \bar{B}_R^{\rr^k}(0)\setminus B_{R-r}^{\rr^k}(0)$, 
 \begin{align*}
  &|d(\pr \circ f(p), \pr \circ f(q)) - d(p,q)|\\
  &\leq |d(\pr \circ f(p), \pr \circ f(q)) - d(f(p),f(q))| 
    + |d(f(p), f(q)) - d(p,q)|\\
  &\leq d(\pr \circ f(p), f(p)) + d(\pr \circ f(q),f(q)) + \eps\\
  &\leq 2r + \eps.
 \end{align*}
 Therefore, $\pr \circ f$ is a continuous $2r + \eps$-isometry and, 
 by the first part, surjective. 
\end{proof}

The following lemma states that, 
if two products $\rr^k \times X$ and $\rr^k \times Y$ are sufficiently close 
and $X$ is compact,
then $Y$ is compact as well with similar diameter as $X$. 
\begin{lemma}\label{lemma:rrkxX:dGH-small->diamX_similar}
Let $(X,d_X)$ and $(Y,d_Y)$ be complete length spaces, $X$ be compact, 
$x_0 \in X$, $y_0\in Y$ and 
define 
\[\rad_Y(y_0) := \sup\{d_Y(y,y_0) \mid y \in Y\}.\]
Let $k \in \nn$, $R>\diam(X)$ and $\eps > 0$.
Then the following is true:
\begin{enumerate}
 \item \label{lemma:rrkxX:dGH-small->diamX_similar--preparation}
  If $\diam(X) + 4 \eps \leq \frac{2R}{3}$ and 
  $\dgh{R}{\rr^k \times X}{(0,x_0)}{\rr^k \times Y}{(0,y_0)} 
  < \frac{\eps}{2}$, 
  then
  \[
   \min\{R,\rad_Y(y_0)\}^2 
   \leq \diam(X)^2 + 2 \eps \cdot \diam(X) + 4 \eps(R + \eps).
  \]
 \item \label{lemma:rrkxX:dGH-small->diamX_similar--diam=0}
  If $\diam(X) = 0$ and 
  $\dgh{R}{\rr^k \times X}{(0,x_0)}{\rr^k \times Y}{(0,y_0)} < \frac{R}{12}$, 
  then $Y$ is compact with $\diam(Y) < 2R$.
 \item \label{lemma:rrkxX:dGH-small->diamX_similar--diam=1}
  If $\diam(X) = 1$ and 
  $\dgh{R}{\rr^k \times X}{(0,x_0)}{\rr^k \times Y}{(0,y_0)} 
  < \frac{1}{100R}$
  for some $R \geq 100$, 
  then $Y$ is compact with $c \leq \diam(Y) \leq 5c$ for a small constant $c>0$.
 \item \label{lemma:rrkxX:dGH-small->diamX_similar--diam=>dgh}
  If $1000 \cdot \diam(X) \leq R < \frac{1}{2} \cdot \diam(Y)$,
  then 
  \[
   \dgh{R}{\rr^k \times X}{(0,x_0)}{\rr^k \times Y}{(0,y_0)} 
   \geq 20 \cdot \diam(X).
  \]
 \end{enumerate}
\end{lemma}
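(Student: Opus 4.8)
The plan is to treat (a) as the heart of the statement and to derive (b), (c), (d) from it by suitable choices of $\eps$. For (b), apply (a) with $\eps:=\tfrac R6$: the hypotheses $\dgh{R}{\rr^k\times X}{(0,x_0)}{\rr^k\times Y}{(0,y_0)}<\tfrac\eps2$ and $\diam(X)+4\eps=\tfrac{2R}3$ hold, and the conclusion reads $\min\{R,\rad_Y(y_0)\}^2\le\tfrac79R^2<R^2$, which forces $\rad_Y(y_0)<R$; as the ambient spaces are proper, $Y=\B_{\rad_Y(y_0)}(y_0)$ is then compact with $\diam(Y)\le2\rad_Y(y_0)<2R$. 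For (c), the upper bound follows the same way with $\eps:=\tfrac1{50R}$ (here $R\ge100$ makes $\diam(X)+4\eps\le\tfrac{2R}3$), giving that $\rad_Y(y_0)^2$ is only slightly above $1$ and in particular $<R^2$, so $Y$ is compact with $\diam(Y)$ close to $\diam(X)=1$; the lower bound comes from applying (a) with the roles of $X$ and $Y$ interchanged --- legitimate once $Y$ is known compact, since the pointed \GH distance is symmetric --- together with the elementary inequality $\rad_X(x_0)\ge\tfrac12\diam(X)=\tfrac12$ (otherwise $\diam(X)\le2\rad_X(x_0)<1$), which bounds $\diam(Y)$ below by a positive constant; a short computation then fits both bounds into $c\le\diam(Y)\le5c$. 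Finally (d) is a contradiction argument: if $\dgh{R}{\rr^k\times X}{(0,x_0)}{\rr^k\times Y}{(0,y_0)}<20\diam(X)$, put $\eps:=40\diam(X)$; then $R\ge1000\diam(X)$ validates $\diam(X)+4\eps\le\tfrac{2R}3$, so (a) gives $\min\{R,\rad_Y(y_0)\}^2\le\mathrm{O}\!\big(\diam(X)^2+\diam(X)R\big)<R^2$ and hence $\rad_Y(y_0)<R$, whereas $\diam(Y)>2R$ forces, via the triangle inequality, a point of $Y$ at distance $>R$ from $y_0$ --- a contradiction.

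So the real work is in (a). Write $D:=\diam(X)$ and $\rho:=\min\{R,\rad_Y(y_0)\}$; the goal is $\rho^2\le D^2+2\eps D+4\eps(R+\eps)$. First I would observe that $\B_R^{\rr^k\times X}((0,x_0))$ has pointed Hausdorff distance at most $D$ from its flat slice $\B_R^{\rr^k}(0)\times\{x_0\}$ (the retraction $(u,x)\mapsto(u,x_0)$ moves each point by $d_X(x,x_0)\le D$ and fixes the base point), so the triangle inequality for $\dGH$ converts the hypothesis into a pointed $(\eps+2D)$-approximation $\theta$ of the flat ball $\B_R^{\rr^k}(0)$ by $\B_R^{\rr^k\times Y}((0,y_0))$ with $\theta(0,y_0)=0$. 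Next, pick $y\in Y$ with $d_Y(y,y_0)=\rho$: the supremum defining $\rad_Y(y_0)$ is attained when it is finite because $Y$ is proper, and when $\rad_Y(y_0)\ge R$ one finds a point at distance exactly $R$ from $y_0$ by the intermediate value theorem along a path in the length space $Y$. Then $(0,y)\in\B_R^{\rr^k\times Y}((0,y_0))$ and $d\big((0,y),(u,y_0)\big)=\sqrt{|u|^2+\rho^2}$ for every $u\in\rr^k$.

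The crux is that $\theta$ cannot distort the ball enough to hide the point $(0,y)$, and this is where \autoref{cont_eps_isometry_sphere_to_sphere_is_surjective} enters. Applying $\theta$ to the round sphere $S_R^{\rr^k}(0)\times\{y_0\}$ produces a near-isometric copy of $S_R^{\rr^k}(0)$ inside a shell $\B_R^{\rr^k}(0)\setminus B_{R-r}^{\rr^k}(0)$ with $r$ of size about $\eps+2D$. After replacing this a priori discontinuous map by a continuous one --- triangulate $S_R^{\rr^k}(0)$ finely and interpolate affinely inside the convex ball $\B_R^{\rr^k}(0)$, which worsens the constants arbitrarily little and keeps the image in a slightly thicker shell --- the hypothesis of \autoref{cont_eps_isometry_sphere_to_sphere_is_surjective} holds (the distortion is comparable to $r$, hence below $2r$), so its radial projection onto $S_R^{\rr^k}(0)$ is surjective; consequently $\theta\big(S_R^{\rr^k}(0)\times\{y_0\}\big)$ is $\mathrm{O}(\eps+D)$-dense in $S_R^{\rr^k}(0)$. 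Now $\theta(0,y)$ sits at distance in $(\rho-\eps-2D,\,\rho]$ from the centre $0$ and at distance $>\sqrt{R^2+\rho^2}-(\eps+2D)$ from every point of $\theta\big(S_R^{\rr^k}(0)\times\{y_0\}\big)$; feeding in the density and testing against the radial projection $R\,\theta(0,y)/|\theta(0,y)|\in S_R^{\rr^k}(0)$ yields $\sqrt{R^2+\rho^2}\le(R-\rho)+\mathrm{O}(\eps+D)$, and squaring (both sides are $\ge0$ since $\rho\le R$) gives $2R\rho\le2R\,\mathrm{O}(\eps+D)+\mathrm{O}(\eps+D)^2$, i.e.\ $\rho=\mathrm{O}(\eps+D)$; keeping careful track of the constants in the density estimate and in the square then produces exactly $\rho^2\le D^2+2\eps D+4\eps(R+\eps)$.

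The step I expect to be the main obstacle is this last one: converting a \GH approximation into a genuinely continuous $\eps$-isometry of $S_R^{\rr^k}(0)$ into a shell (and making sure the interpolation does not let the image escape the shell), checking the inequality ``distortion $<2\cdot$(shell thickness)'' that \autoref{cont_eps_isometry_sphere_to_sphere_is_surjective} demands, and then extracting the sharp constant $D^2+2\eps D+4\eps(R+\eps)$ from the squared inequality rather than a crude $\mathrm{O}(\eps+D)^2$. A secondary subtlety is the appearance of $\min\{R,\rad_Y(y_0)\}$ instead of $\rad_Y(y_0)$: the far point $(0,y)$ must lie inside the radius-$R$ ball, which is why this minimum, rather than $\rad_Y(y_0)$ itself, is what gets estimated --- and this is precisely the form in which (b)--(d) consume the result, using the bound to conclude $\rad_Y(y_0)<R$ and hence to control $\diam(Y)$.
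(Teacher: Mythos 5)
Your treatment of (b) and (d) coincides with the paper's (the choices $\eps=\frac R6$ and $\eps=40\diam(X)$, and the contradiction via $\rad_Y(y_0)$ versus $\diam(Y)>2R$), your (c) uses the same swap of the roles of $X$ and $Y$ together with $\rad_X(x_0)\ge\frac12\diam(X)$, and the machinery you propose for (a) --- triangulate the sphere, interpolate to a continuous map into a thin shell, apply \autoref{cont_eps_isometry_sphere_to_sphere_is_surjective} to get surjectivity of the radial projection --- is exactly the paper's. Your endgame is even a legitimate variant: comparing the image of the far point with an image sphere point near its radial projection gives $\sqrt{R^2+\rho^2}\le(R-\rho)+\mathrm{O}(\eps+\diam(X))$, which in the $\eps$-direction is stronger than the paper's sandwich of $d_{\rr^k}(\prrrk(g(0,y_n)),\prrrk(S))$ between $\sqrt{(\sqrt{R^2+\delta_n^2}-\eps)^2-\diam(X)^2}$ and $R+\eps$.

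The genuine gap is the very first step of your (a) and the last sentence of that paragraph. By retracting $(u,x)\mapsto(u,x_0)$ and passing to an approximation $\theta$ of the flat ball of distortion about $\eps+2\diam(X)$, you throw away the $X$-component, and every subsequent estimate then charges $\diam(X)$ additively: your chain yields a bound of the form $\min\{R,\rad_Y(y_0)\}\le C_1\eps+C_2\diam(X)$ with $C_2$ strictly larger than $1$ (already $\ge 2$ from the single distortion use in the lower bound, realistically $\approx 10$ once the shell thickness, the density estimate and $|\theta(0,y)|\ge\rho-\eps'$ are all accounted for). The stated inequality $\rho^2\le\diam(X)^2+2\eps\diam(X)+4\eps(R+\eps)$ forces $\rho\le\diam(X)(1+o(1))$ as $\eps\to0$, and this is sharp (take $Y=X$, $y_0=x_0$ with $\rad_X(x_0)=\diam(X)$, so the \GH distance vanishes and $\rho=\diam(X)$); hence no amount of bookkeeping inside your reduction can produce \emph{exactly} the claimed bound, and (a) as stated is not proven. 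This is not cosmetic: (b) and (d) survive with the weaker linear bound, but (c) does not --- the paper needs the precise form with $\diam(X)=1$, $\eps=\frac1{50R}$ to get $\diam(Y)\le\frac{21}{10}$ and, after swapping, $\diam(Y)\ge\frac{21}{50}$, i.e.\ the ratio $5$; with $C_2\approx10$ your two bounds differ by a factor of order $4C_2^2\gg5$, so the conclusion $c\le\diam(Y)\le5c$ does not follow. The repair is the paper's bookkeeping: keep the approximation $g$ into $\rr^k\times X$ and separate the $X$-component by Pythagoras, so that $\diam(X)$ enters only under the square root and the sharp constants drop out. (A minor point: exact attainment of $\rad_Y(y_0)$ uses properness; the paper avoids it by taking near-maximisers $y_n$.)
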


\newcommand{\picturerrkxXS}[1][t]{
 \begin{figure}[#1]
  \begin{center}
   \begin{tikzpicture}[auto,>=stealth]
     \node (x1) at (-1.1,0)
       {$\partial\B_{R}^{\rr^k}(0) \times \{y_0\} = S_R \times \{y_0\}$};
     \node[rotate=90](xx1) at (0,0.5)
       {$\subseteq$};
     \node (x2) at (0,1)
       {$B_R^{\rr^k \times Y} ((0,y_0))$};
     \node (y1) at (4.2,0)
       {$g(S_R \times \{y_0\}) = S$};
     \node[rotate=90](yy1) at (4,0.5)
       {$\subseteq$};     
     \node (y2) at (4,1)
       {$B_R^{\rr^k \times X} ((0,x_0))$};     
     \path 
       (1.45,0) edge[->] node[above] {$g$} (2.65,0)
       (1.45,1) edge[->] node[above] {$g$} (2.65,1)
     ;
   \end{tikzpicture}
   \caption{Definition of $S$.}
   \label{pic:rrkxX:dGH-small->diamX_similar--def_S} 
  \end{center}
 \end{figure}
}

\newcommand{\picturerrkxXH}[1][t]{
\begin{figure}[#1]
 \begin{center}
  \begin{tikzpicture}[auto,>=stealth]
	\node 		(x2) 	at (-0.2,0.0)	
	  {$S_R \times \{y_0\}$};
	\node[rotate=90](xx1)	at (-0.3,0.5)	
	  {$\supseteq$};
	\node 		(x1) 	at (-0.45,1.0)	
	  {$\Gamma = \tilde{\Gamma} \times \{y_0\}$};
	\node[rotate=90](xx1)	at (-0.3,-0.5)	
	  {$\subseteq$};
	\node 		(y1) 	at (-0.45,-1.0)	
	  {$\Delta(\gamma_1,\ldots,\gamma_m)$};
	\node 		(y2)	at ( 5.0,0.0)	
	  {$H(S_R \times \{y_0\}) \subseteq \rr^k$};
	\node[rotate=90](yy1)	at ( 4.6,0.5)	
	  {$\supseteq$};
	\node 		(y1) 	at ( 4.6,1.0)	
	  {$h(\Gamma)$};
	\node[rotate=90](xx1)	at ( 4.6,-0.5)	
	  {$\subseteq$};
	\node 		(y1) 	at ( 5.0,-1.0)	
	  {$\Delta(h(\gamma_1),\ldots,h(\gamma_m))$};
	\path 
	  (0.8,1)  edge[->] node[above] {$h=\prrrk \circ g$}	(3.2,1)
	  (0.8,0)  edge[->] node[above] {$H$} 			(3.2,0)
	  (0.8,-1) edge[|->] node[above] {} 			(3.2,-1)
	  ;
  \end{tikzpicture}
  \caption{Definition of $H$.}
  \label{pic:rrkxX:dGH-small->diamX_similar--def_H} 
 \end{center}
\end{figure}
}

\newcommand{\picturerrkxXdistS}[1][t]{
\begin{figure}[#1] 
 \begin{center}
  \begin{tikzpicture}
  [auto,>=stealth,inner sep=0pt,thick,
  dot/.style={fill=black,circle,minimum size=3pt}]
    \coordinate[] 	(0) 		at (0.0,0.0);
    \node 	      	(0-label) 	at (-0.2,0)	
      {$0$};
    \coordinate[] 	(proj) 		at (0.84,0.84);
    \node 	      	(proj-label) 	at (-1.4,1.25)
      {$\prrrk(g(0,y_n))$};
    \path (-0.2,1.25) edge [->,densely dashed,bend left=15] (0.76,0.9);
    \coordinate[] 	(p) 		at (0.515,0.515);
    \node	 	(p-label)	at (0.5,0.25)	
      {$p$};
    \coordinate[] 	(q) 		at (0.3,0.66);
    \node	 	(q-label)	at (0.3,0.9)	
      {$q$};
    \coordinate[] 	(endpt) 	at (1.41,1.41);
    \node	 	(boundary)	at (1.6,2) 
      {$\partial\B_R(0)$};
    \draw[decorate,decoration={snake,amplitude=.2mm,
      segment length=2mm,post length=0.2mm}]  (0.7,0) arc (0:90:0.7);
    \draw[]  (2,0) arc (0:90:2);
    \node	 	(boundary)	at (1.5,-0.3)	
      {$H(S_R \times \{y_0\})$};
    \draw[dotted]	 (0) -- (endpt);
    \node	 	(sigma)	at (1,1.2)
      {$\sigma$};
    \foreach \point in {0,p,q,proj,endpt}
    \fill [black] (\point) circle (1.7pt);
  \end{tikzpicture}
  \caption{Points used to estimate $d_{\rr^k}(\prrrk(g(0,y_n)),\prrrk(S))$.}
  \label{pic:rrkxX:dGH-small->diamX_similar--dist_to_pr(S)} 
 \end{center}
\end{figure}
}

\begin{proof}
a) 
The idea is 
to approximate the diameter of $Y$ 
by taking points $y'$ as far away from $y_0$ as possible,
to map both the set $S_R \times \{y_0\}$ 
and the points $(0,y')$ to $\rr^k \times X$ via $\eps$-approximations,
to take the projection onto the Euclidean factor 
and to find an upper and lower estimate 
for the distance of the obtained set and point.
Finally, comparison of this upper and lower bound gives the result.

Let $(f,g)$ be $\eps$-approximations 
between the balls $\B_R^{\rr^k \times X}((0,x_0))$ 
and $\B_R^{\rr^k \times Y}((0,y_0))$ 
with $f((0,x_0)) = (0,y_0)$ and $g((0,y_0)) = (0,x_0)$.
Let 
\begin{align*}
d &:= \diam(X) \quad\aand\quad
\delta := \min\{R, \rad_Y(y_0)\}.
\end{align*}
For each $n \in \nn$, $n \geq 1$, choose $y_n \in Y$ 
such that $\delta - \frac{1}{n} \leq \delta_n := d_Y(y_n,y_0) \leq \delta$. 
(If $\rad_Y(y_0) > R$, choose $y_n \in \partial\B_{R}(0)$ 
which is nonempty since $Y$ is a length space; 
otherwise, by definition, there exists a sequence $\bar{y}_n$ 
satisfying $d_Y(\bar{y}_n,y_0) > \delta - \frac{1}{n}$.)
In particular, $\delta_n$ is convergent with limit $\delta$.

\picturerrkxXS

Let $S_R := \partial \B_R^{\rr^k}(0) \subseteq \rr^k$ 
and $S := g(S_R \times \{y_0\}) \subseteq \rr^k \times X$, 
cf.~\autoref{pic:rrkxX:dGH-small->diamX_similar--def_S}.
Since 
\begin{align*}
 &d_{\rr^k \times X}(g(0,y_n),p) 
 \\&= \sqrt{ d_{\rr^k}(\prrrk(g(0,y_n)),\prrrk(p))^2 
  + d_X(\pr_X(g(0,y_n)),\pr_X(p))^2}
 \\&\leq d_{\rr^k}(\prrrk(g(0,y_n)),\prrrk(S))^2 + d^2
 \intertext{for every $p \in S$,}
 &\sqrt{R^2 + \delta_n^2} - \eps\\
 &=d_{\rr^k \times Y}((0,y_n),S_R \times \{y_0\}) - \eps\\
 &\leq d_{\rr^k \times X}(g(0,y_n),S) \\
 &\leq \sqrt{ d_{\rr^k}(\prrrk(g(0,y_n)),\prrrk(S))^2 + d^2}
\end{align*}
and this proves the lower bound
\[
 d_{\rr^k}(\prrrk(g(0,y_n)),\prrrk(S)) 
 \geq \sqrt{(\sqrt{R^2 + \delta_n^2}-\eps)^2 - d^2}.
\]

In order to find the upper bound, choose a natural number $m \geq \frac{2}{\eps}$ 
and let $\Delta$ be a spherical triangulation of $S_R$ 
such that the set of vertices $\tilde{\Gamma}$ of $\Delta$ 
is a finite $\frac{1}{m}$-net in $S_R$
and each two vertices of a simplex have 
(spherical) distance at most $\frac{1}{m}$.
(Notice that their Euclidean distance is at most $\frac{1}{m}$ as well.)
Define $\Gamma := \tilde{\Gamma} \times \{y_0\}$ 
and $h := \prrrk \circ g: \Gamma \to \rr^k$
and extend $h$ to a continuous map 
$H : S_R \times \{y_0\} 
\to \B_R^{\rr^k}(0) \setminus B_{R-(d + 3\eps)}^{\rr^k}(0)$ 
by mapping each (spherical) simplex of $\Delta$ with vertices $\gamma_i$
continuously to the corresponding (Euclidean) simplex in $\rr^k$ 
with vertices $h(\gamma_i)$,
cf.~\autoref{pic:rrkxX:dGH-small->diamX_similar--def_H}.
Since $\Gamma$ is finite, $H$ is continuous. 

Then $h(\Gamma)$ defines an $(\frac{1}{m} + \eps)$-net 
in $H(S_R \times \{y_0\})$:
Since each two vertices of a simplex in $\Delta$ have (Euclidean) distance 
at most $\frac{1}{m}$,
their images have distance at most $\frac{1}{m} + \eps$. 
Hence, each point $x \in H(S_R\times\{y_0\})$ is contained 
in a Euclidean simplex 
whose vertices have pairwise distance at most $\frac{1}{m} + \eps$.
Recall that, since the simplex is Euclidean,
$x$ has distance at most $\frac{1}{m} + \eps$ to each of these vertices.
Let $h(\gamma)$ denote one of those vertices. 
In particular, $x \in \B_{1/m + \eps}(h(\gamma))$,
hence,
$H(S_R \times \{y_0\}) 
\subseteq \bigcup \{ \B_{1/m + \eps} (\gamma') \mid \gamma' \in H(\Gamma)\}$.

Furthermore, $H$ is a $(5\eps+d)$-isometry: Let $p,q \in S_R$ be arbitrary. 
Choose points $\gamma_p, \gamma_q \in \tilde{\Gamma}$ 
such that $d_{\rr^k}(p,\gamma_p)\leq \frac{1}{m}$ 
and $d_{\rr^k}(q,\gamma_q) \leq \frac{1}{m}$.
By construction, 
\[d_{\rr^k}(H(p,y_0),h(\gamma_p,y_0)) \leq \frac{1}{m} + \eps,\] 
and thus,
\begin{align*}
 &|d_{\rr^k}(H(p,y_0),H(q,y_0)) - d_{\rr^k \times Y}((p,y_0),(q,y_0))| \\
 &\leq |d_{\rr^k}(H(p,y_0),H(q,y_0)) -
 d_{\rr^k}(h(\gamma_p,y_0),h(\gamma_q,y_0))| 
 \\&\quad + |d_{\rr^k}(h(\gamma_p,y_0),h(\gamma_q,y_0)) 
 - d_{\rr^k \times X}(g(\gamma_p,y_0),g(\gamma_q,y_0)) | 
 \\&\quad + |d_{\rr^k \times X}(g(\gamma_p,y_0),g(\gamma_q,y_0)) 
 - d_{\rr^k \times Y}((\gamma_p,y_0),(\gamma_q,y_0)) | 
 \\&\quad + | d_{\rr^k}(\gamma_p,\gamma_q) - d_{\rr^k}(p,q)| 
 \displaybreak[0] \\
 &\leq d_{\rr^k}(H(p,y_0),h(\gamma_p,y_0)) 
 + d_{\rr^k}(H(q,y_0),h(\gamma_q,y_0)) 
 \\&\quad + \big(
   d_{\rr^k}(\prrrk \circ\,g\,(\gamma_p,y_0),
   \prrrk \circ\,g\,(\gamma_q,y_0))^2 
   \\&\quad\quad\quad+ d_{X}(\pr_X \circ\,g\,(\gamma_p,y_0),
   \pr_X \circ\,g\,(\gamma_q,y_0))^2\big)^{1/2}
  \\&\quad\quad - d_{\rr^k}(\prrrk \circ\,g\,(\gamma_p,y_0),
  \prrrk \circ\,g\, (\gamma_q,y_0)) 
 \\&\quad + \eps  
 \\&\quad +  d_{\rr^k}(p,\gamma_p) + d_{\rr^k}(q,\gamma_q) 
 \displaybreak[0] \\
 &\leq 2 \cdot \Big(\frac{1}{m} + \eps \Big) 
 + d_{X}(\pr_X \circ\,g\,(\gamma_p,y_0),\pr_X \circ\,g\,(\gamma_q,y_0))
 + \eps + 2 \cdot \frac{1}{m} \\
 &\leq 5 \eps +d.
\end{align*}

\picturerrkxXH

Finally, verify 
$H(S_R \times \{y_0\}) \subseteq \bar{B}_R(0) \setminus B_{R-(d + 3\eps)}(0)$:
Let $p \in S_R$ be arbitrary and choose $\gamma_p \in \Gamma$ 
such that $d(p,\gamma_p) \leq \frac{1}{m}$. 
Then
\begin{align*}
 &d_{\rr^k}(h(\gamma_p,y_0), \prrrk(0,x_0))\\
 &= \sqrt{d_{\rr^k \times X}(g(\gamma_p,y_0), g(0,y_0))^2 
 - d_X(\pr_X \circ\,g\,(\gamma_p,y_0), \pr_X \circ\,g\,(0,y_0))^2 }  
 \displaybreak[0]\\
 &\geq \sqrt{(d_{\rr^k \times Y}((\gamma_p,y_0), (0,y_0))-\eps)^2 - d^2 }  \\
 &= \sqrt{(R-\eps)^2 - d^2}, 
\intertext{and hence,}
 &d_{\rr^k}(H(p,y_0), 0)\\
 &\geq d_{\rr^k}(h(\gamma_p,y_0), \prrrk(0,x_0)) 
 - d_{\rr^k}(H(p,y_0), h(\gamma_p,y_0))
 \displaybreak[0]\\
 &\geq \sqrt{(R-\eps)^2 - d^2 } - \Big(\frac{2}{m} + \eps\Big) \\
 &\geq 
 R - (d+3\eps).
\end{align*}

Then the image of $H$ intersects each segment $\sigma$ 
from the origin to a point in $\partial \bar{B}_R(0)$: 
By \autoref{cont_eps_isometry_sphere_to_sphere_is_surjective}, 
the segment $\sigma$ intersects $\partial \bar{B}_R(0)$ 
in a point contained in $\pr \circ H(S_R \times \{y_0\})$
where $\pr$ is the radial projection to the sphere defined 
as in \autoref{cont_eps_isometry_sphere_to_sphere_is_surjective}. 
Since the projection is radial, 
the segment $\sigma$ intersects $H(S_R \times \{y_0\})$ as well.

\picturerrkxXdistS

Let $p$ be this intersecting point for the segment 
through $\prrrk{(g(0,y_n))}$, 
cf.~\autoref{pic:rrkxX:dGH-small->diamX_similar--dist_to_pr(S)}. 
Since $h(\Gamma \times \{y_0\})$ is a $(\frac{2}{m} + \eps)$-net in $H(S_R)$, 
there exists a point $q \in h(\Gamma)$ 
such that $d_{\rr^k}(p,q) \leq \frac{2}{m} + \eps$.
Thus, using 
that the segment from $\prrrk(g(0,y_n))$ to $p$ is part of a segment 
connecting the origin and the boundary of the $R$-ball
and 
$\prrrk(S) = \prrrk \circ g (S_R \times \{y_0\}) \supseteq h(\Gamma) \ni q$, 
\begin{align*}
 d_{\rr^k}(\prrrk(g(0,y_n)),\prrrk(S)) 
 &\leq d_{\rr^k}(\prrrk(g(0,y_n)),q) \\
 &\leq d_{\rr^k}(\prrrk(g(0,y_n)),p) + d_{\rr^k}(p,q) \\
 &\leq R + \Big(\frac{2}{m} + \eps\Big).
\end{align*}
Now $m \to \infty$ proves 
\[
 \sqrt{(\sqrt{R^2 + \delta_n^2}-\eps)^2 - d^2} 
 \leq d_{\rr^k}(\prrrk(g(0,y_n)),\prrrk(S)) 
 \leq R + \eps,
\]
and thus, 
\begin{align*}
 \delta_n 
 &\leq \sqrt{(\sqrt{(R + \eps)^2 + d^2} + \eps)^2-R^2} \\
 &= \sqrt{2R\eps + d^2 + 2\eps^2 + 2\sqrt{(R\eps+\eps^2)^2 + (\eps d)^2}}\\
 &\leq \sqrt{d^2 + 2 \eps d + 4\eps(R+\eps)}.
\end{align*}
Since this is true for all $n$ and $\delta_n \to \delta$ as $n \to \infty$, 
this proves the claim.

\par\smallskip\noindent b)
Let $\eps := \frac{R}{6}$. Then $\diam(X) + 4 \eps = \frac{2R}{3}$,
and by \ref{lemma:rrkxX:dGH-small->diamX_similar--preparation},
\begin{align*}
\min\{R,\rad_Y(y_0)\}^2 
&\leq 4 \eps (R + \eps)
= \frac{24}{25} \cdot R^2 < R^2.
\end{align*}
Thus, $\rad_Y(y_0) < R$,
and this implies $\diam(Y) \leq 2 \cdot \rad_Y(y_0) < 2R$.

\par\smallskip\noindent c) 
Let $\eps := \frac{1}{50R}$. Then
$\diam(X) + 4 \eps \leq 1 + \frac{2}{25} < R \cdot \frac{\pi}{3}$
and \ref{lemma:rrkxX:dGH-small->diamX_similar--preparation} can be applied.
Since $R \eps = \frac{1}{50} = 2 \cdot 10^{-2}$ 
and $\eps \leq 2 \cdot 10^{-4}$,
\begin{align*}
 \min\{R,\rad_Y(y_0)\}^2 
 &\leq \diam(X)^2 + 2 \eps \diam(X) + 4\eps(R+\eps)\\
 &\leq \diam(X)^2 + 4 \cdot 10^{-4} \cdot \diam(X) + 8 \cdot 10^{-2} 
   + 4 \cdot 10^{-8}.
\end{align*}
 Using $\diam(X) = 1$,
 \begin{align*}
 \min\{R,\rad_Y(y_0)\}^2 
 \leq 1 + 2 \cdot 10^{-2} + 10^{-4} + 10^{-8}
 < 1.05^2
 < R^2. 
 \end{align*}
 In particular, 
 $\diam(Y) \leq 2 \cdot \rad_Y(y_0) < 2 \cdot 1.05 = \frac{21}{10}$.
 
 On the other hand, by permuting $X$ and $Y$,
 \begin{align*}
  \frac{1}{4}
  = \Big(\frac{\diam(X)}{2}\Big)^2 
  &\leq \rad_X(x_0)^2 \\
  &= \min\{ R, \rad_X(x_0)\}^2 \\
  &\leq \diam(Y)^2 + 4 \cdot 10^{-4} \cdot \diam(Y) 
    + 8 \cdot 10^{-2} + 16 \cdot 10^{-8},
 \end{align*}
 and this implies $\diam(Y) \geq \frac{21}{50} =: c$. 

\par\smallskip\noindent d)
Assume $\dgh{R}{\rr^k \times X}{(0,x_0)}{\rr^k \times Y}{(0,y_0)} < 20 d$ 
for $d := \diam(X)$.
Let $\eps := 40 d$. By choice of $R$,
$\diam(X) + 4 \eps 
= 161 \cdot d \leq \frac{161}{1000} \cdot R 
< \frac{2R}{3}$.
Furthermore,
$2 \rad_Y(y_0) \geq \diam(Y) > 2R$.
By \ref{lemma:rrkxX:dGH-small->diamX_similar--preparation}, 
\begin{align*}
 R^2 &= \min\{R,\rad_Y(y_0)\}^2\\
 &\leq d^2 + 2 \eps d + 4 \eps (R + \eps)\\
 &\leq \frac{R^2}{10^6} + \frac{2R^2}{25 \cdot 10^3} 
   + \frac{4R}{25} \cdot \frac{26R}{25}
 = \frac{166481}{10^6} \cdot R^2
 < R^2.
\end{align*}
This is a contradiction.
\end{proof}

Using these results, 
the main proposition of this section finally can be proven.
\begin{mainprop*}
Let $(M_i)_{i \in \nn}$ be a sequence 
 of complete connected \ndim Riemannian manifolds
 with uniform lower Ricci curvature bound $\Ric_{M_i} \geq -(n-1)$, 
 and let $k < n$. 
 Given $\mydelta \in (0,1)$, 
 there is $\tildeeps =\tildeeps(\mydelta;n,k) > 0$ 
 such that for any $0 < r \leq \tildeeps$ 
 and $q_i \in M_i$ with \[\dghpt{\tildeeps}{r^{-1} M_i}{q_i}{\rr^k}{0}\]
 there are 
 a family of subsets of good points $G_{r}(q_i) \subseteq B_r(q_i)$ 
 with \[\vol(G_r(q_i)) \geq (1 - \mydelta) \cdot \vol( B_r(q_i))\] and
 a sequence $\lambda_i \to \infty$ 
 such that the following holds:
 \begin{enumerate}
 \item 
  For every choice of base points $x_i \in G_r(q_i)$
  and every sublimit $(Y,\cdot)$ of $(\lambda_i M_i, x_i)$ 
  there exists a compact metric space $K$ of dimension $l \leq n-k$
  satisfying $\frac{1}{5} \leq \diam(K) \leq 1$ 
  such that $Y$ splits isometrically as a product
  \[Y \cong \rr^k \times K.\]
 \item 
  If $x_i^1, x_i^2 \in G_r(q_i)$ are base points 
  such that, after passing to a subsequence,
  \[(\lambda_i M_i, x_i^j) \to (\rr^k \times K_j, \cdot)\] 
  for $1 \leq j \leq 2$ as before,
  then $\dim(K_1) = \dim(K_2)$.
 \end{enumerate}
\end{mainprop*}

\begin{proof}
 Given $\mydelta \in (0,1)$, 
 let 
 \begin{align*}
 \tildeeps_1 & = \tildeeps_1(\mydelta;n,k)>0 
 \text{ be the } \tildeeps_1\Big(\frac{\mydelta}{2};n,k\Big) 
 \text{ of \autoref{lem:set_where_all_blow_ups_split__locally},}\\
 \tildeeps_2 &= \tildeeps_2(\mydelta;n,k)>0 
 \text{ be the } \tildeeps_2\Big(\frac{\mydelta}{2};n,k\Big) 
 \text{ of \autoref{nearly_splitting_after_rescaling_locally},}
 \intertext{and define} 
 \tildeeps & = \tildeeps(\mydelta;n,k) 
 := \frac{1}{16} \cdot \min\{\tildeeps_1,\tildeeps_2\} > 0.
 \end{align*}
 Furthermore, let $\eps_0 = \eps_0(n,k) \in (0,\frac{1}{100})$ be 
 as in \autoref{lem:spaces_of_different_dimension_are_not_close}.
 Let $0 < r \leq \tildeeps$ and $q_i \in M_i$ with
 \[\dghpt{\tildeeps}{r^{-1} M_i}{q_i}{\rr^k}{0}.\]
 Let $G^1_{r}(q_i) \subseteq B_{r}(q_i)$ be 
 as in \autoref{lem:set_where_all_blow_ups_split__locally}
 and $G^2_{r}(q_i) \subseteq B_{r}(q_i)$ and $\lambda_i \to \infty$ 
 as in \autoref{nearly_splitting_after_rescaling_locally}.
 Define 
 \[G_{r}(q_i) := G^1_{r}(q_i) \cap G^2_{r}(q_i) \subseteq B_{r}(q_i).\]
 Clearly,
 $\vol(G_{r}(q_i)) \geq (1 - \mydelta) \cdot \vol(B_{r}(q_i))$.
  
 Let $x_i \in G_{r}(q_i)$ and $(Y,y)$ be a sublimit of $(\lambda_i M_i, x_i)$.
 Using $x_i \in G^1_{r}(q_i)$,
 there are a metric space $Y'$ and $y' \in \{0\} \times Y'$ such that
 $(Y,y) \cong (\rr^k \times Y',y')$. 
 On the other hand, since $x_i \in G^2_{r}(q_i)$, 
 \[
  \dgh{1/\eps_0}{\lambda_i M_i}{x_i}{\rr^k \times K_i}{\tilde{x}_i} 
  \leq \frac{\eps_0}{200}
 \] 
 for some compact metric space $K_i$ with diameter $1$ 
 and $\tilde{x}_i \in \{0\} \times K_i$.
 Hence, by the triangle inequality and for $i$ large enough,
 \[
  \dgh{1/\eps_0}{\rr^k \times Y'}{y'}{\rr^k \times K_i}{\tilde{x}_i} 
  < \frac{\eps_0}{100}.
 \] 
 By \autoref{lemma:rrkxX:dGH-small->diamX_similar} 
 \ref{lemma:rrkxX:dGH-small->diamX_similar--diam=1},
 there exists a constant $c > 0$ such that 
 $Y'$ is compact as well with $c \leq \diam(Y') \leq 5c$, 
 and after rescaling with $\frac{1}{5c}$ 
 this finishes the first part of the claim.
 
 So let $x_i,y_i \in G_r(q_i)$ and
 $K_1$ and $K_2$ be compact metric spaces such that, 
 after passing to a subsequence,
 \[
  (\lambda_i M_i, x_i) \to (\rr^k \times K_1, x_\infty) 
  \quad\aand\quad 
  (\lambda_i M_i, y_i) \to (\rr^k \times K_2, y_\infty).
 \]
 
 Because of $x_i,y_i \in G_r^2(q_i)$,
 there is a time-dependent, piecewise constant in time vector field $X_i$ 
 with compact support and an integral curve $c_i$ such that 
 the vector field $X_i^t$ is divergence free on $B_{10r}(c_i(t))$ 
 for all $0 \leq t \leq 1$, 
 \[
  d(x_i,c_i(0))< \frac{c(n)}{\lambda_i} 
  \quad\aand\quad 
  d(y_i,c_i(1)) < \frac{c(n)}{\lambda_i}
 \] 
 for $c(n) = 9^n \cdot 10^{n^2}$ and 
 \[
  \int_0^1 (\Mx_{2r}(\norm{\nabla.X_i^t}^{3/2}) (c_i(t)) )^{2/3} \dt 
  < \frac{\mydelta}{2}.
 \]
 Let $x_i' := c_i(0)$ and $y_i' := c_i(1)$.
 Because of $d_{\lambda_i M_i}(x_i,x_i')\leq c(n)$ 
 and $d_{\lambda_i M_i}(y_i, y_i') \leq c(n)$,
 there exists $x_\infty' \in \rr^k \times K_1$ 
 such that, after passing to a subsequence, 
 \[(\lambda_i M_i, x_i') \to (\rr^k \times K_1, x_\infty').\]
 After passing to a further subsequence, 
 \[(\lambda_i M_i, y_i') \to (\rr^k \times K_2, y_\infty')\] 
 for some $y_\infty' \in \rr^k \times K_2$.
 Then \autoref{prop:bilipschitz_implies_dimension} implies 
 \[
  \dim(K_1) 
  = \dim(\rr^k \times K_1) - k 
  = \dim(\rr^k \times K_2) - k 
  = \dim(K_2).
  \qedhere
 \]
\end{proof}


\section{Global construction}\label{sec:step2}

Based on the \myquote{local} version (\autoref{prop:main}) established in the last section,
the proof of the following main result can now be given.

\begin{mainthm}\label{thm:main}
 Let $(M_i,p_i)_{i \in \nn}$ be a collapsing sequence 
 of pointed complete connected \ndim Riemannian manifolds
 which satisfy the uniform lower Ricci curvature bound $\Ric_{M_i} \geq -(n-1)$ 
 and converge to a limit $(X,p)$ of dimension $k < n$ 
 in the measured \GH sense. 
 Then for every $\eps \in (0,1)$ there exist a subset of good points
 $G_1(p_i) \subseteq B_1(p_i)$ 
 satisfying \[\vol(G_1(p_i)) \geq (1 -\eps) \cdot \vol(B_1(p_i)),\]
 a sequence $\lambda_i \to \infty$ and
 a constant $D > 0$
 such that the following holds: 
 
 For any choice of base points $q_i \in G_1(p_i)$ 
 and every sublimit $(Y,q)$ of $(\lambda_i M_i, q_i)_{i \in \nn}$ 
 there is a compact metric space $K$ of dimension $l \leq n-k$ 
 and diameter $\frac{1}{D} \leq \diam(K) \leq D$ 
 such that $Y$ splits isometrically as a product
 \[Y \cong \rr^k \times K.\] 
 Moreover, for any base points $q_i, q_i' \in G_1(p_i)$ 
 such that, after passing to a subsequence, both
 $(\lambda_i M_i, q_i) \to (\rr^k \times K, \cdot)$ 
 and $(\lambda_i M_i, q_i') \to (\rr^k \times K', \cdot)$ 
 as before,
 $\dim(K) = \dim(K')$.
\end{mainthm}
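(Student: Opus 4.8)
The plan is to deduce the theorem from the local statement \autoref{prop:main} by covering $B_1(p_i)$, up to a set of small relative volume, by finitely many balls to which \autoref{prop:main} applies, and then fusing the finitely many scale sequences produced that way into a single one. First I would fix $\mydelta = \mydelta(\eps,n) \in (0,1)$ small (it absorbs the multiplicity loss of a bounded covering) and let $\tildeeps = \tildeeps(\mydelta;n,k)$ be as in \autoref{prop:main}. By \autoref{lem_CN:Xgen} the set $\Xgen \cap B_1(p)$ is connected and of full measure, and $(\rho^{-1}X,x) \to (\rr^k,0)$ as $\rho \to 0$ for each $x \in \Xgen$; the H\"older continuity of tangent cones of \cite{colding-naber} moreover makes this convergence uniform, down to a fixed scale, on a subset of relative measure $\ge 1-\frac{\eps}{4}$. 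I would therefore fix $r>0$ small (at most $\tildeeps$, below that uniform scale, and small enough that the Bishop-Gromov constants used below are near-Euclidean), and choose a sufficiently fine maximal separated set $\{x^\alpha\}_{\alpha=1}^N$ in $\Xgen \cap B_1(p)$ so that $\dghpt{\tildeeps/2}{r^{-1}X}{x^\alpha}{\rr^k}{0}$ for every $\alpha$ (discarding the few centers where this fails costs relative volume $\le\frac{\eps}{4}$ by \autoref{lem_background:BG}), the concentric balls of radius $\frac{r}{100}$ are disjoint, and $\bigcup_\alpha B_r(x^\alpha)$ covers $B_1(p)$ up to relative volume $\le\frac{\eps}{4}$. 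Picking $x_i^\alpha \to x^\alpha$, for large $i$ one has $\dghpt{\tildeeps}{r^{-1}M_i}{x_i^\alpha}{\rr^k}{0}$, so \autoref{prop:main} yields $G_r(x_i^\alpha) \subseteq B_r(x_i^\alpha)$ with $\vol(G_r(x_i^\alpha)) \ge (1-\mydelta)\vol(B_r(x_i^\alpha))$ and scales $\lambda_i^\alpha \to \infty$.

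\textbf{A single scale --- the main obstacle.} Each center comes with its own $\lambda_i^\alpha$, but the theorem asks for one. The crux, and the step I expect to be hardest, is to show that the ratios $\lambda_i^\alpha/\lambda_i^1$ lie in a fixed interval $[D_0^{-1},D_0]$, $D_0 = D_0(n,k,\eps)$, for all large $i$. Intuitively $1/\lambda_i^\alpha$ is the scale at which $M_i$ near $x_i^\alpha$ still resembles a nontrivial product $\rr^k \times(\text{compact of diameter}\sim 1)$; since all centers lie in the Colding-Naber regular set and inside the fixed ball $B_1(p)$, this ``collapse scale'' should vary by at most a bounded factor from one center to the next, and quantitatively this is exactly what the effective, sequence-level form of the sharp H\"older continuity estimate of \cite{colding-naber} provides --- this is also the reason the finitely many centers have to be chosen inside that regular set rather than arbitrarily. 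Granting this, I set $\lambda_i := \lambda_i^1$ and $D := 5D_0$.

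\textbf{Assembling the theorem.} I would then take $G_1(p_i) := \bigcup_{\alpha=1}^N \big(G_r(x_i^\alpha) \cap B_1(p_i)\big)$ (and $G_1(p_i) := B_1(p_i)$ for the finitely many small $i$). The complement of $G_1(p_i)$ in $B_1(p_i)$ lies in $\big(B_1(p_i)\setminus\bigcup_\alpha B_r(x_i^\alpha)\big) \cup \bigcup_\alpha\big(B_r(x_i^\alpha)\setminus G_r(x_i^\alpha)\big)$; by \autoref{thm-background:precompactness-measured} the first piece has relative volume $\le\frac{\eps}{2}+o(1)$ and, the $\frac{r}{100}$-balls being disjoint inside $B_{1+r}(p)$, the second has total relative volume $\le C(n)\,\mydelta + o(1) \le \frac{\eps}{2}+o(1)$, so $\vol(G_1(p_i)) \ge (1-\eps)\vol(B_1(p_i))$ for large $i$. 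If $q_i \in G_r(x_i^\alpha)$ and $(Y,q)$ is a sublimit of $(\lambda_i M_i,q_i)$ along some subsequence, then passing to a further subsequence on which $(\lambda_i^\alpha M_i,q_i)$ converges and $\lambda_i/\lambda_i^\alpha \to c \in [D_0^{-1},D_0]$, part (a) of \autoref{prop:main} gives $(\lambda_i^\alpha M_i,q_i) \to \rr^k \times K$ with $K$ compact, $\dim K \le n-k$, $\frac15 \le \diam K \le 1$; by uniqueness of pointed \GH limits $(Y,q) \cong (\rr^k \times cK,\cdot)$, and $cK$ is compact with $\dim(cK)\le n-k$ and $\frac1D \le \diam(cK) \le D$, which is the splitting assertion.

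\textbf{The dimension is well defined.} Finally, fix a subsequence along which $(\lambda_i M_i,q_i) \to \rr^k\times K$ and $(\lambda_i M_i,q_i') \to \rr^k\times K'$, and (passing to a further subsequence) indices $\alpha,\beta$ with $q_i \in G_r(x_i^\alpha)$ and $q_i' \in G_r(x_i^\beta)$. Within one ball, part (b) of \autoref{prop:main} --- transported from $\lambda_i^\alpha$ to $\lambda_i$ exactly as above, since rescaling the compact factor by the bounded factor $c$ preserves its dimension --- shows that all good base points of $G_r(x_i^\alpha)$ produce compact factors of one and the same dimension $l_\alpha$. Since $\Xgen\cap B_1(p)$ is connected I would join $x^\alpha$ to $x^\beta$ by a chain $\alpha=\alpha_0,\dots,\alpha_m=\beta$ of centers with consecutive distances small enough that \autoref{lemma-criterion_volume_intersection2} applies to the radius-$r$ balls; \autoref{lemma-criterion_intersection_subsets} then produces, for large $i$ and via \autoref{thm-background:precompactness-measured}, a common point of $G_r(x_i^{\alpha_{j-1}})\cap G_r(x_i^{\alpha_j})$, through which $l_{\alpha_{j-1}} = l_{\alpha_j}$ (again using $\lambda_i^{\alpha_{j-1}}\asymp\lambda_i^{\alpha_j}$). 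Chaining gives $l_\alpha = l_\beta$, hence $\dim K = \dim K'$.
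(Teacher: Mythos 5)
Your overall skeleton (cover $B_1(p_i)$ by small balls centred at generic points, apply \autoref{prop:main} on each, take the union of the good sets, and fuse the finitely many scale sequences into one) is the same as the paper's. But the step you yourself flag as the crux --- that the ratios $\lambda_i^\alpha/\lambda_i^1$ stay in a fixed interval --- is exactly where the proposal has a genuine gap. You assert that ``the effective, sequence-level form of the sharp H\"older continuity estimate of \cite{colding-naber} provides'' this comparability, but that theorem compares the intrinsic geometry of $r$-balls at points along a minimising geodesic inside one fixed space; it says nothing directly about the scales $\lambda_i^\alpha$ produced by the Rescaling Theorem for the sequence $M_i$. In the paper the comparability is not a one-line consequence of H\"older continuity at all: one first proves that if two good sets $G_{r_1}(p_i^{x})\cap G_{r_2}(p_i^{y})\ne\emptyset$ then the ratio of the associated scales is bounded (\autoref{prop-A}, \autoref{lem:cor_of_prop-E}, \autoref{lem:compare_rescalings}), an argument that uses the splitting of \emph{all} blow-ups at points of $G^1$, an intermediate-value/maximality argument, and \autoref{lemma:rrkxX:dGH-small->diamX_similar}; one then propagates this along chains. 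Your proposal never supplies this mechanism, yet uses it twice ($\lambda_i^\alpha\asymp\lambda_i^1$ for the single scale, and $\lambda_i^{\alpha_{j-1}}\asymp\lambda_i^{\alpha_j}$ in the dimension step).

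The chaining itself is also not available in the form you describe. To apply \autoref{prop:main} (hence to have good sets and scales) at the intermediate points of a chain, those points must lie in $\mathcal{X}_{r'}$ for one common radius $r'$, and --- as the paper points out explicitly --- the threshold $r^y$ is not continuous in $y$, so connectedness of $\Xgen$ does not produce such a chain; moreover your centres form an $R$-separated net, so consecutive centres are at distance of order $r$, whereas \autoref{lemma-criterion_volume_intersection2} needs consecutive distance below a possibly much smaller $d_0(n,\mydelta,r)$. The paper's way around both problems is the actual place where Colding--Naber enters: a.e.\ pair of points lies in the strict interior of a minimising geodesic whose interior is generic (\autoref{lem:points_s.t._nearly_all_pairs_in_interior_of_good_geod}, using \cite[Theorem 1.20, A.4]{colding-naber}), the H\"older continuity theorem then shows that this strict interior lies in a single $\mathcal{X}_{r'}$ (\autoref{stmt_glob:rescaling_st_interior_of_geod_uniformly_close_to_IRn}), and one chains along the geodesic with spacing $d_0$, additionally comparing the scales of the same point at the two radii $r'$ and $R$ (\autoref{lem:compare_rescalings} \ref{lem:compare_rescalings-different_rescalings_for_same_point}). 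This is also why the centres must be chosen inside $\mathcal{G}_{x_0}$ for a distinguished base point $x_0$, not merely inside $\Xgen$. Without these ingredients the fusion of scales and the dimension comparison across different balls remain unproved.
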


The idea of the proof is to take (finitely many) sequences $(q_i)_{i \in \nn}$ 
satisfying the hypothesis of \autoref{prop:main} for some $r>0$
and to define $G_1(p_i)$ as the union of the $G_r(q_i)$ obtained from \autoref{prop:main}.
Instantly, the following question occurs:
\begin{enumerate}
 \item[(1)] 
 Why do sequences $(q_i)_{i \in \nn}$ satisfying the hypothesis of \autoref{prop:main} exist?
\end{enumerate}
It will turn out that sequences $p_i^x \to x$, where $x \in X$ is a generic point, 
are candidates for these $(q_i)_{i \in \nn}$:
If $x \in X$ is generic, 
then $(\frac{1}{r}X,x)$ is close to $(\rr^k,0)$ for sufficiently small $r > 0$
and so is $(\frac{1}{r} M_i, p_i^x)$ for sufficiently large $i \in \nn$.
In fact, decreasing $r$ only improves the situation.

Now assume that $x,x'$ are two such generic points, let $r>0$ be small enough 
and $\lambda_i \to \infty$ and $\lambda_i' \to \infty$, respectively,
be the sequences given by \autoref{prop:main}.
These sequences might be different, 
but \autoref{thm:main} calls for one single rescaling sequence.
This gives rise to the following question:
\begin{enumerate}
 \item[(2)] Does \autoref{prop:main} still hold for $(p_i^{x'})_{i \in \nn}$ 
 if $\lambda_i' \to \infty$ is replaced by $\lambda_i \to \infty$?
\end{enumerate}
In order to answer this question, 
first consider the special case $\lambda_i = 2 \lambda_i'$: 
Obviously, if $q_i \in G_r(p_i^{x'})$ 
and $(\rr^k \times K,\cdot)$ is a sublimit of $(\lambda_i' M_i, q_i)$,
then $(\rr^k \times 2K, \cdot)$ is a sublimit 
of $(\lambda_i M_i, q_i) = (2\lambda_i' M_i,q_i)$.
Conversely, every sublimit of $(\lambda_i M_i,q_i)$ 
has the form $(\rr^k \times 2K, \cdot)$ 
for a sublimit $(\rr^k \times K, \cdot)$ of $(\lambda_i'M_i,q_i)$.
It turns out that such a correspondence holds 
whenever the sequence $\big(\frac{\lambda_i'}{\lambda_i}\big)_{i \in \nn}$ is bounded. 
In this way, $\lambda_i'$ indeed can be replaced by $\lambda_i$ 
if one allows weaker diameter bounds for the compact factors of the sublimits.
Therefore, the question (2) can be reformulated in the following way:
\begin{enumerate}
 \item[(2')] Under which condition is the quotient 
 $\big(\frac{\lambda_i'}{\lambda_i}\big)_{i \in \nn}$ 
 of two such rescaling sequences bounded?
\end{enumerate}
In fact, one can prove the following: 
If the subsets $G_r(p_i^x)$ and $G_r(p_i^{x'})$ have non-empty intersection, 
then $\big(\frac{\lambda_i'}{\lambda_i}\big)_{i \in \nn}$ is bounded.
An obvious approach for comparing points where these subsets do not intersect is
to connect the points by a curve consisting of generic points only
and
to cover this curve by balls $B_{r_j}(y_j)$ 
such that for every two subsequent points $y_j$ and $y_{j+1}$ 
(and sufficiently large $i \in \nn$)
the subsets intersect.
If this can be done using only finitely many $y_j$,
an inductive argument proves the boundedness of 
$\big(\frac{\lambda_i'}{\lambda_i}\big)_{i \in \nn}$.
Usually, such covers are constructed by using a compactness argument:
Let $r^y$ denote the minimal radius 
such that all $r \leq r^y$ and $p_i^y$ 
satisfy the hypothesis of \autoref{prop:main}.
If this $r^y$ is continuous in $y$, 
there exists $r>0$ that can be used at each point of the (compact image of the) curve.
Unfortunately, there is no reason for this $r^y$ to depend continuously on $y$. 
It will turn out that a similar approach to compare $\lambda_i$ and $\lambda_i'$ 
can be performed
if $x$ and $x'$ lie in the interior of a minimising geodesic 
such that all the points of this geodesic lying between $x$ and $x'$ are generic.
The H\"older continuity result of Colding and Naber \cite[Theorem 1.2]{colding-naber} 
then allows a cover similar to the one described above.
The subsequent question
\begin{enumerate}
 \item[(3)] 
 Does there exists a minimising geodesic such that $x,x'$ lie in its interior?
\end{enumerate}
can be answered affirmatively for a set of full measure (in $X \times X$) 
by applying further results of \cite{colding-naber}.

This section is subdivided into several subsections answering the above questions:
First, \autoref{sec:application_to_generic_points} investigates generic points $x \in X$ 
and answers question (1) by applying \autoref{prop:main} to the sequence $p_i^x \to x$. 
Both questions (2) and (2') are dealt with in \autoref{sec:comparison_rescaling_sequences},
which discusses the comparison of the different $\lambda_i$.
Afterwards, \autoref{sec:geodesics} treats question (3) 
by proving that the necessary conditions for performing the comparison 
are given on a set of full measure.
Finally, \autoref{sec:proof_main_thm} deals with the proof of \autoref{thm:main}.

\subsection{Application to generic points}\label{sec:application_to_generic_points}
A very important property of generic points is that, after rescaling,
the manifolds with base points converging to a generic point 
are in some sense close to the Euclidean space.

\begin{lemma}\label{lemma:rescaling_close_to_tangent_cone}
 Let $(X,p)$ be the limit of a collapsing sequence 
 of pointed complete connected \ndim Riemannian manifolds 
 which satisfy the uniform lower Ricci curvature bound $-(n-1)$, 
 $k = \dim(X) < n$,
 $x \in \Xgen$ and $p_i^x \to x$. 
 For fixed $R>0$ and $\eps >0$ 
 there exists $\lambda_0=\lambda_0(x,R,\eps)$ 
 such that for all $\lambda \geq \lambda_0$,
 \[\dgh{R}{\lambda X}{x}{\rr^k}{0} \leq \eps.\]
\end{lemma}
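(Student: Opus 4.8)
The plan is to read the estimate off the defining property of a generic point, and then to pass from sequential convergence of rescalings to uniformity in the scale by a compactness argument. There will be three steps: first unwind the meaning of $x \in \Xgen$; second, establish precompactness of the family of rescalings $(\mu X, x)$; third, conclude by contradiction.

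For the first step, recall that $x \in \Xgen = \mathcal{R}_k$ means precisely that $x$ is $k$-regular, i.e.\ the tangent cone $C_x X$ exists and satisfies $C_x X = \rr^k$ (with base point $0$). By the definition of tangent cones this is equivalent to: for \emph{every} sequence $\mu_j \to \infty$ for which $(\mu_j X, x)$ converges in the pointed \GH sense, the limit is $(\rr^k, 0)$. For the second step I would note that each $(\mu X, x)$ with $\mu \geq 1$ is the pointed \GH limit of $(\mu M_i, p_i^x)$ as $i \to \infty$ (using $(M_i, p_i^x) \to (X, x)$ from the excerpt and the fact that rescaling both sides by $\mu$ preserves \GH convergence), and that $\Ric_{\mu M_i} = \mu^{-2} \Ric_{M_i} \geq -(n-1)$ for $\mu \geq 1$; hence the family $\{(\mu M_i, p_i^x) : \mu \geq 1,\ i \in \nn\}$ is pointed \GH precompact by \precptnessThm, and therefore so is the family $\{(\mu X, x) : \mu \geq 1\}$ of its limits.

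For the third step, suppose the lemma failed for some $R, \eps > 0$. Then there would be $\lambda_j \to \infty$ with $\dgh{R}{\lambda_j X}{x}{\rr^k}{0} > \eps$ for all $j$. By the precompactness just established, a subsequence of $(\lambda_j X, x)$ converges in the pointed \GH sense to some $(Y, y)$, which by definition is a tangent cone at $x$ and hence isometric to $(\rr^k, 0)$. But then $\dgh{R}{\lambda_j X}{x}{\rr^k}{0} \to 0$ along that subsequence, contradicting the choice of the $\lambda_j$. I do not expect a genuine obstacle here; the only point needing a little care is the precompactness in the second step, and that is immediate once one observes that rescaling by a factor $\geq 1$ does not spoil the lower Ricci bound. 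Note finally that the hypothesis $p_i^x \to x$ is not used for the displayed inequality itself --- it merely exhibits $(\mu X, x)$ as a pointed \GH limit of manifolds in the relevant class (and is kept because later applications also use $(\mu M_i, p_i^x) \to (\mu X, x)$).
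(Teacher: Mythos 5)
Your proof is correct and follows essentially the same route as the paper: genericity of $x$ gives that every rescaling sequence $(\lambda_j X,x)$ with $\lambda_j\to\infty$ (sub)converges to $(\rr^k,0)$, and from this one deduces the uniform bound on $\dgh{R}{\lambda X}{x}{\rr^k}{0}$ for all large $\lambda$ by the contradiction/limit argument. The only difference is that you make the pointed \GH precompactness of the rescaled family explicit via Gromov's theorem applied to $(\mu M_i,p_i^x)$ with the improved Ricci bound, a step the paper leaves implicit; this is a sound and welcome piece of extra care, not a different method.
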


\begin{proof}
 Since $x$ is a generic point, all tangent cones at $x$ equal $\rr^k$,
 i.e.~for every choice of $\lambda \to \infty$, 
 the sequence $(\lambda X,x)$ converges to $(\rr^k,0)$.
 In particular, the $R$-balls converge 
 and for sufficiently large $\lambda$ 
 the distance of these balls is bounded from above by $\eps$.
 This proves that there exists
 \begin{align*}
  \lambda_0(x) 
       :=& \min\{\lambda \geq 1 
       \mid \forall \mu \geq \lambda: \dgh{R}{\mu X}{x}{\rr^k}{0} \leq \eps\}
       < \infty.
 \qedhere
 \end{align*}
\end{proof}

\begin{not*}
From now on, for given $k < n$ and $\mydelta \in (0,1)$,
let $\tildeeps = \tildeeps(\mydelta;n,k)$ be as in \autoref{prop:main}.
For $r>0$,
define 
\[
 \mathcal{X}_r(\mydelta;n,k) 
 := \Big\{x \in \Xgen 
 \mid \dgh{1/\tildeeps}{r^{-1} X}{x}{\rr^k}{0} \leq \frac{\tildeeps}{2} \Big\}.
\]
\end{not*}

\begin{lemma}\label{prop:main--generic_points}
 Let $(M_i,p_i)_{i \in \nn}$ be a collapsing sequence 
 of pointed complete connected \ndim Riemannian manifolds
 which satisfy the uniform lower Ricci curvature bound $\Ric_{M_i} \geq -(n-1)$ 
 and converge to a limit $(X,p)$ of dimension $k < n$ 
 and let $\mydelta \in (0,1)$.
 \begin{enumerate}
 \item 
  For every $x \in \Xgen$ there is $0 < r^x = r(\mydelta,x;n,k) \leq \tildeeps $ 
  such that $x \in \mathcal{X}_r(\mydelta;n,k)$ for any $0 < r \leq r^x$. 
 \item 
  For $0<r\leq\tildeeps$, $x \in \mathcal{X}_r(\mydelta;n,k)$ and $p_i^x \to x$
  there is $i_0 \in \nn$ such that for $i \geq i_0$ there are
  a subset of good points $G_{r}(p_i^x) \subseteq B_{r}(p_i^x)$ with 
  \[\vol(G_{r}(p_i^x)) \geq (1 - \mydelta) \cdot \vol( B_{r}(p_i^x))\]
  and a sequence $\lambda_i \to \infty$
  satisfying the following:
  \begin{enumerate}
  \item 
   For any choice of base points $x_i \in G_r(p_i^x)$ 
   and all sublimits $(Y,\cdot)$ of $(\lambda_i M_i, x_i)$ 
   there exists a compact metric space $K$ 
   of dimension $l \leq n-k$
   and diameter $\frac{1}{5} \leq \diam(K) \leq 1$ 
   such that $Y$ splits isometrically as a product
   \[Y \cong \rr^k \times K.\]
  \item 
   If $x_i^1, x_i^2 \in G_r(p_i^x)$ are base points such that, after passing to a subsequence,
   \[(\lambda_i M_i, x_i^j) \to (\rr^k \times K_j, \cdot)\] for $1 \leq j \leq 2$ as before,
   then $\dim(K_1) = \dim(K_2)$.
  \end{enumerate}
  Moreover, if $\omega$ is a fixed ultrafilter on $\nn$,
  there exists $l \in \nn$
  such that the following holds:
  Given $q_i \in G_{r}(p_i^x)$, the ultralimit of $(\lambda_i M_i, q_i)$ 
  is a product $\rr^k \times K$ such that $K$ is compact with
  \[\frac{1}{5} \leq \diam(K) \leq 1  \quad\aand\quad  \dim(K) = l.\]
 \end{enumerate}
\end{lemma}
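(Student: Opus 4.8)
The plan is to derive \autoref{prop:main--generic_points} almost entirely from \autoref{prop:main} and \autoref{lemma:rescaling_close_to_tangent_cone}, with the only real content being the passage from ``$r^{-1}X$ close to $\rr^k$'' to ``$r^{-1}M_i$ close to $\rr^k$ for large $i$'' and the final uniqueness-of-$l$ claim via the ultrafilter.

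\textbf{Part (a).} Since $x\in\Xgen$, every tangent cone at $x$ is $\rr^k$, so by \autoref{lemma:rescaling_close_to_tangent_cone} (applied with $R=1/\tildeeps$ and $\eps=\tildeeps/2$, both depending only on $\mydelta,n,k$) there is $\lambda_0=\lambda_0(x)$ with $\dgh{1/\tildeeps}{\lambda X}{x}{\rr^k}{0}\le\tildeeps/2$ for all $\lambda\ge\lambda_0$. Setting $r^x:=\min\{\tildeeps,1/\lambda_0\}$ we get $x\in\mathcal X_r(\mydelta;n,k)$ for all $0<r\le r^x$: indeed $r^{-1}\ge\lambda_0$, so the displayed bound holds.

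\textbf{Part (b), the splitting and dimension statements.} Fix $0<r\le\tildeeps$, $x\in\mathcal X_r(\mydelta;n,k)$ and $p_i^x\to x$. Since $(r^{-1}M_i,p_i^x)\to(r^{-1}X,x)$ in the pointed \GH sense, there is $i_0$ such that for all $i\ge i_0$
\[
 \dgh{1/\tildeeps}{r^{-1}M_i}{p_i^x}{r^{-1}X}{x}\le\frac{\tildeeps}{2},
\]
and combining with $x\in\mathcal X_r(\mydelta;n,k)$ and the triangle inequality gives $\dghpt{\tildeeps}{r^{-1}M_i}{p_i^x}{\rr^k}{0}$ for $i\ge i_0$. (Here I use that $\dgh{r'}{}{}{}{}\le16\,\dgh{R'}{}{}{}{}$ for $r'\le R'$, if one needs to unwind the definition of $\dghpt{\tildeeps}{}{}{}{}$; but as stated the bound is exactly on the ball of radius $1/\tildeeps$, so no rescaling of radii is needed.) So the hypotheses of \autoref{prop:main} hold for the sequence $(M_i)_{i\ge i_0}$ with these base points $p_i^x=:q_i$ and this $r$, and \autoref{prop:main} directly furnishes $G_r(p_i^x)\subseteq B_r(p_i^x)$ with $\vol(G_r(p_i^x))\ge(1-\mydelta)\vol(B_r(p_i^x))$ and $\lambda_i\to\infty$ satisfying (i) and (ii) of the claim — note the diameter bound $\tfrac15\le\diam(K)\le1$ and the dimension bound $l\le n-k$ are exactly those in \autoref{prop:main}.

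\textbf{Part (b), the ultralimit statement — the main point.} Fix a non-principal ultrafilter $\omega$ on $\nn$. For $q_i\in G_r(p_i^x)$ the ultralimit $\limomega(\lambda_i M_i,q_i)$ is, by \autoref{lem_background:ultralimits}(a), a pointed \GH sublimit of $(\lambda_i M_i,q_i)$, hence by the already-proven part (i) it is isometric to $\rr^k\times K$ with $K$ compact, $\tfrac15\le\diam(K)\le1$ and $\dim(K)\le n-k$. It remains to see $\dim(K)$ does not depend on the choice of $q_i\in G_r(p_i^x)$. Suppose $q_i,q_i'\in G_r(p_i^x)$ give ultralimits $\rr^k\times K$ and $\rr^k\times K'$. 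By \autoref{lem_background:ultralimits}(a) there is a \emph{single} subsequence $(i_j)$ along which $(\lambda_{i_j}M_{i_j},q_{i_j})\to\rr^k\times K$ \emph{and} $(\lambda_{i_j}M_{i_j},q_{i_j}')\to\rr^k\times K'$ in the pointed \GH sense; now part (ii) applied along this subsequence (the points $q_{i_j},q_{i_j}'$ still lie in $G_r(p_i^x)$) yields $\dim(K)=\dim(K')$. Thus the common value $l:=\dim(K)$ depends only on $\omega$ (and on $x,r,\mydelta,n,k$), as asserted. The only subtlety to check carefully is that \autoref{lem_background:ultralimits}(a) genuinely produces one subsequence realizing both ultralimits simultaneously — which is exactly its ``concretely'' clause applied to the two sequences $X_i=(\lambda_iM_i,q_i)$ and $Y_i=(\lambda_iM_i,q_i')$ — so that part (ii) of \autoref{prop:main} is applicable. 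I expect this bookkeeping (aligning subsequences and base points so that the hypotheses of the ``local'' \autoref{prop:main} are literally met) to be the only place where any care is required; everything else is a direct citation.
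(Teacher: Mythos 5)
Your proof is correct and takes essentially the same route as the paper: part (a) is exactly the application of \autoref{lemma:rescaling_close_to_tangent_cone} with $R = 1/\tildeeps$ and $\eps = \tildeeps/2$ and $r^x = \min\{\tildeeps, 1/\lambda_0\}$, and part (b) is the same triangle-inequality argument feeding into \autoref{prop:main}. Your explicit handling of the ultralimit clause via \autoref{lem_background:ultralimits} (one subsequence realizing both ultralimits, then part (ii) of \autoref{prop:main} along it) merely spells out what the paper leaves implicit in ``\autoref{prop:main} implies the claim,'' and it is the right justification.
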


\begin{proof}
 a)
  Let $\lambda_0 = \lambda_0(\mydelta;n,k)$ 
  be the $\lambda_0(x,\frac{1}{\tildeeps},\frac{\tildeeps}{2})$ 
  appearing in \autoref{lemma:rescaling_close_to_tangent_cone}. 
  Then 
  $r^x := r(\mydelta,x;n,k) := \min\big\{\tildeeps, \frac{1}{\lambda_0}\big\} > 0$ 
  proves the claim.
 
 \par\smallskip\noindent b)
  Let $x \in \mathcal{X}_r(\mydelta;n,k)$ be arbitrary,
  i.e.~$\dgh{1/\tildeeps}{r^{-1} X}{x}{\rr^k}{0} \leq \frac{\tildeeps}{2}$.
  Since $(\frac{1}{r}M_i,p_i^x) \to (\frac{1}{r} X, x)$,
  there is $i_0 \in \nn$ with 
  $\dgh{1/\tildeeps}{r^{-1} M_i}{p_i^x}{r^{-1} X}{x} \leq \frac{\tildeeps}{2}$
  for all $i \geq i_0$.
  In particular, 
  $\dgh{1/\tildeeps}{r^{-1} M_i}{p_i^x}{\rr^k}{0} \leq \tildeeps$ by triangle in\-equality
  and \autoref{prop:main} implies the claim.
\end{proof}

\begin{not*}
 For $0 < r \leq \tildeeps$ and $x \in \mathcal{X}_r(\mydelta;n,k)$,
 let $\lambda_i^{\mydelta,x}(r)$ and $G_r^\mydelta(p_i^x)$ 
 be as in \autoref{prop:main--generic_points},
 i.e.~for $q_i \in G_r^\mydelta(p_i^x)$ the sublimits 
 of $(\lambda_i^{\mydelta,x}(r)\,M_i, q_i)$ are isometric to products $(\rr^k \times K, \cdot)$ 
 where the $K$ are compact metric spaces with $\diam(K) \in \big[\frac{1}{5},1\big]$.
 Moreover, for $x \in \Xgen$, 
 let $r^x(\mydelta;n,k)$ be as in \autoref{prop:main--generic_points},
 i.e.~$x \in \mathcal{X}_r(\mydelta;n,k)$ for all $0 < r \leq r^x(\mydelta;n,k)$.

 Furthermore, for a non-principal ultrafilter $\omega$ on $\nn$, 
 let $l_\omega^{\mydelta,x}(r)$ be as in \autoref{prop:main--generic_points}, 
 i.e.~for $q_i \in G_r^\mydelta(p_i^x)$, 
 $\limomega (\lambda_i^{\mydelta,x}(r)\,M_i, q_i) = (\rr^k \times K, \cdot)$ 
 and $\dim(K) = l_\omega^{\mydelta,x}(r)$ for some $K$ as above.

 All notations will be used throughout the remaining section without referring to 
 \autoref{prop:main--generic_points} explicitly.
 Occasionally, if they are fixed, the dependences on $n$, $k$ and $\mydelta$ will be suppressed.
\end{not*}

\subsection{Comparison of rescaling sequences}\label{sec:comparison_rescaling_sequences}
Throughout this subsection, let $k < n$ and $\mydelta \in (0,\frac{1}{2})$ be fixed 
and use the notation introduced in \autoref{sec:application_to_generic_points}. 

The following lemma states that 
for each two rescaling sequences corresponding to limit spaces which are 
products of the same Euclidean and a compact set, 
the quotient of these rescaling sequences is bounded.
Especially, this holds in the situation of \autoref{prop:main--generic_points}.

\begin{lemma}\label{prop-A}
 Let $(M_i,p_i)_{i \in \nn}$ be a collapsing sequence 
 of pointed complete connected \ndim Riemannian manifolds
 which satisfy the uniform lower Ricci curvature bound $\Ric_{M_i} \geq -(n-1)$ 
 and converge to a limit $(X,p)$ of dimension $k < n$. 
 \begin{enumerate}
 \item\label{prop-A--tilde(G)_i->quotients_are_bounded}
  Let $q_i \in M_i$ satisfy that 
  for any $\lambda_i \to \infty$ and 
  every sublimit $(Y,\cdot)$ of $(\lambda_i M_i, q_i)$
  there exists a metric space $Y'$ 
  such that $Y \cong \rr^k \times Y'$ isometrically. 
  For $1 \leq j \leq 2$,  
  let $\lambda_i^j \to \infty$ and $K_j$ be compact 
  with $(\lambda_i^j M_i, q_i) \to (\rr^k \times K_j, \cdot)$ 
  as $i \to \infty$.
  Then the sequence $\big(\frac{\lambda_i^2}{\lambda_i^1}\big)_{i \in \nn}$ is bounded.
 \item 
  For $1 \leq j \leq 2$, 
  let $r_j > 0$, $x_j \in \mathcal{X}_{r_j}$, $p_i^j := p_i^{x_j}$ 
  and $\lambda_i^j := \lambda_i^{\mydelta,x_j}(r_j)$. 
  Moreover, assume $G_{r_1}(p_i^1) \cap G_{r_2}(p_i^2) \ne \emptyset$.
  Then the sequence $\big(\frac{\lambda_i^2}{\lambda_i^1}\big)_{i \in \nn}$ is bounded.
 \end{enumerate}
\end{lemma}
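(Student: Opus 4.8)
The plan is to reduce part \textit{(b)} to part \textit{(a)} and to prove \textit{(a)} by a blow-up argument. For \textit{(b)}, pick any $q_i$ in the nonempty intersection $G_{r_1}(p_i^1)\cap G_{r_2}(p_i^2)$. By the construction of these good sets as intersections $G^1_r\cap G^2_r$ (\autoref{lem:set_where_all_blow_ups_split__locally}, \autoref{prop:main--generic_points}), the point $q_i$ satisfies the hypothesis of \textit{(a)} --- every blow-up $(\lambda_i M_i,q_i)$, $\lambda_i\to\infty$, splits off an $\rr^k$-factor --- and in addition every sublimit of $(\lambda_i^j M_i,q_i)$ is a product $\rr^k\times K$ with $\frac{1}{5}\le\diam(K)\le 1$. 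Hence, if $(\lambda_i^2/\lambda_i^1)_{i\in\nn}$ were unbounded, then after passing to a subsequence along which $\mu_i:=\lambda_i^2/\lambda_i^1\to\infty$ and (by \precptnessThm, applied to both rescaled sequences) along which $(\lambda_i^j M_i,q_i)\to(\rr^k\times K_j,\cdot)$ with $K_j$ compact of diameter in $[\frac{1}{5},1]$, we are exactly in the situation of \textit{(a)} with the extra information that the compact factors have positive diameter. So it suffices to derive a contradiction from the hypotheses of \textit{(a)} together with $\mu_i\to\infty$ and $\diam(K_1),\diam(K_2)>0$.

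To do this, first apply \autoref{lem:rescaling_converging_to_tangent_cone} to $(\lambda_i^1 M_i,q_i)\to\rr^k\times K_1$: there is $\nu_i\to\infty$ such that for every $\alpha_i\to\infty$ with $\alpha_i\le\nu_i$ the sequence $\bigl(\alpha_i(\lambda_i^1 M_i),q_i\bigr)$ subconverges to a tangent cone of $\rr^k\times K_1$ at the limit point $(0,z_1)$ of $q_i$. Since rescaling leaves a Euclidean factor unchanged, every such tangent cone is isometric to $\rr^k\times C$, where $C$ is a tangent cone of the compact space $K_1$ at $z_1$. If $z_1$ is not isolated in $K_1$, pick a nontrivial minimising geodesic issuing from $z_1$ in the proper length space $K_1$; rescaling it exhibits a ray in $C$, so $C$ is non-compact. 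If $z_1$ is isolated, then $C$ is a single point and $\rr^k\times C=\rr^k$. Now suppose $\mu_i\le\nu_i$ for infinitely many $i$. Along that subsequence $(\lambda_i^2 M_i,q_i)=\bigl(\mu_i(\lambda_i^1 M_i),q_i\bigr)$ converges both to $\rr^k\times K_2$ and to a tangent cone $\rr^k\times C$ as above. But if $C$ is non-compact, $\rr^k\times C$ cannot be isometric to $\rr^k\times K_2$: the rescalings $\frac{1}{R}(\rr^k\times K_2)$ converge to $\rr^k$ as $R\to\infty$ (the compact factor shrinks to a point), whereas $\frac{1}{R}(\rr^k\times C)$ retains an unbounded transverse direction coming from a rescaled ray of $C$. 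And if $C$ is a point then $\rr^k\times K_2\cong\rr^k$, forcing $\diam(K_2)=0$, a contradiction. Either way we are done in this case.

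The case that requires care --- and that I expect to be the main obstacle --- is $\mu_i>\nu_i$ for all large $i$, i.e.\ the rescaling factor outgrows the rate allowed by \autoref{lem:rescaling_converging_to_tangent_cone}. Here the plan is to iterate: pass from $(\lambda_i^1 M_i,q_i)\to\rr^k\times K_1$ to $(\nu_i\lambda_i^1 M_i,q_i)\to\rr^k\times C_1$ (a tangent cone, by the previous paragraph), which replaces the residual ratio $\mu_i$ by $\mu_i/\nu_i$ and replaces $K_1$ by the tangent cone $C_1$; then reapply \autoref{lem:rescaling_converging_to_tangent_cone} and repeat. The point that makes this terminate is that once $z_1$ is not isolated, $C_1$ is a non-compact proper length space, hence every point of $C_1$ starts a geodesic ray and therefore every further tangent cone of $C_1$ is again non-compact; so at whatever step the residual ratio finally falls below (or stays comparable with) the current $\nu$, the dichotomy of the previous paragraph applies and forces $K_2$ to be isometric either to a non-compact space or to a point, contradicting compactness, resp.\ positivity of the diameter, of $K_2$. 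Carrying out the bookkeeping for these nested rescalings and ruling out an infinite regress is the technical heart of the argument. This proves \textit{(a)}, and with the reduction above, \textit{(b)}.
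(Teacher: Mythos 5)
Your reduction of b) to a strengthened version of a) (adding $\diam K_1,\diam K_2>0$, which is all that b) requires and which the paper's own argument tacitly uses as well) is sound, and your first case, where $\mu_i:=\lambda_i^2/\lambda_i^1$ stays below the threshold $\nu_i$ of \autoref{lem:rescaling_converging_to_tangent_cone} along a subsequence, is essentially correct: a tangent cone of $\rr^k\times K_1$ at the base point is $\rr^k\times C$ with $C$ unbounded (when $\diam K_1>0$), and this cannot coincide with $\rr^k\times K_2$. Only note that your justification of the latter argues through the product structure ("unbounded transverse direction"), whereas an isometry $\rr^k\times C\cong\rr^k\times K_2$ need not respect the splittings; the clean way to finish is the paper's quantitative \autoref{lemma:rrkxX:dGH-small->diamX_similar} \ref{lemma:rrkxX:dGH-small->diamX_similar--diam=>dgh} (or \ref{lemma:rrkxX:dGH-small->diamX_similar--diam=0}), which is exactly designed to separate $\rr^k\times(\text{compact})$ from $\rr^k\times(\text{unbounded})$ on a fixed ball.

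The genuine gap is the case you yourself call the technical heart. Iterating \autoref{lem:rescaling_converging_to_tangent_cone} has no termination mechanism: the threshold $\nu_i$ it produces depends only on the rate of the convergence $(\lambda_i^1M_i,q_i)\to(\rr^k\times K_1,\cdot)$ and bears no quantitative relation to $\mu_i$, so after any finite number of steps the residual ratio $\mu_i/(\nu_i^{(1)}\cdots\nu_i^{(m)})$ may still tend to infinity; the non-compactness of the successive tangent cones does nothing to stop this, and an "infinite iteration" is not defined (nor is the handling of the nested subsequences a mere bookkeeping matter). Since you explicitly leave this unresolved, the proof is incomplete exactly where the difficulty sits. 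The paper avoids the regress altogether: it builds only one intermediate scale $\lambda_i=\mu_i'\lambda_i^1$ with $\mu_i'=\min\{\mu_i,\sqrt{\lambda_i^2/\lambda_i^1}\}$, shows its sublimit is $\rr^k\times N$ with $N$ unbounded (via \autoref{lemma:rrkxX:dGH-small->diamX_similar} \ref{lemma:rrkxX:dGH-small->diamX_similar--diam=1}), and then studies the continuous functions $h_i(\mu):=\dGH(B_R^{\mu M_i}(q_i),B_R^{\lambda_i^2 M_i}(q_i))$ for a fixed $R>1000\,\diam(K_2)$: from $h_i(\lambda_i)\geq 10\,\diam(K_2)$ and $h_i(\lambda_i^2)=0$ the intermediate value theorem yields a \emph{maximal} scale $\lambda_i'$ with $h_i(\lambda_i')=5\,\diam(K_2)$; the corresponding sublimit $\rr^k\times Y$ has $Y$ compact with $\diam(Y)\geq 4\,\diam(K_2)$, which forces $\lambda_i^2/\lambda_i'\to\infty$, and running the same construction once more between $\lambda_i'$ and $\lambda_i^2$ produces a larger parameter with $h_i=5\,\diam(K_2)$, contradicting maximality. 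Some such continuity-in-the-scale argument with an extremal choice (or another substitute for it) is the missing idea; without it your outline does not yield a complete proof of a), and hence not of b).
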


\begin{proof}
 a)
  The proof is done by contradiction: 
  Without loss of generality, 
  assume $\lambda_i^j > 0$ for $1 \leq j \leq 2$ and all $i \in \nn$.
  Obviously, the sequence $\big(\frac{\lambda_i^2}{\lambda_i^1}\big)_{i \in \nn}$ 
  is bounded from below by $0$. 
  Assume the sequence is not bounded from above, 
  i.e.~$\frac{\lambda_i^2}{\lambda_i^1} \to \infty$ and, 
  without loss of generality, 
  $\lambda_i^1 < \lambda_i^2$ for all $i \in \nn$.
  
  There exists $\lambda_i \to \infty$ satisfying $\lambda_i^1 < \lambda_i < \lambda_i^2$ 
  such that
  \[(\lambda_i M_i, q_i) \to (\rr^k \times N, y)\]
  for some unbounded metric space $N$: 
  Let $\mu_i \to \infty$ be as in \autoref{lem:rescaling_converging_to_tangent_cone} 
  such that $(\mu_i' \lambda_i^1 M_i, q_i)$ subconverges 
  to a tangent cone of $(\rr^k \times K_1, x_1)$ 
  for any $\mu_i' \to \infty$ with $\mu_i' \leq \mu_i$.
  Define
  \[
   \mu_i' := \min \bigg\{ \mu_i, \sqrt{\frac{\lambda_i^2}{\lambda_i^1}} \bigg\} \to \infty 
   \quad\aand\quad 
   \lambda_i := \mu_i' \cdot \lambda_i^1 \to \infty.
  \] 
  Without loss of generality, assume $\mu_i > 1$. 
  Thus, $\lambda_i^1 < \lambda_i < \lambda_i^2$.  
  Furthermore, 
  there exist a metric space $N$ and $q \in \rr^k \times N$ 
  with $(\lambda_i M_i,q_i) \to (\rr^k \times N, q)$ 
  and this is a tangent cone of $(\rr^k \times K_1, x_1)$.
  Hence, for a sequence $\alpha_i \to \infty$,
  \[(\rr^k \times \alpha_i K_1,x_1) \to (\rr^k \times N,q).\]
  Assume this $N$ is compact 
  and let $N' := \frac{1}{\diam(N)} \cdot N$ 
  and $\beta_i := \frac{1}{\diam(N)} \cdot \alpha_i$.
  Then
  \[(\rr^k \times \beta_i K_1,x_1) \to (\rr^k \times N',q),\]
  and, for sufficiently large $i$,
  \[\dgh{100}{\rr^k \times \beta_i K_1}{x_1}{\rr^k \times N'}{q} \leq 10^{-4}.\] 
  By \autoref{lemma:rrkxX:dGH-small->diamX_similar} 
  \ref{lemma:rrkxX:dGH-small->diamX_similar--diam=1}, 
  the sequence $(\diam(\beta_i K_1))_{i \in \nn}$ is bounded.
  This is a contradiction to $\beta_i \to \infty$.
  Hence, $N$ is unbounded.
  
  Now let $D := \diam(K_2)$ 
  and fix $R > 1000 D$. 
  As $N$ is unbounded, $2R < \diam(N)$ and 
  $\dgh{R}{\rr^k \times N}{q}{\rr^k \times K_2}{x_2} \geq 20 D$
  by \autoref{lemma:rrkxX:dGH-small->diamX_similar} 
  \ref{lemma:rrkxX:dGH-small->diamX_similar--diam=>dgh}.
  Thus, for $i$ large enough,
  \begin{align*}
   \dgh{R}{\lambda_i M_i}{q_i}{\lambda_i^2 M_i}{q_i} 
   &\geq 
   \dgh{R}{\rr^k \times N}{q}{\rr^k \times K_2}{x_2} 
   \\&\quad - \dgh{R}{\lambda_i M_i}{q_i}{\rr^k \times N}{q}
   \\&\quad - \dgh{R}{\lambda_i^2 M_i}{q_i}{\rr^k \times K_2}{x_2} 
   \\&\geq
   10 D.
  \end{align*}
  Since the maps $h_i: (0,\infty) \to (0,\infty)$ defined by 
  \[h_i(\mu_i) := \dgh{R}{\mu_iM_i}{q_i}{\lambda_i^2 M_i}{q_i}\] 
  are continuous 
  with $h_i(\lambda_i^2) = 0$ and $h_i(\lambda_i) \geq 10 D$,
  by the intermediate value theorem, 
  there is a maximal $\lambda_i^1 < \lambda_i' \leq \lambda_i < \lambda_i^2$ 
  such that $h_i(\lambda_i') = 5D$.
  After passing to a subsequence,
  \[(\lambda_{i}' M_i, q_{i}) \to (\rr^k \times Y, y)\]
  for some metric space $Y$.
  In particular,
  \[\dgh{R}{\lambda_i' M_i}{q_i}{\lambda_i^2 M_i}{q_i}\\
   \to \dgh{R}{\rr^k \times Y}{y}{\rr^k \times K_2}{x_2} \]
   as $i \to \infty$.
   Hence, $\dgh{R}{\rr^k \times Y}{y}{\rr^k \times K_2}{x_2} = 5D$.
   Furthermore,
   \[\dgh{R}{\rr^k \times K_2}{x_2}{\rr^k}{0} \leq \diam(K_2) = D\]
   and the triangle inequality implies
  \begin{align*}
   4D
   \leq \dgh{R}{\rr^k \times Y}{y}{\rr^k}{0}
   \leq 6D < \frac{R}{12}.
  \end{align*}
  By \autoref{lemma:rrkxX:dGH-small->diamX_similar} 
  \ref{lemma:rrkxX:dGH-small->diamX_similar--diam=0},
  $Y$ is compact. Moreover, \[\diam(Y) \geq \dgh{R}{\rr^k \times Y}{y}{\rr^k}{0} \geq 4D > D,\] 
  in particular, $Y$ is not a point.
   
  Next, prove $\frac{\lambda_i^2}{\lambda_i'} \to \infty$: 
  Assume the quotient is bounded. 
  Hence, after passing to a subsequence, 
  $\frac{\lambda_i^2}{\lambda_i'} \to \alpha$ and $\alpha \geq 1$ 
  due to $\lambda_i' \leq \lambda_i^2$.
  Then
  \begin{align*}
   (\lambda_i^2 M_i, q_i) 
   = \Big(\frac{\lambda_i^2}{\lambda_i'} \cdot \lambda_i' M_i, q_i \Big) 
   \to (\rr^k \times \alpha Y, y) 
   \cong (\rr^k \times K_2, x_2).
  \end{align*}
  In particular, $\diam(Y) \leq \alpha \cdot \diam(Y) = \diam(K_2) = D$, 
  and this is a contradiction.

  Thus, $\frac{\lambda_i^2}{\lambda_i'} \to \infty$. 
  Analogously to the previous argumentation, 
  there exists some maximal $\lambda_i' < \tilde{\lambda}_i < \lambda_i^2$ 
  such that $h_i(\tilde{\lambda}_i) = 5D$.
  This is a contradiction to the maximal choice of $\lambda_i'$.
 
 \par\smallskip\noindent b)
  Let $q_i \in G_{r_1}(p_i^1) \cap G_{r_2}(p_i^2)$ 
  and $\alpha_i := \frac{\lambda_i^2}{\lambda_i^1}$.
  Assume $\alpha_i \to \infty$ and choose a subsequence $(i_j)_{j \in \nn}$ 
  such that $\alpha_{i_j} > j$ for all $j \in \nn$.
  After passing to a further subsequence,
  there exist compact metric spaces $K_m$, where $1 \leq m \leq 2$,
  such that $(\lambda_{i_{j}}^m M_{i_{j_l}}, q_{i_{j}}) \to (\rr^k \times K_m, \cdot)$.
  By \ref{prop-A--tilde(G)_i->quotients_are_bounded}, the sequence
  $(\alpha_{i_j})_{j \in \nn}$ is bounded. 
  This is a contradiction. 
\end{proof}

The following lemma gives a statement about the limit of such a bounded sequence of quotients.

\begin{lemma}\label{lem:cor_of_prop-E}
 Let $(M_i,p_i)_{i \in \nn}$ be a collapsing sequence 
 of pointed complete connected \ndim Riemannian manifolds
 satisfying the uniform lower Ricci curvature bound $\Ric_{M_i} \geq -(n-1)$
 and let $\lambda_i, \mu_i > 0$ such that 
 $\big(\frac{\lambda_i}{\mu_i}\big)_{i \in \nn}$ is bounded. 
 If 
 \[
  (\lambda_i M_i, p_i) \to (\rr^k \times L, p_L) 
  \quad\aand\quad 
  (\mu_i M_i, p_i) \to (\rr^k \times M, p_M)
 \] 
 for some bounded metric spaces $L$ and $M$, 
 then 
 \[
  \frac{\lambda_i}{\mu_i} \to \frac{\diam(L)}{\diam(M)} \as i \to \infty  
  \quad\aand\quad 
  \dim(L) = \dim(M).
 \]
\end{lemma}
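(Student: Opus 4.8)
The plan is to reduce the whole lemma to one algebraic fact about metric products: \emph{if $L$ is bounded, then the isometry type of $\rr^k\times L$ determines both $\dim(L)$ and $\diam(L)$.} Granting this, since $\big(\tfrac{\lambda_i}{\mu_i}\big)_{i\in\nn}$ is bounded it suffices to show that every subsequential limit $\alpha\in[0,\infty)$ of $\tfrac{\lambda_i}{\mu_i}$ satisfies $\alpha=\tfrac{\diam(L)}{\diam(M)}$, and that $\dim(L)=\dim(M)$; the full sequence then converges.

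First I would pass to a subsequence along which $\tfrac{\lambda_i}{\mu_i}\to\alpha$. Writing $\lambda_i M_i=\tfrac{\lambda_i}{\mu_i}\cdot\mu_i M_i$ and using that rescaling a pointed Gromov--Hausdorff convergent sequence by a bounded factor again converges, with the limit rescaled by the limiting factor (with the convention that $0\cdot M$ is a point; when $\alpha=0$ and $k\geq 1$ this last case uses that the asymptotic cone of the bounded-factor product $\rr^k\times M$ is $\rr^k$, a routine diagonal argument, while for $k=0$ it is immediate as $M$ is bounded), one obtains $(\lambda_iM_i,p_i)\to(\rr^k\times\alpha M,p_M)$. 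By uniqueness of pointed Gromov--Hausdorff limits together with the hypothesis $(\lambda_iM_i,p_i)\to(\rr^k\times L,p_L)$, this gives an isometry
\[
 \rr^k\times L\;\cong\;\rr^k\times\alpha M .
\]

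It remains to read off the two invariants. For the dimension, recall (as already used in the proof of \autoref{prop:main}) that $\dim(\rr^k\times Z)=k+\dim(Z)$ and that rescaling does not change the dimension; hence $\dim(L)=\dim(\rr^k\times L)-k=\dim(\rr^k\times\alpha M)-k=\dim(M)$. For the diameter I would argue that, since $L$ and $\alpha M$ are bounded, every bi-infinite minimising geodesic (line) in $\rr^k\times L$ has the form $t\mapsto(v+tu,x)$ with $|u|=1$ and $x\in L$ fixed: the $L$-component of such a geodesic is itself a minimising geodesic of $L$, and in the bounded space $L$ no infinite-length minimising geodesic exists, so this component is constant. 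Consequently, for $p\in\rr^k\times L$ the union $\ell(p)$ of all lines through $p$ is exactly the fibre $\rr^k\times\{\pi_L(p)\}$, so $\sup_{q}\mathrm{dist}(q,\ell(p))=\sup_{y\in L}d_L(\pi_L(p),y)$, and taking the supremum over $p$ yields $\diam(L)$. Since the quantity $\sup_{p}\sup_{q}\mathrm{dist}(q,\ell(p))$ depends only on the isometry type of $\rr^k\times L$, we get $\diam(L)=\diam(\alpha M)=\alpha\,\diam(M)$, hence $\alpha=\tfrac{\diam(L)}{\diam(M)}$ (here $\diam(M)>0$, as in every intended application of the lemma, where $\diam(M)\in[\tfrac15,1]$; if $\diam(M)=0$ the asserted ratio is vacuous). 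As this holds for every subsequential limit, and since $\diam(L)>0$ rules out $\alpha=0$ a posteriori, the full sequence $\tfrac{\lambda_i}{\mu_i}$ converges to $\tfrac{\diam(L)}{\diam(M)}$, finishing the proof.

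The hard part will be the last step, namely isolating a purely metric quantity of $\rr^k\times L$ that recovers $\diam(L)$; equivalently, making precise that the splitting of $\rr^k\times L$ into a Euclidean factor and a bounded factor is canonical. The classification of lines in $\rr^k\times L$ (and hence of the fibres) is the key ingredient. Note that \autoref{lemma:rrkxX:dGH-small->diamX_similar}\ref{lemma:rrkxX:dGH-small->diamX_similar--diam=>dgh} only yields comparability of the two diameters up to a fixed constant, not the exact equality needed here, so this canonicity argument genuinely cannot be avoided. The only other mildly delicate point is the degenerate case $\alpha=0$ in the second paragraph, which is dealt with by the asymptotic-cone remark.
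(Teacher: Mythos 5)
Your proof is correct and follows essentially the same route as the paper: pass to a subsequence along which $\frac{\lambda_i}{\mu_i}\to\alpha$, identify the limit of $(\lambda_iM_i,p_i)$ with $(\rr^k\times\alpha M,p_M)$, and deduce from the resulting isometry $\rr^k\times L\cong\rr^k\times\alpha M$ that $\dim(L)=\dim(M)$ and $\diam(L)=\alpha\cdot\diam(M)$, so every accumulation point of the bounded ratio equals $\frac{\diam(L)}{\diam(M)}$ and the sequence converges. The only difference is that the paper asserts the diameter/dimension identification from the product isometry without comment, whereas you justify it via the classification of lines and the induced fibres (and you also flag the degenerate cases $\alpha=0$ and $\diam(M)=0$, which the paper silently ignores); this is added care rather than a different method.
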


\begin{proof}
 Let $a$ be any accumulation point of $\big(\frac{\lambda_i}{\mu_i}\big)_{i \in \nn}$ and 
 $\big(\frac{\lambda_{i_j}}{\mu_{i_j}}\big)_{j \in \nn}$ be 
 the corresponding converging subsequence.
 Then 
 \[
  (\lambda_{i_j} M_{i_j}, p_{i_j}) 
  = \Big(\frac{\lambda_{i_j}}{\mu_{i_j}} \cdot \mu_{i_j} M_{i_j}, p_{i_j}\Big) 
  \to (a \, (\rr^k \times M), p_M) 
  = (\rr^k \times\,(a M), p_M)
 \]
 as $j \to \infty$.
 Since this sequence converges to $(\rr^k \times L, p_L)$ as well, there is an isometry
 \[(\rr^k \times\,(a M), p_M) \cong (\rr^k \times L, p_L).\]
 Thus, $\dim(L) = \dim(M)$, $\diam(L) = a \cdot \diam(M)$ and
 all accumulation points of the bounded sequence $(\frac{\lambda_i}{\mu_i})_{i \in \nn}$ 
 equal $\frac{\diam(L)}{\diam(M)}$. 
 In particular, $\big(\frac{\lambda_i}{\mu_i}\big)_{i \in \nn}$ is convergent.
\end{proof}

Next, the question will be answered 
under which condition the quotient of rescaling sequences 
belonging to two points in $\mathcal{X}_r$ is bounded.
The first approach in order to prove this 
is the special case of their good subsets to intersect.
In the general case, the idea is to connect the points 
by a curve which itself is contained in $\mathcal{X}_r$
and can be covered by finitely many balls 
such that subsequent subsets of good points intersect.
In fact, this cannot be expected to be possible for the same $r>0$.
However, it turns out that the quotient of the rescaling sequences is bounded 
if the points are connected by a minimising geodesic 
contained in some $\mathcal{X}_{r'}$ of a possibly different $r'$.
Making all of this precise is the subject of the following lemma.

\begin{lemma}\label{lem:compare_rescalings}
 Let $(M_i,p_i)_{i \in \nn}$ be a collapsing sequence 
 of pointed complete connected \ndim Riemannian manifolds
 which satisfy the uniform lower Ricci curvature bound $\Ric_{M_i} \geq -(n-1)$ 
 and converge to a limit $(X,p)$ of dimension $k < n$. 
 \begin{enumerate}
 \item\label{lem:compare_rescalings-neighboured_points} 
  Let $r_m > 0$, $x_m \in \mathcal{X}_{r_m}$ and $p_i^{x_m} \to x_m$, where $1 \leq m \leq 2$, 
  such that $G_{r_1}(p_i^{x_1}) \cap G_{r_2}(p_i^{x_2}) \ne \emptyset$ for all $i \in \nn$.
  Then
  \[ \frac{1}{5} \leq \frac{\lambda_i^{x_1}(r_1)}{\lambda_i^{x_2}(r_2)} \leq 5\] 
  for almost all $i \in \nn$
  and $l_\omega^{x_1}(r_1) = l_\omega^{x_2}(r_2)$.
 \item\label{lem:compare_rescalings-different_rescalings_for_same_point}
  Let $x \in \Xgen$, $p_i^x \to x$ and $r^x \geq R > r > 0$.
  Then there is a natural number $m=m(n,\mydelta, r, R) \in \nn$ such that 
  \[ 5^{-m} \leq \frac{\lambda_i^x(r)}{\lambda_i^x(R)} \leq 5^m\]
  for almost all $i \in \nn$
  and $l_\omega^x(r) = l_\omega^x(R)$.
 \item\label{lem:compare_rescalings-along geodesic}
  Let $\gamma : [0,l] \to X$ be a minimising geodesic 
  with image $\im(\gamma) \subseteq \mathcal{X}_{r}$ for some $0 < r \leq \tildeeps$.
  Let $x=\gamma(0)$ and $y=\gamma(l)$.
  Then there is a natural number $m=m(n,\mydelta,l,r) \in \nn$ such that 
  \[ 5^{-m} \leq \frac{\lambda_i^x(r)}{\lambda_i^y(r)} \leq 5^m\]
  for almost all $i \in \nn$
  and $l_\omega^x(r) = l_\omega^y(r)$.
 \item\label{lem:compare_rescalings-points_connected_by_geodesic}
  Let $x,y \in \mathcal{X}_r$ and $\gamma : [0,l] \to X$ be a minimising geodesic 
  with end points
  $x = \gamma(0)$, $y = \gamma(l)$ 
  and image $\im(\gamma) \subseteq \mathcal{X}_{r'}$ for some $r' \leq r \leq \tildeeps$.
  Then there is a natural number $m=m(n,\mydelta, l, r, r') \in \nn$ such that 
  \[ 5^{-m} \leq \frac{\lambda_i^x(r)}{\lambda_i^y(r)} \leq 5^m\]
  for almost all $i \in \nn$
  and $l_\omega^x(r) = l_\omega^y(r)$.
 \end{enumerate}
\end{lemma}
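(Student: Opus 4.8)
The plan is to establish the four parts in order, with part (a) serving as the basic comparison and parts (b)--(d) as chaining arguments built on it. For part (a), fix $q_i$ in the nonempty intersection $G_{r_1}(p_i^{x_1}) \cap G_{r_2}(p_i^{x_2})$. By the construction recorded in \autoref{prop:main--generic_points}, for each $m \in \{1,2\}$ the ultralimit $\limomega(\lambda_i^{x_m}(r_m)\,M_i,q_i)$ is a product $\rr^k \times K_m$ with $K_m$ compact, $\frac{1}{5} \le \diam(K_m) \le 1$ and $\dim(K_m) = l_\omega^{x_m}(r_m)$. The second part of \autoref{prop-A}, applied in both directions, shows that $\bigl(\lambda_i^{x_1}(r_1)/\lambda_i^{x_2}(r_2)\bigr)_{i\in\nn}$ is bounded and bounded away from $0$; hence its $\omega$-limit $a$ exists. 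Choosing a subsequence along which both ultralimits are realized as pointed \GH limits (\autoref{lem_background:ultralimits}) and along which the quotient of scales also converges, necessarily to $a$, \autoref{lem:cor_of_prop-E} yields $a = \diam(K_1)/\diam(K_2)$ together with $\dim(K_1) = \dim(K_2)$. As $\diam(K_1)/\diam(K_2) \in [\frac{1}{5},5]$, both assertions of (a) follow, with $l_\omega^{x_1}(r_1) = \dim(K_1) = \dim(K_2) = l_\omega^{x_2}(r_2)$.

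Part (b) is obtained by chaining (a) along a geometric sequence of radii. First fix $c = c(n,\mydelta) > 1$ so small that $\CBG(n,-1,\rho,c\rho) < \frac{1}{2\mydelta}$ for all $\rho \le 1$, which is possible since $\rho \mapsto \CBG(n,-1,\rho,c\rho)$ is monotone and tends to $1$ as $c \to 1$. Set $\rho_j := \min\{c^j r,\,R\}$ for $0 \le j \le m := \lceil \log_c(R/r)\rceil$; since $R \le r^x$, each $\rho_j$ lies in $(0,r^x]$, so $x \in \mathcal{X}_{\rho_j}$. Because $B_{\rho_j}(p_i^x) \subseteq B_{\rho_{j+1}}(p_i^x)$ and, by \refBGThm, $\vol(B_{\rho_{j+1}}(p_i^x)) \le \CBG(n,-1,\rho_j,\rho_{j+1}) \cdot \vol(B_{\rho_j}(p_i^x)) < \frac{1}{2\mydelta}\vol(B_{\rho_j}(p_i^x))$, \autoref{lemma-criterion_intersection_subsets} (with $A = B_{\rho_{j+1}}(p_i^x)$, $B = B_{\rho_j}(p_i^x)$ and the respective good sets) forces $G_{\rho_j}(p_i^x) \cap G_{\rho_{j+1}}(p_i^x) \ne \emptyset$ for all large $i$. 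Applying (a) to each consecutive pair of radii and multiplying the $m$ resulting inequalities gives (b) with $m = m(n,\mydelta,r,R)$.

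Part (c) repeats this idea with the base point moving along $\gamma$ at fixed radius $r$: subdivide $[0,l]$ into $m := \lceil 2l/d_0\rceil$ subintervals of length $< d_0 = d_0(n,\mydelta,r)$, where $d_0$ is the constant of \autoref{lemma-criterion_volume_intersection2}, producing points $\gamma(t_0) = x,\dots,\gamma(t_m) = y$ of $\im(\gamma) \subseteq \mathcal{X}_r$. For $i$ large one has $d_{M_i}(p_i^{\gamma(t_j)},p_i^{\gamma(t_{j+1})}) < d_0$, so \autoref{lemma-criterion_volume_intersection2} and \autoref{lemma-criterion_intersection_subsets} give $G_r(p_i^{\gamma(t_j)}) \cap G_r(p_i^{\gamma(t_{j+1})}) \ne \emptyset$, and applying (a) along the chain proves (c). Finally, part (d) combines (b) and (c): since $x,y \in \im(\gamma) \subseteq \mathcal{X}_{r'}$, part (c) applied to $\gamma$ at radius $r'$ compares $\lambda_i^x(r')$ with $\lambda_i^y(r')$, while part (b) applied at $x$ and at $y$ compares $\lambda_i^x(r)$ with $\lambda_i^x(r')$ and $\lambda_i^y(r)$ with $\lambda_i^y(r')$; multiplying the three estimates yields $5^{-m} \le \lambda_i^x(r)/\lambda_i^y(r) \le 5^m$ and $l_\omega^x(r) = l_\omega^y(r)$ for a suitable $m = m(n,\mydelta,l,r,r')$.

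I expect part (a) to be the main obstacle: the only available input, \autoref{prop-A}, is purely qualitative, and upgrading it to the explicit constant $5$ requires identifying the limit of the quotient of scales with the ratio $\diam(K_1)/\diam(K_2)$ via \autoref{lem:cor_of_prop-E}, which in turn forces careful ultrafilter/subsequence bookkeeping so that both space-ultralimits and the numerical sequence of quotients are realized along one common subsequence. Once (a) is in hand, parts (b)--(d) are essentially combinatorial: the intersection criteria \autoref{lemma-criterion_intersection_subsets} and \autoref{lemma-criterion_volume_intersection2} together with Bishop--Gromov volume comparison reduce the matter to counting the links of a chain of overlapping balls, and the exponents $5^{\pm m}$ just accumulate the factor $5$ once per link.
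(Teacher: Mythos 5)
Your overall architecture coincides with the paper's: part~(a) via \autoref{prop-A} together with \autoref{lem:cor_of_prop-E}, parts~(b) and~(c) by chaining~(a) along nested radii resp.\ along a subdivision of the geodesic using the intersection criteria (your use of \autoref{lemma-criterion_intersection_subsets} with a constant $c>1$ chosen so that $\CBG(n,-1,\rho,c\rho)<\tfrac{1}{2\mydelta}$ is a valid variant of the paper's volume-counting contradiction), and part~(d) by concatenating~(b) and~(c) exactly as the paper does.

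There is, however, a genuine gap in part~(a). The assertion is that $\tfrac15\le \lambda_i^{x_1}(r_1)/\lambda_i^{x_2}(r_2)\le 5$ for \emph{almost all} $i$, i.e.\ for all but finitely many $i$ (this cofinite form is what is used later, when an $i_0$ is chosen so that the estimate holds simultaneously for finitely many chains). Your argument only identifies the single $\omega$-limit $a=\limomega a_i$ of the quotient sequence with $\diam(K_1)/\diam(K_2)\in[\tfrac15,5]$. Knowing that the $\omega$-limit lies in $[\tfrac15,5]$ only tells you that $\{i: a_i\in[\tfrac15-\eps,5+\eps]\}$ has $\omega$-measure one; its complement may still be infinite, so the cofinite statement does not follow. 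The correct route (the paper's) is to take an \emph{arbitrary} accumulation point $a$ of the bounded sequence $(a_i)$, pass to the corresponding subsequence, use the sublimit form of \autoref{prop:main--generic_points} (any sublimit of $(\lambda_i^{x_m}(r_m)M_i,q_i)$ is $\rr^k\times K_m$ with $\diam(K_m)\in[\tfrac15,1]$) and \autoref{lem:cor_of_prop-E} to conclude $a\in[\tfrac15,5]$; since every accumulation point of a bounded sequence lies in $[\tfrac15,5]$, only finitely many $a_i$ can lie outside. A second, smaller wrinkle: the subsequence provided by \autoref{lem_background:ultralimits} realises the two ultralimits as pointed \GH limits, but nothing forces the scalar sequence $a_i$ to converge to its $\omega$-limit along that subsequence, so the step ``the quotient converges, necessarily to $a$'' is unjustified as written; after a further extraction the quotient converges to \emph{some} accumulation point, which is enough for the dimension claim $l_\omega^{x_1}(r_1)=l_\omega^{x_2}(r_2)$ but not for your identification of $a$. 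Both issues are repairable by switching from the ultralimit to the arbitrary-subsequence formulation in~(a); parts~(b)--(d) then go through as you describe.
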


\begin{proof} 
 a) 
  Without loss of generality, 
  for $1 \leq m \leq 2$,
  all $\lambda_i^{x_m}(r_m)$ are positive.
  Define 
  $a_i := \frac{\lambda_i^{x_1}(r_1)}{\lambda_i^{x_2}(r_2)} > 0$ 
  and let $q_i \in G_{r_1}(p_i^{x_1}) \cap G_{r_2}(p_i^{x_2})$ be arbitrary.  
  
  By \autoref{prop-A}, $(a_i)_{i \in \nn}$ is bounded. 
  Let $a$ be an arbitrary accumulation point 
  and $(a_{i_j})_{j \in \nn}$ be the subsequence converging to $a$.
  Since $q_{i_j} \in G_{r_1}(p_{i_j}^{x_1})$, after passing to a subsequence,
  \[
   (\lambda_{i_j}^{x_1}(r_1)\,M_{i_j}, q_{i_j}) 
   \to (\rr^k \times K_1, \cdot) 
   \as j \to \infty
  \]
  for some compact metric space $K_1$ with $\frac{1}{5} \leq \diam(K_1) \leq 1$.
  Because of $q_{i_j} \in G_{r_2}(p_{i_j}^{x_2})$, 
  after passing to a further subsequence,
  \[
   (\lambda_{i_j}^{x_2}(r_2)\,M_{i_j}, q_{i_j}) 
   \to (\rr^k \times K_2, \cdot) 
   \as j \to \infty
  \]
  and $K_2$ satisfies $\frac{1}{5} \leq \diam(K_2) \leq 1$. 
  By \autoref{lem:cor_of_prop-E},
  \[ 
   a 
   = \lim_{j \to \infty} \frac{\lambda_{i_j}^{x_1}(r_1)}{\lambda_{i_j}^{x_2}(r_2)} 
   = \frac{\diam(K_1)}{\diam(K_2)} 
   \in \Big[\frac{1}{5}, 5\Big]
  \]
  and $l_\omega^{x_1}(r_1) = l_\omega^{x_2}(r_2)$.
  
  Since $(a_i)_{i \in \nn}$ is a bounded sequence 
  and all accumulation points are contained in $[\frac{1}{5},5]$,
  only finitely many $a_i$ are not contained in $[\frac{1}{5},5]$.
  
 \par\smallskip\noindent b)
  Since $\CBG(n,-1,\beta R, R)$ is monotonically increasing for decreasing $\beta < 1$, 
  there exists $\beta = \beta(n,\mydelta,R) < 1$ 
  with $\CBG(n,-1,\beta R, R) < \frac{1}{\mydelta} - 1$.
  Fix this $\beta$.
  Because
  $\CBG(n,-1,\beta \rho, \rho)$ is monotonically increasing for increasing $\rho$, 
  all $\rho \leq R$ satisfy $\CBG(n,-1,\beta \rho, \rho) < \frac{1}{\mydelta} - 1$ as well.
   
  Let $m = m(n, \mydelta, r, R) \in \nn$ be maximal with $r \leq \beta^m \cdot R$.
  Define $r_j := \beta^j \cdot R$ 
  for $0 \leq j < m$
  and $r_m := r \leq \beta \cdot r_{m-1}$.
  Then $\CBG(n,-1,r_{j+1},r_j) < \frac{1}{\mydelta} - 1$ for all $0 \leq j < m$.
  Moreover, $x \in \mathcal{X}_{r_j}$ for all $0 \leq j \leq m$ due to $r_j \leq R \leq r^x$.

  Assume $G_{r_j}(p_i^x) \cap G_{r_{j+1}}(p_i^x) = \emptyset$ 
  for some $0 \leq j < m$ and $i \in \nn$.
  This implies \[G_{r_{j+1}}(p_i^x) \subseteq B_{r_{j}}(p_i^x) \setminus G_{r_{j}}(p_i^x),\]
  in particular,
  \begin{align*}
   (1- \mydelta)\cdot \vol(B_{r_j}(p_i^x))
   &\leq \vol(G_{r_{j+1}}(p_i^x)) 
   \\&\leq \vol(B_{r_{j}}(p_i^x) \setminus G_{r_{j}}(p_i^x))
   \\&\leq \mydelta \cdot \vol(B_{r_{j}}(p_i^x))
   \\&\leq \mydelta \cdot \CBG(n,-1,r_{j+1},r_j)\cdot \vol(B_{r_j}(p_i^x)).
  \end{align*}
  Hence, $1- \mydelta \leq \mydelta \cdot \CBG(n,-1,r_{j+1},r_j) < 1 -\mydelta$, 
  and this is a contradiction.
  
  Thus, $G_{r_j}(p_i^x) \cap G_{r_{j+1}}(p_i^x) \ne \emptyset$ 
  for all $0 \leq j < m$ and $i \in \nn$. 
  By \ref{lem:compare_rescalings-neighboured_points}, 
  \[ \frac{1}{5} \leq \frac{\lambda_i^x(r_j)}{\lambda_i^x(r_{j+1})} \leq 5\] for almost all $i$
  and $l_\omega^x(r_j) = l_\omega^x(r_{j+1})$.
  Inductively, 
  \[l_\omega^x(r) = l_\omega^x(r_m) = l_\omega^x(r_0) = l_\omega^x(R).\]
  Then
  \[\frac{\lambda_i^x(R)}{\lambda_i^x(r)}
  = \frac{\lambda_i^x(r_0)}{\lambda_i^x(r_m)}
  = \prod_{j=0}^{m-1} \frac{\lambda_i^x(r_j)}{\lambda_i^x(r_{j+1})}\]
  proves the claim.
  
 \par\smallskip\noindent c)  
  Let $d_0 = d_0(n,\mydelta,r)$ be as in \autoref{lemma-criterion_volume_intersection2} 
  and $m_0 = m_0(n, \mydelta, l, r) \in \nn$ be the minimal natural number with
  $l \leq m_0 \cdot d_0$.  
  
  Define a sequence $0 = t_0 < t_1 < \ldots < t_{m_0} = l$ 
  with pairwise $t_{j+1}-t_j \leq d_0$
  by $t_j := j \cdot d_0$ for $0 \leq j \leq m_0 -1$ and $t_{m_0} := l$.   
  For $0 \leq j \leq m_0$, define $y_j := \gamma(t_j) \in \mathcal{X}_{r}$.
  Now fix $0 \leq j < m_0$.
  By \autoref{lemma-criterion_volume_intersection2} 
  and \autoref{lemma-criterion_intersection_subsets},
  \[G_{r}(p_i^{y_j}) \cap G_{r}(p_i^{y_{j+1}}) \ne \emptyset.\]
  The rest of the proof can be done analogously 
  to the one of \ref{lem:compare_rescalings-different_rescalings_for_same_point}.
  
 \par\smallskip\noindent d) 
  Let $m_x := m_y := m(n, \mydelta, r', r)$ be 
  as in \ref{lem:compare_rescalings-different_rescalings_for_same_point} 
  and $m_0 = m_0(n, \mydelta, l, r')$ as in \ref{lem:compare_rescalings-along geodesic}.
  Then 
  $m = m(n, \mydelta, l, r, r') := m_x \cdot m_0 \cdot m_y$
  proves the claim.
\end{proof}

\subsection{Generic points and geodesics}\label{sec:geodesics}
Throughout this subsection, 
fix a collapsing sequence $(M_i,p_i)_{i \in \nn}$ 
of pointed complete connected \ndim Riemannian manifolds
which satisfy the uniform lower Ricci curvature bound $\Ric_{M_i} \geq -(n-1)$ 
and converge to a limit $(X,p)$ of dimension $k < n$
and use the notation introduced in \autoref{sec:application_to_generic_points}. 
Moreover, minimising geodesics are assumed to be parametrised by arc length.

By \autoref{lem:compare_rescalings},
rescaling sequences corresponding to two different points can be compared 
if those points are connected by a geodesic lying in some $\mathcal{X}_r$.
It remains to check for which points this is the case.
It will turn out that, if the strict interior of a minimising geodesic 
(i.e.~the interior bounded away from the endpoints) is generic,
then it is already contained in $\mathcal{X}_r$ for sufficiently small $r > 0$. 
In fact, nearly all pairs of points lie in the interior of such a geodesic 
such that the part of the geodesic connecting these points is generic.

\begin{not*}
Define 
\begin{align*}
 \mathcal{G} 
  &:= \{(x,y) \in \Xgen \times \Xgen 
  \mid \exists \text{~minim.~geod.~} \gamma:[0,l] \to X, 0<t_x<t_y<l: 
  \\&{\color{white}{:= \{(x,y) \in \Xgen \times \Xgen \mid \}}}
  x=\gamma(t_x), y=\gamma(t_y), \im(\gamma_{|[t_x,t_y]}) \subseteq \Xgen \},
 \intertext{and for $x \in \Xgen$ denote 
 the image under the projection to the second factor by}
 \mathcal{G}_x 
 &:= \{y \in \Xgen \mid (x,y) \in \mathcal{G}\}.
 \intertext{Finally, define}
 \mathcal{G}' 
 &:= \{x \in \Xgen \mid \mathcal{G}_x \text{ has full measure in } X \}.
 \end{align*}
\end{not*}

\begin{lemma}\label{lem:points_s.t._nearly_all_pairs_in_interior_of_good_geod}
 The set $\mathcal{G}'$ has full measure in $X$.
\end{lemma}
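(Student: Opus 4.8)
The goal is to show that the set $\mathcal{G}'$ of points $x\in\Xgen$ for which $\mathcal{G}_x$ has full measure is itself of full measure. The natural strategy is a Fubini-type argument applied to the set $\mathcal{G}\subseteq X\times X$: if I can show that $\mathcal{G}$ has full measure with respect to the product measure $\vol_X\times\vol_X$ (equivalently, its complement is a null set), then by Fubini the slice $\mathcal{G}_x$ has full measure for $\vol_X$-almost every $x$, which is exactly the statement. So the whole problem reduces to proving that $(X\times X)\setminus\mathcal{G}$ is null, and for that I need two ingredients from Colding-Naber \cite{colding-naber}: first, that $\Xgen$ has full measure (this is \autoref{lem_CN:Xgen}), and second, a statement to the effect that $\vol_X\times\vol_X$-almost every pair $(x,y)$ is joined by a \emph{unique} minimising geodesic whose interior avoids any fixed null set — in particular avoids $X\setminus\Xgen$ — and that the endpoints $x,y$ can be taken in the strict interior of a slightly longer minimising geodesic.

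\textbf{Key steps.} First I would recall that $\Xgen$ is a set of full measure, so $Z:=X\setminus\Xgen$ is a $\vol_X$-null set; hence $(\Xgen\times\Xgen)$ has full measure in $X\times X$, and it suffices to work within this set and show that for a.e.\ pair $(x,y)\in\Xgen\times\Xgen$ one can find a minimising geodesic $\gamma:[0,l]\to X$ and $0<t_x<t_y<l$ with $x=\gamma(t_x)$, $y=\gamma(t_y)$ and $\gamma([t_x,t_y])\subseteq\Xgen$. Second, I would invoke the Colding-Naber structure theory: by the results on the geometry of limit spaces with lower Ricci bounds, there is a full-measure set of pairs $(x,y)$ connected by a unique minimising geodesic, and moreover — using the fact (again from \cite{colding-naber}, via the almost-everywhere behaviour of geodesics and the segment-type inequalities) — the interior of the a.e.\ minimising geodesic meets the null set $Z$ only in a null subset of its parameter interval; combined with a further small perturbation/extension argument one upgrades this to the interior \emph{entirely avoiding} $Z$ for a.e.\ pair, and to the endpoints lying strictly inside. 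This is where the real content sits. Third, once $\mathcal{G}$ is shown to have full measure in $X\times X$, apply Fubini's theorem for the Radon measure $\vol_X$ (which is $\sigma$-finite, $X$ being proper): the function $x\mapsto \vol_X(X\setminus\mathcal{G}_x)$ is measurable and has vanishing integral, hence $\vol_X(X\setminus\mathcal{G}_x)=0$ for $\vol_X$-a.e.\ $x$; by definition such $x$ lie in $\mathcal{G}'$, so $\mathcal{G}'$ has full measure.

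\textbf{Main obstacle.} The delicate point is the second step: passing from ``the a.e.\ minimising geodesic between $x$ and $y$ has interior that avoids $Z$ up to a null set of parameters'' to ``the interior avoids $Z$ completely, and $x,y$ are in the strict interior of a longer geodesic.'' The cleanest route is probably to use that for a.e.\ $(x,y)$ the geodesic extends beyond both endpoints (again a Colding-Naber fact about the behaviour of geodesics in these spaces), so one can pick nearby generic points $x',y'$ slightly outside $[x,y]$ along the extension, making $x,y$ interior; and then one uses that along a.e.\ geodesic the set of parameters mapping into $Z$ is not just null but actually empty after possibly discarding a further null set of $(x,y)$ — this last upgrade relies on the fact that $Z$ being preserved under the geodesic flow in a measure-theoretic sense, combined with the connectedness/openness structure of $\Xgen$. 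I would organise this as a short lemma extracting exactly the needed statement from \cite{colding-naber} and then feed it into the Fubini argument; the Fubini step and the reduction are routine, so the proof should be short once the geodesic input is correctly cited.
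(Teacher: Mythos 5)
Your overall skeleton (show $\mathcal{G}$ has full measure in $X \times X$, then conclude by a Fubini-type integration that $x \mapsto \vol_X(X\setminus\mathcal{G}_x)$ vanishes almost everywhere) is exactly the paper's, and the Fubini step is fine. The gap is in the middle step, which you yourself flag as ``where the real content sits'' but then resolve with a sketch that would not work. You propose to start from ``for a.e.\ pair the minimising geodesic meets $Z = X\setminus\Xgen$ only in a null set of parameters'' and to upgrade this to ``the interior avoids $Z$ entirely'' by invoking measure-theoretic invariance of $Z$ under the geodesic flow together with ``the connectedness/openness structure of $\Xgen$.'' There is no such openness: $\Xgen = \mathcal{R}_k$ is merely a connected set of full measure (Theorem~\ref{lem_CN:Xgen}), and in general it is not open, so a null set of bad parameters on a single geodesic cannot be emptied by this kind of soft argument; nor is there any available statement that the geodesic flow preserves $Z$. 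The correct input is sharper and is quoted directly in the paper: by \cite[Theorem 1.20\,(1)]{colding-naber}, almost every pair $(x,y) \in \Xgen\times\Xgen$ is joined by a minimising geodesic whose \emph{entire image} lies in $\Xgen$ (call this set of pairs $S_1$), and by \cite[Theorem A.4\,(3)]{colding-naber} almost every pair lies in the strict interior of some minimising geodesic (call this $S_2$). No ``upgrade'' lemma is needed.

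You also miss the one genuine construction the paper does need: for $(x,y) \in S_1 \cap S_2$ the geodesic $c$ with generic image from $x$ to $y$ and the longer geodesic $\gamma$ containing $x,y$ in its interior are a priori \emph{different} geodesics, so neither alone witnesses $(x,y)\in\mathcal{G}$. The paper splices them: it replaces the middle segment of $\gamma$ between the parameters of $x$ and $y$ by $c$ (both have length $d(x,y)$) and checks that the concatenated curve $\tilde\gamma$ is still a minimising geodesic; this $\tilde\gamma$ then has $x,y$ in its interior with the piece between them inside $\Xgen$. Your alternative suggestion---pick generic points $x',y'$ slightly beyond $x,y$ along the extension---does not address the actual requirement, namely that the segment of the chosen geodesic \emph{between} $x$ and $y$ lie in $\Xgen$; the extended geodesic through $x,y$ carries no such information. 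So the proposal needs both the correct citations ($S_1$, $S_2$ full measure) and the concatenation argument before the Fubini step can be fed.
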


\begin{proof}
 First, prove that $\mathcal{G}$ has full measure in $X \times X$.
 Let 
 \begin{align*}
  S_1 
  &:= \{(x,y) \in \Xgen \times \Xgen 
  \mid \exists \text{~minim.~geodesic~} c:[0,d] \to X : 
    \\&{\color{white}{:= \{(x,y) \in \Xgen \times \Xgen \mid \}}} 
    x=c(0), y=c(d), \im(c) \subseteq \Xgen\} \\
  S_2 
  &:= \{(x,y) \in X \times X 
  \mid \exists \text{~minim.~geodesic~} \gamma: [0,l] \to X, 0<t_x<t_y<l: 
  \\& {\color{white}{:= \{(x,y) \in X \times X \mid \}}} 
  x=\gamma(t_x), y=\gamma(t_y)\}
 \end{align*}
 and define $S := S_1 \cap S_2$.
 By \cite[Theorem 1.20 (1), Theorem A.4 (3)]{colding-naber}, 
 \[\vol_X \times \vol_X(X \times X \setminus S_1) = 0
 \quad\aand\quad
 \vol_X \times \vol_X(\Xgen \times \Xgen \setminus S_2) = 0.\]
 In particular, using that $\vol_X(X \setminus \Xgen) = 0$,
 this proves \[\vol_X \times \vol_X(X \times X \setminus S) = 0.\]
 
 Next, prove $S \subseteq \mathcal{G}$: 
 Let $(x,y) \in S$,
 $c:[0,d] \to \Xgen$ and $\gamma: [0,l] \to X$ be geodesics and $0<t_x<t_y<l$ with 
 $x=c(0) = \gamma(t_x)$ and $y=c(d) = \gamma(t_y)$. 
 In particular, $d = d(x,y) = t_y - t_x \leq l$.
 
 Define $\tilde{\gamma}: [0,l] \to X$ by 
 \[\tilde{\gamma}(\tau) := 
  \begin{cases}
   \gamma(\tau)	& \text{if }\tau \in [0,t_x] \cup [t_y,l],\\
   c(\tau-t_x) 	& \text{if }\tau \in [t_x,t_y],
  \end{cases}
  \]
 cf.~\autoref{pic:points_s.t._nearly_all_pairs_in_interior_of_good_geod}.
 \begin{figure}[t]  
  \begin{center}
   \begin{tikzpicture}
   [auto,>=stealth,inner sep=0pt,thick,dot/.style={fill=black,circle,minimum size=3pt}]
	\coordinate[] 	(x) 		
	  at (0.0,0.0);
    	\node[gray]   	(x-label) 	
	  at (-1.3,0.35)
	  {$\gamma(t_x) = c(0) = x$};
	\coordinate[] 	(y) 		
	  at (3.0,0.0);
    	\node[gray]   	(y-label) 	
	  at (4.3,0.35)
	  {$y = c(d) = \gamma(t_y)$};
    	\coordinate[] 	(tau1)
	  at (-1,0.0);
    	\node 	      	(tau1-label)
	  at (-1.8,-0.35)
	  {$\tilde{\gamma}(\tau_1)=\gamma(\tau_1)$};
    	\coordinate[] 	(tau2)
	  at (2,0.4);
    	\node[gray]    	(tau2-label)
	  at (2,0.75)	
	  {$\gamma(\tau_2)$};
    	\coordinate[] 	(tau2')
	  at (2,-0.4);
    	\node 	      	(tau2'-label)
	  at (3,-0.75)	
	  {$\tilde{\gamma}(\tau_2) = c(\tau_2-t_x)$};
    	\draw
	    (-2,0) -- (0,0)
	    (3,0) -- (5,0);
	\path (x) edge [densely dashed,bend left=30] (y);
	\path (x) edge [bend right=30] (y);
	\foreach \point in {tau1,tau2'}
	  \fill [black] (\point) circle (1.7pt);
	\foreach \point in {x,y,tau2}
	  \fill [gray] (\point) circle (1.7pt);
   \end{tikzpicture}
   \caption{Construction of $\tilde{\gamma}$.}
   \label{pic:points_s.t._nearly_all_pairs_in_interior_of_good_geod}
  \end{center}
 \end{figure}
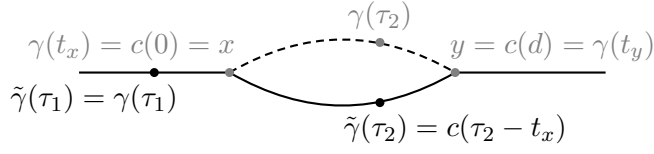
 A straightforward computation proves that $\tilde{\gamma}$ is a minimising geodesic.
 Then $\tilde{\gamma}$ verifies $(x,y) \in \mathcal{G}$, and this proves
 \[\vol_X \times \vol_X(X \times X \setminus \mathcal{G}) = 0.\]
 Using 
 $X \times X \setminus \mathcal{G} 
 = \bigcup_{x \in X} \{x\} \times (X \setminus \mathcal{G}_x)$,
 \begin{align*}
  0
  &=\vol_{X \times X}(X \times X \setminus \mathcal{G}) 
  \\&= \int_X \vol_X(X \setminus \mathcal{G}_x) \dV(x) 
  \\&= \int_{X \setminus \mathcal{G}'} \vol_X(X \setminus \mathcal{G}_x) \dV(x). 
 \end{align*}
 Since $\vol_X(X \setminus \mathcal{G}_x) > 0$ 
 for all $x \in X \setminus \mathcal{G}'$, 
 this proves $\vol_X(X \setminus \mathcal{G}') = 0$.
\end{proof}

So far it was seen that almost all points can be connected 
by a geodesic lying in $\Xgen$ which can be extended at both ends.
By applying the following theorem of Colding and Naber, 
which describes the Hölder continuity of the geometry 
of small balls with the same radius, 
to this situation,
one obtains that the interior of the regarded geodesics 
not only lies in $\Xgen$, but in $\mathcal{X}_r$ for some $r > 0$.
\begin{thm}
 [{\cite[Theorem 1.1, Theorem 1.2]{colding-naber}}]
 \label{thm_CN:hoelder}
 For $n \in \nn$ there are $\alpha(n)$, $C(n)$ and $r_0(n)$ 
 such that the following holds:
 Let $M$ be a complete \ndim Riemannian manifold 
 with $\Ric_M \geq -(n-1)$ or the limit space of a sequence of such manifolds,
 let $\gamma : [0,l] \to M$ a minimising geodesic (parametrised by arc-length)
 and fix $\beta \in(0,1)$.
 For $0 < r < r_0 \beta l$ and $\beta l < s < t < (1-\beta)l$,
 \[
  \dgh{r}{M}{\gamma(s)}{M}{\gamma(t)} 
  < \frac{C}{\beta l} \cdot r \cdot |s-t|^{\alpha(n)}.
 \]
\end{thm}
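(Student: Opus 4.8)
The plan is to follow the strategy of Colding and Naber: for an interior sub-arc $\gamma|_{[s,t]}$ I would construct an explicit \GH approximation between the balls $B_r(\gamma(s))$ and $B_r(\gamma(t))$ of quality $\tfrac{C(n)}{\beta l}\,r\,|s-t|^{\alpha(n)}$ by flowing along the gradient of a suitably smoothed distance function. First I would reduce to the case that $M$ is a smooth manifold with $\Ric_M \geq -(n-1)$: all the estimates below are scale-invariant and stable under measured \GH convergence, so for a limit space one approximates $M$ and $\gamma$ by manifolds $M_j$ and minimising geodesics $\gamma_j$, runs the argument there, and passes to the limit of the resulting approximations.

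The two technical ingredients would be a parabolic smoothing of the distance function and \refSegmIneq. Writing $p := \gamma(0)$ and $b := d(p,\cdot)$, I would replace $b$ by its heat-flow smoothing $h_\sigma$ (the solution of the heat equation with initial data $b$) at a small smoothing time $\sigma$ to be chosen. On the region swept out by $\gamma|_{[\beta l,(1-\beta)l]}$ and its $r$-neighbourhood, the sharp Laplacian comparison and Li--Yau/Hamilton type gradient estimates would give $|\nabla h_\sigma| \leq 1 + \Psi$ and $\avgint |\partial_\sigma h_\sigma| \leq \Psi$, where $\Psi = \Psi\big(\sigma(\beta l)^{-2}\mid n\big) \to 0$; moreover $h_\sigma$ would stay close to $b$, hence almost distance-like, along the geodesic. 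The heart of the matter is the parabolic Hessian estimate
\[
 \int_s^t \avgint_{B_r(\gamma(u))} \norm{\Hess h_\sigma}^2 \,\dV \, du \;\leq\; \frac{C(n)\,|s-t|}{\sigma}\cdot \Psi ,
\]
which I would derive by integrating the Bochner formula for $|\nabla h_\sigma|^2$ against a cutoff, using that $\avgint |\Delta h_\sigma|^2$ is controlled by the heat flow and that $\nabla h_\sigma$ is almost parallel along $\gamma$ (quantitative almost-splitting near an almost line segment), and finally converting the ball-averages along $\gamma$ into the stated space--time integral by \refSegmIneq.

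With these estimates in hand I would build the approximation as follows. For $x$ near $\gamma(s)$, flow $x$ along $\nabla h_\sigma$ until $h_\sigma$ increases by $t-s$, obtaining a point $F(x)$ which, by the gradient bound, lies within $\Psi r$ of $B_r(\gamma(t))$. Along such a flow line the distortion $\dt^{F}$ is bounded by the integral of $\norm{\Hess h_\sigma}$ over a thin tube around the flow lines; averaging over $B_r(\gamma(s)) \times B_r(\gamma(s))$, applying \refSegmIneq once more and Cauchy--Schwarz together with the Hessian estimate above would yield $\avgint \dt^{F} \leq \Psi\, r$. A Chebyshev/maximal-function argument then promotes this to a genuine $\Psi r$-isometry on a subset of full measure, which extends by completeness to all of $B_r(\gamma(s))$; symmetrising — using the reverse flow and the analogous object built from the excess function $d(p,\cdot) + d(q,\cdot)$ with $q := \gamma(l)$ — gives a two-sided \GH approximation of the required quality.

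It remains to choose $\sigma$. The smoothing error $\Psi\big(\sigma(\beta l)^{-2}\big)$ and the flow/Hessian error, which scales like $r\,\sqrt{|s-t|/\sigma}$ up to controlled factors, compete; taking $\sigma$ to be a fixed small power of $r^2(\beta l)^2$ balances them and, together with $|s-t|\leq l$ and $r < r_0(n)\beta l$ to absorb the remaining lower-order terms, produces the bound $\tfrac{C(n)}{\beta l}\,r\,|s-t|^{\alpha(n)}$ for the resulting exponent $\alpha(n)\in(0,1)$. The main obstacle, by far, is the parabolic Hessian estimate: showing that its right-hand side genuinely carries a factor $\Psi \to 0$ with the correct dependence on $r$ and $\beta l$ requires the full quantitative almost-rigidity machinery (sharp heat-kernel and gradient estimates, segment and Poincaré inequalities) near an interior arc of a minimising geodesic. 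A secondary, more bookkeeping-type difficulty is checking that every estimate is uniform enough to survive the passage to measured \GH limits, so that the limit-space case is covered.
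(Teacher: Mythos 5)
There is nothing in the paper to compare your argument against: Theorem~\ref{thm_CN:hoelder} is imported as a black box from \cite{colding-naber} (Theorems 1.1 and 1.2 there) and is never proven in this paper, so the only question is whether your sketch would stand on its own. As written, it would not. What you give is an accurate roadmap of the Colding--Naber strategy (parabolic approximation of the distance function, a space--time Hessian estimate along the geodesic, gradient-flow-built \GH approximations, segment inequality and maximal-function arguments to upgrade integral distortion bounds), but the two steps that constitute the actual content of \cite{colding-naber} are only asserted. The displayed parabolic Hessian estimate, with a factor $\Psi\to 0$ and the stated dependence on $r$, $\beta l$ and $|s-t|$, is the main new theorem of that paper; it does not follow from ``integrating Bochner against a cutoff'' plus the Segment Inequality, because the input you invoke --- that $\nabla h_\sigma$ is almost parallel along $\gamma$, i.e.\ quantitative almost-splitting at every interior point of the geodesic and at every scale below $\beta l$, with constants uniform in the point --- is precisely what has to be established there. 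Likewise the endgame is not a proof: you posit scalings for the smoothing and flow errors and ``balance'' them by a choice of $\sigma$, and then simply declare that this ``produces'' an exponent $\alpha(n)\in(0,1)$; nothing in the outline derives the claimed error scalings, shows that a one-shot optimisation in $\sigma$ yields a H\"older modulus in $|s-t|$ at all, or explains why the exponent depends only on $n$. Since you yourself identify this as ``the main obstacle,'' the honest assessment is that the decisive quantitative work is missing, not merely deferred bookkeeping.

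A secondary but genuine gap is the reduction of the limit-space case to the smooth case. For a \emph{prescribed} minimising geodesic $\gamma$ in a Ricci-limit space there is no a priori reason that $\gamma$ is a \GH limit of minimising geodesics $\gamma_j\subseteq M_j$: limits of such $\gamma_j$ are minimising geodesics with the same endpoints, but when geodesics between those endpoints are non-unique they may all miss $\gamma$. This matters for the present paper, which applies the theorem to geodesics in $X$ produced by concatenation (cf.\ \autoref{lem:points_s.t._nearly_all_pairs_in_interior_of_good_geod}), not to geodesics known to arise as limits; handling arbitrary minimising geodesics in the limit requires an additional argument, which is part of what \cite{colding-naber} supplies. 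In short: as a reading guide to the cited paper your outline is sound, but as a proof it leaves exactly the hard steps unproven, and the paper's use of the statement as an external citation is the appropriate comparison point.
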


\begin{lemma}
 \label{stmt_glob:rescaling_st_interior_of_geod_uniformly_close_to_IRn}
 Let $\mydelta \in (0,\frac{1}{2})$, 
 $\gamma : [0,l] \to X$ be a minimising geodesic 
 and assume $0 < s < t < l$ to be times
 such that $\gamma_{|[s,t]}$ is contained in $\Xgen$. 
 Then there is $0 < r' = r'(\mydelta, l, s, t; n,k) \leq \tildeeps$ 
 such that for all $0 < r \leq r'$, 
 \[\im(\gamma_{|[s,t]}) \subseteq \mathcal{X}_r.\]
\end{lemma}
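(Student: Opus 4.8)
The plan is to combine the pointwise fact that a generic point looks Euclidean after a suitable rescaling (\autoref{lemma:rescaling_close_to_tangent_cone}) with the uniform H\"older control of the geometry of small balls along a geodesic provided by \autoref{thm_CN:hoelder}. The only real difficulty is \emph{uniformity} over the compact set $\im(\gamma_{|[s,t]})$: for each single generic point $\gamma(\tau)$, \autoref{lemma:rescaling_close_to_tangent_cone} produces a scale below which $(r^{-1}X,\gamma(\tau))$ is $\frac{\tildeeps}{4}$-close to $(\rr^k,0)$ on the ball of radius $\frac{1}{\tildeeps}$, but this threshold need not depend continuously on $\tau$. The observation that unlocks the argument is that, after rescaling by $r^{-1}$, \autoref{thm_CN:hoelder} yields a modulus of continuity for $\tau\mapsto(r^{-1}X,\gamma(\tau))$ that does \emph{not} depend on $r$.

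Concretely, I would first fix $\beta:=\tfrac12\min\{s/l,\,(l-t)/l\}\in(0,\tfrac12)$, which gives $\beta l<s\le t<(1-\beta)l$ and hence $[s,t]\subseteq(\beta l,(1-\beta)l)$. Let $\alpha(n)$, $C(n)$, $r_0(n)$ be the constants of \autoref{thm_CN:hoelder}. For $\tau,\tau_0\in[s,t]$ with $\tau\neq\tau_0$ and any $r>0$ with $r/\tildeeps<r_0(n)\,\beta l$, the behaviour of the Gromov--Hausdorff distance under rescaling of the metric together with \autoref{thm_CN:hoelder} (applied to the limit space $X$, the geodesic $\gamma$, and this $\beta$) gives
\begin{align*}
 \dgh{1/\tildeeps}{r^{-1}X}{\gamma(\tau)}{r^{-1}X}{\gamma(\tau_0)}
 &= \frac{1}{r}\cdot\dgh{r/\tildeeps}{X}{\gamma(\tau)}{X}{\gamma(\tau_0)}\\
 &< \frac{C(n)}{\beta l\,\tildeeps}\cdot|\tau-\tau_0|^{\alpha(n)},
\end{align*}
and the right-hand side is independent of $r$. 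Hence one can choose $\eta=\eta(\mydelta,l,s,t;n,k)>0$ so small that $\frac{C(n)}{\beta l\,\tildeeps}\cdot\eta^{\alpha(n)}\le\frac{\tildeeps}{4}$.

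Next I would use compactness of $[s,t]$ to pick finitely many times $\tau_1,\dots,\tau_N\in[s,t]$ with $[s,t]\subseteq\bigcup_{j=1}^N(\tau_j-\eta,\tau_j+\eta)$. Each $\gamma(\tau_j)$ is generic, so all its tangent cones equal $\rr^k$ and therefore $(\mu X,\gamma(\tau_j))\to(\rr^k,0)$ as $\mu\to\infty$; by \autoref{lemma:rescaling_close_to_tangent_cone} (with $R=\tfrac1{\tildeeps}$ and $\eps=\tfrac{\tildeeps}{4}$) there is $r^{(j)}>0$ with $\dgh{1/\tildeeps}{r^{-1}X}{\gamma(\tau_j)}{\rr^k}{0}\le\tfrac{\tildeeps}{4}$ for all $0<r\le r^{(j)}$. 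Then I would set
\[
 r'=r'(\mydelta,l,s,t;n,k):=\min\{\tildeeps,\ r_0(n)\,\beta l\,\tildeeps,\ r^{(1)},\dots,r^{(N)}\}>0.
\]

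Finally I would verify the claim: let $0<r\le r'$ and $\tau\in[s,t]$, and choose $j$ with $|\tau-\tau_j|<\eta$. If $\tau=\tau_j$ the statement is immediate; otherwise the triangle inequality and the two estimates above yield
\begin{align*}
 \dgh{1/\tildeeps}{r^{-1}X}{\gamma(\tau)}{\rr^k}{0}
 &\le \dgh{1/\tildeeps}{r^{-1}X}{\gamma(\tau)}{r^{-1}X}{\gamma(\tau_j)}\\
 &\quad + \dgh{1/\tildeeps}{r^{-1}X}{\gamma(\tau_j)}{\rr^k}{0}\\
 &\le \frac{\tildeeps}{4}+\frac{\tildeeps}{4}=\frac{\tildeeps}{2}.
\end{align*}
Since $\gamma(\tau)\in\Xgen$ by hypothesis, this gives $\gamma(\tau)\in\mathcal{X}_r(\mydelta;n,k)$, and as $\tau\in[s,t]$ was arbitrary, $\im(\gamma_{|[s,t]})\subseteq\mathcal{X}_r$. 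The main obstacle is precisely the $r$-independence of the H\"older modulus after rescaling (so that a finite cover chosen once works for all small $r$); the remaining ingredients are the bookkeeping for how $\dGH$ scales and a routine compactness argument.
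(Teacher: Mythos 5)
Your proposal is correct and follows essentially the same route as the paper: fix $\beta$ from $s,t,l$, use the Colding--Naber H\"older estimate (\autoref{thm_CN:hoelder}) to get an $r$-independent modulus for $\tau\mapsto (r^{-1}X,\gamma(\tau))$ after the rescaling identity for $\dGH$, cover $[s,t]$ by finitely many intervals, apply \autoref{lemma:rescaling_close_to_tangent_cone} at the finitely many centres, take the minimum of the resulting thresholds together with $\tildeeps$ and $r_0(n)\beta l\tildeeps$, and conclude by the triangle inequality. Your choice of $\tildeeps/4+\tildeeps/4$ even lands exactly on the bound $\tildeeps/2$ appearing in the definition of $\mathcal{X}_r$, so no changes are needed.
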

\begin{proof}
 Define $\beta = \beta(l,s,t) := \frac{1}{2l} \cdot \min\{s, l-t\} > 0$.
 Then $t,s \in (\beta l, (1-\beta) l)$. 
 Furthermore, let 
 $\alpha(n), C(n), r_0(n)$ be as in \autoref{thm_CN:hoelder} and 
 define 
 \[
  d 
  = d(\mydelta, l, s, t; n, k) 
  := \sqrt[\alpha(n)]{\frac{\beta  l \cdot \tildeeps^2}{2C(n)}}.
 \]
 Let $m = m(s,t)$ be the maximal natural number with $(m-1) d \leq t-s$ 
 and define
 $\tau_j := s + jd$ for $0 \leq j \leq m$.
 By definition, 
 $\tau_0 = s$ and $\tau_m = s + m d > t$.
 Therefore, $[s,t] \subseteq \bigcup_{j=0}^{m} (\tau_j - d, \tau_j + d)$.
 
 For every $0 \leq j \leq m$, 
 choose $\lambda_j = \lambda_j(\mydelta, l, s, t; n,k) > 1$ 
 as in \autoref{lemma:rescaling_close_to_tangent_cone} 
 and
 define 
 $r' 
 = r'(\mydelta, l, s, t; n,k) 
 := \min\{\tildeeps,\frac{1}{\lambda_0},\dots,\frac{1}{\lambda_m},
 \tildeeps \cdot r_0(n) \cdot \beta l\}$.
 Let $0 < r \leq r'$ and $\tau \in [s,t]$ be arbitrary. 
 Choose $0 \leq j \leq m$ with $|\tau - \tau_j| < d$.
 By definition of $d$,
 \[
  |\tau - \tau_j|^{\alpha(n)} 
  < d^{\alpha(n)} 
  = \frac{\beta l \cdot \tildeeps^2}{2 C(n)},
 \]
 and so, using \autoref{thm_CN:hoelder},
 \begin{align*}
  &\dgh{1/\tildeeps}{r^{-1} X}{\gamma(\tau)}{\rr^k}{0}\\
  &\leq \frac{1}{r} 
    \cdot \dgh{r/\tildeeps}{X}{\gamma(\tau)}{X}{\gamma(\tau_j)} 
 + \dgh{1/\tildeeps}{r^{-1} X}{\gamma(\tau_j)}{\rr^k}{0}\\
  &\leq \frac{1}{r} \cdot \frac{C(n)}{\beta l} \cdot \frac{r}{\tildeeps} 
    \cdot |\tau_j - \tau|^{\alpha(n)} + \frac{\tildeeps}{2} \\
  &< \tildeeps.
  \qedhere
 \end{align*}
\end{proof}

\subsection{Proof of the main theorem}\label{sec:proof_main_thm}

In order to prove \autoref{thm:main}, the following technical result is needed
which estimates the number of balls a point can be contained in 
if the base points of these balls form an $\eps$-net.

\begin{lemma}\label{prop-J}
 Let $X$ be an \ndim Riemannian manifold 
 with lower Ricci curvature bound $\Ric \geq (n-1) \cdot \kappa$ 
 or the limit of a sequence of such manifolds. 
 Then each point is contained in maximal $\CBG(n, \kappa, r, r+2R)$ balls 
 with radii $R$ whose base points have pairwise distance at least $2r$.
\end{lemma}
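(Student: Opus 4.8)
The plan is to turn a configuration of balls containing a fixed point into a packing of disjoint balls and then apply the Bishop-Gromov volume comparison. Suppose $x \in X$ is contained in balls $B_R(x_1), \dots, B_R(x_N)$ whose centres satisfy $d(x_j, x_l) \geq 2r$ for $j \neq l$. First I would observe that the smaller balls $B_r(x_1), \dots, B_r(x_N)$ are pairwise disjoint: if $z \in B_r(x_j) \cap B_r(x_l)$, then $d(x_j, x_l) < 2r$, contradicting the separation hypothesis. Next, since $x \in B_R(x_j)$ for every $j$, we have $d(x, x_j) < R$, and hence by the triangle inequality every ball $B_r(x_j)$ is contained in $B_{r+R}(x)$; moreover $B_{r+R}(x) \subseteq B_{r + 2R}(x_j)$ for each fixed $j$, again by the triangle inequality (any $w$ with $d(w,x) < r+R$ satisfies $d(w, x_j) < r + R + R = r + 2R$).

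Then I would combine disjointness with the volume monotonicity. Fixing any one index, say $j=1$, we have
\begin{align*}
 N \cdot \min_{1 \leq j \leq N} \vol(B_r(x_j))
 &\leq \sum_{j=1}^N \vol(B_r(x_j))
 = \vol\Big( \bigsqcup_{j=1}^N B_r(x_j) \Big)
 \leq \vol(B_{r+R}(x))
 \leq \vol(B_{r+2R}(x_j))
\end{align*}
for every $j$; in particular this holds for the index $j_0$ realising the minimum volume. Applying the Bishop-Gromov Theorem (\autoref{lem_background:BG}) to the ball $B_{r+2R}(x_{j_0}) \supseteq B_r(x_{j_0})$ gives $\vol(B_{r+2R}(x_{j_0})) \leq \CBG(n,\kappa,r,r+2R) \cdot \vol(B_r(x_{j_0}))$, so dividing through by $\vol(B_r(x_{j_0})) > 0$ yields $N \leq \CBG(n,\kappa,r,r+2R)$, which is the claim. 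For the limit-space case one uses the renormalised limit measure of \autoref{thm-background:precompactness-measured}, which satisfies the same Bishop-Gromov inequality, and the disjointness and inclusion arguments are purely metric and carry over verbatim.

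The argument is essentially routine; the only point requiring a little care is making sure one divides by a \emph{nonzero} volume, which is automatic on a Riemannian manifold and holds for the limit measure since it is a Radon measure assigning positive mass to every ball (a consequence of the convergence statement and Bishop-Gromov in \autoref{thm-background:precompactness-measured}). A second minor subtlety is the choice of which centre to use in the final Bishop-Gromov step: one should apply it at the centre $x_{j_0}$ whose $r$-ball has the smallest volume, so that the factor coming out is uniform. I do not expect any genuine obstacle here.
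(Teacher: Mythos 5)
Your proposal is correct and follows essentially the same argument as the paper: disjointness of the $r$-balls, the inclusions $B_r(x_j) \subseteq B_{r+R}(x) \subseteq B_{r+2R}(x_j)$, and the Bishop-Gromov bound $\vol(B_{r+2R}(x_j)) \leq \CBG(n,\kappa,r,r+2R)\cdot\vol(B_r(x_j))$. The only cosmetic difference is that you single out the centre of minimal $r$-ball volume, whereas the paper sums the per-term volume ratios; both yield the same estimate.
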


\begin{proof}  
 This result is an immediate consequence of \refBGThm: 
 Let $p_1, \dots, p_m \in X$ be points with pairwise distance at least $2r$ 
 and $q \in \bigcap_{i=1}^m B_R(p_i)$. 

 On the one hand, since $d(p_i,p_j) \geq 2r$, 
 one has $B_r(p_i) \cap B_r(p_j) = \emptyset$ for any $i \ne j$. 
 On the other hand, 
 $B_r(p_i) \subseteq B_{r+R}(q)$,
 hence, 
 $\coprod_{i=1}^m B_r(p_i) \subseteq B_{r+R}(q)$. 
 Furthermore, 
 $B_{r+R}(q) \subseteq B_{r+2R}(p_i)$
 for any $1 \leq i \leq m$.
 Together, 
 \begin{align*}
  1 
  \geq \frac{\vol(\coprod_{i=1}^m B_r(p_i))}{\vol(B_{r+R}(q))} 
  &= \sum_{i=1}^m \frac{\vol(B_r(p_i))}{\vol(B_{r+R}(q))} \\
  &\geq \sum_{i=1}^m \frac{\vol(B_r(p_i))}{\vol(B_{r+2R}(p_i))} 
  \geq 
  \frac{m}{\CBG(n,\kappa,r,r+2R)}.
 \end{align*}
 Thus, $m \leq \CBG(n,\kappa,r,r+2R)$.
\end{proof}

It remains to prove the main theorem. 
Again, the notation introduced 
in \autoref{sec:application_to_generic_points} 
and \autoref{sec:geodesics} is used.

\begin{proof}[Proof of \autoref{thm:main}]
 The idea of the proof is the following: 
 First, fix a bound $\mydelta \in (0,\frac{1}{2})$ and
 choose a radius $R$ 
 such that $\mathcal{X}_R(\mydelta;n,k)$ has sufficiently large volume.
 Inside of this set of points, 
 choose a point $x_0$ and a finite $R$-net of points $x_j$ 
 such that $(x_0,x_j) \in \mathcal{G}$,
 and take the union of the subsets $G_{R}(p_i^{x_j})$. 
 This has the required properties.
  
 Let $\eps \in (0,1)$ be arbitrary and define
 \[
  \mydelta 
  = \mydelta(n,\eps) 
  := \frac{\eps}{2 \cdot \CBG\big(n,-1,\frac{1}{8},\frac{17}{8}\big)} 
  \in \Big(0,\frac{1}{2}\Big).
 \] 
 For arbitrary $r > 0$, define 
 \[X'(r) := \{x \in B_{1-r}(p) \cap \Xgen \mid r^x \geq r\}.\]
 For $r_1 \leq r_2$, obviously $X'(r_2) \subseteq X'(r_1)$. 
 Further, 
 $\bigcup_{r > 0} X'(r) = B_1(p) \cap \Xgen$.
 Thus, there exists a radius $ 0 < R = R(\eps,X,p;n) \leq 1$ such that 
 \[
  \vol_X(X'(r)) 
  \geq \Big(1 - \frac{\eps}{4}\Big) \cdot \vol_X (B_1(p) \cap \Xgen) 
  = 1 - \frac{\eps}{4}
 \]
 for all $r \leq R$. 
 Fix this $0 < R \leq 1$. 
  
 Since $\mathcal{G}'$ has full measure 
 by \autoref{lem:points_s.t._nearly_all_pairs_in_interior_of_good_geod},
 $X'(R) \cap \mathcal{G}'$ is non-empty. 
 Fix an arbitrary point $x_0 \in X'(R) \cap \mathcal{G}'$, 
 define \[X' := X'(R) \cap \mathcal{G}_{x_0}\]
 and
 choose a maximal number of points $x_1, \dots, x_l \in X'$ 
 with pairwise distance at least $R$.
 By the maximality of the choice, 
 $X' \subseteq \bigcup_{j=1}^l B_{R}(x_j)$.
 Since 
 $\mathcal{G}_{x_0}$ has full measure
 by choice of $x_0$,
 \[
  \vol_X \big( \bigcup_{j=1}^l B_{R}(x_j) \big) 
  \geq \vol_X(X') 
  = \vol_X(X'(R)) 
  \geq 1 - \frac{\eps}{4}.
 \]
 On the other hand, by choice, 
 $B_{R}(x_j) \subseteq B_1(p)$.
 Thus,
 \[\vol_X(B_1(p) \setminus \bigcup_{j=1}^l B_{R}(x_j)) \leq \frac{\eps}{4}.\]
  
 Let $p_i^{x_j} \to x_j$ and $i_0 \in \nn$ be large enough 
 such that for all $i \geq i_0$ and all $1 \leq j < j' \leq l$,
 \begin{align*}
  &d(p_i^{x_j},p_i^{x_{j'}}) 
  \geq \frac{1}{2} \cdot d(x_j,x_{j'}) 
  \geq \frac{R}{2}
  \intertext{and}
  &\frac{\vol_{M_i}(B_1(p_i) 
  \setminus \bigcup_{j=1}^l B_{R}(p_i^{x_j}))}{\vol_{M_i}(B_1(p_i))}
  \leq 2 \cdot \frac{\vol_{X}(B_1(p) 
  \setminus \bigcup_{j=1}^l B_{R}(x_j))}{\vol_{X}(B_1(p))}.
 \end{align*}
 Fix $i \geq i_0$.
 Then
 \begin{align*}
  &\vol_{M_i}(B_1(p_i) \setminus \bigcup_{j=1}^l B_{R}(p_i^{x_j}))
  \\&\leq 2 \cdot \frac{\vol_{X}(B_1(p) 
  \setminus \bigcup_{j=1}^l B_{R}(x_j))}{\vol_{X}(B_1(p))} 
  \cdot \vol_{M_i}(B_1(p_i)) \\
  &\leq \frac{\eps}{2} \cdot \vol_{M_i}(B_1(p_i)).
  \intertext{By \autoref{prop-J}, every point in the union 
  $\bigcup_{j=1}^l B_{R}(p_i^{x_j})$ is contained 
  in at most $M$ different balls $B_{R}(p_i^{x_j})$ for
  \[
   M 
   = M(\eps;n,k) 
   := \CBG\Big(n,-1,\frac{R}{8}, \frac{17R}{8}\Big) 
   \leq \CBG\Big(n,-1,\frac{1}{8}, \frac{17}{8}\Big) 
   = \frac{\eps}{2 \mydelta}
  .\]
  Therefore,}
  \sum_{j=1}^l \vol_{M_i}(B_{R}(p_i^{x_j}))
  &\leq M \cdot \vol_{M_i}\big(\bigcup_{j=1}^l B_{R}(p_i^{x_j})\big) 
  \leq \frac{\eps}{2 \mydelta} \cdot \vol_{M_i}(B_1(p_i)).
 \end{align*} 
 Thus,
 \begin{align*}
  \vol_{M_i}\big(\bigcup_{j=1}^l B_{R}(p_i^{x_j}) 
  \setminus \bigcup_{j=1}^l G_{R}(p_i^{x_j})\big)
  &\leq \vol_{M_i}\big(\bigcup_{j=1}^l (B_{R}(p_i^{x_j}) 
  \setminus G_{R}(p_i^{x_j}))\big)
  \\&\leq \sum_{j=1}^l \vol_{M_i}(B_{R}(p_i^{x_j}) 
  \setminus G_{R}(p_i^{x_j})) 
  \displaybreak[0]\\
  &\leq \sum_{j=1}^l \mydelta \cdot \vol_{M_i}(B_{R}(p_i^{x_j})) 
  \\&\leq \frac{\eps}{2} \cdot \vol_{M_i}(B_1(p_i)).
 \end{align*}
 Hence,
 \begin{align*}
  \vol_{M_i}(B_1(p_i) \setminus \bigcup_{j=1}^l G_R(p_i^{x_j}))
  \leq \eps \cdot \vol_{M_i}(B_1(p_i)).
 \end{align*}
 Now define
 \[
  G_1(p_i) := \bigcup_{j=1}^l G_{R}(p_i^{x_j}) 
  \quad\aand\quad 
  \lambda_i := \lambda_i^{x_0}(R).
 \]
 By construction,
 \[\vol_{M_i}(G_1(p_i)) \geq (1-\eps) \cdot \vol_{M_i}(B_1(p_i)).\]
 From now on, let $\lambda_i^{x_j}$ denote $\lambda_i^{x_j}(R)$.
 
 Fix $1 \leq j \leq l$. 
 By construction, $(x_0,x_j) \in \mathcal{G}$ 
 and $x_0, x_j \in \mathcal{X}_{R}$.
 Thus, there exists a minimising geodesic $\gamma_j : [0,l_j] \to X$ 
 and $0 < s_j < t_j < l_j$ 
 such that ${\gamma_j}_{|[s_j,t_j]}$ is contained in $\Xgen$, 
 $\gamma_j(s_j) = x_0$ and $\gamma_j(t_j) = x_j$.
 By \autoref{stmt_glob:rescaling_st_interior_of_geod_uniformly_close_to_IRn},
 there is $r_j' > 0$ such that for all $0 < r \leq r_j'$, 
 ${\gamma_j}_{|[s_j,t_j]}$ is contained in $\mathcal{X}_r$.
 Let $r_j := \min\{r_j',R\}$.
 By \autoref{lem:compare_rescalings} 
 \ref{lem:compare_rescalings-points_connected_by_geodesic},
 there is $m_j := m(n, \mydelta, d_X(x_0, x_j), r_j, R)$ satisfying 
 \[ 5^{-m_j} \leq \frac{\lambda_i^{x_0}}{\lambda_i^{x_j}} \leq 5^{m_j}\]
 for almost all $i \in \nn$
 and $l_\omega^{x_0}(R) = l_\omega^{x_j}(R)$.
 From now on, let $i \geq i_0$ be large enough 
 such that the above estimate holds for all $1 \leq j \leq l$.
 
 Given $q_i \in G_1(p_i)$, 
 let $(Y, q)$ be an arbitrary sublimit of $(\lambda_i M_i, q_i)$, 
 i.e.~for a subsequence $(i_s)_{s \in \nn}$, 
 \[(\lambda_{i_s} M_{i_s}, q_{i_s}) \to (Y, q) \as s \to \infty.\]
 
 For a further subsequence $(i_{s_t})_{t \in \nn}$ 
 there is $1 \leq j \leq l$ 
 with $q_{i_{s_t}} \in G_{R}(p_{i_{s_t}}^{x_j})$ for all $t$
 and 
 \[
  (\lambda_{i_{s_t}}^{x_j} M_{i_{s_t}}, q_{i_{s_t}}) 
  \to (\rr^k \times \tilde{K}, \cdot) 
  \as t \to \infty
 \]
 for a compact metric space $\tilde{K}$ satisfying 
 $\diam(\tilde{K}) \in [\frac{1}{5},1]$. 
 
 On the other hand, 
  \[ 
   \bigg( \frac{\lambda_{i_{s_t}}}{\lambda_{i_{s_t}}^{x_j}} 
   \cdot \lambda_{i_{s_t}}^{x_j} M_{i_{s_t}}, q_{i_{s_t}}\bigg)  
   = \big(\lambda_{i_{s_t}} M_{i_{s_t}}, q_{i_{s_t}}\big) 
   \to (Y, q) 
   \as t \to \infty,
  \]
 the sequence $\frac{\lambda_{i_{s_t}}}{\lambda_{i_{s_t}}^{x_j}}$ 
 converges to some $\alpha$ 
 and $Y$ is isometric to the product $\rr^k \times K$ 
 for $K := \alpha \tilde{K}$.
 In particular, $5^{-m_j} \leq \alpha \leq 5^{m_j}$. 
 So,
 $\diam(K) \in [\frac{1}{D},D]$ 
 for $D:= 5^{\max\{m_j \mid 1 \leq j \leq l\} + 1}$.
 Moreover, for any non-principal ultrafilter $\omega$, 
 \[\dim(K) = \dim(\tilde{K}) = l_\omega^{x_j}(R) = l_\omega^{x_0}(R).\]
 
 In particular, 
 for any two sublimits $(\rr^k \times K_1, \cdot)$ 
 and $(\rr^k \times K_2, \cdot)$ 
 coming from the same subsequence of indices,
 let $\omega$ be a non-principal ultrafilter 
 such that these sublimits are ultralimits with respect to $\omega$.
 Then \[\dim(K_1) = \dim(K_2).\qedhere\]
\end{proof}

 


\end{document}